\documentclass[reqno,11pt]{preprint}

\usepackage[marginparwidth=1in]{geometry}
\usepackage[full]{textcomp}
\usepackage[osf]{newtxtext}
\usepackage{cabin}
\usepackage{enumerate}
\usepackage{enumitem}
\setlist[itemize]{label=\small\textbullet}
\usepackage{bbm}
\usepackage{dsfont}

\usepackage[makeroom]{cancel}

\usepackage{stmaryrd}
\usepackage{csquotes}

\usepackage{commath}
\usepackage{nicefrac}

\usepackage[varqu,varl]{inconsolata}
\usepackage[cal=boondoxo]{mathalfa}

\usepackage{mhcomment}
\usepackage{hyperref}
\usepackage{breakurl}
\usepackage{mathrsfs}
\usepackage{booktabs}
\usepackage{caption}

\usepackage{tikz}
\usetikzlibrary{decorations.pathreplacing,decorations.markings}
\usepackage{amsmath} 
\usepackage{mhequ} 
\usepackage{mhsymb} 
\usepackage{mhenvs} 
\usepackage{microtype}
\usepackage{amssymb} 
\usepackage{eufrak}

\usepackage{wasysym}
\usepackage{centernot}
\usepackage{scalerel}

\usepackage{pifont}

\usepackage{makecell}

\usepackage[utf8]{inputenc}
\usepackage[T1]{fontenc}

\usepackage{oldgerm}




\newcommand{\Exp}{\mathbf{E}}
\newcommand{\Prob}{\mathbf{P}}

\renewcommand{\R}{\mathbb{R}}
\renewcommand{\C}{\mathbb{C}}
\renewcommand{\N}{\mathbb{N}}
\renewcommand{\Q}{\mathbb{Q}}

\renewcommand{\Z}{\mathbb{Z}}
\renewcommand{\E}{\mathbb{E}}
\renewcommand{\P}{\mathbb{P}}

\renewcommand{\L}{\mathbb{L}}


\newcommand{\bd}{\mathbf{d}}

\newcommand{\BP}{\mathbf{P}}
\newcommand{\BE}{\mathbf{E}}

%
\newcommand{\cA}{\mathcal{A}}

\newcommand{\cC}{\mathcal{C}}
\newcommand{\cD}{\mathcal{D}}

\newcommand{\cF}{\mathcal{F}}
\newcommand{\cG}{\mathcal{G}}
\newcommand{\cH}{\mathcal{H}}

\newcommand{\cL}{\mathcal{L}}

\newcommand{\cN}{\mathcal{N}}
\newcommand{\cO}{\mathcal{O}}

\newcommand{\cS}{\mathcal{S}}

\newcommand{\cX}{\mathcal{X}}

\newcommand{\cg}{\mathcal g}


\renewcommand{\fE}{\mathfrak{E}}

\newcommand{\fe}{\mathfrak{e}}
\newcommand{\fw}{\mathfrak{w}}




\newcommand{\dd}{\mathrm{d}}      
\newcommand{\dis}{\mathrm{dis}}
\newcommand{\eff}{\mathrm{eff}}

\newcommand{\loc}{\mathrm{loc}}


\newcommand{\SH}{\mathscr{H}}



\renewcommand{\fE}{\mathfrak{E}}

\def\one{\mathrm{(I)}}
\def\two{\mathrm{(II)}}
\def\three{\mathrm{III}}

\newcommand{\1}{\mathds{1}}




\def\one{\mathrm{(I)}}
\def\two{\mathrm{(II)}}
\def\three{\mathrm{(III)}}




\newcommand{\fock}[3]{ 
    \def\order{#2}
    \ifx\order\empty
        \Gamma_{#1} L^2_{#3}
    \else
        \Gamma_{#1} H^{#2}_{#3}
    \fi
}

\newcommand{\core}{\mathbf{T}^\tau}





\newcommand{\gen}{\cL^\tau}

\newcommand{\geneff}{\cL^\eff}

\newcommand{\gensy}{\cS^\tau}
\newcommand{\gensyx}{\cL_0^{\fe_1}}
\newcommand{\gensyy}{\cL_0^{\fe_2}}

\newcommand{\gena}{\cA^{\tau}}
\def\genaF#1#2{{\cA^{\tau,\scalebox{0.6}{$#1$}}_{#2}}} 
\def\genaFsh#1#2{{\cA^{\tau,\scalebox{0.6}{$2$}}_{\scalebox{0.6}{$#1$}, \scalebox{0.6}{$#2$}}}} 

\newcommand{\genapmsh}{\cA^{\sharp, \tau}_{\pm}}

\newcommand{\genapmfl}{\cA^{\flat, \tau}_{\pm}}


\usepackage{tikz,pgfplots}
\pgfplotsset{compat=newest}
\usetikzlibrary{cd}
\usetikzlibrary{calc}
\usetikzlibrary{arrows, scopes}
\usetikzlibrary{decorations.pathmorphing}
\usetikzlibrary{positioning}
\usetikzlibrary{shapes.geometric}
\usetikzlibrary{backgrounds}
\usetikzlibrary{arrows,chains,matrix,positioning,scopes}

\tikzset{
	whitenode/.style={circle,fill=black!10,draw=black,inner sep=0pt, minimum size=1.5mm},
	blacknode/.style={circle,fill=black,draw=black,inner sep=0pt, minimum size=1.5mm},
}

\usepackage{relsize}

\usepackage{subfig}
\usepackage{caption}

\captionsetup[subfigure]{labelformat=simple,labelsep=colon,
	listofformat=subsimple}
\captionsetup{lofdepth=2}
\makeatletter
\renewcommand{\p@subfigure}{}
\makeatother


\newcommand{\eqlaw}{\stackrel{\mbox{\tiny law}}{=}}





\def\restriction#1#2{\mathchoice
              {\setbox1\hbox{${\displaystyle #1}_{\scriptstyle #2}$}
              \restrictionaux{#1}{#2}}
              {\setbox1\hbox{${\textstyle #1}_{\scriptstyle #2}$}
              \restrictionaux{#1}{#2}}
              {\setbox1\hbox{${\scriptstyle #1}_{\scriptscriptstyle #2}$}
              \restrictionaux{#1}{#2}}
              {\setbox1\hbox{${\scriptscriptstyle #1}_{\scriptscriptstyle #2}$}
              \restrictionaux{#1}{#2}}}
\def\restrictionaux#1#2{{#1\,\smash{\vrule height .8\ht1 depth .85\dp1}}_{\,#2}}

\let\Phi=\phi
\let\phi=\varphi
\let\epsilon=\varepsilon
\let\dis=\displaystyle


\colorlet{darkblue}{blue!90!black}
\colorlet{darkred}{red!90!black}
\colorlet{darkgreen}{green!50!black}
\colorlet{darkyellow}{yellow!90!black}

\setcounter{tocdepth}{3}

\title{Superdiffusive central limit theorem for a class of \\ driven diffusive systems at the critical dimension}

\begin{document}

\maketitle

\vspace{-2cm}

\noindent{\textbf{Giuseppe Cannizzaro$^1$}, \textbf{Tom Klose$^2$}, \textbf{Quentin Moulard$^3$}}
\newline

\noindent{\small $^1$University of Warwick, UK, \email{giuseppe.cannizzaro@warwick.ac.uk}\\
$^2$University of Oxford, UK, \email{tom.klose@maths.ox.ac.uk}\\
    $^3$Technical University of Vienna, Austria, \email{quentin.moulard@tuwien.ac.at}}
\newline

\begin{abstract}

We study the large-scale behaviour of a class of driven diffusive systems 
	modelled by a Stochastic Partial Differential Equation, the Stochastic Burgers Equation~(SBE) 
	with general nonlinearity, at the critical dimension and in infinite volume. 
	Our main result shows that, under a logarithmically superdiffusive space-time scaling, it is given by the same 
	explicit Gaussian Fixed point obtained in [G. Cannizzaro, Q. Moulard, \& F. Toninelli, 
	https://arxiv.org/abs/2501.00344, 2025] for 
	the quadratic SBE, but with suitably renormalised coefficients, thereby rigorously justifying 
	and partly correcting the classical Physics derivation of the SBE in  [H. van Beijeren, R. Kutner, \& H. Spohn, 
  Phys. Rev. Lett., 1986] based on Spohn's 
	theory of nonlinear fluctuating hydrodynamics.  
	Besides, ours is the first universality-type result for out-of-equilibrium systems and the first extension of 
	[M. Hairer, J. Quastel, Forum of Mathematics, Pi, Vol. 6, 2018, e3], to the critical dimension and beyond weak coupling. 
	The major challenge in our work is the mild growth condition on the nonlinearity which renders even the well-posedness of the microscopic equation non-trivial. 
	Additional key novelties include the derivation of fine estimates on the non-quadratic part of the generator as well as a new approximation for the resolvent associated to the solution of the quadratic SBE.  

\end{abstract}

\bigskip\noindent
{\it Key words and phrases.}
Stochastic Partial Differential Equations, critical dimension, Stochastic Burgers equation, driven diffusive systems, super-diffusion, Hairer--Quastel universality
\bigskip

\setcounter{tocdepth}{3} 
\tableofcontents

\section{Introduction} \label{sec:intro}

In the context of out-of-equilibrium statistical mechanics models, the classical theory of 
nonlinear fluctuating hydrodynamics~\cite{Spo1} allows to {\it heuristically} 
derive (non-linear) Stochastic Partial Differential 
Equations (SPDEs) as an effective description for their universal fluctuations 
around the macroscopic profile. 
Renowned examples include randomly growing surfaces, which lead 
to the celebrated KPZ equation~\cite{KPZ, BS, Ton}, 
lattice gas models, Navier--Stokes dynamics~\cite{Spo}, and
driven diffusive systems~\cite{vBKS85}. 

The goal of the present paper is to rigorously justify (and partly {\it correct}) 
this derivation: 
We show 
that the large-scale behaviour of a class of 
driven diffusive systems {\it at the critical dimension} obeys the same law
as that of the corresponding 
heuristically derived SPDE, the so-called Stochastic Burgers Equation, 
with {\it suitably renormalised coefficients}. 
\subsection{Physical motivation and background} \label{s:background}
We are interested in a family of random scalar conservation laws taking the form of the non-linear 
Langevin equation 
\begin{equ} \label{e:ConservationLaw}
\partial_t u + \nabla \cdot \vec{j}(u) = 0 \, , \quad \vec{j}(u) \eqdef - D(u) \nabla u + u \, \vec{w} (u) + \sigma(u) \,  \vec{\xi} \, .
\end{equ}
where $u=u(t,x)\in\R$ for $(t,x)\in\R_+\times\R^d$, and $D,\,\sigma, \vec{w}$ are arbitrary functions on $\R$, 
the first two $\R_+$-valued while the latter is $\R^d$-valued. 
The system in~\eqref{e:ConservationLaw} was 
introduced by van Beijeren, Kutner, and Spohn in~\cite{vBKS85} 
to model the fluctuations of a conserved scalar quantity $u$ in $d+1$ space-time dimensions.
The current density $\vec{j}$ is the result of three contributions: 
a \emph{diffusive term}, $-D(u) \nabla u$, consistent with Fick's law; 
an \emph{advective term} $u \,\vec{w}(u)$, which describes the transport of $u$ by the flow velocity $\vec{w}(u)$; 
and a \emph{stochastic term} $\sigma(u) \, \vec{\xi}$, which models random fluctuations 
via a vector-valued space-time white noise $\vec{\xi} = (\xi_1, \dots, \xi_d)$. 
%

To derive the effective equation governing the behaviour of~\eqref{e:ConservationLaw},  
in~\cite{vBKS85}, the authors perform a \emph{perturbative expansion} of the non-linear coefficient $\vec{w}$ 
around a constant reference density $u_0$ (the {\it macroscopic profile} 
alluded to earlier, which is flat as we are implicitly assuming to be at stationarity). 
This amounts to setting $\bar u \eqdef u-u_0$ and then Taylor expanding $\vec{w}$ at $u_0$, i.e. 
\begin{equ}[e:PerturbativeExpansion]
\vec{w}(u)= \vec{f}_0 + \vec{f}_1 \, \bar{u} + \vec{f}_2 \, \bar{u}^2 + \dots\,,\quad \text{with }\quad\vec{f}_i=\frac{\vec{w}^{\,(i)}(u_0)}{i!}\,.
\end{equ}
In principle, we should also expand $D$ and $\sigma$ but the perturbative corrections 
beyond the zero-th order term are generally neglected 
as they are expected to be irrelevant compared to those arising from the nonlinear advection term. 
We will follow this convention and assume $D$ and $\sigma$ to be  constant. 
For simplicity, we further take $D(u) = \frac{1}{2} \mathrm{Id}$ and $\sigma(u) = \mathrm{Id}$.
From~\eqref{e:ConservationLaw} and the previous discussion, we obtain an equation for $\bar u$ of the form
\begin{equ}[e:PerturbativeSBE]
\partial_t \bar{u} =\frac{1}{2} \Delta \bar{u} - \Big[\vec{f}_0\cdot \nabla \bar u + \vec{f}_1\cdot\nabla \bar{u}^2 + \vec{f}_2 \, \cdot\nabla\bar{u}^3 + \dots\Big] - \nabla \cdot \vec{\xi} \, .
\end{equ}
Now, the linear term, $\vec{f}_0\cdot \nabla \bar u$, can always be removed by performing 
the Galilean transformation $x \mapsto x + \vec{f}_0 \, t$. 
Concerning the rest, it is commonly believed (and similarly argued in~\cite{vBKS85}) 
that {\it perturbative terms of degree three and higher have a negligible effect compared to the quadratic one}, 
a belief which is supported by the following simple scaling argument. 

Consider the \emph{diffusive rescaling}, which corresponds to zooming out in space and time as 
$\bar{u}^\tau(t,x) = \bar{u}^\tau(t,x) = \tau^{d/4} \, \bar{u}(\tau t, \sqrt{\tau} x)$ for $\tau \gg 1$. Then, 
equation \eqref{e:PerturbativeSBE} becomes
\begin{equ} \label{e:ZoomingSBE}
\partial_t \bar{u}^\tau = \frac{1}{2} \Delta \bar{u}^\tau- \Big[\tau^{1-\frac{d}2}\, \vec{f}_1\cdot\nabla (\bar{u}^\tau)^2 + \tau^{1-d}\vec{f}_2 \, \cdot\nabla(\bar{u}^\tau)^3 + \dots\Big]  - \nabla \cdot \vec{\xi}^\tau \, ,
\end{equ}
where $\vec{\xi}^\tau$ denotes the appropriately rescaled space-time white noise. 
Observe that the nonlinear term of degree $m$ is now scaled by a factor $\tau^{1 - (m-1) d/2}$, 
whose order is decreasing as $m$ increases.
At large scales, it seems therefore heuristically justified to truncate the expansion 
in~\eqref{e:PerturbativeExpansion} beyond the quadratic contribution, 
leading to the so-called (quadratic) Stochastic Burgers Equation (SBE)
\begin{equ} \label{e:SBE}
\partial_t \bar{u} = \frac{1}{2} \Delta \bar{u} - \,\vec{f_1} \cdot \nabla \bar{u}^2 - \nabla \cdot \vec{\xi} \, .
\end{equ}
where the vector $\vec{f_1}$ should be chosen, according to the above heuristic, as in~\eqref{e:PerturbativeExpansion}. 

The quadratic SBE has been extensively studied in the Mathematics literature 
and the characterisation of its (small-scale, only in $d=1$, and) large-scale behaviour 
has been at the core of a rich and vibrant area of research. Depending on the spatial 
dimension $d$, the scaling argument in~\eqref{e:ZoomingSBE} and, in particular, 
the coefficient of the quadratic term $\tau^{1-\frac{d}2}$ determine whether 
the equation is {\it sub-critical}, $d=1$, {\it super-critical}, $d\geq 3$ or {\it critical}, $d=2$. 
At the sub-critical dimension $d=1$, the analysis of the small-scales of~\eqref{e:SBE} (and of its ``primitive'' 
$h$, i.e. $\partial_x h=u$, solving the KPZ equation~\cite{KPZ}) has lead to 
the development of the Theory of Regularity Structures~\cite{Hai13, Hai14}, while, at large scales, 
the SBE was shown to be {\it polynomially superdiffusive} in~\cite{BQS} and to converge 
to the KPZ Fixed Point~\cite{QS1, Virag}. The super-critical case, $d\geq 3$, instead 
was treated in~\cite{CGT}, where diffusive behaviour and asymptotic Gaussianity were determined. 
At last, the most recent work is~\cite{CMT}, which deals with the critical dimension $d=2$: 
The solution of SBE is proved to be {\it superdiffusive}, as in $d=1$, but only {\it logarithmically}, 
and to display, under a logarithmically superdiffusive scaling, {\it Gaussian fluctuations}, as in $d\geq 3$.  
\medskip

The present article is concerned with this latter case and it is the first work 
which \emph{rigorously formalises} the truncation of~\eqref{e:PerturbativeSBE}. 
We consider the system~\eqref{e:ConservationLaw} at the \emph{critical dimension}~$d=2$ in \emph{infinite volume},  
with $D=\frac12\sigma=\frac12 {\rm Id}$ and $\vec{w}$ {unidirectional and} smooth, satisfying 
extremely mild growth assumptions, and show that it converges, 
under a \emph{logarithmically superdiffusive scaling}, 
to the same \emph{universal Gaussian limit} as the rescaled solutions to~\eqref{e:SBE}, 
but with a choice of the vector~$\vec{f}_1$ {\it different} from that in~\eqref{e:PerturbativeExpansion}, 
which we explicitly identify 
(see Theorems~\ref{th:CLT} and~\ref{th:conv_semigroup} below for precise statements). 

\subsection{The model and main results}
As mentioned above, our focus is the system~\eqref{e:ConservationLaw} in dimension $d=2$. 
Since we assume $\vec{w}$ to be {unidirectional, we can write the advection term as 
$u \vec{w}(u) = F(u) \, \vec{\fw}$ for some function $F\colon \R\to\R$ and constant vector $\vec{\fw} \in \mathbb S^1$.} 
Then, the model of interest becomes 
\begin{equ} \label{e:FormalGeneralizedSBE}
\partial_t u = \frac{1}{2} \Delta u + \vec{\fw} \cdot \nabla F(u) + \nabla \cdot \vec{\xi} \quad \text{on} \quad \R_+ \times \R^2 \,.
\end{equ}
By rotational invariance of the spatial domain, the specific choice of $\vec{\fw}$ is irrelevant; 
without loss of generality, we fix $\vec{\fw} = \fe_1$, where $(\fe_1,\fe_2)$ denotes the canonical basis of $\R^2$.
%
%
%
%
%

As written, the SPDE~\eqref{e:FormalGeneralizedSBE} is severely ill-posed due to the 
singularity of the space-time white noise $\vec{\xi}$. Further, since the dimension $d=2$ is {\it critical},  
not even the pathwise theories of Regularity Structures~\cite{Hai14} or Paracontrolled Calculus~\cite{GIP15} 
apply. In fact, a local solution theory is not even expected to exist. 
That said, we are interested in {\it large scales} and thus it is legitimate 
to start from a microscopically well-defined model. Following the approach of~\cite{CMT}, 
let $\rho$ be a smooth, compactly supported, non-negative, and even function with unit total mass 
and such that $\|\rho\|_{L^2(\R^2)} = 1$, 
and consider the regularised version of~\eqref{e:FormalGeneralizedSBE} given by  
\begin{equ} \label{e:GeneralisedSBE}
	\partial_t u = \frac{1}{2} \Delta u + \cN^1_F(u) + \nabla \cdot \vec{\xi}^{\thinspace 1} \, \quad \text{on} \quad \R_+ \times \R^2, 
\end{equ}
where 
\begin{equ} \label{e:Regularization1}
	\cN_F^1(u) \eqdef \rho^{*2} * \partial_1 F(u) \, , \quad \vec{\xi}^{\thinspace 1} \eqdef \rho * \vec{\xi} \, . \
\end{equ}
We will require the function $F\colon\R\to\R$ to satisfy the following assumption. 

\begin{assumption}[On the non-linearity] \label{Assumption:F}
We assume that $F \in \cC^\infty(\R)$ and that there exists $\kappa \in [0,1/4)$ such that, for every $n \geq 0$ and $y \in \R$, it holds that  
\begin{equ} \label{e:assumptionF}
|F^{(n)}(y)| \leq \exp(\exp(o(n))) \, \exp(\kappa y^2) \,, 
\end{equ}
where $o(n)$ is the usual Landau symbol, i.e. $o(n)$ is a function of $n$ 
such that $o(n)/n$ converges to $0$ as $n\to\infty$. 
\end{assumption}

Roughly speaking, this assumption encompasses all reasonable smooth functions 
whose growth is slower than $\exp(x^2/4)$.
%
In particular, it guarantees that the constants~$c_1(F)$ and~$c_2(F)$, given by 
\begin{equ} \label{e:c1_c2}
	c_1(F) \eqdef \int_{\R} F'(y) \; \pi_1(\dif y), 
	\quad 
	c_2(F) \eqdef \int_{\R} F''(y) \; \pi_1(\dif y), 
\end{equ}
for $\pi_1$ the law of a standard mean-zero real-valued Gaussian random variable, 
are indeed finite (see Proposition~\ref{p:AssF} in Appendix~\ref{a:DecayCoeff}). 
We refer the reader to Section~\ref{s:discussion} for a more detailed discussion of Assumption~\ref{Assumption:F}, interesting examples for which it is satisfied, as well as a more systematic perspective on the constants in~\eqref{e:c1_c2}. 
\medskip

Even if the noise (and the nonlinearity) were smoothened in space, 
it is not at all obvious that~\eqref{e:GeneralisedSBE} 
admits a unique solution. 
Indeed, the growth condition~\eqref{e:assumptionF} is very weak and we are working 
in {\it infinite volume} so that classical analytic tools do not necessarily apply. 
Our first task in Section~\ref{sec:WellPosed} will then be to identify a good notion of solution 
(see Definition~\ref{def:SolSPDE}) and prove well-posedness of~\eqref{e:GeneralisedSBE} 
(see Proposition~\ref{p:MollifSol}). 
The results therein are summarised in the next loosely stated proposition. 

\begin{proposition}\label{p:WellPosed}
Let $F$ be such that Assumption~\ref{Assumption:F} holds. Then,~\eqref{e:GeneralisedSBE} admits 
a unique global-in-time solution $u$. Further, $u$ is Markov and has $\P^\rho$ as invariant measure, 
where $\P^\rho$ is the law of $\eta^1\eqdef \eta\ast\rho$ for $\eta$ a spatial white noise on $\R^{2}$.  
\end{proposition}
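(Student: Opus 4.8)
The plan is to proceed by a change of variables of Da Prato--Debussche type, reducing \eqref{e:GeneralisedSBE} to a \emph{random} semilinear parabolic PDE, to solve the latter by a contraction argument in spatially weighted function spaces, to push the local solution to all times via a probabilistic a priori bound coming from the (to be identified) invariant measure, and finally to deduce the Markov property from pathwise uniqueness and the invariance of $\P^\rho$ from a generator computation. Concretely, write $u = z + v$, where $z$ is the stationary-in-time solution of the linear Ornstein--Uhlenbeck equation $\partial_t z = \frac12\Delta z + \nabla\cdot\vec{\xi}^{\,1}$, i.e.\ the centred Gaussian process whose marginal law is $\P^\rho$ at every time. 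Since $\vec{\xi}^{\,1} = \rho*\vec{\xi}$ is smooth in space and only white in time, $z$ is an explicit, spatially smooth Gaussian field, continuous in time; a Borel--Cantelli estimate shows that $\sup_{|x|\le R}|z_t(x)|$ grows at most like $\sqrt{\log R}$, uniformly on compact time intervals, so that, by Assumption~\ref{Assumption:F}, $F(z)$ and each $F^{(n)}(z)$ grow at most polynomially in $|x|$. The remainder $v$ then solves
\begin{equ}
\partial_t v = \tfrac12 \Delta v + (\partial_1\rho^{*2})*F(v+z)\,, \qquad v(0) = u(0) - z(0)\,,
\end{equ}
where I used that $\rho^{*2}*\partial_1 F(w) = (\partial_1\rho^{*2})*F(w)$, so that the nonlinearity is a smooth function of $v+z$ convolved against the \emph{compactly supported} kernel $\partial_1\rho^{*2}$.

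For the notion of solution in Definition~\ref{def:SolSPDE} I would take the mild formulation of this $v$-equation in a scale of polynomially weighted spaces of continuous functions on $\R^2$, the weight being chosen large enough to accommodate the spatial growth of $z$ and of $F(v+z)$. Local existence and uniqueness then follow from a Banach fixed point: the map $w\mapsto (\partial_1\rho^{*2})*F(w)$ is locally Lipschitz on such a space --- because $F$ is smooth and $\partial_1\rho^{*2}$ has compact support, so that convolution against it only displaces mass a bounded distance --- while the heat semigroup acts boundedly on it. This produces a unique maximal mild solution $v$, hence $u = z+v$, up to a random explosion time $T^\star\in(0,\infty]$.

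The crux, and the step I expect to be the main obstacle (as emphasised in the abstract), is to show $T^\star = \infty$ almost surely. The difficulty is precisely the borderline growth permitted by \eqref{e:assumptionF}: a super-linear drift can in principle cause finite-time blow-up, and a purely deterministic energy estimate does not close, because $\|(\partial_1\rho^{*2})*F(v+z)\|$ cannot be bounded by Sobolev norms of $v$ alone. Instead I would obtain an a priori moment bound from the stationary solution itself: approximate \eqref{e:GeneralisedSBE} by truncating $F$ to a globally Lipschitz $F_M$ (and, if needed, by a finite-volume or spectral-Galerkin cutoff), for which global well-posedness is classical and the invariant measure is explicitly $\P^\rho$ (respectively its truncated analogue); this yields bounds, uniform in $t$ and in $M$, on suitable weighted moments of $F(u_t)$ --- and it is exactly here that the condition $\kappa<1/4$ enters, through the finiteness of the Gaussian integral $\int_{\R} F(y)^2\,\pi_1(\dif y)$, which is proved by the same argument as Proposition~\ref{p:AssF}. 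Combining these bounds with the conservative structure $\partial_1(\rho^{*2}*F(\cdot))$ of the drift, which precludes creation of mass, and with local uniqueness, one passes to the limit $M\to\infty$, thereby constructing a global stationary solution, identifying it with the maximal mild solution, and concluding that $T^\star=\infty$ on the stationary trajectory; global existence for a general initial condition $u(0)$ with the required integrability then follows by a comparison argument, and uniqueness of the solution is obtained along the way.

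Finally, pathwise uniqueness together with the additivity of the driving noise turns $u(0)\mapsto u$ into a stochastic flow over the Wiener shift, from which the Markov property of $u$ follows by the standard argument. Invariance of $\P^\rho$ is then verified by computing the generator $\cL = \cL_0 + \cA$ of $u$ on a core of smooth cylinder functionals: $\cL_0$, the Ornstein--Uhlenbeck generator, is symmetric --- indeed reversible --- with respect to $\P^\rho$, whereas the transport part $\cA$ associated with the drift $\partial_1(\rho^{*2}*F(\cdot))$ is antisymmetric in $L^2(\P^\rho)$; this is precisely why the regularisation in \eqref{e:Regularization1} couples the mollifier $\rho$ in the noise with $\rho^{*2}$ in the nonlinearity. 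Consequently $\int \cL\Phi\,\dif\P^\rho = 0$ for every $\Phi$ in the core, which yields the claimed invariance.
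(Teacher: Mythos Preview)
Your outline follows a natural Da~Prato--Debussche route, but there is a genuine gap already at the local step: the contraction does \emph{not} close in polynomially weighted spaces. The obstruction is exactly the growth in~\eqref{e:assumptionF}. Even granting that $z$ grows like $\sqrt{\log|x|}$ so that $F(z)$ is polynomial, the map $v\mapsto (\partial_1\rho^{*2})*F(v+z)$ does not leave any polynomially weighted space invariant: if $|v(x)|\lesssim \langle x\rangle^a$ with $a>0$, then $|F(v+z)(x)|\lesssim \exp(\kappa\langle x\rangle^{2a})$, which is super-polynomial; if instead $v$ is uniformly bounded, then $F(v+z)$ is indeed polynomial, but after the heat semigroup and time integration the outcome is again polynomial in $|x|$ and hence not bounded. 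In neither case does the image lie in the space where the argument lives, and the mismatch is worse in any exponentially weighted scale (one picks up doubly exponential growth). There is thus no self-map to contract, and your subsequent steps --- global existence via moments, a ``comparison argument'' for general data (there is no comparison principle here), invariance via a core for the generator --- inherit this defect or rely on facts the paper explicitly does not establish (see the footnote preceding Lemma~\ref{l:ItoTrick} on the absence of a core).

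The paper avoids the fixed point altogether. The solution class (Definition~\ref{def:SolSPDE}) encodes the sub-logarithmic growth of $u$ through the pathwise requirement $t\mapsto\fE_\alpha(u_t)=\sup_x e^{\kappa u_t(x)^2}/\langle x\rangle^\alpha\in L^1_{\mathrm{loc}}$, and the key analytic ingredient is an a~priori Lipschitz estimate (Lemma~\ref{l:Apriori}) in $\cX_\beta$ with a \emph{time-dependent} exponential weight $e^{A_t\langle x\rangle^\beta+B_t}$, where $A_t,B_t$ solve an ODE designed to absorb the polynomial factors produced by the nonlinearity --- this is what replaces the closed-ball argument. Existence is obtained by approximating with periodic solutions on tori of side $M$ carrying a polynomial nonlinearity $P_M\to F$ (Lemma~\ref{l:DensityPolynomial}); for these, stationarity under $\P^\rho_M$ is known from~\cite[Lemma~A.4]{CMT}, which makes $\|\fE_\alpha(u^{\tau,M})\|_{L^1(0,T)}$ tight (Lemma~\ref{l:Nwhitenoise}) and, via the Lipschitz estimate, renders the sequence Cauchy in $C([0,T],\cX_1)$. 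Markovianity, stationarity and skew-reversibility are then inherited from the approximants rather than deduced from a generator identity.
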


We now turn to the analysis of the large-scale behaviour of $u$. As we expect it to be analogous 
to that of the solution to~\eqref{e:GeneralisedSBE} with $F$ quadratic, the relevant 
scaling should be the one implied by~\cite[Thm. 1.1]{CMT}. Since further we need 
to remove the contribution coming from the linear term (see the discussion after~\eqref{e:PerturbativeSBE}), 
we also place ourselves to a moving frame and consequently define, for a given scale $\tau\geq 1$, 
the rescaled $u$ according to 
\begin{equ} \label{e:GoodScaling}
	u^\tau(t,x) \eqdef \tau^{1/2} \nu_\tau^{-1/4} \, u\big(\tau \, t, \tau^{1/2} R_\tau^{-1/2} x - \tau t c_1(F) \fe_1\big)  \,
\end{equ}
where $c_1(F)$ is as in~\eqref{e:c1_c2} and $R_\tau,\,\nu_\tau$ satisfy 
\begin{equ} \label{e:Rtau}
	R_\tau \eqdef 
	\begin{pmatrix}
	\nu_\tau & 0 \\
	0 & 1
	\end{pmatrix} \, , 
	\qquad \text{for }\quad
	\nu_\tau \eqdef 
	\frac{1}{1 \vee (\log\tau)^{2/3}} \,.
\end{equ}
Note that the prefactor in~\eqref{e:GoodScaling} is chosen in such a way  
that the law $\mathbb P^\tau$ of the mollified white noise $\eta \ast \rho_\tau$, 
with $\rho_\tau(\cdot) \eqdef \tau \nu_\tau^{-1/2} \, \rho(\tau^{1/2} R_\tau^{-1/2} \cdot)$,
is a stationary measure for $u^\tau$. 

%
The next is our main result, which is a \emph{superdiffusive central limit theorem} for~$u^\tau$.
It states that, as~$\tau \to \infty$, the finite dimensional distributions of~$u^\tau$ converge to those of~$u^{\eff}$, 
the solution to an \emph{anisotropic linear stochastic heat equation} 
with \emph{explicit renormalised} coefficients that depend solely on $c_2(F)$ as given in~\eqref{e:c1_c2}.

\begin{theorem} \label{th:CLT}
Let $F$ be such that Assumption~\ref{Assumption:F} holds and assume~$c_2(F) \neq 0$, 
for $c_2(F)$ as in~\eqref{e:c1_c2}. 
Let $\Q$ be a probability measure, absolutely continuous with respect to the law of a spatial 
white noise $\mathbb P$ and 
for $\tau>0$, let $(u^\tau(t))_{t \geq 0}$ be as in~\eqref{e:GoodScaling} 
with initial condition $u^\tau(0) =  \rho_\tau \ast u_0$ for $u_0 \eqlaw \mathbb Q$. 
Then, for any~$k \in \N$, times~$0 \leq t_1 \leq t_2 \leq \ldots \leq t_k$ and test 
functions~$\phi_1, \dots, \phi_k \in \cS(\R^2)$, we have
\begin{equ}
	\del[1]{\scal{u^\tau(t_1),\phi_1}, \ldots, \scal{u^\tau(t_k),\phi_k}}
	\overset{\rm law}{\Longrightarrow}
	\del[1]{\scal{u^{\eff}(t_1),\phi_1}, \ldots, \scal{u^{\eff}(t_k),\phi_k}}\qquad \text{as $\tau\to\infty$,}
\end{equ} 
where~$u^{\eff}$ solves 
\begin{equ} \label{e:limitingSHE}
	\partial_t u^{\eff} = \frac{1}{2} \nabla \cdot D^{\eff} \nabla u^{\eff} + \nabla \cdot \sqrt{D^{\eff}} \vec{\xi}
\end{equ}
started at~$u^{\eff}(0) = u_0$, driven by a $2$-dimensional space-time white noise $\vec \xi = (\xi_1, \xi_2)$, and with diffusion matrix $D^{\eff}$ given by
\begin{equ} \label{e:Dlim}
	\quad D^{\eff} \eqdef \begin{pmatrix}
	\displaystyle \frac{3^{2/3}}{2 \pi^{2/3}} \, |c_2(F)|^{4/3} & 0 \\
	0 & 1
	\end{pmatrix} \, .
\end{equ}
\end{theorem}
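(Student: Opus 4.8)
The plan is to follow the generator/resolvent strategy pioneered in \cite{CMT} for the quadratic SBE, combined with the new approximation results advertised in the abstract, in order to show that the only piece of the generator of $u^\tau$ that survives in the limit is the quadratic part with coefficient governed by $c_2(F)$. Concretely, one works with the generator $\gen$ of the process $u^\tau$ acting on the Fock space $\fock{}{}{}$ built over the invariant measure $\mathbb P^\tau$; decomposing $F$ via its Wick/Hermite expansion around the Gaussian reference measure, the generator splits as $\gen = \cL_0^\tau + \cA^\tau$, where $\cL_0^\tau$ is the Ornstein--Uhlenbeck part (the linear term) and $\cA^\tau = \sum_{m\geq 2}\cA^{\tau}_{m}$ collects the contributions of the degree-$m$ nonlinearities $\partial_1 \bar u^m$ in the expansion \eqref{e:PerturbativeSBE}. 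The key heuristic from \eqref{e:ZoomingSBE} — that only the quadratic term is scaling-relevant — must now be turned into rigorous operator bounds: one shows that $\cA^{\tau}_m$ for $m \geq 3$ is negligible in a suitable graded norm on Fock space (this is the ``fine estimates on the non-quadratic part of the generator''), so that effectively only $\cA^{\tau}_2$, carrying the prefactor proportional to $c_2(F)$ (the second Hermite coefficient of $F'$, equivalently the first of $F''$), contributes.

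The core analytic step is then the resolvent/Replacement analysis. Following \cite{CMT}, one reduces the convergence of finite-dimensional distributions to the convergence of the resolvent $(\lambda - \gen)^{-1}$, tested against a fixed first-chaos vector, to the resolvent of the effective anisotropic heat operator appearing on the right-hand side of \eqref{e:limitingSHE}. This is where the ``new approximation for the resolvent associated to the solution of the quadratic SBE'' enters: rather than re-deriving the logarithmic superdiffusivity from scratch, one imports the scaling $\nu_\tau = (\log\tau)^{-2/3}$ from \cite{CMT} and shows that replacing $\cA^\tau$ by $\cA^{\tau}_2$ (plus the rescaling \eqref{e:GoodScaling}) does not change the limiting resolvent. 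The diffusion constant in \eqref{e:Dlim}, namely $\tfrac{3^{2/3}}{2\pi^{2/3}}|c_2(F)|^{4/3}$, should emerge exactly as in \cite{CMT} but with the bare quadratic coefficient $\vec f_1$ replaced by the value $\tfrac12 c_2(F)$ dictated by the Wick renormalisation — this is the sense in which the van Beijeren--Kutner--Spohn prediction is ``partly corrected''. One also needs a tightness/uniqueness input to upgrade resolvent convergence to convergence of the process, and an argument that the absolutely continuous initial datum $u_0 \sim \mathbb Q$ relaxes fast enough that only the stationary-measure analysis matters; both are standard given the Markov and stationarity statements of Proposition~\ref{p:WellPosed}.

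The main obstacle, I expect, is precisely controlling the higher-order terms $\cA^{\tau}_m$, $m\geq 3$, uniformly in $\tau$: the growth condition \eqref{e:assumptionF} allows $F$ and all its derivatives to grow like $\exp(\kappa y^2)$ with $\kappa$ dangerously close to $1/4$, so the Hermite coefficients of $F^{(n)}$ grow, and one must show that the combinatorial/Gaussian factors gained from the Wick expansion beat this growth. This requires sharp bounds on the $n$-th Hermite coefficients of $F^{(n)}$ under Assumption~\ref{Assumption:F} — presumably the content of Proposition~\ref{p:AssF} and Appendix~\ref{a:DecayCoeff} — and then feeding these into the graded Fock-space norm so that the series $\sum_{m\geq 3}\cA^{\tau}_m$ is shown to vanish relative to the ``diagonal'' scaling-critical part. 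A secondary difficulty is that, because $d=2$ is critical, the Da Prato--Debussche / regularity-structures machinery is unavailable, so every estimate must be done at the level of the generator and its Fock-space decomposition, with the logarithmic corrections tracked explicitly; but this is exactly the framework of \cite{CMT}, so the novelty is concentrated in the non-quadratic bookkeeping and the resolvent approximation rather than in the overall architecture.
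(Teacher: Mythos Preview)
Your high-level architecture is correct and matches the paper: decompose the antisymmetric part of $\gen$ along the Hermite expansion of $F$, show that only the $m=2$ term (with coefficient $c_2(F)$) survives, prove resolvent convergence to $\geneff$, and conclude via Trotter--Kato. A few small inaccuracies: $\cA^\tau = \sum_{m\geq 1}$ not $\sum_{m\geq 2}$ (the $m=1$ piece is the mollification artefact in~\eqref{e:m=1contribution}); the prefactor is $c_2(F)$, not $\tfrac12 c_2(F)$; and no tightness in path space is proved or needed, only convergence of finite-dimensional distributions.

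There is, however, a genuine gap at the level of mechanism. You write that the $\cA^\tau_m$, $m\geq 3$, are ``negligible in a suitable graded norm'' and that ``combinatorial/Gaussian factors gained from the Wick expansion beat this growth''. In the paper this is \emph{not} how it works: the estimate for $\genaF{m}{a}$ on the $n$-th chaos (Proposition~\ref{p:HighDegreeEstimate}) produces a factor $4^n m^n$, which no Gaussian combinatorics kills, and moreover the graded sector condition fails so one cannot even show $\mathrm{Dom}(\gen)\subset \fock{}{1}{\tau}$. Two specific devices resolve this, and you do not identify either. First, Lemma~\ref{l:ItoTrick} uses the It\^o trick to prove directly that the resolvent is bounded $\fock{}{-1}{\tau}\to\fock{}{}{\tau}$, bypassing any domain characterisation. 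Second, the ``new approximation for the resolvent'' is not merely an import of the CMT ansatz: it is an \emph{amended} ansatz $s^\tau=\Pi^\tau_{\Yleft}\tilde s^\tau$ obtained by truncating the CMT ansatz both in chaos (to $n\lesssim -\log\nu_\tau$) and in Fourier support (to the double-exponentially small scale $\chi_n=\exp(-\exp(cn))$), so that $s^\tau\in\core$ and $\gen s^\tau$ is well-defined. The double-exponential cut-off is chosen precisely so that the $4^n m^n$ growth in Proposition~\ref{p:HighDegreeEstimate} is absorbed by $\chi_n$, while the double-exponential growth of the Hermite-moment sums~\eqref{e:expexpMomentGrowth} (this is where Assumption~\ref{Assumption:F} is used in full) is still summable. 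Without naming these two ingredients your proposal does not close: replacing $\cA^\tau$ by $\cA^\tau_2$ in the resolvent equation is formal until you exhibit an element of $\mathrm{Dom}(\gen)$ on which the replacement error can actually be computed and bounded.
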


In fact, Theorem~\ref{th:CLT} follows from the more general Theorem~\ref{th:conv_semigroup} below, 
whose statement requires some additional notation. 
Let $(P^\tau_t)_{t \geq 0}$ (resp. $(P^\eff_t)_{t \geq 0}$) be  
the Markov semigroups of $(u^\tau_t)_{t \geq 0}$ (resp. $(u^\eff_t)_{t \geq 0}$). 
For an initial condition $u_0 \in \cS'(\R^2)$, we denote by $\BP_{u_0}^\tau$ (resp. $\BP^{\eff}_{u_0}$) 
the law of the process $(u^\tau_t)_{t \geq 0}$ (resp. $(u_t^{\eff})_{t \geq 0}$) started at $u_0^\tau = \rho^\tau * u_0$ 
(resp. $u_0$), which is well defined for $\mathbb P(\dd u_0)$-almost every initial condition.
Let~$\BE_{u_0}^\tau$ (resp. $\BE^{\eff}_{u_0}$) be the corresponding expectation. 
Finally, for~$\theta \in [1,\infty)$,
in order to compare elements~$G^\tau \in \L^\theta(\P^\tau)$ (on which~$P^\tau$ acts) to elements in~$\L^\theta(\P^{\eff})$ (on which~$P^{\eff}$ acts), we define the isometric embedding~$\iota_\tau: \L^{\theta}(\P^\tau) \to \L^\theta(\P)$ which maps the functional~$\eta^\tau \mapsto G(\eta^\tau)$ onto~$\eta \mapsto G(\rho^\tau * \eta)$, see Definition~\ref{def:iotatau} below for details.
By abuse of notation, we will say that the sequence~$(G^\tau)_\tau$ with~$G^\tau \in \L^\theta(\P^\tau)$ converges to~$G \in \L^\theta(\P)$ if~$\iota_\tau G^\tau \to G$ in~$\L^\theta(\P)$ as~$\tau \to \infty$. 

\begin{theorem} \label{th:conv_semigroup}
	In the same setting of Theorem~\ref{th:CLT}, let~$\theta \in [1,\infty)$.
	Then, for any sequence~$(G^\tau)_\tau$ which converges to~$G$ in~$\L^\theta(\P)$, the sequence~$(P^\tau_t G^\tau)_{\tau > 0}$ converges to $P_t^{\eff} G$ uniformly in time, that is, 
	\begin{equ}\label{e:semigroups_L_theta}
		\sup_{t \geq 0} \E\sbr[1]{\abs[0]{\iota_\tau P_t^\tau G^\tau - P_t^{\eff}G}^\theta} \longrightarrow 0 \quad \text{as} \quad \tau \to \infty \,.
	\end{equ}
	As a consequence, for every~$k \geq 1$ and continuous functions $f \colon \R^k \to \R$ 
	with at most polynomial growth, test functions $\phi_1, \dots, \phi_k \in \cS(\R^2)$, 
	and times $0 \leq t_1 \leq \dots \leq t_k$ 
	we have
 	\begin{equ} \label{e:CLT}
 		\int  \, \abs[2]{\BE_{u_0}^\tau\big[f\big(u^{\tau}_{t_1}(\phi_1), \dots, u^{\tau}_{t_k}(\phi_k)\big)\big] - \BE_{u_0}^{\eff}\big[f\big(u^{\eff}_{t_1}(\phi_1), \dots, u^{\eff}_{t_k}(\phi_k)\big)\big]}^\theta \mathbb P(\dd u_0) \to 0 \, ,
 	\end{equ}	
 	where, furthermore, the convergence is uniform in $0 \leq t_1 \leq \dots \leq t_k$.
\end{theorem}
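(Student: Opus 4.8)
The plan is to work on the Wiener chaos decomposition of~$\L^\theta(\P^\tau)$ --- with~$\P^\tau$ the Gaussian invariant measure of~$u^\tau$ from Proposition~\ref{p:WellPosed} --- to reduce~\eqref{e:semigroups_L_theta} to the strong convergence of the resolvents~$R^\tau_\lambda \eqdef (\lambda-\cL^\tau)^{-1}$ of the rescaled dynamics to the resolvent~$R^\eff_\lambda$ of~$\cL^\eff = \tfrac12\nabla\cdot D^\eff\nabla$, and to obtain the latter by a perturbative comparison with the rescaled \emph{quadratic} SBE, for which~\cite{CMT} provides the input. The finite-dimensional statement~\eqref{e:CLT} is then deduced from~\eqref{e:semigroups_L_theta} via the Markov property. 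Throughout, all functionals may be taken centred, since the Markov semigroups preserve the respective means and~$\BE_\P[\iota_\tau G^\tau] = \BE_{\P^\tau}[G^\tau] \to \BE_\P[G]$.

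First I would decompose the generator of~$(u^\tau_t)_t$ as~$\cL^\tau = \cL_0^\tau + \cA^\tau$, with~$\cL_0^\tau$ the self-adjoint, non-positive generator of the linear part of the rescaled equation and~$\cA^\tau$ the first-order, $\P^\tau$-antisymmetric drift induced by the rescaled nonlinearity of~\eqref{e:Regularization1}. Expanding~$F$ --- whose argument, since~$\|\rho\|_{\L^2}=1$, has pointwise unit variance --- into Wick powers against~$\pi_1$ and tracking the scaling~\eqref{e:GoodScaling}, the drift splits by Wick degree as~$\cA^\tau = \cA^\tau_1 + \cA^\tau_2 + \cA^\tau_{>2}$, the degree-$k$ part carrying the coefficient~$\tfrac1{k!}\int F^{(k)}\,\dd\pi_1$ (finite by Assumption~\ref{Assumption:F} and Proposition~\ref{p:AssF}) times a~$\tau$-dependent prefactor: $\cA^\tau_1$ is pure transport, exactly removed by the moving frame in~\eqref{e:GoodScaling} --- this is where~$c_1(F)$ enters; $\cA^\tau_2$ is, after Wick ordering, the drift of a quadratic SBE with the \emph{renormalised} coefficient~$\tfrac12 c_2(F)\fe_1$, the correction to the naive value in~\eqref{e:PerturbativeExpansion}; and~$\cA^\tau_{>2}$ collects the Wick degrees~$\geq 3$, each with a prefactor vanishing polynomially in~$\tau$, cf.~\eqref{e:ZoomingSBE}. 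Writing~$\tilde\cL^\tau \eqdef \cL_0^\tau + \cA^\tau_2$ and~$\tilde R^\tau_\lambda$ for the associated resolvent, the antisymmetry~$\scal{\cA^\tau f, f} = 0$ yields the~$\tau$-uniform a priori bound
\begin{equ}
\lambda\,\|R^\tau_\lambda f\|^2 + \|(-\cL_0^\tau)^{1/2} R^\tau_\lambda f\|^2 = \scal{f, R^\tau_\lambda f} \leq \|f\|\,\|R^\tau_\lambda f\|\,,
\end{equ}
the same for~$\tilde R^\tau_\lambda$, and shows that~$t \mapsto \|P^\tau_t G\|_{\L^2}$ and~$t \mapsto \|P^\eff_t G\|_{\L^2}$ are non-increasing.

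The second ingredient is the result of~\cite{CMT} for the quadratic SBE, which I would recast into the new resolvent approximation announced in the abstract, to the effect that~$\iota_\tau \tilde R^\tau_\lambda G^\tau \to R^\eff_\lambda G$ in~$\L^2(\P)$ for every~$\lambda>0$ whenever~$G^\tau \to G$, with~$D^\eff$ as in~\eqref{e:Dlim} --- the explicit constants coming from inserting~$\vec f_1 = \tfrac12 c_2(F)\fe_1$, and the hypothesis~$c_2(F)\neq 0$ keeping~$D^\eff$ non-degenerate. The resolvent identity~$R^\tau_\lambda = \tilde R^\tau_\lambda - \tilde R^\tau_\lambda\,\cA^\tau_{>2}\,R^\tau_\lambda$ together with the a priori bound then reduces~$\iota_\tau R^\tau_\lambda G^\tau \to R^\eff_\lambda G$ to showing that~$\cA^\tau_{>2}$ does not contribute in the limit, for which it suffices to prove
\begin{equ}
\big\|(\lambda-\cL_0^\tau)^{-1/2}\,\cA^\tau_{>2}\,(\lambda-\cL_0^\tau)^{-1/2}\big\|_{\L^2(\P^\tau)\to\L^2(\P^\tau)} \longrightarrow 0 \qquad\text{as }\tau\to\infty\,.
\end{equ}
This estimate is the technical core and, I expect, the main obstacle: we work in \emph{infinite volume} under only the very weak growth~\eqref{e:assumptionF}, so one must sum over~$k\geq 3$ the Fock-space norms of the degree-$k$ pieces of~$\cA^\tau_{>2}$ --- which grow in~$k$ --- against the vanishing scaling prefactors, and it is precisely the restriction~$\kappa < 1/4$, which makes~$F$ square-integrable --- indeed in~$\L^{2+\delta}(\pi_1)$ for some~$\delta>0$ --- and hence forces enough decay of its high-degree chaos coefficients, that allows this sum to be controlled.

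It remains to pass from resolvents to~\eqref{e:semigroups_L_theta} and~\eqref{e:CLT}. Strong convergence~$R^\tau_\lambda \to R^\eff_\lambda$ for all~$\lambda>0$ together with uniform contractivity gives, by Trotter--Kato, $\iota_\tau P^\tau_t G^\tau \to P^\eff_t G$ in~$\L^2(\P)$ uniformly on each~$[0,T]$. For the full~$\sup_{t\geq 0}$: for~$t \geq T$, monotonicity of the~$\L^2$-norms gives~$\|\iota_\tau P^\tau_t G^\tau - P^\eff_t G\| \leq \|\iota_\tau P^\tau_T G^\tau\| + \|P^\eff_T G\|$, whose~$\limsup_{\tau\to\infty}$ is~$2\|P^\eff_T G\|$; since~$P^\eff_t G \to 0$ in~$\L^2$ as~$t\to\infty$ for centred~$G$ (ergodicity of the linear SHE), this tends to~$0$ as~$T\to\infty$, establishing~\eqref{e:semigroups_L_theta} for~$\theta=2$. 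Truncating~$G$ at a bounded level and using the~$\L^\theta$-contractivity of all the semigroups and of~$\iota_\tau$ upgrades this to every~$\theta\in[1,\infty)$. Finally, conditioning successively at~$t_1,\dots,t_k$ turns~$\BE^\tau_{u_0}\big[f\big(u^\tau_{t_1}(\phi_1),\dots,u^\tau_{t_k}(\phi_k)\big)\big]$, as a functional of~$u_0$, into a~$k$-fold nested application of the semigroups~$(P^\tau_{t_{i+1}-t_i})_i$ to functions obtained from~$f$ by partial evaluation; the polynomial growth of~$f$, the Schwartz regularity of the~$\phi_i$, and the~$\tau$-uniform moment bound~$\sup_{t,\tau}\|u^\tau_t(\phi)\|_{\L^p(\P^\tau)} \lesssim_p \|\phi\|_{\L^2}$ --- which holds because, for~$u_0 \sim \P$, the process~$u^\tau$ starts from~$\P^\tau$ and is hence stationary, while~$\|\rho^\tau\|_{\L^1}=1$ --- place all intermediate functions in~$\L^\theta$ with bounded, convergent norms; applying~\eqref{e:semigroups_L_theta} once per time increment, with the error at each of the~$k$ steps made arbitrarily small and each bound uniform in time, yields~\eqref{e:CLT} with the stated uniformity in~$0\leq t_1\leq\dots\leq t_k$.
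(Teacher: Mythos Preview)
Your high-level strategy --- reduce to resolvent convergence, compare to the quadratic SBE via~\cite{CMT}, treat the higher-degree pieces as a perturbation, then pass to semigroups via Trotter--Kato, extend to~$\sup_{t\geq0}$ by ergodicity of~$P^\eff$, and iterate through the Markov property for~\eqref{e:CLT} --- is exactly the paper's architecture, and those later steps are fine.

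The genuine gap is the core step: your claim that it suffices to show
\[
\big\|(\lambda-\cL_0^\tau)^{-1/2}\,\cA^\tau_{>2}\,(\lambda-\cL_0^\tau)^{-1/2}\big\|_{\L^2\to\L^2} \longrightarrow 0
\]
is both unjustified and, in fact, false. First, the resolvent identity $R^\tau_\lambda = \tilde R^\tau_\lambda - \tilde R^\tau_\lambda\,\cA^\tau_{>2}\,R^\tau_\lambda$ is formal: one does not know that $R^\tau_\lambda f$ lies in any space on which $\cA^\tau_{>2}$ acts boundedly --- the paper stresses (see point~\ref{i:II} in Section~\ref{sec:Ideas} and the footnote after Lemma~\ref{l:ItoTrick}) that even the inclusion $\mathrm{Dom}(\gen)\subset\fock{}{1}{\tau}$ is unclear, so the a~priori bound you wrote for $R^\tau_\lambda$ in $\fock{}{1}{\tau}$ is not available. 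Second, and more fundamentally, the operator norm you target does \emph{not} vanish: Proposition~\ref{p:HighDegreeEstimate} gives, for $\Phi\in\fock{n}{}{\tau}$ with Fourier support of radius $\sqrt{\tau\chi}$,
\[
\|\sqrt{m!}\,\cA^{\tau,m}\Phi\|_{\fock{}{-1}{\tau}} \lesssim \big(n^3m^3+\chi\,4^n m^n\big)\,\|(-\nu_\tau\gensyx)^{1/2}\Phi\|_\tau^\alpha\,\|\Phi\|_{\fock{}{1}{\tau}}^{1-\alpha}\,.
\]
Without a Fourier cutoff ($\chi=1$) the prefactor blows up in the chaos index~$n$; and even at fixed~$n$, the anisotropic norm $\|(-\nu_\tau\gensyx)^{1/2}\Phi\|_\tau$ is \emph{not} dominated by a vanishing constant times $\|\Phi\|_{\fock{}{1}{\tau}}$ --- it only vanishes on fixed~$\Phi$, not uniformly. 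The paper's fix (Section~\ref{s:ansatz_QSBE}) is to replace the quadratic resolvent by an explicit Ansatz $s^\tau\in\core$, truncated both in chaos and in Fourier with an $n$-dependent radius $\chi_n=\exp(-\exp(cn))$, so that (a) $s^\tau$ is in $\mathrm{Dom}(\gen)$ by construction, (b) the $4^n m^n$ term is killed by $\chi_n$ (Corollary~\ref{cor:HighDegreeEstimate}), and (c) the anisotropic norm of~$s^\tau$ carries an exponential weight in~$\cN$ that is controlled by Proposition~\ref{p:Ansatz}. This double truncation, and the commutator estimates (Lemma~\ref{l:Comm}) needed to show the truncated Ansatz still approximates the resolvent, are precisely the ``novel Ansatz'' and ``fine estimates'' advertised in the abstract; they cannot be replaced by a global operator bound. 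A minor additional point: $\cA^\tau_1$ is not \emph{exactly} removed by the Galilean shift --- the mollifier leaves the residual term~\eqref{e:Gena1}, which must be (and is) estimated separately.
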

%


\subsection{Discussion of the main results} \label{s:discussion}

Theorems~\ref{th:CLT} and~\ref{th:conv_semigroup} are the analogues of~\cite[Thm. 1.3 and 1.4]{CMT} and, as such, 
they not only determine the large-scale behaviour of~\eqref{e:GeneralisedSBE} but also 
provide information on the way in which such behaviour is achieved. 
Indeed, 
the uniformity in time of~\eqref{e:semigroups_L_theta} (due to the mixing properties 
of the limit $u^\eff$) guarantees a control 
on the law of $u^\tau$ {\it uniformly across all time scales}. The convergence of the finite-dimensional 
distributions that~\eqref{e:CLT} gives is in $\L^\theta(\P)$ for any $\theta\in[1,\infty)$ 
with respect to the initial condition, 
so that is {\it stronger than annealed} (i.e. the convergence in~\eqref{e:CLT} for $\theta = 1$ but \emph{without} the absolute value) and {\it stable} (and that of~\cite[Def. 2.6]{KLO}). 

Furthermore, the above statement realises our main goal as it gives  \emph{precise meaning} 
to the heuristic perturbative argument presented in Section~\ref{s:background}. 
Indeed, combining it with the above-mentioned~\cite[Thm. 1.3]{CMT}, it shows that, 
as $\tau\to\infty$, $u^\tau$ behaves as the quadratic SBE in~\eqref{e:SBE} but in which 
$\vec{f_1}$ is {\it not} that in~\eqref{e:PerturbativeExpansion} but $c_2(F)$ in~\eqref{e:c1_c2} 
which, in general, is not $0$ even if $F''(0)$ is. In other words, higher order terms 
of the {\it Taylor} expansion~\eqref{e:PerturbativeExpansion} {\it do matter}. 
Even though such phenomenon has already been observed in the literature 
(see Section~\ref{s:literature} for references), 
let us comment on how it arises and where such constant comes from, 
as this will also give us the chance to briefly discuss how the proof proceeds.
\medskip

The stationary solution to \eqref{e:GeneralisedSBE} satisfies 
$u(t,x) \eqlaw \pi_1$, for $\pi_1$ as in~\eqref{e:c1_c2}, for any given $(t,x) \in \R_+ \times \R^2$.
In view of Proposition~\ref{p:AssF}, Assumption~\ref{Assumption:F} ensures that $F$ (and all of its 
derivatives) belong to $L^2(\pi_1)$, so that it admits the Hermite decomposition
\begin{equs} \label{e:DecompositionF}
	F = \sum_{m \geq 0} c_m(F) \, H_m\,, \quad c_m(F) \eqdef m! \langle F, H_m \rangle_{L^2(\cN(0,1))} \, .
\end{equs}
where $(H_m)_m$ are the classical Hermite polynomials (see~\eqref{e:HermitePol}). 
Note that this decomposition of~$F$ is very different from a Taylor expansion:
for example, \emph{all even monomials} contribute to the constant~$c_2(F)$ and so it is not necessarily~$0$, 
even if~$F''(0) = 0$.
%
%

Now, let us discuss the contribution of the various terms in the above expansion. 
W.l.o.g. we can take $c_0(F)=0$ since the derivative in~\eqref{e:Regularization1} would {annihilate} it anyway. 
%
{In contrast,} the \emph{linear term} ($m=1$) contributes to~$\cN^1(u)$ by 
\begin{equ} \label{e:m=1contribution}
		c_1(F) \, \partial_1 u + c_1(F) \, \partial_1 (\rho^{*2} - \delta_0) * u \, .
	\end{equ}
The first summand is a transport term which was removed via the Galilean transformation (see~\eqref{e:GoodScaling}). 
The second is an artefact of our regularisation procedure and, as we shall see later, it is negligible at large scales.
Thus, the bulk of the proof of Theorems~\ref{th:CLT} and~\ref{th:conv_semigroup} boils down to 
showing that {\it all terms of degree $m \geq 3$ have a negligible effect} on the large-scale dynamics 
while the \emph{quadratic term} (provided $c_2(F) \neq 0$) gives a dominant superdiffusive contribution.\footnote{In contrast, when $c_2(F) = 0$, we expect the system~\eqref{e:GeneralisedSBE} to be \emph{diffusive} at large scales. }
Making this rigorous is highly non-trivial and a discussion of the difficulties 
we need to overcome, together with a heuristic as to how we will rigorously do so, can be found in Section~\ref{sec:Ideas}. 

{In summary,}
the naive perturbative argument in~\eqref{e:PerturbativeSBE} misses a crucial structural fact 
about the large-scale behaviour of~\eqref{e:GeneralisedSBE}: 
what matters in the limit is not the quadratic term in the {\it Taylor} expansion of the nonlinearity, 
but {the one} in its \emph{Hermite expansion}.

\begin{remark}
Despite the failure of~\eqref{e:PerturbativeSBE} to give the fully correct prediction, let us make the following 
observation. If one were to work with general $\sigma \eqdef \norm[0]{\rho}_{L^2(\R^2)} > 0$ 
and decompose $F \in L^2(\pi_\sigma)$ in terms of the rescaled Hermite polynomials~$\sigma^m H_m(\cdot/\sigma)$, 
then $\sigma^m H_m(X/\sigma) \to X^m/m!$ 
as~$\sigma \to 0$. In other words, 
the Hermite and Taylor decompositions agree in the limit when the mollifier~$\rho$ becomes flat. 
\end{remark}

Let us conclude this paragraph by commenting on Assumption~\ref{Assumption:F}. 
As will be clearer from the analysis in the following sections, the reason why we require it is that we need 
the coefficients of the Hermite expansion of $F$ in~\eqref{e:DecompositionF} to decay sufficiently fast in $m$. 
That said, in Appendix~\ref{a:DecayCoeff} (and in particular Lemma~\ref{lem:Analytic}), we show that such assumption 
holds for any analytic function since the growth 
of their derivatives is at most $n!\ll \exp(\exp(o(n)))$; for example, $F(y)= \sqrt{1  + y^2}$ (see Corollary~\ref{coro:examples_F}). We mention the last $F$  
explicitly as it has played a pivotal role in the original derivation 
of the KPZ equation from the Eden model~\cite{KPZ}. 

\subsection{Comparison to the literature} \label{s:literature}
We conclude the introduction by comparing our result with those present in the 
literature. 

\begin{itemize}[leftmargin=*, noitemsep, topsep=1.5pt]
\item {\it Phenomenological derivation of quadratic SBE in dimension $d=1$ at small scales.} In $d=1$, all the existing 
results focus on {\it small scales}, i.e. the goal is to derive the solution of the SBE itself (and not its large-scale 
behaviour prescribed by the derivative of KPZ Fixed Point) 
as a suitable scaling limit of~\eqref{e:FormalGeneralizedSBE}. 
To do so, one has to tame the exploding factor $\tau^{1/2}$ which appears 
in front of the quadratic term in the diffusive rescaling (see~\eqref{e:ZoomingSBE}) 
and thus the ``microscopic'' model considered is~\eqref{e:FormalGeneralizedSBE} 
but in {\it weak coupling}, i.e. with a scale-dependent coefficient $\tau^{-1/2}$ multiplying $F$. 
In this setting, the heuristic argument in Section~\ref{s:background} 
was first rigorously formalised in the pioneering work~\cite{hairer_quastel_18}, 
where it is shown that~\eqref{e:FormalGeneralizedSBE} with $F$ an
even polynomial converges to the solution of the SBE with an effective quadratic non-linearity 
whose strength is the $c_2(F)$ as in~\eqref{e:c1_c2}.  
Several authors have since extended their work to different classes 
of non-Gaussian driving noises~\cite{HS17, SX18}, general nonlinearities~\cite{HX18, HX19, FG19, KZ22}, 
or both~\cite{KWX24}, as well as more general smoothing mechanisms~\cite{erhard_xu_22}.  
While all of these generalisations present demanding technical challenges 
that lead to interesting and novel results, in contrast to ours, 
they are restricted to the~\emph{subcritical dimensions} in the~\emph{weak coupling regime} 
and {\it finite volume}. 
In particular, equations in this category are within the scope of the pathwise solution theories~\cite{Hai14, GIP15} 
we mentioned earlier which generally lead to stronger convergence results 
and typically non-Gaussian limits \emph{without} the additional assumption 
that the equation is started from stationarity.

Another class of results in $d=1$ is based on 
the notion of energy solutions in~\cite{GJ14, GP18}, which, as for us, requires 
a rather explicit knowledge of the invariant measure already at the microscopic level. 
In particular, Gubinelli and Perkowski in~\cite{GP16} 
have given another proof of~\cite{hairer_quastel_18} in the case of a general Lipschitz $F$, 
and their work was later generalised to 
non-stationary initial conditions and infinite volume in~\cite{Y23, Y25}. 
Roughly speaking, their argument relies on the fact that the time-averaged non-linearity, 
in probabilistic square norm, gains \emph{strictly more than one power} of the time-increment 
which they then leverage to get tightness (and ultimately convergence). 
It is easy to convince oneself that it is precisely at the critical dimension that this strategy breaks down, 
as one would only gain \emph{exactly one power} by the same argument. 
On the technical side, one ramification of this observation is that, in contrast to~\cite{GP16}, 
we have to control the action of the generator (and of the resolvent) much more thoroughly, 
see Section~\ref{sec:Ideas} for a more precise discussion of this and further aspects at the heart of our strategy.

\item {\it SPDEs and other models at the critical dimension.} Turning to 
SPDEs and other statistical mechanics models at the critical dimension, recent years have witnessed 
a flurry of activity most of which though (a) work in  {\it weak coupling scaling} (compared to $d=1$, 
the factor that multiplies the non-linearity, the noise, or the initial condition is $1/\sqrt{\log\tau}$), 
and (b) feature a {\it polynomial} (mostly quadratic) non-linearity. This is the case of the KPZ 
equation~\cite{CD,Gu2020,CSZ}, the AKPZ equation~\cite{CES,AKPZweak}, the SBE~\cite{CGT}, 
the fractional KPZ equation~\cite{GPP}, 
the (Landau--Lifshits--)Navier--Stokes equation~\cite{CK, KRYZ}, 
(self-)interacting diffusions~\cite{CG,yang2024weak}\footnote{Even though these works do not 
directly discuss SPDEs but SDEs, quadratic SPDEs still appear at the level of the so-called {\it environment 
seen by the particle}. }, 
the Allen--Cahn equation~\cite{GRZallen}, 
and the multiplicative linear Stochastic Heat Equation in $d=2$ 
~\cite{BC, CSZ1, GHL, caravenna2023critical, Tsai}. 
Notable results that avoid (a) (but satisfy (b)) 
include the so-called DCGFF model of~\cite{TothValko, CHT, Morfe, armstrong} 
and the derivation of scaling exponents for ASEP~\cite{Yau}, AKPZ~\cite{CET}, SBE~\cite{de2024log}, 
while those that avoid (b) (but satisfy (a)) include the non-linear stochastic heat equation~\cite{DG, Tao, DHL} and the Allen--Cahn equation~\cite{CasDun}. 

Compared to the above results, ours is the first in {\it strong coupling} with a {\it generic non-linearity} satisfying 
a weak growth condition, 
but we still require to be close to the (explicit) invariant measure already at the microscopic scale and 
impose the non-linearity to be anisotropic (i.e. the flow velocity $\vec{w}$ in~\eqref{e:ConservationLaw} 
is unidirectional). 
\end{itemize}

\subsection*{Organisation of the article}
The rest of the article is organised as follows. 
In Section~\ref{s:preliminaries}, we recall basic tools from Wiener space analysis and introduce the notations 
we will use throughout the article.
In Section~\ref{sec:WellPosed}, we introduce a suitable notion of solution to the microscopic
equation~\eqref{e:GeneralisedSBE} for a fixed value of~$\tau > 0$, show that such a solution exists and is unique, 
and prove that it defines a stationary, skew-reversible Markov process whose invariant measure is a smoothened 
spatial white noise.
%
%
In Section~\ref{sec:generator}, we then obtain properties of the Markov semigroup associated with~\eqref{e:GeneralisedSBE}, identify a sufficiently rich set in the domain of its generator, and explicitly describe the action of the latter on the corresponding Fock spaces.
The key challenge here is the fact that the nonlinearity has components in every chaos: We overcome it by a polynomial approximation argument for the antisymmetric part of the generator and present a novel, direct proof, based on the It\^{o} trick, that the resolvent is continuous between suitable spaces; we believe the latter to be of independent interest.
Section~\ref{s:proof_main} contains the proofs of our main results which, on a heuristic level, we anticipate in Section~\ref{sec:Ideas}. 
Therein, we present a high-level overview of the major challenges, outline the ideas to overcome them, and describe a roadmap for their implementation.
Section~\ref{s:gen_est_nonq} comprises novel estimates on the non-quadratic part of the generator while 
Section~\ref{s:ansatz_QSBE} presents a new ansatz for the resolvent of the quadratic~SBE 
which is truncated both in chaos and in Fourier. 
Finally, we collect all previous results in Section~\ref{s:pf_summary} to obtain the desired resolvent convergence 
and then show how it implies Theorems~\ref{th:CLT} and~\ref{th:conv_semigroup}.      
At last, Appendix~\ref{a:DecayCoeff} collects some facts about the connection between the regularity of 
a function and the decay of its Hermite coefficients, complemented by some representative examples 
of functions satisfying Assumption~\ref{Assumption:F}.
Appendix~\ref{a:technical} contains some technical proofs concerning the antisymmetric part of the generator in the polynomial case.

\subsection*{Acknowledgements}
G.~C. gratefully acknowledges financial support
via the UKRI FL fellowship ``Large-scale universal behaviour of Random
Interfaces and Stochastic Operators'' MR/W008246/1. 
TK is supported by a UKRI Horizon Europe Guarantee MSCA Postdoctoral Fellowship (UKRI, SPDEQFT, grant reference EP/Y028090/1). Views and opinions expressed are however those of the authors only and do not necessarily reflect those of UKRI. 
In particular, UKRI cannot be held responsible for them.
The research of Q.~M. was funded 
by the Austrian Science Fund (FWF) 10.55776/F1002. 
For open access purposes, the authors have applied a CC BY public copyright 
license to any author accepted manuscript version arising from this submission.

\section{Preliminaries} \label{s:preliminaries}

Let us collect in this section a few preliminary tools and notations which are mainly taken from~\cite[Section 1.3]{CMT}. 

\subsection{Notations, Function spaces and Wiener space analysis}\label{sec:Pre}

Let $\fe_1$ and $ \fe_2$ be the canonical basis vectors of $\R^2$ and $|\cdot|$ be
the usual Euclidean norm. For $I$ a finite subset of $\N$ of cardinality $|I|$ and a set of $|I|$ vectors 
$\{p_i\in\R^2\colon i\in I\}$, we set $p_I\in\R^{2|I|}$ to be the vector $(p_i)_{i\in I}$ and,
given a $2 \times 2$ matrix $M$, $M p_{I}\eqdef(M p_i)_{i\in I}$.
We write (for $e \in \R^2$)
\begin{equ} \label{e:NotationsR2n}
p_{[I]} = \sum_{i\in I} p_i \, , \quad |p_{I}| \eqdef \Big(\sum_{i\in I}|p_i|^2\Big)^\frac{1}{2}\,,\qquad |e \cdot p_{I}| \eqdef \Big(\sum_{i\in I} |e \cdot p_i|^2\Big)^\frac{1}{2} \,.
\end{equ}
For $\tau>0$, let the matrix $R_\tau$ and $\nu_\tau > 0$ be respectively given according to
\begin{equ}[e:nutau]
R_\tau\eqdef\begin{pmatrix}
\dis \nu_\tau & 0 \\
0 & 1
\end{pmatrix}\qquad\text{and}\qquad\nu_\tau \eqdef \frac{1}{\big(1 \vee \log\tau\big)^{2/3}} \, ,
\end{equ}
and note that it holds
\begin{equ} \label{e:NotationsQ}
|\sqrt{R_\tau} p_{I}|^2 = \nu_\tau |\fe_1 \cdot p_{I}|^2 + |\fe_2 \cdot p_{I}|^2 \, .
\end{equ}
In the specific case of $I=\{1,\dots,n\}$, we write $I=1:n$ so that, in particular, $p_{1:n}=(p_1,\dots,p_n)$. 

We denote by $\cS(\R^d)$ the Schwartz space of smooth functions whose derivatives decay faster
then any polynomial and by $\cS'(\R^d)$ its dual. The Fourier transform of $\phi\in\cS(\R^d)$ is
\begin{equ}
\cF(\phi)(p) = \hat \phi(p) \eqdef \frac{1}{(2 \pi)^{d/2}} \int_{\R^d} \phi(x) \, e^{- \iota \, p \cdot x} \, \dd x \, ,
\end{equ}
while, for $f\in\cS'(\R^d)$, it is given via duality.

Let $\rho \in \cS(\R^2)$ be a given mollifier, i.e. $\rho$ is 
smooth, compactly supported, even-symmetric, of mass $1$ and of $L^2(\R^2)$-norm $1$. 
For $\tau>0$, we set
\begin{equ}[e:Mollifiers]
\rho_\tau(\cdot) \eqdef \tau \nu_\tau^{-1/2} \, \rho(\tau^{1/2} R_\tau^{-1/2} \cdot)\,,\quad\text{and}\quad
\Theta_\tau(\cdot) \eqdef (2 \pi \hat{\rho}_\tau(\cdot))^2=(2 \pi \hat{\rho}(\tau^{-1/2} R_\tau^{1/2} \cdot))^2\,,
\end{equ}
and, for $n\in\N$, we further define the measure $\Xi_n^\tau$ on $\R^{2n}$ according to
\begin{equ}[e:RegMeas]
\Xi_n^\tau(\dd p_{1:n}) \eqdef \prod_{i=1}^n \Theta_\tau(p_i) \, \dd p_{1:n} \, ,\qquad p_{1:n}\in\R^{2n}
\end{equ}
with the convention that, for $\tau = \infty$, $\Theta_\tau \equiv 1$ and $\Xi_n=\Xi_n^\infty$ is the Lebesgue measure.

For $\tau> 0$ and $\gamma\in\R$, let $H^\gamma_\tau(\R^{2n})$ be the $\tau$-dependent
(anisotropic) Sobolev space with respect to $\Xi_n^\tau$, i.e. $H^\gamma_\tau(\R^{2n})$ is 
the completion of $\cS(\R^{2n})$ under the norm
\begin{equ} \label{e:normHeps}
\|f\|_{H^\gamma_\tau(\R^{2n})}^2 \eqdef \int \Big(1 + \frac{1}{2} \, |\sqrt{R_\tau} p_{1:n}|^2\Big)^\gamma \, |\hat f(p_{1:n})|^2 \, \Xi_n^\tau(\dd p_{1:n}) \, .
\end{equ}
For $\gamma = 0$, $H^0_\tau(\R^{2n})=L_\tau^2(\R^{2n}) \eqdef L^2(\R^{2n}, \Xi_n^\tau)$, with scalar product $\langle \cdot, \cdot \rangle_{L^2_\tau(\R^{2n})}$.
\medskip

Finally, we will write $a \lesssim b$ if there exists a constant $C > 0$ independent of any quantity relevant for the result,
such that $a \leq C b$ and $a \asymp b$ if $a \lesssim b$ and $b \lesssim a$.
If we want to highlight the dependence of the constant $C$ on a specific quantity $Q$, we write instead $\lesssim_Q$.

\subsection{Gaussian white noise and Fock spaces} 
\label{sec:notation}

Let $\eta$ be a space white noise on $\R^2$, 
and, for $\tau\geq 0$, set $\eta^\tau\eqdef\rho_\tau\ast\eta$ (with $\eta^\infty\equiv\eta$).
Then, $\eta^\tau$ (whose law is denoted by $\P^\tau$ is a centred Gaussian field with covariance
\begin{equ}
\E^\tau[\eta^\tau(\phi)\eta^\tau(\psi)] =\langle\phi,\psi\rangle_{L_\tau^2(\R^2)} = \int_{\R^{2}}\hat \phi(p)\,\overline{\hat \psi(p)}\,\Xi_1^\tau(\dd p)\,,\qquad \phi,\psi\in L^2_\tau(\R^2)\,.
\end{equ}
Cylinder random variables $\phi\in\L^\theta(\P^\tau)$ are random variables 
of the form $\phi=f(\eta^\tau(h_1),\dots,\eta^\tau(h_n))$, with $f\colon \R^n\to\R$ smooth and
growing at most polynomially at infinity, and $h_1,\dots,h_n\in\cS(\R^2)$.  
The set of cylinder random variables is dense in
$\L^\theta(\P^\tau)$, for $\theta<\infty$.
\medskip

By~\cite[Theorem 1.1.1]{Nualart},
$\L^2(\P^\tau)= \overline{\bigoplus_{n \geq 0} \SH_n^\tau}$, where
$\SH_n^\tau$ is the {\it $n$-th homogeneous Wiener chaos}, i.e.
the closure in $\L^2(\P^\tau)$ of
$\mathrm{Span}\big\{H_n(\eta^\tau(h)) \, | \, h \in \cS(\R^2), \, \|h\|_{L_\tau^2(\R^2)} = 1\big\}$
and $H_n$ is the $n$-th Hermite polynomial (see~\eqref{e:HermitePol}).
Let $I^\tau$ be the canonical isometry onto $\L^2(\P^\tau)$ (see~\cite[Theorem 1.1.2]{Nualart})
obtained as the unique extension of the map that, for every $n\geq 0$,
assigns $h^{\otimes n}$ to $n! \, H_n(\eta^\tau(h)) \in \cH^\tau_n$ for
$h\in\cS(\R^2)$ with $L^2_\tau(\R^2)$-norm equal to $1$.
$I^\tau$ is defined on the Fock space $\fock{}{}{\tau}=\oplus_{n\geq 0} \fock{n}{}{\tau}$,
where $\fock{n}{}{\tau}\subset L^2_\tau(\R^{2n})$ contains 
functions which are symmetric with respect to
permutations of their variables. We endow $\fock{}{}{\tau}$ with the norm
\begin{equ}[e:normFock]
\|f\|_\tau^2=\|f\|_{\fock{}{}{\tau}}^2\eqdef \sum_{n\geq 0} \|f_n\|_{\fock{n}{}{\tau}}^2\eqdef \sum_{n\geq 0} n! \|f_n\|_{L^2_\tau(\R^{2n})}^2\,,
\end{equ}
for $f=(f_n)_{n\geq 0}\in \oplus_{n\geq 0}\fock{n}{}{\tau}$, and define, for $\gamma\in\R$, 
$\fock{}{\gamma}{\tau}$ as the Sobolev space whose norm is the same as that in~\eqref{e:normFock}
but with $\|\cdot\|_{H^\gamma_\tau(\R^{2n})}^2$ replacing $\|\cdot\|_{L^2_\tau(\R^{2n})}^2$. 
Recall that by~\cite[Theorem 1.1.2]{Nualart}, for any $F\in\L^2(\P^\tau)$
there exists $f=(f_n)_n\in\fock{}{}{\tau}$ such that $F=\sum_n I_n(f_n)$ and $\E^\tau[F^2]=\|f\|_\tau^2$, 
where $I^\tau_n$ is the restriction of $I^\tau$ to $\fock{n}{}{\tau}$ (which is itself an isometry). 
\medskip

Thanks to the isometry $I^\tau$, we will abuse notation and identify operators acting on $\L^2(\P^\tau)$
with the corresponding operator acting on $\fock{}{}{\tau}$.
An important notion is that of {\it diagonal} operator (see~\cite[Def. 1.5]{CMT}). 

\begin{definition}(Def. 1.5 in~\cite{CMT})\label{def:DiagOp}
An operator $\cD$ on $\fock{}{}{\tau}$ is said to be {\it diagonal}
if there exists a family of measurable real-valued kernels $\mathbf{d} = (\bd_n)_{n \geq 0}$
such that for all $n\geq 0$ and $\phi\in\fock{n}{}{\tau}\cap\cS(\R^{2n})$, it holds
$\cF{\cD \phi} (p_{1:n}) = \bd_n(p_{1:n}) \, \widehat \phi(p_{1:n})$ for (Lebesgue-almost-) every $p_{1:n} \in \R^{2n}$.
If $\cD^{(1)}$ and $\cD^{(2)}$ are diagonal, we write $\cD^{(1)} \leq \cD^{(2)}$ if their kernels satisfy
$\bd^{(1)} \leq \bd^{(2)}$ and say that $\cD^{(1)}$ is positive if $\cD^{(1)} \geq 0$.
Given a diagonal operator $\cD$ with kernel $\bd=(\bd_n)_{n \geq 0}$
and a measurable scalar function $f$, we define $f(\cD)$ to be the diagonal operator
whose kernel is $f(\bd)=(f \circ \bd_n)_{n \geq 0}$.
\end{definition}


\begin{definition}\label{def:NoOp}
We define the number operator $\cN$ to be the diagonal operator
acting on $\psi=(\psi_n)_n\in\fock{}{}{\tau}$ such that $\|\psi_n\|_\tau$ decays faster than any polynomial in $n$,
as $\cN\psi_n\eqdef n\psi_n$.
\end{definition}

Let us recall the operators $\iota^\tau$ and $j^\tau$ that appeared in \cite{CMT} (see~\cite[Lemma 1.7]{CMT}).

\begin{definition} \label{def:iotatau}
Let us define
$\iota^\tau \colon \L^\theta(\P^\tau) \to \L^\theta(\P)$, $\theta\in[1,\infty)$, to be the operator that maps
the functional $\eta^\tau \mapsto F(\eta^\tau)$ to the functional $\eta \mapsto F(\rho_\tau * \eta)$.
This is a bijective isometry which, for $\theta=2$, translates to a bijective isometry from $\fock{}{}{\tau}$ to $\fock{}{}{\infty}$.
We denote $j^\tau = (i^\tau)^{-1} \colon \fock{}{}{\infty} \to \fock{}{}{\tau}$ its inverse.
\end{definition}

We will also need the product rule for the Wiener-Itô integrals. For $f \in \fock{n_1}{}{\tau}$ and $g \in \fock{n_2}{}{\tau}$, 
it reads
\begin{equ} \label{e:ProdRule}
I_{n_1}^\tau(f) I_{n_2}^\tau(g) = \sum_{k=0}^{n_1 \wedge n_2} k! \binom{n_1}{k} \binom{n_2}{k} I_{n_1+n_2-2k}\big(\mathrm{Sym} (f \otimes_k g)\big) \, ,
\end{equ}
where $\mathrm{Sym}$ is the symmetrization operator and
\begin{equ} \label{e:ProdMixing}
\cF (f \otimes_k g) \, (p_{1:n_1-k}, q_{1:n_2-k}) \eqdef \int \Xi^\tau_k(\dd r_{1:k}) \hat{f}(p_{1:n_1-k}, r_{1:k}) \, \hat{g}(q_{1:n_2-k}, -r_{1:k}) \, .
\end{equ}

\section{Analysis of the microscopic equation} \label{s:microscopic_equation}

Let us begin by collecting some basic results concerning our microscopic model, i.e. 
equation~\eqref{e:GeneralisedSBE}. Compared to the analysis carried out in Section 2 and Appendix A of~\cite{CMT}
the arguments below are more delicate due to the very mild assumptions on the function $F$ 
but, as the setting is similar, we will borrow the same notations (that will be recalled along the way). 
We will first prove well-posedness of the SPDE~\eqref{e:GeneralisedSBE} and subsequently detail 
the derivation of the generator (and semigroup) of its solution. 
\medskip

Throughout the rest of the paper, let us fix a probability space $(\cO,\cF,\Prob)$   
supporting a couple of independent spatial white noise $\eta$ on $\R^2$ and
$2$-dimensional space-time white noise $\vec{\xi}=(\xi_1,\xi_2)$ on $\R_+ \times \R^2$,
whose laws are respectively denoted by $\P$ and $P$, and $\Prob \eqdef \P \otimes P$.
Let $(\cF_t)_{t \geq 0}$ be the filtration in which $\cF_t$ is the (completed)
$\sigma$-algebra generated by $\eta$ and $\restriction{{\vec \xi} \,}{[0,t]}$.
Let $\rho \in \cS(\R^2)$ be the mollifier introduced above
and, for $\tau>0$,
$\rho_\tau$ be as in~\eqref{e:Mollifiers}. 
Let $\eta^\tau \eqdef \eta\ast\rho_\tau$ and $\vec{\xi}^{\tau} \eqdef \vec{\xi}\ast\rho_\tau=(\xi_1\ast\rho_\tau,\xi_2\ast\rho_\tau)$
the rescaled white noises, whose joint law is $\Prob^\tau \eqdef \P^\tau \otimes P^\tau$.

\subsection{Well-posedness for the microscopic equation}\label{sec:WellPosed} 

Our first goal is to prove that the rescaled version of the regularised equation~\eqref{e:GeneralisedSBE} 
admits a unique solution globally in space and time, and that such solution is Markovian and stationary. 
Throughout this subsection the scale parameter $\tau>0$ will be fixed and bounds 
and estimates below will (badly) depend on it. 
By scaling (and shifting) the solution $u$ of~\eqref{e:GeneralisedSBE} as in~\eqref{e:GoodScaling}, 
we obtain 
\begin{equ}[e:GSBE] 
  \partial_t u^\tau = \frac{1}{2} \, \nabla \cdot R_\tau \, \nabla u^\tau + \cN_F^\tau[u^\tau] + \nabla \cdot \sqrt{R_\tau} \, \vec{\xi}^\tau \, ,
\end{equ}
where $R_\tau$ is the $2\times2$ matrix in~\eqref{e:Rtau} and, for $F\colon\R\to\R$, 
the rescaled nonlinearity $\cN_F^\tau$ is 
\begin{equ}[e:GSBEnnlin]
\cN_F^\tau[u^\tau] \eqdef \tau \nu_\tau^{1/4} \, \partial_1 \rho_\tau^{*2} * F(\tau^{-1/2} \nu_\tau^{1/4} u^\tau) - c_1(F) \, \tau^{1/2} \nu_\tau^{1/2} \, \partial_1 u^\tau\,
\end{equ}
for $c_1(F)$ as in~\eqref{e:c1_c2}. 

Even though, {\it apriori}, the function $F$ should be subject to Assumption~\ref{Assumption:F}, for the sake of this section 
it will be sufficient to require that $F\in\cC^1(\R)$ and that there exists $\kappa\in(0,1/4)$ (which will be fixed throughout) such that 
both conditions 
\begin{equs}[e:AssF]
\,&\|F\|_{\exp} \eqdef\sup_{x \in \R} \del[1]{|F(x)|\vee |F'(x)|}  e^{- \kappa x^2} < +\infty\,,\\
&\lim_{|x|\to\infty} \del[1]{|F(x)|\vee |F'(x)|}  e^{- \kappa x^2}=0\,. 
\end{equs}
hold\footnote{The second condition is always satisfied up to taking any larger $\kappa$. }. 
The spaces in which we will look for a solution have to take into account the growth of $F$. Thus, 
for $\beta>0$, we define $\cX_\beta$ to be the metric space completion of smooth compactly supported 
functions on $\R^2$ under the distance 
\begin{equ}[e:distance]
d_\beta(f,g) \eqdef \sum_{k \geq 0} \min\Big\{2^{-k}, \sup_{x \in \R^2} |f(x)-g(x)| \exp(- 2^{-k} \langle x \rangle^\beta)\Big\} \, ,
\end{equ}
where, for $x\in\R^2$, we set $\langle x\rangle\eqdef (1+|x|^2)^{1/2}$. Let us point out that 
if $\beta>\beta'>0$ then $\cX_\beta\subset\cX_{\beta'}$ and that the choice of the distance 
in~\eqref{e:distance} guarantees that if $f\in\cX_\beta$ then {\it for all $C>0$}
\begin{equ}[e:GrowthCond]
\sup_{x\in\R^2}e^{-C\langle x\rangle^\beta}|f(x)|<\infty\,.
\end{equ}
The notion of solution we will be adopting is the following. 

\begin{definition}\label{def:SolSPDE}
Let $\tau>0$, $\kappa\in(0,1/4)$ and $F\in\cC^1(\R)$ be such that~\eqref{e:AssF} holds. 
We say that $u^\tau$ is a mild solution of~\eqref{e:GSBE} driven by $(\eta^\tau,\vec{\xi}^{\, \tau})$
if 
\begin{enumerate}[noitemsep]
\item $u^\tau$ is $(\cF_t)_{t \geq 0}$-adapted, 
\item $\mathbf P$-almost surely, 
$u^\tau \in C(\R_+, \cX_1)$ and, for $\alpha\in(4\kappa,1)$, 
\begin{equ}[e:Nalpha]
t\mapsto \fE_\alpha(u^\tau_t)\eqdef \sup_{x \in \R^2} \frac{e^{\kappa |u^\tau_t(x)|^2}}{\langle x \rangle^{\alpha}}  \in L^1_{\mathrm{loc}}(\R_+), 
\end{equ}
\item for every $t \geq 0$
\begin{equ}[e:SolSPDE]
u^\tau_t = K^\tau_t\eta^\tau + \int_0^t K^\tau_{t-s} \cN_F^\tau[u_s] \, \dd s + \int_0^t K^\tau_{t-s} \, \nabla \cdot \sqrt{R_\tau} \vec{\xi}^{\, \tau}(\dd s) \, ,
\end{equ}
where, for $t\geq 0$,  $K^\tau_t$ is the heat semigroup associated with $\tfrac12 \nabla \cdot R_\tau \, \nabla u^\tau$. 
\end{enumerate}
\end{definition}
Let us briefly comment on Definition~\ref{def:SolSPDE}. The first and the last condition are natural as they 
require $u^\tau$ to be a stochastic process adapted to the filtration induced by the noise 
that indeed solves~\eqref{e:GSBE} (in its mild formulation). 
As we will soon see, the second instead is needed to obtain uniqueness. Even though~\eqref{e:Nalpha} 
might appear quite strong, in our setting it can be shown to hold since the $u^\tau$ 
we will construct is stationary. This means that the $\Prob$-almost sure 
boundedness of the $L^1$-norm (in time) of $\fE_\alpha(u^\tau)$ 
follows by the boundedness of the expectation of $\fE_\alpha(\eta^\tau)$ (and this is the 
reason why we require $L^1$, see Lemma~\ref{l:Nwhitenoise}), 
which in turn can be easily deduced as $\eta^\tau$ is a smooth Gaussian process 
with finite range of dependence.   

We are now ready to state the main result of this subsection. 

\begin{proposition} \label{p:MollifSol}
Let $\tau>0$ be fixed, $\kappa\in(0,1/4)$ and $F\in\cC^1(\R)$ be such that~\eqref{e:AssF} holds. Then, 
there exists a unique mild solution $u^\tau$ of~\eqref{e:GSBE} driven by $(\eta^\tau,\vec{\xi}^{\, \tau})$ 
according to Definition~\ref{def:SolSPDE}. 
Furthermore, for any $h\in\cC_c^\infty(\R^2)$, we have that, $\mathbf P$-almost surely, for every $t \geq 0$ it holds
\begin{equ}[e:WeakFormulation]
u^\tau_t(h) = \eta^1(h) + \frac12\int_0^t u_s(\nabla \cdot R_\tau \, \nabla h) \, \dd s + \int_0^t \cN^\tau_F[u_s](h) \, \dd s + \int_0^t \vec{\xi}^{\, \tau}(\dd s, \sqrt{R_\tau}\nabla h) \, .
\end{equ}
Besides, $u^\tau$ is a Markov process which is stationary, i.e. for every $t \geq 0$, $u^\tau_t \eqlaw \eta^\tau$, and 
skew-reversible, in the sense that for any $T > 0$, under $\Prob$, the process $(u^\tau_{T-t})_{t \in [0,T]}$ 
has the same law as the process $(\tilde u^\tau_t)_{t \in [0,T]}$, 
for $\tilde u^\tau$ the mild solution of \eqref{e:GSBE} driven by 
$(\tilde \eta^\tau,\vec{\tilde \xi}^{\, \tau})\eqlaw (\eta^\tau,\vec{\xi}^{\, \tau})$ 
but with $F$ replaced by $-F$.
\end{proposition}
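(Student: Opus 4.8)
The plan is to establish the five assertions of Proposition~\ref{p:MollifSol} --- existence, uniqueness, the weak formulation, the Markov property with stationarity, and skew-reversibility --- in that order, with existence and uniqueness being the substantial part.

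\textbf{Step 1: A priori bounds and local well-posedness.} First I would treat~\eqref{e:SolSPDE} as a fixed-point problem in the Banach/metric space $C([0,T], \cX_1)$ for small $T$. The linear (stochastic convolution) term $t \mapsto K^\tau_t \eta^\tau + \int_0^t K^\tau_{t-s}\nabla\cdot\sqrt{R_\tau}\,\vec\xi^{\,\tau}(\dd s)$ can be handled directly: since $\rho_\tau$ is a fixed Schwartz mollifier, both $\eta^\tau$ and $\vec\xi^{\,\tau}$ are smooth in space, so standard Gaussian estimates give that this term lies in $C(\R_+, \cX_1)$ (indeed with Gaussian-type tails controlled by $\fE_\alpha$) almost surely. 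For the Duhamel term, the key observation is that the map $v \mapsto \cN_F^\tau[v] = \tau\nu_\tau^{1/4}\,\partial_1 \rho_\tau^{*2}*F(\tau^{-1/2}\nu_\tau^{1/4} v) - c_1(F)\tau^{1/2}\nu_\tau^{1/2}\partial_1 v$ is, because of the convolution with $\partial_1\rho_\tau^{*2}$, smoothing: for any $v \in \cX_1$, $F(\tau^{-1/2}\nu_\tau^{1/4}v)$ has at most $e^{\kappa'\langle x\rangle^2}$-type growth by~\eqref{e:AssF} and~\eqref{e:GrowthCond}, but convolving against a Schwartz function that decays faster than any such exponential produces a bounded (in fact globally Lipschitz on bounded sets of $\cX_1$) function --- this is where the growth restriction $\kappa < 1/4$ versus the Gaussian weight in $\fE_\alpha$, $\alpha \in (4\kappa,1)$, enters. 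One then sets up a Picard iteration: on a ball of $C([0,T],\cX_1)$ of suitable radius (random, measurable with respect to the noise), the Duhamel operator is a contraction for $T = T(\omega)$ small enough, yielding a unique local solution. Crucially the solution does not blow up: because the nonlinearity, \emph{after the convolution}, is globally bounded in terms of the sup-norm of $v$ only through $\fE_\alpha$, and $\fE_\alpha$ of the stochastic convolution is locally integrable in time, one obtains an a priori bound allowing the local solution to be extended to all of $\R_+$. I expect this extension/global existence argument --- keeping careful track of how the (non-quantitative) bounds depend on $\tau$ and on the realisation of the noise --- to be the \emph{main obstacle}, precisely because the growth condition on $F$ is so weak and we are in infinite volume, so one cannot rely on compactness or on uniform-in-space estimates.

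\textbf{Step 2: Uniqueness.} Given two solutions $u^\tau, \tilde u^\tau$ in the class of Definition~\ref{def:SolSPDE}, their difference $w = u^\tau - \tilde u^\tau$ solves $w_t = \int_0^t K^\tau_{t-s}(\cN_F^\tau[u_s] - \cN_F^\tau[\tilde u_s])\,\dd s$. Using the mean value theorem and the convolution structure, $\|\cN_F^\tau[u_s] - \cN_F^\tau[\tilde u_s]\|$ in the relevant weighted sup-norm is bounded by a random constant times $(\fE_\alpha(u_s) + \fE_\alpha(\tilde u_s))$ times $\|w_s\|$ in a weaker weighted norm --- here condition~\eqref{e:Nalpha} is exactly what is needed to make the Lipschitz constant integrable in time. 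A Grönwall argument in the appropriate norm (using that $\int_0^T (\fE_\alpha(u_s)+\fE_\alpha(\tilde u_s))\,\dd s < \infty$ a.s.) then forces $w \equiv 0$.

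\textbf{Step 3: Weak formulation, Markov, stationarity, skew-reversibility.} The weak formulation~\eqref{e:WeakFormulation} follows from~\eqref{e:SolSPDE} by testing against $h \in \cC_c^\infty$ and using the defining property of $K^\tau$ as the semigroup of $\tfrac12\nabla\cdot R_\tau\nabla$ (a standard mild-to-weak equivalence, justified by the spatial smoothness of all terms). The Markov property is immediate from uniqueness and the fact that~\eqref{e:GSBE} is driven by white noise with independent increments together with the time-homogeneity of the coefficients. For stationarity, I would verify that $\P^\tau$ (the law of $\eta^\tau$) is invariant by checking that, tested against the generator computed in the next subsection, the drift and the noise covariance balance --- equivalently, that if $u_0 \eqlaw \eta^\tau$ then $u^\tau_t \eqlaw \eta^\tau$ for all $t$; the slick route is to note that the stationary process on the torus (or with periodisation) has this property by an explicit generator computation and to pass to the limit, or to directly verify the infinitesimal invariance identity $\int \gen^\tau G\, \dd\P^\tau = 0$ on a core. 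Finally, skew-reversibility: one decomposes the generator $\gen^\tau = \cS^\tau + \cA^\tau$ into its $\L^2(\P^\tau)$-symmetric and antisymmetric parts, observes that $\cS^\tau$ (coming from the linear diffusion and the noise) is unchanged under $F \mapsto -F$ while $\cA^\tau$ (coming from the nonlinearity $\cN_F^\tau$, which is odd in $F$ modulo the constant-removal) changes sign, so that the time-reversed process is the solution of~\eqref{e:GSBE} with $F$ replaced by $-F$; the equality in law of the driving noises $(\tilde\eta^\tau, \vec{\tilde\xi}^{\,\tau}) \eqlaw (\eta^\tau,\vec\xi^{\,\tau})$ is then what closes the identification. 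This last part is essentially formal once the generator is in hand, so I would defer the computational details to Section~\ref{sec:generator}.
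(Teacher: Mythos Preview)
Your uniqueness argument (Step~2) is essentially the paper's: it is precisely the content of Lemma~\ref{l:Apriori} applied with $F=G$ and $X^\tau=Y^\tau$, and the role of the condition~\eqref{e:Nalpha} is exactly as you describe. Step~3 is also morally fine, though in the paper these properties are not proved separately but inherited from an approximating sequence.

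The genuine gap is in Step~1. Your growth claim is wrong: for a generic $v\in\cX_1$ one only has $|v(x)|\lesssim e^{C\langle x\rangle}$, so $|F(\tau^{-1/2}\nu_\tau^{1/4}v(x))|\lesssim e^{\kappa c^2 |v(x)|^2}$ can be \emph{doubly} exponential in $\langle x\rangle$, not $e^{\kappa'\langle x\rangle^2}$. Convolving with the compactly supported kernel $\partial_1\rho_\tau^{*2}$ does nothing to tame this. What makes $F\circ v$ tractable is not membership of $v$ in $\cX_1$ but the much stronger pointwise control $e^{\kappa|v(x)|^2}\le \fE_\alpha(v)\langle x\rangle^\alpha$, which forces $|v(x)|$ to grow only like $\sqrt{\log\langle x\rangle}$. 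This is precisely condition~\eqref{e:Nalpha}, and it is part of the \emph{definition} of solution, not something one can derive for Picard iterates. There is no mechanism to propagate an $\fE_\alpha$-bound through the fixed-point map, so the contraction argument does not close; the obstacle is not global extension but already local existence.

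The paper circumvents this entirely. It builds the solution as a limit of solutions $u^{\tau,M}$ to~\eqref{e:GSBE} on tori of side $M$ with \emph{polynomial} nonlinearities $P_M\to F$ (Lemma~\ref{l:DensityPolynomial}); these periodic polynomial problems are well-posed and, crucially, \emph{stationary} by~\cite[Lemma~A.4]{CMT}, so Lemma~\ref{l:Nwhitenoise} gives uniform-in-$M$ tightness of $\|\fE_\alpha(u^{\tau,M})\|_{L^1(0,T)}$. The a~priori estimate (Lemma~\ref{l:Apriori}), which is Lipschitz in \emph{both} the noise and the nonlinearity and whose constant depends continuously on $\|\fE_\alpha(\cdot)\|_{L^1}$, then shows the sequence is Cauchy in $C([0,T],\cX_1)$. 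Markov, stationarity and skew-reversibility come for free from the approximants. The essential idea you are missing is that stationarity is not a property to be checked \emph{a posteriori} but the device that makes existence possible in the first place.
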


\begin{remark}\label{rem:Growth}
The reason why the weak formulation of~\eqref{e:GSBE} in~\eqref{e:WeakFormulation} 
is stated for compactly supported smooth functions $\phi$ and not for $\phi\in\cS(\R^2)$ is that, in space, 
the solution $u$ only belongs to $\cX_1$ (so that in particular it might grow at infinity exponentially fast) 
and thus the pairing $u_\cdot(\phi)$ might not be well-defined. 
\end{remark}

As usual, the proof of the previous proposition consists of showing that a solution to~\eqref{e:GSBE}
(1) exists, (2) is unique and (3) enjoys all the stated properties.  
For (1), we will consider the sequence $(u^{\tau,M})_{M\in\N}$, where $u^{\tau,M}$ solves~\eqref{e:GSBE} 
on a torus of side-length $M$ (and then extended periodically to $\R^2$) whose nonlinearity is not $F$ 
but a polynomial $P_M$ with $P_M$ converging to $F$ is a suitable sense (see Lemma~\ref{l:DensityPolynomial}). 
We will show that 
such sequence is Cauchy and that its limit satisfies~\eqref{e:SolSPDE} (and~\eqref{e:WeakFormulation}), 
thus obtaining (1) 
and, as a byproduct, (3), since, by the arguments presented in~\cite[Appendix A]{CMT},  
it is not hard to prove that each of the $u^{\tau,M}$ is Markov, stationary and skew-reversible 
(and this is the reason why we chose such approximation) and these properties are carried through in the limit. 

The next lemma allows us to simultaneously prove that the sequence $(u^{\tau,M})_{M\in\N}$ 
is Cauchy and that the solution to~\eqref{e:GSBE} is unique, as it shows that the solution map is Lipschitz continuous  
with respect to {\it both} the driving noise {\it and} the nonlinearity, thus allowing us to  
carry out the outlined strategies for (1) and (2) (and (3)) at once. 

\begin{lemma}\label{l:Apriori}
Let $\tau>0$ be fixed, $T > 0$, $F, G\in\cC^1(\R)$ satisfying~\eqref{e:AssF}, $\beta\in(0,1]$ and 
$X^\tau, Y^\tau\in C([0,T], \cX_\beta)$. 
Assume there exist $u^\tau, v^\tau\in C([0,T], \cX_\beta)$ such that, for $\alpha\in(0,\beta)$, 
$\fE_\alpha(u^\tau), \fE_\alpha(v^\tau)\in L^1(0,T)$ and, for every $t \in [0,T]$, they respectively satisfy
\begin{equ} \label{e:MildApriori}
u^\tau_t = X^\tau_t + \int_0^t K^\tau_{t-s} \cN^\tau_F[u^\tau_s] \, \dd s \, , \qquad v^\tau_t = Y^\tau_t + \int_0^t K^\tau_{t-s}\cN^\tau_G[v^\tau_s] \, \dd s \, .
\end{equ}
Then there exists a constant $C=C(\tau, T,\alpha,\beta)>0$ that depends continuously on $\|F\|_{\exp},$ 
$\|\fE_\alpha(u^\tau)\|_{L^1(0,T)}$ and $\|\fE_\alpha(v^\tau)\|_{L^1(0,T)}$ (where $\|\cdot\|_{\exp}$ and $\fE_\alpha(\cdot)$ 
are respectively defined in~\eqref{e:AssF} and~\eqref{e:Nalpha}), 
such that for every $t \in [0,T]$ and $x \in \R^2$
\begin{equ}[e:Apriori]
|u^\tau_t(x) - v^\tau_t(x)| \leq C e^{C \langle x \rangle^\beta} \, \Big(\|F - G\|_{\exp} + \sup_{(t,x) \in [0,T]\times\R^2} e^{- \langle x \rangle^\beta}\, |X^\tau_t(x) - Y^\tau_t(x)| \Big)\, .
\end{equ}
\end{lemma}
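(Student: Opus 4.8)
The estimate~\eqref{e:Apriori} is deterministic, and the plan is a Gr\"onwall-type argument on the mild identities~\eqref{e:MildApriori}. Write $w^\tau \eqdef u^\tau - v^\tau$ and, for $\lambda \geq 1$, set $\phi_\lambda(t) \eqdef \sup_{x \in \R^2} e^{-\lambda\langle x\rangle^\beta}\abs{w^\tau_t(x)}$; since $u^\tau, v^\tau \in C([0,T],\cX_\beta)$, each $\phi_\lambda$ is finite and continuous on $[0,T]$. Subtracting the two identities in~\eqref{e:MildApriori} gives $w^\tau_t = (X^\tau_t - Y^\tau_t) + \int_0^t K^\tau_{t-s}\big(\cN^\tau_F[u^\tau_s] - \cN^\tau_G[v^\tau_s]\big)\,\dd s$, and, recalling the form~\eqref{e:GSBEnnlin} of the rescaled nonlinearity, I would split the integrand into four contributions: (i) the \emph{field-Lipschitz} term $\tau\nu_\tau^{1/4}\,\partial_1\rho_\tau^{*2}*\big(F(a_\tau u^\tau_s) - F(a_\tau v^\tau_s)\big)$ with $a_\tau \eqdef \tau^{-1/2}\nu_\tau^{1/4}$; (ii) the \emph{nonlinearity-Lipschitz} term $\tau\nu_\tau^{1/4}\,\partial_1\rho_\tau^{*2}*\big(F(a_\tau v^\tau_s) - G(a_\tau v^\tau_s)\big)$; (iii) the transport term $-c_1(F)\tau^{1/2}\nu_\tau^{1/2}\,\partial_1 w^\tau_s$; and (iv) $-(c_1(F)-c_1(G))\tau^{1/2}\nu_\tau^{1/2}\,\partial_1 v^\tau_s$.

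Next I would bound each piece pointwise. For (i), the mean value theorem and~\eqref{e:AssF} give $\abs{F(a_\tau u^\tau_s(z)) - F(a_\tau v^\tau_s(z))} \leq a_\tau\norm{F}_{\exp}\abs{w^\tau_s(z)}\,e^{\kappa a_\tau^2(\abs{u^\tau_s(z)}\vee\abs{v^\tau_s(z)})^2}$; since $a_\tau^2 = \tau^{-1}\nu_\tau^{1/2}$, which for the scales $\tau \geq 1$ of interest is $\leq 1$, the exponential is controlled, \emph{by the very definition}~\eqref{e:Nalpha} of $\fE_\alpha$, by $(\fE_\alpha(u^\tau_s)\vee\fE_\alpha(v^\tau_s))\,\langle z\rangle^\alpha$ --- this is the decisive move, as it trades the a priori gigantic factor $\exp(\kappa(\cdots)^2)$ for a mere polynomial in $z$, at the price of an $L^1$-in-time weight. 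Similarly (ii) is $\lesssim\norm{F-G}_{\exp}(1+\fE_\alpha(v^\tau_s))\langle z\rangle^\alpha$, while for (iii)--(iv) I would keep the $\partial_1$ and use $\abs{c_1(H)} \lesssim \norm{H}_{\exp}$ (finite because $\kappa < 1/4$, by Proposition~\ref{p:AssF}) together with the crude pointwise bound $\abs{v^\tau_s(z)} \lesssim 1 + \fE_\alpha(v^\tau_s) + (\log\langle z\rangle)^{1/2}$ that also follows from~\eqref{e:Nalpha}. Feeding these into the heat-semigroup estimates --- $\norm{K^\tau_t g}_\mu \lesssim_{\tau,T,\mu}\norm{g}_\mu$ and $\norm{\partial_1 K^\tau_t g}_\mu \lesssim_{\tau,T,\mu}t^{-1/2}\norm{g}_\mu$ for $t \leq T$, valid by Gaussian moment bounds and subadditivity of $r \mapsto r^\beta$ ($\beta \leq 1$), with $\norm{\cdot}_\mu$ the $e^{\mu\langle\cdot\rangle^\beta}$-weighted sup-norm --- and using that convolution against the compactly supported $\partial_1\rho_\tau^{*2}$ preserves $\norm{\cdot}_\mu$ with \emph{no} time singularity (the derivative lands on the smooth mollifier), I would arrive at a closed integral inequality for the $\phi_\lambda$'s, schematically
\begin{equ}
\phi_{\lambda'}(t) \lesssim_{\tau,T,\lambda}\, \big(\norm{F-G}_{\exp} + D\big)\big(1 + \norm{\fE_\alpha(v^\tau)}_{L^1(0,T)}\big) + \int_0^t \norm{F}_{\exp}\Big[(t-s)^{-1/2} + 1 + \fE_\alpha(u^\tau_s) + \fE_\alpha(v^\tau_s)\Big]\,\phi_\lambda(s)\,\dd s\,,
\end{equ}
for $\lambda' > \lambda$, where $D \eqdef \sup_{(t,x)\in[0,T]\times\R^2} e^{-\langle x\rangle^\beta}\abs{X^\tau_t(x) - Y^\tau_t(x)}$; note that the kernel is a \emph{sum} of a $(t-s)^{-1/2}$-singularity coming from (iii) (with no $\fE_\alpha$) and an $L^1$ function coming from (i) (with no singularity).

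The final step is to close this inequality by a (singular) Gr\"onwall lemma. Its kernel is integrable --- which is exactly what makes the argument work --- precisely by the hypothesis $\fE_\alpha(u^\tau),\fE_\alpha(v^\tau) \in L^1(0,T)$ together with $(t-\cdot)^{-1/2} \in L^1$; the resulting constant is a Mittag--Leffler-type function of $\tau, T, \alpha, \beta, \norm{F}_{\exp}$ and $\norm{\fE_\alpha(u^\tau)}_{L^1}, \norm{\fE_\alpha(v^\tau)}_{L^1}$, hence continuous in the latter three. The one genuinely delicate point --- and, I expect, the main obstacle --- is the \emph{loss of spatial weight}: every use of the $\fE_\alpha$-bound in (i) produces a polynomial factor $\langle z\rangle^\alpha$ that can only be absorbed at the cost of a slightly heavier exponential weight (with a constant blowing up as the loss shrinks), while the heat-semigroup constants themselves deteriorate with the weight, so the Gr\"onwall iteration must keep very careful track of these weights; it is here that the assumption $\alpha < \beta$ enters in an essential way, as it is precisely what controls the weight-loss against the factorial gain of the iterated kernel. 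Having obtained $\phi_C(t) \leq C'(\norm{F-G}_{\exp} + D)$ --- first for a.e.\ $t$, since the fractional integrals of $L^1$-functions are only defined a.e., then for all $t$ by continuity of $t \mapsto \phi_C(t)$ --- one unravels the weighted norm to get~\eqref{e:Apriori}. Conceptually, the whole estimate survives only because (a) the compact support of $\rho_\tau$ localises the evaluation of $F$, and (b) the quantity $\fE_\alpha$ exchanges the dangerous $\exp(\kappa(\cdots)^2)$ for a polynomial; these two features, together with the $L^1$-in-time and $\alpha < \beta$ conditions, are exactly what is needed to tame the extremely weak growth of $F$ in the infinite-volume setting, and the $L^1$-bound will later be supplied by the stationarity of the solution constructed in Proposition~\ref{p:MollifSol}.
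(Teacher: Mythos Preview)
Your overall strategy is sound and you correctly isolate the two decisive mechanisms: that $\fE_\alpha$ converts the dangerous factor $e^{\kappa(\cdot)^2}$ into a mere polynomial $\langle z\rangle^\alpha$ at the price of an $L^1$-in-time weight, and that the condition $\alpha<\beta$ is what ultimately allows that polynomial to be absorbed. You are also more explicit than the paper about the transport contributions (iii)--(iv), which is a genuine plus: the paper tacitly treats only the mollified part $\partial_1\rho^{*2}*F$ in its estimates (e:FI)--(e:ConvBound2).

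The route, however, is different. The paper does \emph{not} run a Gr\"onwall iteration with a family of fixed weights. Instead it introduces \emph{time-dependent} weight functions $A_t,B_t$, sets $W_s=\sup_x|w_s(x)|e^{-A_s\langle x\rangle^\beta-B_s}$, and chooses $A,B$ as primitives of $\|F\|_{\exp}(\fE_\alpha(u_s)+\fE_\alpha(v_s))$ (and $e^{C'(1+A_s^2)}$ times the same, respectively). The point is that the troublesome factor $\langle x\rangle^\alpha e^{A_s\langle x\rangle^\beta+B_s}$ is then bounded by $\frac12\,\partial_s e^{A_s\langle x\rangle^\beta+B_s}$, so the time integral produces $\frac12 W_t^\ast e^{A_t\langle x\rangle^\beta+B_t}$ directly and one closes in a single step by absorption, with no iteration and no weight-loss bookkeeping. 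This is both shorter and more transparent about where $\alpha<\beta$ is used (it is what makes $\langle x\rangle^\alpha\le \langle x\rangle^\beta\,\dot A_s$ useful).

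Your scheme can also be made to work, but the displayed inequality $\phi_{\lambda'}(t)\lesssim \text{source}+\int k\,\phi_\lambda$ with $\lambda'>\lambda$ is \emph{not} a Gr\"onwall inequality as written: since $\phi_{\lambda'}\le\phi_\lambda$ trivially, it does not close on its own. What you are implicitly proposing is an $N$-fold Picard iteration from $\phi_{\lambda_0}$ to $\phi_{\lambda_0+N\epsilon}$, and to conclude you must show that the remainder $C(\epsilon)^N\cdot(\text{iterated kernel})\cdot\sup\phi_{\lambda_0}$ vanishes while the accumulated weight $\lambda_0+N\epsilon$ stays bounded. Concretely, $C(\epsilon)\sim\epsilon^{-\alpha/\beta}$ (from $\sup_z\langle z\rangle^\alpha e^{-\epsilon\langle z\rangle^\beta}$) and the $N$-fold integral contributes $1/N!$, so with $\epsilon=\Lambda/N$ the remainder behaves like $N^{N(\alpha/\beta-1)}(\text{const})^N$, which indeed tends to $0$ precisely when $\alpha<\beta$. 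This is the ``factorial gain versus weight loss'' you allude to, but it needs to be written out; as the sketch stands, a reader cannot see why the inequality closes. If you want to keep your approach, make this step explicit; otherwise, the paper's time-dependent-weight trick gets you there in one line.
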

\begin{proof}
Throughout the proof, we will denote by $C$ a constant as in the above statement, 
but whose value may increase from line to line and, since the specific value of $\tau$ is irrelevant, we will 
take it to be $1$ and omit the corresponding index (e.g. we write $u, X$ for $u^\tau, X^\tau$ etc.). 
Recall that, for~$\tau = 1$, we have~$\nu_\tau = 1$ and thus~$R_\tau = \operatorname{Id}_{2}$ and~$K_t \equiv K_t^1$ is the classical heat semigroup w.r.t.~$\frac{1}{2}\Delta$.

Let us define $w \eqdef u - v$. Then, from \eqref{e:MildApriori} and the definition of $\cN^1$ in~\eqref{e:GSBEnnlin}, 
we see that for every $t \in [0,T]$, $w$ equals
\begin{equs}
w_t &= \int_0^t K_{t-s} \big(\cN_F^1[u_s]-\cN_F^1[v_s]\big) \, \dd s + \int_0^t K_{t-s} \, \big(\cN_F^1[v_s]-\cN_G^1[v_s]\big) \, \dd s +\big(X_t - Y_t\big)\\
&=:\one_t+\two_t+\three_t\label{e:w}
\end{equs}
and we will separately estimate each of the terms above. Let us begin with $\three$, which is the easiest 
as, for every $x\in\R^2$, we immediately have 
\begin{equ}[e:K]
|\three_t(x)| \leq \, e^{\langle x\rangle^\beta} \sup_{(t,y) \in [0,T]\times\R^2} e^{- \langle y \rangle^\beta} \, |X_t(y) - Y_t(y)|\, . 
\end{equ}

The bound on $\two$ is an immediate consequence of two bounds. The first 
follows by the definition of $\|\cdot\|_{\exp}$ in~\eqref{e:AssF} and of $\fE_\alpha$ in~\eqref{e:Nalpha}, 
and reads 
\begin{equ}
|(F-G)(v_s(x))| \leq \|F - G\|_{\exp} \exp(\kappa v_s(x)^2) \leq \|F - G\|_{\exp} \fE_\alpha(v_s) \, \langle x \rangle^{\alpha} \, ,
\end{equ}
while the second is a simple convolutional bound (which can be obtained by using e.g. Feynman-Kac formula) 
that takes the form 
\begin{equ}[e:ConvBound]
K_{r} \, \big[|\partial_1 \rho^{*2}| * \langle \cdot \rangle^{\alpha}\big](x) \leq C \langle x \rangle^{\alpha} \,, 
\end{equ}
for some constant $C>0$, uniformly over $r\in [0,T]$. 
Putting them together, we obtain 
\begin{equ}[e:J]
|\two_t(x)| \leq C \|\fE_\alpha(v)\|_{L^1(0,T)} \, \|F - G\|_{\exp} \, \langle x \rangle^\alpha \leq C \, \|F - G\|_{\exp} \, e^{\langle x\rangle^\beta} \,,
\end{equ}
where the constant $C$ in the last bound clearly depends (linearly) on $\|\fE_\alpha(v)\|_{L^1(0,T)}$. 

We now turn to $\one$, which is the most delicate. We first notice that
\begin{equs}[e:FI]
|F(u_s(x)) - F(v_s(x))| &\leq \|F\|_{\exp} \big(e^{\kappa u_s(x)^2} + e^{\kappa v_s(x)^2}\big) \, |w_s(x)| \\
&\leq \|F\|_{\exp} (\fE_\alpha(u_s) + \fE_\alpha(v_s)) \, \langle x \rangle^\alpha \, |w_s(x)| \, .
\end{equs}
Now, since both $u$ and $v$ satisfy~\eqref{e:GrowthCond} by assumption, so does $w$, which 
means that morally we should have $|w_s(x)|\lesssim e^{A\langle x\rangle^\beta+B}$ for some $A, B>0$. 
If we plug this into the previous bound and back to the definition of $I$ in~\eqref{e:w}, 
we will need to estimate 
\begin{equ}[e:ConvBound2]
K_{r} \,  \big[|\partial_1 \rho^{*2}| \ast\big(\langle \cdot \rangle^\alpha e^{A \langle \cdot \rangle^\beta + B}\big)\big](x) \leq e^{C'(1+A^2)} \, \langle x \rangle^\alpha e^{A \langle x \rangle^\beta + B}
\end{equ}
where, for some $C'=C'(T,\beta)>0$, the bound is uniform over~$r \in [0,T]$  and can be derived similarly to~\eqref{e:ConvBound}. At this point though, we see a problem. 
Indeed, while we are assuming $w$ to grow as $e^{A\langle x\rangle^\beta+B}$, at the 
r.h.s. there is an extra polynomial factor in $x$ so that in principle 
it grows {\it faster} than what is expected and it is unclear how to close the bound. 
What we have neglected is the integral in time, and to exploit it we will 
choose $A$ and $B$ to be {\it time-dependent}. 
More specifically, for $A, B\colon [0,T]\to\R$ to be chosen afterwards and $s\in[0,T]$, set  
\begin{equ}[e:W]
W_s\eqdef \sup_{x \in \R^2} |w_s(x)| e^{-A_s \langle x \rangle^\beta - B_s}\qquad \text{and}\qquad W_t^\ast\eqdef \sup_{s \in [0,t]} W_s\,.
\end{equ}
With these definitions at hand together with~\eqref{e:FI} and~\eqref{e:ConvBound2}, we can estimate $\one$ as 
\begin{equs}
|\one_t(x)| &\leq \int_0^t \dd s \, W_s \, \|F\|_{\exp} (\fE_\alpha(u_s) + \fE_\alpha(v_s)) \, e^{C'(1+A_s^2)}\langle x \rangle^\alpha  e^{A_s \langle x \rangle^\beta + B_s} \, \\
&\leq W_t^* \int_0^t \dd s \, \|F\|_{\exp} (\fE_\alpha(u_s) + \fE_\alpha(v_s)) \, \big[\langle x \rangle^\beta + e^{C'(1+A_s^2)}\big] e^{A_s \langle x \rangle^\beta + B_s}\,.\label{e:I1}
\end{equs}
At this point, we want to choose $A$ and $B$ in such a way that we can control the time integral 
by a small constant (say $1/2$) times the exponential of $A_t \langle x \rangle^\beta + B_t$ so that, 
once we get back to~\eqref{e:w}, we can reabsorb this term at the l.h.s. . 
A convenient choice is to set, for every $t\in[0,T]$, 
\begin{equs}
A_t &= 1 + 2 \int_0^t \|F\|_{\exp} (\fE_\alpha(u_s) + \fE_\alpha(v_s)) \dd s \,, \\
B_t &= 2 \int_0^t \|F\|_{\exp} (\fE_\alpha(u_s) + \fE_\alpha(v_s))  e^{C' (1+A_s^2)}  \dd s
\end{equs}
as these are well-defined, non-negative, non-decreasing and bounded by some constant $C$ depending 
(continuously) on $T,\alpha,\beta, \|F\|_{\exp},$ 
$\|\fE_\alpha(u)\|_{L^1(0,T)}$ and $\|\fE_\alpha(v)\|_{L^1(0,T)}$, and, even more importantly,  
allow us to take care of the polynomial factor. Indeed, from~\eqref{e:I1} we can bound $\one$ as     
\begin{equ}[e:I]
|\one_t(x)| \leq W_t^* \int_0^t \dd s \, \frac{1}{2} \big(\dot{A}_s \langle x \rangle^\beta + \dot{B}_s\big) e^{A_s \langle x \rangle^\beta + B_s} \leq \frac{1}{2} \, W_t^\ast \,e^{A_t \langle x \rangle^\beta + B_t} \, ,
\end{equ}
which concludes the bound on $\one$. 

We now turn to~\eqref{e:w} and apply~\eqref{e:K},~\eqref{e:J} and~\eqref{e:I}, so that we deduce 
\begin{equ}
W_t \leq \frac{1}{2} W_t^* + C \Big(\|F-G\|_{\exp} + \sup_{(t,y) \in [0,T]\times\R^2} e^{- \langle y \rangle^\beta} \, |X_t(y) - Y_t(y)|\Big) \, ,
\end{equ}
where we further used that, since $A_0=1$ and $A,B$ are non-decreasing and non-negative, 
$\langle x \rangle^\beta\leq A_s \langle x \rangle^\beta + B_s$ for every $(s,x)\in[0,T]\times\R^2$. 
Note, in particular, that it is precisely this estimate which takes care of the exponential factor~$e^{\langle x\rangle^\beta}$ in~\eqref{e:K} as, by definition,~$W_t$ contains multiplication by~$e^{-A_t \langle x \rangle^\beta - B_t}$.
By assumption, both $u$ and $v$ (and thus $w$) satisfy~\eqref{e:GrowthCond}, and 
$A$ and $B$ are bounded, which means that, for every $t\in[0,T]$, $W^\ast_t<\infty$. 
As a consequence, we can take  bring the $W_t^*/2$ to the l.h.s. , and deduce 
that for every $t\in[0,T]$
\begin{equs}
|w_t(x)| &\leq 2 C \,e^{A_t \langle x \rangle^\beta + B_t} \Big(\|F-G\|_{\exp} + \sup_{(t,y) \in [0,T]\times\R^2} e^{- \langle y \rangle^\beta} \, |X_t(y) - Y_t(y)|\Big)\\
&\leq 2 C \,e^{A_T \langle x \rangle^\beta + B_T} \Big(\|F-G\|_{\exp} + \sup_{(t,y) \in [0,T]\times\R^2} e^{- \langle y \rangle^\beta} \, |X_t(y) - Y_t(y)|\Big)
\end{equs}
where in the last step we exploited once again the fact that $A$ and $B$ are non-decreasing. 
By rebranding the constant $C$ according to the prefactor at the r.h.s.,~\eqref{e:Apriori} follows at once. 
\end{proof}

In order to make use of the previous lemma, we need to ensure that, for the approximation 
outlined after the statement of Proposition~\ref{p:MollifSol}, the constant 
$C$ for which~\eqref{e:Apriori} holds (that depends continuously $\|\fE_\alpha(u^\tau)\|_{L^1(0,T)}$ and, 
for us, will be random) is  
tight and the other terms at the r.h.s. do indeed go to $0$. 
Our approximation consists of solutions to~\eqref{e:GSBE} with a polynomial nonlinearity 
and driven by periodic noise and initial 
condition. These were introduced in~\cite[eq. (A.7)]{CMT} and we now recall their 
definition. Let $(\eta^\tau, \vec{\xi}^\tau)$ be as above. 
For $M > 0$, let $(\eta^\tau_M, \vec{\xi}^\tau_M)$, whose law is denoted $\mathbb P^\tau_M \otimes P^\tau_M$, be given by
\begin{equ}[e:PerNoise]
    \eta^\tau_M \eqdef \sum_{k \in \Z^2} \restriction{\eta^\tau}{[-M, M]^2}(2Mk + \cdot) \, , \quad \vec{\xi}^\tau_M(\dd s) \eqdef \sum_{k \in \Z^2} \restriction{\vec{\xi}^\tau(\dd s)}{[-M, M]^2}(2Mk + \cdot) \, ,
\end{equ}
and if $M = \infty$ then we set $(\eta^\tau_\infty, \vec{\xi}^\tau_\infty) \eqdef (\eta^\tau, \vec{\xi}^\tau)$.

In the next lemma we show that, for any stationary process $t\mapsto u^\tau_t(\cdot)$, the quantity 
$\|\fE_\alpha(u^\tau)\|_{L^1(0,T)}$ is indeed tight. 

\begin{lemma} \label{l:Nwhitenoise}
Let $\tau>0$, $\alpha > 4 \kappa$ and, for $M>0$, define $\eta^\tau_M$ according to~\eqref{e:PerNoise}. 
Then, 
\begin{equ} \label{e:Nwhitenoise}
\sup_{M \in (0, +\infty]} \E[\fE_\alpha(\eta^\tau_M)] < +\infty \, .
\end{equ}
As a consequence, for every $T > 0$ and $\delta > 0$, 
there exists a constant $C = C(\tau, \alpha, T, \delta) > 0$ such that any 
$\P_M^\tau$-stationary locally continuous process $u^\tau$ (that is for every $t > 0$, $u^\tau_t \eqlaw \eta^\tau_M$) it holds
\begin{equ} \label{e:Nstationary}
\mathbf{P}\big(\|\fE_\alpha(u^\tau)\|_{L^1(0,T)} \geq C\big) \leq \delta \, .
\end{equ}
\end{lemma}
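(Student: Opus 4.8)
The plan is to establish the substantive bound~\eqref{e:Nwhitenoise} first, by Gaussian concentration, and then read off~\eqref{e:Nstationary} from it via Tonelli's theorem, stationarity, and Markov's inequality. Throughout, $\tau$ is fixed and all constants may depend on it.

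\emph{Step 1 (setup).} I would first record the relevant facts about the field. Since $\eta^\tau = \rho_\tau \ast \eta$ with $\rho_\tau \in \cC_c^\infty(\R^2)$ and $\eta$ a white noise, $\eta^\tau$ is an a.s.\ smooth, centred, stationary Gaussian field with finite range of dependence and constant pointwise variance $\sigma^2 := \|\rho_\tau\|_{L^2(\R^2)}^2$; its periodisation $\eta^\tau_M$ from~\eqref{e:PerNoise} is $2M$-periodic, coincides with $\eta^\tau$ on a fundamental domain — hence shares the same constant variance $\sigma^2$, \emph{uniformly in $M$} — and near the ``seams'' of that domain is pointwise dominated by a sum of two shifted copies of $\eta^\tau$. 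From this I obtain the uniform bound
\begin{equ}
m_0 \;:=\; \sup_{M \in (0,\infty]}\,\sup_{z \in \Z^2} \E\Big[\sup_{x \in z+[0,1)^2}|\eta^\tau_M(x)|\Big] \;\le\; 2\,\E\Big[\sup_{x \in [0,1)^2}|\eta^\tau(x)|\Big] \;<\; \infty\,,
\end{equ}
the finiteness being the automatic integrability of a.s.-finite Gaussian suprema.

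\emph{Step 2 (core estimate).} Writing $S_M := \fE_\alpha(\eta^\tau_M) = \sup_{x\in\R^2}\langle x\rangle^{-\alpha}e^{\kappa\eta^\tau_M(x)^2}$, I would bound $\E[S_M] = \int_0^\infty \mathbf{P}(S_M > \lambda)\,\dd\lambda$. Using $\langle x\rangle \asymp \langle z\rangle$ for $x$ in the unit cube $Q_z := z+[0,1)^2$, on the event $\{S_M>\lambda\}$ there are $z \in \Z^2$ and $x \in Q_z$ with $\kappa\eta^\tau_M(x)^2 \ge \log\lambda + \alpha\log\langle z\rangle - C_0$ for an absolute constant $C_0$; hence, for $\lambda \ge \lambda_0 = \lambda_0(\kappa,\alpha,m_0)$ large enough that $u_z(\lambda) := \kappa^{-1/2}(\log\lambda + \alpha\log\langle z\rangle - C_0)^{1/2} > m_0$ for every $z$, a union bound and the Borell--TIS inequality on $\overline{Q_z}$ give
\begin{equ}
\mathbf{P}(S_M > \lambda) \;\le\; \sum_{z \in \Z^2} \mathbf{P}\Big(\sup_{Q_z}|\eta^\tau_M| > u_z(\lambda)\Big) \;\le\; \sum_{z \in \Z^2} 2\exp\Big(-\tfrac{1}{2\sigma^2}\big(u_z(\lambda)-m_0\big)^2\Big)\,,
\end{equ}
while for $\lambda < \lambda_0$ I simply use $\mathbf{P}(S_M>\lambda)\le 1$. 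Bounding $(u-m_0)^2 \ge (1-\varepsilon)u^2 - m_0^2/\varepsilon$ turns the right-hand side into $C_\varepsilon\,\lambda^{-(1-\varepsilon)/(2\kappa\sigma^2)}\sum_z\langle z\rangle^{-(1-\varepsilon)\alpha/(2\kappa\sigma^2)}$, and I would then choose $\varepsilon>0$ small enough that $(1-\varepsilon)\alpha/(2\kappa\sigma^2) > 2$ and $(1-\varepsilon)/(2\kappa\sigma^2)>1$ hold simultaneously — the first possible \emph{precisely because} $\alpha > 4\kappa\sigma^2$ (this is the hypothesis $\alpha>4\kappa$, i.e.\ $\sigma^2=1$), the second because then $2\kappa\sigma^2 < \alpha/2 < 1$. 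Both the $z$-sum and $\int_{\lambda_0}^\infty\lambda^{-(1-\varepsilon)/(2\kappa\sigma^2)}\,\dd\lambda$ converge, so $\E[S_M] \le \lambda_0 + C_\varepsilon'(\tau,\alpha) < \infty$ with all constants independent of $M$; this is~\eqref{e:Nwhitenoise}.

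\emph{Step 3 (consequence and the hard point).} For a $\P^\tau_M$-stationary, locally continuous $u^\tau$, continuity in $x$ makes $\fE_\alpha(u^\tau_t) = \sup_{x\in\Q^2}\langle x\rangle^{-\alpha}e^{\kappa u^\tau_t(x)^2}$ jointly measurable and non-negative, so Tonelli and $u^\tau_t \eqlaw \eta^\tau_M$ give $\E[\|\fE_\alpha(u^\tau)\|_{L^1(0,T)}] = \int_0^T \E[\fE_\alpha(\eta^\tau_M)]\,\dd t = T\,\E[S_M] \le T\sup_M\E[S_M] < \infty$ by~\eqref{e:Nwhitenoise}, and Markov's inequality then yields~\eqref{e:Nstationary} with $C(\tau,\alpha,T,\delta) := \delta^{-1}T\sup_M\E[S_M]$. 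The one genuinely delicate step is the core estimate of Step 2: the polynomial weight $\langle x\rangle^{-\alpha}$ must beat the typical logarithmic growth $\sup_{|x|\le L}|\eta^\tau_M(x)| \sim 2\sigma\sqrt{\log L}$ of the Gaussian maximum, against which $e^{\kappa\eta^\tau_M(x)^2}$ behaves like the small power $|x|^{4\kappa\sigma^2}$ — a critical-type balance, which is exactly why the threshold is $\alpha>4\kappa$ and why the argument must be run with the extra $\varepsilon$ of room in the Borell--TIS exponent rather than with any cruder bound on $\E[e^{\kappa\sup_{Q_z}|\eta^\tau_M|^2}]$. A secondary, purely bookkeeping, nuisance (handled in Step 1) is keeping every constant uniform in $M$ despite $\eta^\tau_M$ losing exact stationarity at the seams of the fundamental domain.
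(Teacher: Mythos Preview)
Your proof follows the paper's strategy --- union bound over unit cubes, Borell--TIS on each, tail integration, then Tonelli and Markov for~\eqref{e:Nstationary} --- with one substantive slip and one inefficiency. The slip is your assertion that $\sigma^2 = \|\rho_\tau\|_{L^2}^2 = 1$: in fact a direct computation gives $\|\rho_\tau\|_{L^2}^2 = \tau\nu_\tau^{-1/2}$, so for $\tau \neq 1$ your Step~2 delivers the threshold $\alpha > 4\kappa\tau\nu_\tau^{-1/2}$, not the stated $\alpha > 4\kappa$. The paper handles this by first invoking the scaling~\eqref{e:GoodScaling} to reduce to $\tau = 1$ (the constants being allowed to depend on $\tau$), where indeed $\sigma^2 = 1$ and your argument goes through verbatim.

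The inefficiency is in the $M$-uniformity. The paper observes in one line that $\fE_\alpha(\eta^\tau_M) \leq \fE_\alpha(\eta^\tau)$ pointwise: periodisation merely repeats the values of $\eta^\tau|_{[-M,M]^2}$ at points with larger $\langle\cdot\rangle$ (since $\langle x+k\rangle \geq \langle x\rangle$ for $x\in[-M,M]^2$ and $k\in 2M\Z^2$), so the weighted sup cannot increase. This reduces the whole problem to $M=\infty$ and bypasses your Step~1 seam-bookkeeping entirely. Your route also works --- though ``dominated by a sum of two shifted copies'' is not quite the right description: a.e.\ $\eta^\tau_M$ equals a \emph{single} shifted copy of $\eta^\tau$, which is already enough for your uniform variance and $m_0$ bounds --- but the paper's pointwise inequality is cleaner.
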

\begin{proof}
By a straightforward scaling argument using~\eqref{e:GoodScaling} 
and since we do keep track of the dependence in $\tau$, we take $\tau=1$. 
First, \eqref{e:Nstationary} follows by~\eqref{e:Nwhitenoise}, after a straightforward application of 
Markov's inequality and Fubini's theorem.
For \eqref{e:Nwhitenoise}, note that we always have $\fE_\alpha(\eta^1_M) \leq \fE_\alpha(\eta^1)$, 
as can be directly checked using that for any $x \in [-M, M]^2$ and $k \in 2 M \Z^2$, 
$\langle x+k \rangle \geq \langle x \rangle$. 
Thus it is sufficient to prove that $\E[\fE_\alpha(\eta^1)]$ is finite.

To prove that the latter, recall that $\eta^1$ is a Gaussian field such that, for all $y\in\R^2$, $\eta^1(y)$ 
is a centred Gaussian random variable of variance $1$. Define, for $k\in\Z^2$, the random variable 
$S_k = \sup_{x \in [-1/2,1/2]^2} |\eta^1(k+x)|$ and note that, by stationarity, 
the $S_k$'s are identically distributed and each of them is almost surely finite. By Borell--TIS inequality, 
we thus deduce that there exists $C > 0$ such that for every $A > 0$ it holds
\begin{equ}
\P(S_k > A) \leq C \exp(- A^2/2) \, . 
\end{equ}
By union bound and using that there exists $C' > 0$ such that $\langle x+k \rangle \geq (C')^{-1} \langle k \rangle$ for every $k \in \Z^2$ and $x \in [-1/2,1/2]^2$, we deduce that for every $\lambda > 0$ it holds
\begin{equ}
\P[\fE_\alpha(\eta^1) > \lambda] \leq \sum_{k \in \Z^2} \P\Big(e^{\kappa S_k^2}/\langle k \rangle^\alpha > C' \lambda\Big)\lesssim \sum_{k \in \Z^2} \frac{1}{\lambda^{\frac{1}{2 \kappa}} \langle k \rangle^{\frac{\alpha}{2 \kappa}}} \lesssim \frac{1}{\lambda^{\frac{1}{2 \kappa}}} \, ,
\end{equ}
where, in the last bound, we used that $\alpha > 4 \kappa$. 
Since $\kappa < 1/4$ (actually, here we only need $\kappa < 1/2$), 
we get $\E[\fE_\alpha(\eta^1)] < +\infty$, which concludes the proof.
\end{proof}

We are now ready to complete the proof of Proposition~\ref{p:MollifSol}. 

\begin{proof}[of Proposition~\ref{p:MollifSol}]
Uniqueness follows immediately from Lemma \ref{l:Apriori} applied with $F = G$ and 
$X^\tau_t = Y^\tau_t = K^\tau_t \eta^\tau + \int_0^t K^\tau_{t-s} \nabla \cdot \sqrt{R_\tau} \, \vec{\xi}^\tau(\dd s)$.

To prove existence and the stated properties, let $(P_M)_M$ be a sequence of polynomials 
such that $\|F-P_M\|_{\exp}\to 0$ as $M\to\infty$, whose existence is guaranteed by 
Lemma~\ref{l:DensityPolynomial}, and for any $M$, let $u^{\tau, M}$ 
be the unique (periodic) mild solution of~\eqref{e:GSBE} driven by $(\eta^\tau_M, \vec{\xi}^\tau_M)$, 
defined according to~\eqref{e:PerNoise},  
and with nonlinearity $P_M$. That such solution exists, is unique, Markov, $\P_M^1$-stationary, skew-symmetric  
and satisfies the weak formulation is shown in~\cite[Lemma~A.4]{CMT} for the 
case of $P_M$ quadratic, but the arguments therein work {\it mutatis mutandis} for a generic polynomial 
of any given degree. In particular,~\eqref{e:SolSPDE} holds and $\Prob$-almost surely, 
for every $t\geq 0$, we have 
\begin{equ}
u^{\tau,M}_t = X^{\tau,M}_t + \int_0^t K^\tau_{t-s} \cN^\tau_{P_M}[u^{\tau,M}_s] \, \dd s \, ,
\end{equ}
where $X^M$ is defined by 
\begin{equ}
 X^M_t \eqdef K^\tau_t \eta^\tau_M + \int_0^t K^\tau_{t-s} \nabla \cdot \sqrt{R_\tau} \, \vec{\xi}^\tau(\dd s) \, .
\end{equ}
We want to show that the sequence $(u^{\tau,M})_M$ is Cauchy. 
Let $4 \kappa < \alpha < \beta < 1$ and $T > 0$. Lemma~\ref{l:Apriori}, 
applied to $u = u^{\tau,M}$ and $v = u^{\tau,M'}$, gives that $\Prob$-almost surely for every $(t,x)\in[0,T]\times\R^2$, 
we have 
\begin{equs}
\thinspace & 
|u_t^{\tau,M}(x) - u_t^{\tau,M'}(x)| \\[0.5em]
& \leq \ 
C_M e^{C_M \langle x \rangle^\beta} \, \Big(\|P_M - P_{M'}\|_{\exp} + \sup_{(t,x) \in [0,T]\times\R^2} e^{- \langle x \rangle^\beta}\, |X^{\tau,M}_t(x) - X^{\tau,M'}_t(x)| \Big) \, ,
\end{equs}
where we recall that $C_M$ is a random positive constant that depends continuously on 
$\|P_M\|_{\exp}$, $\|\fE(u^{\tau,M})\|_{L^1(0,T)},\|\fE(u^{\tau,M'})\|_{L^1(0,T)}$ 
(the other quantities on which $C_M$ depends are independent of $M$). 
Since the sequence $(\|P_M\|_{\exp})_M$ converges and the family $(\|\fE_\alpha(u^M)\|_{L^1(0,T)})_{M \geq 1}$ is tight
in view of Lemma \ref{l:Nwhitenoise} (recall that for every $M$, $u^M$ is $\P_M^1$-stationary), 
we conclude that the sequence $(C_M)_M$ is tight. Further, the two summands 
in parenthesis converge to $0$ as $M,M'\to\infty$: the first deterministically by Lemma~\ref{l:DensityPolynomial}, 
the second in probability thanks to~\cite[Lemma A.2]{CMT} (this would be even true replacing the 
$\exp(- \langle x\rangle^\beta)$ by any $\langle x\rangle^{- \eps}$, $\eps > 0$). 

Therefore, as $\beta<1$, we deduce that the sequence of process $(u^{\tau,M})$ is 
Cauchy in the complete metric space consisting of processes living in 
$\cC([0,T], \cX_1)$, equipped with the distance $d(u,v) = \E[1 \wedge \sup_{t \in [0,T]} d_1(u_t,v_t)]$. 
Thus, up to extracting a subsequence, we conclude that there exists a process $u^\tau\in\cC([0,T], \cX_1)$ 
which is the $\Prob$-almost surely limit (along the subsequence) of $(u^{\tau,M})_{M \geq 1}$ in $\cC([0,T], \cX_1)$, 
and, via a diagonal argument, one can prove that $u^\tau \in \cC(\R_+, \cX_1)$ and that 
the convergence takes place in the same space. 

It is then immediate to pass through the limit all the properties of $u^{\tau,M}$. In particular, 
the limit is $(\cF_t)_{t \geq 0}$-adapted; $\P^1$-stationary, and thus $t\mapsto \fE(u^\tau_t)\in L^1_{\loc}(\R_+)$; 
it satisfies~\eqref{e:SolSPDE}, and therefore it is the necessarily unique mild solution to~\eqref{e:GSBE} 
driven by $(\eta^\tau, \vec{\xi}^\tau)$ according to Definition~\ref{def:SolSPDE}; 
the weak formulation~\eqref{e:WeakFormulation} holds; it is skew-symmetric. 
As for the Markov property, it is an immediate consequence of the existence, 
adaptedness and uniqueness of solutions of \eqref{e:GSBE}. 
\end{proof}

\subsection{The semigroup and the generator of the generalised SBE}\label{sec:generator}

In view of Proposition~\ref{p:MollifSol}, for every $\tau > 0$,~\eqref{e:GSBE} admits a
unique solution $u^\tau$ which is a stationary Markov process.
Thus, its semigroup, $P^\tau$, given by 
\begin{equ}[e:semigroup]
P_t^\tau \phi \eqdef \Exp^\tau\big[\phi(u^\tau_t) \, \big| \, \cF_0\big] \, ,
\end{equ}
is well-defined. Let us state a preliminary lemma providing basic properties of $P^\tau$ and 
the generator $\cL^\tau$ associated to it. 

\begin{lemma}\label{lem:Contractivity}
Let $\theta\in[1,\infty)$. Then, $(P^\tau_t)_{t\geq0}$ in~\eqref{e:semigroup}
defines a contractive strongly continuous semigroup on $\L^\theta(\P^\tau)$
such that, for every $\phi\in \L^\theta(\P^\tau)$, 
\begin{equ}[e:Contractivity]
\|P_t^\tau \phi\|_{\mathbb L^\theta(\mathbb P^\tau)} \leq \|\phi\|_{\mathbb L^\theta(\mathbb P^\tau)}
\end{equ}
holds uniformly in $\tau$ and also for $\theta=\infty$.
In particular,
the generator $\cL^\tau \colon \mathrm{Dom}(\gen) \subset \L^\theta(\P^\tau) \to \L^\theta(\P^\tau)$
associated to $P^\tau$ is closed and, for every $\mu>0$, 
its resolvent $(\mu-\gen)^{-1}\colon \L^2(\P^\tau)\to\L^2(\P^\tau)$ is bounded uniformly in $\tau$ by $\mu^{-1}$ 
and has range given by $\mathrm{Dom}(\gen)$.
\end{lemma}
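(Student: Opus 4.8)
The plan is to derive all four assertions from the structural properties of $u^\tau$ established in Proposition~\ref{p:MollifSol} — stationarity, the Markov property, and path-continuity in $\cX_1$ — combined with the classical Hille--Yosida theory for contraction semigroups. Note first that, since $u^\tau_0 = \eta^\tau$ and $\cF_0$ is, up to completion, generated by $\eta$ alone, the Markov property from Proposition~\ref{p:MollifSol} guarantees that $P^\tau_t\phi = \E^\tau[\phi(u^\tau_t)\mid\cF_0]$ is a function of $\eta^\tau$, hence a genuine element of $\L^\theta(\P^\tau)$, and the semigroup identity $P^\tau_{t+s} = P^\tau_t P^\tau_s$ is a direct consequence of the same property.

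For the contractivity bound~\eqref{e:Contractivity} I would simply invoke stationarity, namely $u^\tau_t\eqlaw\eta^\tau$ for every $t\geq0$: by the conditional Jensen inequality, for $\theta\in[1,\infty)$,
\begin{equ}
\|P^\tau_t\phi\|_{\L^\theta(\P^\tau)}^\theta = \E^\tau\big[\,\abs[1]{\E^\tau[\phi(u^\tau_t)\mid\cF_0]}^\theta\,\big] \leq \E^\tau\big[\,\abs[0]{\phi(u^\tau_t)}^\theta\,\big] = \E^\tau\big[\,\abs[0]{\phi(\eta^\tau)}^\theta\,\big] = \|\phi\|_{\L^\theta(\P^\tau)}^\theta \,,
\end{equ}
while for $\theta=\infty$ the inequality $\abs[0]{P^\tau_t\phi}\leq\|\phi\|_{\L^\infty(\P^\tau)}$ is immediate. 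In every case the contraction constant equals $1$, so the bound is trivially independent of $\tau$.

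The one step calling for a little care is \emph{strong continuity} on $\L^\theta(\P^\tau)$, $\theta<\infty$. Applying conditional Jensen exactly as above to $P^\tau_t\phi - \phi = \E^\tau[\phi(u^\tau_t) - \phi(u^\tau_0)\mid\cF_0]$ reduces matters to showing that $\phi(u^\tau_t)\to\phi(\eta^\tau)$ in $\L^\theta(\P^\tau)$ as $t\to0$. I would first establish this for $\phi$ a bounded continuous cylinder random variable built from test functions in $\cC^\infty_c(\R^2)$, which form a dense subclass of $\L^\theta(\P^\tau)$: the $\mathbf P$-a.s.\ continuity of $t\mapsto u^\tau_t$ in $\cX_1$ from Definition~\ref{def:SolSPDE} forces locally uniform convergence $u^\tau_t\to\eta^\tau$ and hence $u^\tau_t(h)\to\eta^\tau(h)$ for every compactly supported $h$, whereupon the boundedness of $\phi$ lets us conclude by dominated convergence. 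The passage to arbitrary $\phi\in\L^\theta(\P^\tau)$ is then a routine $3\varepsilon$-argument combining density with the uniform contractivity~\eqref{e:Contractivity}.

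Once $(P^\tau_t)_{t\geq0}$ is known to be a strongly continuous contraction semigroup on $\L^\theta(\P^\tau)$, the remaining assertions follow from Hille--Yosida theory: its generator $\cL^\tau$ is automatically closed and densely defined; for every $\mu>0$ the parameter $\mu$ lies in the resolvent set of $\cL^\tau$, with $(\mu-\cL^\tau)^{-1} = \int_0^\infty e^{-\mu t}P^\tau_t\,\dd t$, so that
\begin{equ}
\|(\mu-\cL^\tau)^{-1}\phi\|_{\L^2(\P^\tau)} \leq \int_0^\infty e^{-\mu t}\,\|P^\tau_t\phi\|_{\L^2(\P^\tau)}\,\dd t \leq \mu^{-1}\,\|\phi\|_{\L^2(\P^\tau)} \,,
\end{equ}
the bound being uniform in $\tau$ precisely because~\eqref{e:Contractivity} is; and, since $\mu-\cL^\tau$ is by construction a bijection from $\mathrm{Dom}(\cL^\tau)$ onto $\L^2(\P^\tau)$, the range of $(\mu-\cL^\tau)^{-1}$ is exactly $\mathrm{Dom}(\cL^\tau)$. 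The only non-formal ingredient in the argument is the path-continuity of $u^\tau$ in $\cX_1$, which is already available, so I do not expect a serious obstacle; the sole subtlety to track is the restriction to compactly supported test functions in the strong-continuity step, imposed by the fact that $u^\tau$ a priori lives only in $\cX_1$ (cf.\ Remark~\ref{rem:Growth}).
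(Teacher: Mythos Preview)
Your argument is correct and matches the paper's approach essentially verbatim: the paper simply refers to the analogous result in~\cite[Lemma~2.6]{CMT}, noting precisely the one subtlety you identify, namely that strong continuity must be verified on bounded cylinder functionals built from \emph{compactly supported} test functions because $u^\tau$ lives only in $\cX_1$ (cf.\ Remark~\ref{rem:Growth}). Your sketch of the contractivity via conditional Jensen plus stationarity, the density/$3\varepsilon$ extension, and the Hille--Yosida consequences is exactly the intended route.
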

\begin{proof}
The proof follows exactly the same steps as that of~\cite[Lemma 2.6]{CMT} 
and is thus omitted. The only point in which some care is needed is in the proof of 
the strong continuity of the semigroup: in view of the continuity implied by~\eqref{e:Contractivity}, 
it suffices to check it on a dense subset of $\L^\theta(\P^\tau)$, $\theta\in[1,\infty)$, 
which should here be taken to be 
the set of bounded cylinder functionals depending on smooth compactly supported test functions. 
In the above-mentioned reference, bounded cylinder functions 
depending on more general Schwartz test functions were used instead, but in the present context, a priori, $u^\tau_\cdot\in\cX_1$ 
cannot be tested against these (see Remark~\ref{rem:Growth}). 
\end{proof}

Our next goal is to identify a suitable subset of $\L^2(\P)$ (that, by abusing notation, 
is identified from here on with the Fock space $\fock{}{}{\tau}$) which is contained in the domain of the generator,  
on which the action of $\gen$ is explicit and which contains all those functionals we need 
in order to establish the convergence stated in our main theorem. 
In view of the estimates in Section~\ref{s:gen_est_nonq}, it turns out 
that a convenient choice is that of the subspace $\core$ consisting of 
$f = (f_n)_{n \geq 0} \in \fock{}{}{\tau}$ such that for every $n$ the Fourier transform of $f_n$ 
is compactly supported and $f_n\neq 0$ only for finitely many $n$'s. 
An example of $\phi\in\core$ is $\phi=\otimes^m h\in\fock{m}{}{\tau}$ (corresponding to 
$m! H_m(\eta^\tau(h))\in\L^2(\P^\tau)$ for $H_m$ the $m$-th Hermite polynomial in~\eqref{e:HermitePol}) 
for $m\in\N$ and 
$h$ such that $\mathrm{Supp}(\cF(h))$ is compact. 
Notice that, even for such a simple example, since $h$ cannot have compact support in real space 
it is not clear that $t\mapsto \phi(u^\tau_t)$ is continuous (see Remark~\ref{rem:Growth}) 
and thus It\^o's formula cannot be applied to easily conclude that $\phi\in\mathrm{Dom}(\gen)$. 

In the next lemma, we prove a slightly more general statement, namely, 
that cylinder random variables are contained in $\mathrm{Dom}(\gen)$ 
and thus infer that is $\core$ as well. 

\begin{lemma}\label{l:CylDomain}
Let $\tau>0$ be fixed and $\cL^\tau$ the generator associated to the semigroup $P^\tau$ in~\eqref{e:semigroup}. 
Then, any cylinder function $\phi$, i.e. $\phi=f(\eta^\tau(h_1),\dots,\eta^\tau(h_n))$ 
for $f\colon \R^n\to\R$ smooth and of at most polynomial growth at infinity, and $h_1,\dots,h_n\in\cS(\R^2)$, 
belongs to $\mathrm{Dom}(\gen)$ and we have $\gen\phi(\eta^\tau)=S^\tau\phi(\eta^\tau)+A^\tau\phi(\eta^\tau)$, 
where 
\begin{equs}[e:GenCylFunctional]
S^\tau\phi(\eta^\tau)\eqdef&\sum_{i=1}^n \big\langle \frac{1}{2} \nabla \cdot R_\tau \nabla \eta^\tau, h_i \big\rangle_{L^2(\R^2)} \, \partial_i f(\eta^\tau(h_1),\dots,\eta^\tau(h_n))  \\
 &\qquad+ \frac{1}{2} \sum_{1 \leq i,j \leq n} \langle R_{\tau} \nabla h_i, \nabla h_j\rangle_{L^2(\R^2)} \, \partial_i \partial_j f(\eta^\tau(h_1),\dots,\eta^\tau(h_n))\,,\\
 A^\tau\phi(\eta^\tau)\eqdef&\sum_{i=1}^n \big\langle \cN^\tau_F[\eta^\tau], h_i \big\rangle_{L^2(\R^2)} \, \partial_i f(\eta^\tau(h_1),\dots,\eta^\tau(h_n))\,.
\end{equs} 
As a consequence, $\core\subset \mathrm{Dom}(\gen)$. 
\end{lemma}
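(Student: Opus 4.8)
\medskip

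The plan is to reduce the claim to an application of the It\^o formula along the approximating sequence $(u^{\tau,M})_M$ constructed in the proof of Proposition~\ref{p:MollifSol}, followed by a limit passage in $M$ and then a characterisation of the generator via the semigroup. First I would observe that the two difficulties in applying It\^o's formula directly to $t\mapsto \phi(u^\tau_t)$ are that (i) $u^\tau$ only lives in $\cX_1$, so the pairings $\langle u^\tau_t, h_i\rangle$ for $h_i \in \cS(\R^2)$ are not \emph{a priori} well-defined, and (ii) $u^\tau$ is obtained only as a limit, so we do not have a direct semimartingale decomposition for it. For the periodic polynomial approximations $u^{\tau,M}$, however, both issues disappear: the noise $\eta^\tau_M$ and nonlinearity $P_M$ are such that $u^{\tau,M}$ is a genuine (finite-volume) semimartingale for which the weak formulation~\eqref{e:WeakFormulation} with $P_M$ in place of $F$ holds for \emph{all} Schwartz test functions (not merely compactly supported ones, since on the torus this distinction is vacuous, and for the extended-by-periodicity process the growth issue of Remark~\ref{rem:Growth} does not arise). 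Thus for fixed $M$ one may apply It\^o's formula to $f(\langle u^{\tau,M}_t, h_1\rangle, \dots, \langle u^{\tau,M}_t, h_n\rangle)$ and obtain
\begin{equ}
	\phi(u^{\tau,M}_t) = \phi(u^{\tau,M}_0) + \int_0^t \big(S^{\tau,M}\phi + A^{\tau,M}\phi\big)(u^{\tau,M}_s)\, \dd s + (\text{martingale})\,,
\end{equ}
where $S^{\tau,M}, A^{\tau,M}$ are as in~\eqref{e:GenCylFunctional} but with $\eta^\tau$ replaced by $u^{\tau,M}_s$ and $\cN^\tau_F$ by $\cN^\tau_{P_M}$.

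\medskip

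Next I would take expectations, conditionally on $\cF_0$, to kill the martingale and write, for the $M$-indexed semigroup $P^{\tau,M}$,
\begin{equ}
	P^{\tau,M}_t \phi - \phi = \int_0^t P^{\tau,M}_s\big(S^{\tau,M}\phi + A^{\tau,M}\phi\big)\, \dd s\,.
\end{equ}
The key quantitative input is to pass to the limit $M\to\infty$ in this identity in $\L^2(\P^\tau)$, uniformly in $t$ on compacts. For the left-hand side, $\phi(u^{\tau,M}_\cdot) \to \phi(u^\tau_\cdot)$ almost surely by the $\cX_1$-convergence established in the proof of Proposition~\ref{p:MollifSol} (here one uses that $\phi$ is a cylinder function of fixed Schwartz test functions, together with the growth estimate~\eqref{e:GrowthCond} and stationarity, to upgrade a.s.\ convergence to $\L^2$ convergence via uniform integrability — the stationarity giving $u^{\tau,M}_t \eqlaw \eta^\tau_M$ and the moment bounds on Gaussians controlling all polynomial moments of $\phi$). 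For the right-hand side, one needs $S^{\tau,M}\phi + A^{\tau,M}\phi \to S^\tau\phi + A^\tau\phi$ in an appropriate sense; the quadratic form part $S^{\tau,M}$ converges because $\nabla\cdot R_\tau\nabla$ and the second-order coefficients are unchanged and only $u^{\tau,M}$ varies, while the antisymmetric part $A^{\tau,M}$ involves $\langle \cN^\tau_{P_M}[u^{\tau,M}_s], h_i\rangle$, which converges to $\langle \cN^\tau_F[u^\tau_s], h_i\rangle$ using the density $\|F - P_M\|_{\exp}\to 0$ from Lemma~\ref{l:DensityPolynomial} together with the exponential-integrability estimate~\eqref{e:Nalpha}/\eqref{e:Nwhitenoise} and the smoothing by $\rho_\tau^{*2}$ in the definition~\eqref{e:GSBEnnlin} of $\cN_F^\tau$. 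Combining, one obtains in the limit
\begin{equ}
	P^\tau_t \phi - \phi = \int_0^t P^\tau_s\big(S^\tau\phi + A^\tau\phi\big)\, \dd s\,,
\end{equ}
with the integrand in $\L^2(\P^\tau)$; dividing by $t$ and letting $t\downarrow 0$, using the strong continuity of $P^\tau$ from Lemma~\ref{lem:Contractivity}, yields $\phi\in\mathrm{Dom}(\gen)$ with $\gen\phi = S^\tau\phi + A^\tau\phi$.

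\medskip

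Finally, the statement $\core\subset\mathrm{Dom}(\gen)$ is immediate: an element $f=(f_n)_{n\geq 0}\in\core$ corresponds, via the isometry $I^\tau$, to a finite sum of Wiener--It\^o integrals $I^\tau_n(f_n)$ with $\widehat{f_n}$ compactly supported, and each such integral is a polynomial in finitely many $\eta^\tau(h)$'s with $h\in\cS(\R^2)$ (indeed one may realise $I^\tau_n(f_n)$ as a cylinder function by expanding $f_n$ in a suitable orthonormal basis of Schwartz functions and truncating — the compact Fourier support ensures smoothness of the test functions), hence a cylinder function of the type covered by the first part; $\mathrm{Dom}(\gen)$ being a linear space closes the argument. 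I expect the main obstacle to be the limit passage in the antisymmetric term $A^{\tau,M}\phi \to A^\tau\phi$ inside the time integral and in $\L^2(\P^\tau)$: one must control $\langle (\cN^\tau_F - \cN^\tau_{P_M})[u^{\tau,M}_s], h_i\rangle$ uniformly enough in $s$, which requires combining the $\|\cdot\|_{\exp}$-density of polynomials with the tightness of $\|\fE_\alpha(u^{\tau,M})\|_{L^1(0,T)}$ from Lemma~\ref{l:Nwhitenoise} and dominated convergence — essentially the same bundle of estimates that powered the Cauchy argument in Proposition~\ref{p:MollifSol}, now applied to the generator rather than to the solution itself.
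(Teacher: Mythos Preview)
Your strategy via the $M$-approximations is different from the paper's and, as written, has a genuine gap at the limit step. The issue is precisely the one flagged in Remark~\ref{rem:Growth}: for $h_i\in\cS(\R^2)$ that are not compactly supported, the pairing $u^\tau_t(h_i)$ is \emph{not} pathwise defined, since $u^\tau_t\in\cX_1$ may grow like $e^{C\langle x\rangle}$ while Schwartz functions only decay faster than any polynomial. Hence your claim that ``$\phi(u^{\tau,M}_\cdot)\to\phi(u^\tau_\cdot)$ almost surely by the $\cX_1$-convergence'' has no meaning: there is no almost-sure target. The functional $\phi(u^\tau_t)$ exists only as an $\L^2(\Prob)$ object (via stationarity $u^\tau_t\eqlaw\eta^\tau$), and $\cX_1$-convergence of $u^{\tau,M}$ gives you nothing about convergence of $u^{\tau,M}_t(h_i)$ for non-compactly-supported $h_i$. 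A similar problem affects the integrand $A^{\tau,M}\phi(u^{\tau,M}_s)$: you need $\langle\cN^\tau_{P_M}[u^{\tau,M}_s],h_i\rangle\to\langle\cN^\tau_F[u^\tau_s],h_i\rangle$, but the latter again involves a pairing of an $\cX_1$ object against a Schwartz function.

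The paper's route is more direct and avoids this entirely. It first restricts to \emph{good} cylinder functionals, where both $f$ and the $h_i$ are compactly supported; for these, the weak formulation~\eqref{e:WeakFormulation} (which is only stated for $h\in\cC^\infty_c$) applies directly to $u^\tau$ itself, so It\^o's formula gives the semimartingale decomposition without any $M$-approximation. Then the paper proves a quantitative continuity estimate for $\phi\mapsto(\phi,\gen\phi)$ on good cylinder functionals in terms of $\|h_i\|_{L^1}$, $\|\nabla h_i\|_{L^2}$ and Sobolev norms of $f$, and uses the closedness of $\gen$ (Lemma~\ref{lem:Contractivity}) together with density to extend to general Schwartz $h_i$ and to $\core$. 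Your final paragraph has a related slip: elements of $\core$ are not cylinder functions in general---a kernel $f_n$ with $\hat f_n$ compactly supported need not be a \emph{finite} sum of tensor products---so ``$\mathrm{Dom}(\gen)$ being a linear space'' is not enough; you need exactly the graph-norm approximation plus closedness argument that the paper supplies.
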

\begin{proof}
We will first derive the action of $\gen$ on {\it good} cylinder functionals, 
i.e. cylinder functionals $\phi=f(\eta^\tau(h_1),\dots,\eta^\tau(h_n))$ such that both $f$ and 
$h_1,\dots,h_n$ have compact support, and then extend it cylinder functionals 
using that $\gen$ is a closed operator as shown in Lemma~\ref{lem:Contractivity}. 

Let $\phi=f(\eta^\tau(h_1),\dots,\eta^\tau(h_n))$ be a good cylinder functional.   
Since $v^\tau$ takes values in $\cC(\R_+, \cX_1)$ and the test functions $h_1,\dots,h_n$ 
belong to $\cC_c^\infty(\R^2)$, the weak formulation \eqref{e:WeakFormulation} 
and Itô's formula imply that, $\mathbf{P}$-almost surely, for every $t \geq 0$
\begin{equ}[e:ItoFormula]
\Phi(u^\tau_t) = \Phi(u_0^\tau) + \int_0^t L^\tau \Phi(u_s^\tau) \, \dd s + M^\tau_t(\Phi) \, , 
\end{equ}
where $L^\tau\phi=S^\tau\phi+A^\tau\phi$ with $S^\tau\phi,\,A^\tau\phi$ 
defined according to~\eqref{e:GenCylFunctional}, and 
$(M^\tau_t(\Phi))_{t \geq 0}$ is a true continuous martingale. 
Notice that the functional $L^\tau \Phi(\cdot)$ is continuous on $\cX_1$ 
(so that $L^\tau \Phi(u^\tau)$ is $\mathbf{P}$-almost surely continuous) and 
$L^\tau \Phi \in \mathbb L^{2+\eps}(\mathbb P^\tau)$ (so that 
$\sup_{s \geq 0} \E[|L^\tau \Phi(v_s^\tau)|^{2+\eps}] < +\infty$ by stationarity). 
To check the last claim, recall the definition of $\|\cdot\|_{\exp}$ in~\eqref{e:AssF} 
and fix $\eps\in(0,\frac1{2\kappa}-2)$. Since $\|F\|_{L^{2+\eps}(\cN(0,1))} \lesssim \|F\|_{\exp}$, 
it follows that, for any $h \in \cC^\infty_c(\R^2)$ and $p\geq 1$, it holds
\begin{equ}[e:NonLinL2+eps]
\E[| \cN^\tau_F[\eta^\tau](h)|^{2+\eps}]^{\frac{1}{2+\eps}} \lesssim_{\tau} \|F\|_{\exp} \, \|h\|_{L^1(\R^2)} \, ,\quad\text{and}\quad \E[|\eta^\tau( h )|^p]^{\frac{1}{p}} \lesssim_{\tau} \|h\|_{L^1(\R^2)}\,.
\end{equ}
Therefore, we can make  sense of both $\cN^\tau_F[\eta^\tau](h)$ and $\eta^\tau( h )$ as elements of $\L^2(\P)$ 
for any $h \in L^1(\R^2)$ (and they satisfy~\eqref{e:NonLinL2+eps}). 

The continuity and integrability of $L^\tau \Phi$, together with the continuity of $s\mapsto v^\tau_s$ ensure 
that both $\Prob$-almost surely and in $\L^2(\P^\tau)$ we have 
\begin{equ}
\frac{1}{t} \int_0^t L^\tau \Phi(v_s^\tau) \, \dd s \xrightarrow[t \to 0]{} L^\tau \phi(v_0^\tau) \, ,
\end{equ}
which, by taking conditional expectation with respect to $\cF_0$ and using~\eqref{e:ItoFormula}, 
leads to  
\begin{equ}
\frac{P_t^\tau \Phi - \Phi}{t} \xrightarrow[t \to 0]{\mathbb L^2(\mathbb P)} L^\tau \phi(v_0^\tau) \, . 
\end{equ}
In other words, $\Phi$ belongs to the domain of $\gen$ and $\gen \Phi = L^\tau \Phi$.
\medskip

Now, let $\phi,\phi'$ be two good cylinder functionals with $\phi=f(\eta^\tau(h_1),\dots,\eta^\tau(h_n))$ and 
$\phi'=f'(\eta^\tau(h'_1),\dots,\eta^\tau(h'_n))$. Thanks to the fact that $\gen \Phi = L^\tau \Phi$,
~\eqref{e:NonLinL2+eps} and H\"older inequality, 
it is straightforward to check that for every $N \geq 0$ it holds that
\begin{equs}
\E[|&\Phi(\eta^\tau) - \Phi'(\eta^\tau)|^2] \vee \E[|\gen\Phi(\eta^\tau) - \gen \Phi'(\eta^\tau)|^2] \\
&\lesssim_{F,  \tau, n, N} \max_{1 \leq i \leq n} \big\{1, \|h_i\|_{L^1(\R^2)}, \|\nabla h_i'\|_{L^2(\R^2)}\big\}^{2 N+4} \times \bigg(\sup_{y_{1:n} \in \R^{n}} \frac{|(f' - f)(y_{1:n})|^2}{1+|y_{1:n}|^{2N}}  \\
&\qquad+\sup_{y_{1:n} \in \R^{n}, \, \alpha \in \{0,3\}^2} \frac{|\partial_\alpha^{|\alpha|} f(y_{1:n})|^2}{1+|y_{1:n}|^{2N}} \max_{1 \leq i \leq N }\{\|h'_i - h_i\|_{L^1(\R^2)}, \|\nabla (h_i' - h_i)\|_{L^2(\R^2)}\}\bigg) \, ,
\end{equs}
which implies that $\gen$ is continuous on the set of good cylinder functionals endowed with its natural 
Fr\'echet metric on the set of cylinder functionals. Since the set of good cylinder functionals 
is clearly dense both in the set of cylinder functionals and in $\core$, 
and further $\gen$ is closed on $\L^2(\P^\tau)$, it follows that 
both cylinder functionals and $\core$ belong to the domain of $\cL^\tau$ and~\eqref{e:GenCylFunctional} holds, 
so that the proof is complete. 
\end{proof}

We are now ready to derive an explicit expression for the action of $\gen$ on $\core\subset\fock{}{}{\tau}$. 
For $n \geq 0$, we write $\core_n = \core \cap \fock{n}{}{}$.

\begin{lemma}\label{lem:ActionGen}
Let $\tau>0$ be fixed. Let $\gensyx$, $\gensyy$ and $\gensy$ be the diagonal operators 
acting on $\psi \in \core_n$ as
\begin{equs}
\cF(\cL_0^{\fe_i}\psi)\,(p_{1:n}) &\eqdef -\frac{1}{2}|\fe_i\cdot p_{1:n}|^2 \, \hat\psi(p_{1:n})\,,\qquad i=1,2, \label{e:L0w} \\
\cF(\gensy \psi)\,(p_{1:n}) &\eqdef \cF\big((\nu_\tau\cL_0^{\fe_1}+\cL_0^{\fe_2})\psi\big)\,(p_{1:n})=-\frac{1}{2} \,  |\sqrt{R_\tau} p_{1:n}|^2 \, \hat\psi(p_{1:n}) \, . \label{e:gensy}
\end{equs}
and, for $m \geq 1$, $j \in \{0,...,m-1\}$ and $a=m-2j-1$,  
define the operator $\genaF{m}{a}$ which, for any $n \in \N$, maps $\core_n$ to $\core_{n+a}$ and acts on $\psi \in \core_n$ as follows: if $j+1> n$ then $\genaF{m}{a}\psi=\genaF{m}{m-2j-1}\psi\equiv 0$, otherwise for $m=1$, 
\begin{equ}[e:Gena1]
\cF(\genaF{1}{0} \psi) \, (p_{1:n}) \eqdef - \iota \, \tau^{1/2} \nu_\tau^{1/2} \Big(\sum_{i=1}^n (\fe_1 \cdot p_i) \, (\Theta^\tau(p_i) - 1)\Big) \hat{\psi}(p_{1:n}) \, ,
\end{equ}
and for $m \geq 2$, 
\begin{equs}[e:Genamj]
\cF(\genaF{m}{a}\psi) \, (p_{1:n+a}) \eqdef& - \iota \, \frac{\tau^{1-\frac{m}2} \nu_\tau^{\frac{1+m}{4}}}{(2\pi)^{m-1}}\, \frac{n!}{(n+a)! (j+1)!} \times \\
&\times\sum_{I\in\Delta_{m-j}^{n+a}} (\fe_1 \cdot p_{[I]}) \int_{r_{[1:j+1]} = p_{[I]}} \Xi_{j+1}^\tau(\dd r_{1:j+1}) \, \hat{\psi}(r_{1:j+1}, p_{1:n+a\setminus I}) \, ,
\end{equs}
where for any $u \leq v$ positive integers, $\Delta_{u}^{v} \eqdef \{(i_1,\dots,i_u)\in \N^{u}\colon 1 \leq i_1 < ... < i_{u} \leq v\}$. 
We also set
\begin{equ}[e:AmA]
	\genaF{m}{} \eqdef \sum_{j=0}^{m-1} \genaF{m}{m-2j-1}\,, \qquad \text{and}\qquad
	\gena \eqdef \sum_{m \geq 1} c_m(F) \, \genaF{m}{}\,, 
	%
%
\end{equ}
the last sum converging in $\fock{}{}{\tau}$ uniformly on compact subsets of $\core$. 
Then, on $\core\subset {\rm Dom}(\gen)$, $\gen$ coincides with $\gensy + \gena$, 
$\gensy$ is symmetric and each $\genaF{m}{}$ (and thus also $\gena$) is skew-symmetric 
as it holds $(\genaF{m}{a})^* = - \genaF{m}{-a}$ for every $m \geq 1$, $a \in \{-m+1, -m+3, \dots, m-1\}$.
\end{lemma}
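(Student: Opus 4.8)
Before seeing the authors' argument, here is how I would prove Lemma~\ref{lem:ActionGen}.

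\medskip
\noindent\textbf{Reduction to tensor powers.} The plan is to reduce everything to computing $\gen$ on the special functionals $\phi = I_n^\tau(h^{\otimes n})$, where $h \in \cS(\R^2)$ has compactly supported Fourier transform and, after a harmless rescaling, $\|h\|_{L^2_\tau(\R^2)} = 1$, so that $\phi = n!\, H_n(\eta^\tau(h))$ is a good cylinder functional. By polarisation, finite linear combinations of such tensors are dense in each $\core_n$ with Fourier support in a fixed ball; since $\gen$ is closed (Lemma~\ref{lem:Contractivity}) and $\gensy$ acts boundedly there while the series defining $\gena$ will be shown to converge uniformly on such families, it suffices to verify $\gen\phi = \gensy\phi + \gena\phi$ for these $\phi$. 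For them Lemma~\ref{l:CylDomain} gives $\gen\phi = S^\tau\phi + A^\tau\phi$ with $S^\tau\phi,A^\tau\phi$ as in~\eqref{e:GenCylFunctional}, so the task splits into identifying the Fock-space forms of the two pieces.

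\medskip
\noindent\textbf{The symmetric part.} Using $H_n' = H_{n-1}$ in the present normalisation, one has $S^\tau\phi = \langle \tfrac12\nabla\cdot R_\tau\nabla\eta^\tau, h\rangle\, n\, I_{n-1}^\tau(h^{\otimes(n-1)}) + \tfrac12\langle R_\tau\nabla h,\nabla h\rangle\, n(n-1)\, I_{n-2}^\tau(h^{\otimes(n-2)})$. Recognising the first bracket as $I_1^\tau$ of the function with Fourier symbol $-\tfrac12|\sqrt{R_\tau}p|^2\hat h(p)$ and expanding $I_1^\tau(\cdot)\, I_{n-1}^\tau(h^{\otimes(n-1)})$ by~\eqref{e:ProdRule}, the chaos-$(n-2)$ contraction term cancels exactly against the second summand, and what remains is $n\, I_n^\tau(\mathrm{Sym}(\cdot \otimes h^{\otimes(n-1)}))$ with Fourier transform $-\tfrac12|\sqrt{R_\tau}p_{1:n}|^2\,\widehat{h^{\otimes n}}(p_{1:n})$, i.e.\ exactly $\gensy\phi$; this is the computation of~\cite{CMT} and I would only sketch it.

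\medskip
\noindent\textbf{The antisymmetric part.} Here lies the bulk of the work. First expand $F = \sum_{m\ge0} c_m(F) H_m$ in Hermite polynomials, which converges in $L^2(\pi_1)$ by Proposition~\ref{p:AssF} and hence, after composition with the field, in $L^2(\P^\tau)$. Since $\tau^{-1/2}\nu_\tau^{1/4}\eta^\tau(y)$ is a standard Gaussian for each $y$, the mode $H_m(\tau^{-1/2}\nu_\tau^{1/4}\eta^\tau(y))$ sits in the $m$-th chaos with the formal kernel $\tfrac1{m!}(\tau^{-1/2}\nu_\tau^{1/4})^m\delta_y^{\otimes m}$; convolving in $y$ with $\partial_1\rho_\tau^{*2}$ and pairing against $h$ turns it into $I_m^\tau$ of a kernel with Fourier weight $\iota(\fe_1\cdot p_{[1:m]})\Theta_\tau(p_{[1:m]})\hat h(p_{[1:m]})$ --- this is where $\Theta_\tau$ enters. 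Multiplying by $n\, I_{n-1}^\tau(h^{\otimes(n-1)})$ and applying~\eqref{e:ProdRule}, the chaos-$(n+a)$ component coming from the $m$-th mode is produced by the contraction of order $j$ with $a = m - 2j - 1$, and after tracking the symmetrisation, the factors $j!\binom{m}{j}\binom{n-1}{j}$, the $1/m!$ from the Hermite kernel, and the change of variables $r\mapsto -r$ on the contracted momenta, it matches $c_m(F)\,\genaF{m}{a}\phi$ of~\eqref{e:Genamj}. The mode $m=0$ contributes nothing (annihilated by $\partial_1$), while $m=1$ must be handled together with the Galilean counter-term $-c_1(F)\tau^{1/2}\nu_\tau^{1/2}\partial_1\eta^\tau$ in $\cN_F^\tau$: its $\delta_0$ part kills the $\delta_0$ inside $\Theta_\tau$ and produces exactly the factor $\Theta_\tau - 1$ of~\eqref{e:Gena1}. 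I expect the main obstacle to be precisely this bookkeeping --- making sure every combinatorial constant lands on the coefficient stated in~\eqref{e:Genamj}.

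\medskip
\noindent\textbf{Convergence and (skew-)symmetry.} For a fixed $\psi \in \core_n$ only the orders $j \in \{0,\dots,n-1\}$ survive the vanishing convention, so there are at most $n$ summands $\genaF{m}{m-2j-1}\psi$ for each $m$; bounding $\Theta_\tau$ pointwise, using the compact Fourier support of $\psi$ to control the momentum integral in~\eqref{e:Genamj}, and then invoking Stirling --- after which all powers of $\tau$ and of $2\pi$ cancel --- one gets $\|\genaF{m}{}\psi\|_{\fock{}{}{\tau}} \lesssim_{n,\psi,\tau} \mathrm{poly}(m)/\sqrt{(m-n)!}$. Combined with the bound $|c_m(F)| \lesssim \sqrt{m!}\, q^m$ for some $q = q(\kappa) \in (0,1)$ --- which is the content of Proposition~\ref{p:AssF}, equivalently the fact that $t \mapsto \E[F(Z+t)] = \sum_m c_m(F)\, t^m/m!$ is entire of order $2$ and type $<\tfrac12$, using $\kappa < \tfrac14$ --- the series $\sum_m |c_m(F)|\, \|\genaF{m}{}\psi\|_{\fock{}{}{\tau}}$ is summable, uniformly on compact subsets of $\core$; this also upgrades the chaos-by-chaos identification above to $A^\tau\phi = \gena\phi$ in $\fock{}{}{\tau}$. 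The symmetry of $\gensy$ is clear from its real, even symbol. For the skew-symmetry I would compute $\langle \genaF{m}{a}\phi,\psi\rangle_{\fock{}{}{\tau}}$ with $\phi$ in chaos $n$ and $\psi$ in chaos $n+a$ directly from~\eqref{e:Genamj} by Fubini and a change of variables, reading off $(\genaF{m}{a})^* = -\genaF{m}{-a}$ (the sign being $\overline{\iota} = -\iota$); summing over $a$ gives $(\genaF{m}{})^* = -\genaF{m}{}$ and $\gena^* = -\gena$, consistently with the skew-reversibility of $u^\tau$ from Proposition~\ref{p:MollifSol} together with $c_m(-F) = -c_m(F)$.
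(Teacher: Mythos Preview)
Your overall strategy is correct and close to the paper's: reduce to tensor powers $\phi = I_n^\tau(h^{\otimes n})$, identify $S^\tau\phi$ with $\gensy\phi$ via the standard Ornstein--Uhlenbeck computation, and obtain the antisymmetric part by the product rule~\eqref{e:ProdRule}. The main structural difference is that the paper does \emph{not} expand $F$ in its Hermite series and sum term by term. Instead it first proves the formula for $F$ a single Hermite polynomial (Lemma~\ref{l:PolGen}, which is exactly your mode-by-mode computation) and then passes to general $F$ by approximating $F$ by polynomials $P_M$ in the $\|\cdot\|_{\exp}$-norm (Lemma~\ref{l:DensityPolynomial}), showing separately that $A^\tau_{P_M}\phi \to A^\tau\phi$ via the $L^{2+\eps}$ bound~\eqref{e:NonLinL2+eps} and that $\cA^\tau_{P_M}\phi \to \cA^\tau\phi$ via Hermite-coefficient estimates. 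Your direct route is legitimate, but note that to pass from ``$F_M \to F$ in $L^2(\pi_1)$'' to ``$\langle \cN_{F_M}^\tau[\eta^\tau],h\rangle \cdot n I_{n-1}^\tau(h^{\otimes(n-1)}) \to A^\tau\phi$ in $L^2(\P^\tau)$'' you need convergence of the first factor in some $L^{2+\eps}$ (the second factor is only in $L^p$, not $L^\infty$); the paper's $\|\cdot\|_{\exp}$-approximation handles this cleanly, whereas your Hermite partial sums would need an extra uniform-integrability argument.

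There is, however, a concrete error in your convergence step. You claim $|c_m(F)| \lesssim \sqrt{m!}\, q^m$ with $q<1$ and attribute this to Proposition~\ref{p:AssF}. That proposition only gives that $\hat c_m(F) = c_m(F)/\sqrt{m!}$ decays faster than any inverse power of $m$, not exponentially. Your entire-function argument does not give exponential decay either: for $t = s+iu \in \C$ one has $|g(t)| = |\E[F(Z+t)]| \leq C\, e^{u^2/2 + \sigma s^2}$ with $\sigma = \kappa/(1-2\kappa) < 1/2$, so the maximum modulus on $|t|=r$ is attained in the \emph{imaginary} direction and gives type $1/2$, not $<1/2$; Cauchy's estimate then only yields $|\hat c_m(F)| \lesssim 1$. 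Under Assumption~\ref{Assumption:F} alone (which allows $\|F^{(n)}\| \sim \exp(\exp(o(n)))$), exponential decay of $\hat c_m(F)$ is simply false in general. The fix is immediate: the polynomial decay that \emph{is} in Proposition~\ref{p:AssF}, combined with your bound $\|\genaF{m}{}\psi\|_\tau \lesssim_{n,\tau} m^{n/2}/\sqrt{m!}$ (which is the paper's~\eqref{e:SummabilityA}), already gives $\sum_m |c_m(F)|\,\|\genaF{m}{}\psi\|_\tau \lesssim \sum_m |\hat c_m(F)|\, m^{n/2} < \infty$, uniformly on compacts of $\core$.
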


Compared to~\cite{CMT}, in the present setting the action of the generator  
is much more involved. Along the lines of the analogy made therein, 
we can think of $\fock{n}{}{\tau}$ as the span of $n$-particle states, with each 
particle carrying a momentum $p_i$, and $\gena$ as an operator acting on the 
particles by changing their number and modifying their momenta. In this perspective, while 
the $\gena$ in~\cite{CMT} coincides with 
$c_2(F)\genaF{2}{}=c_2(F)\sum_{a=\pm 1}\genaF{2}{a}$ and thus 
creates (for $a=1$) and annihilates (for $a=-1$) {\it one single} particle, 
here, $\gena$ creates or annihilates an {\it arbitrarily large number} of particles 
as the $m$ in the sum in~\eqref{e:AmA} is unbounded. 
More precisely, for $m\in\N$, $a\in\{-m+1,-m+3,\dots,m-1\}$ and $j$ such that $a=m-2j-1$, 
one can regard $\genaF{m}{a}$, 
as an operator that takes $j+1$ particles 
with momenta $r_1,\dots,r_{j+1}$ whose sum is $p_{[I]}$, for $I\subset\{1,\dots,n\}$ 
of cardinality $m-j$, and produces $m-j$ particles, 
whose respective momenta are $\{p_{i}\colon i\in I\}$. 

Since $m$ is unbounded it is not a priori obvious that $\gena$ in~\eqref{e:AmA} 
is well-defined {\it even for finite $\tau$} and, to ensure that this is the case, 
a crucial role will be played by the decay of the coefficients of the Hermite 
polynomial expansion of the nonlinearity $F$, which in turn is tightly connected 
to its regularity (see Appendix~\ref{a:DecayCoeff}). While this is the first problem 
one encounters in the proof of the above Lemma, the bulk of it 
consists of showing that the action of $\gen$ on $\core$ 
in~\eqref{e:GenCylFunctional} indeed coincides with 
the expressions in~\eqref{e:gensy} and~\eqref{e:AmA} for which 
instead we will approximate $F$ by a polynomials using Lemma~\ref{l:DensityPolynomial}, 
prove that the statement holds for such $F$ (Lemma~\ref{l:PolGen}) and 
finally verify that we are allowed to remove the approximation in $\L^2(\P^\tau)$. 

\begin{proof}
Before deriving the action of $\gen$ on the Fock space $\fock{}{}{\tau}$ and 
show that~\eqref{e:GenCylFunctional} indeed reduces to the formulas in the statement, 
let us mention that, as a consequence of Lemma~\ref{l:ContOp}, 
the operators $\gensyx$, $\gensyy$, $\gensy$ and $\genaF{m}{a}$, for 
$m \geq 1$ and $a \in \{-m+1,-m+3...,m-1\}$, respectively 
given in~\eqref{e:L0w},~\eqref{e:gensy},~\eqref{e:Gena1} and~\eqref{e:Genamj} 
are continuous on $\core$. 
For $\gena$ in~\eqref{e:AmA}, the argument is more subtle and 
relies on the quantitative estimate~\eqref{e:SummabilityA} and Assumption~\ref{Assumption:F}. 
Indeed, for $n\in\N$ and $\psi\in\core_n$, the former implies
\begin{equs}
\|\gena\psi\|_\tau\leq \sum_{m\geq 1}\frac{c_m(F)}{\sqrt{m!}}\|\sqrt{m!}\genaF{m}{}\psi\|_\tau\lesssim_n \|\psi\|_\tau \sum_{m\geq 1}\frac{c_m(F)}{\sqrt{m!}} m^{n/2}
\end{equs}
and, since by Proposition~\ref{p:cm(F)asymptotic} and Assumption~\ref{Assumption:F} $c_m(F)/\sqrt{m!}$ 
decays faster than any inverse power, the sum at the r.h.s. is bounded (by an $n$-dependent constant) 
and thus, $\gena$ is well-defined and continuous from $\core$ to $\fock{}{}{\tau}$. 

It remains to show that for $\phi\in\core$,  $\gen\phi=\gensy\phi + \gena\phi$ and that 
$\gensy$ and $\gena$ satisfy the stated symmetry properties. In view of their 
continuity, it suffices to prove them for $\phi$ in a dense subset 
of $\core$ which we take to be the set of $\phi$ of the form $I_n^\tau(h^{\otimes n}) = n! H_n( \eta^\tau(h))$ 
for $n\in\N$ and $h$ such that $\hat h\in \cC_c^\infty(\R^2)$ and $\|h\|_{L^2_\tau(\R^2)} = 1$.
Notice that such $\phi$ is a cylinder functional so that, by Lemma~\ref{l:CylDomain}, $\phi\in{\rm Dom}(\gen)$ 
and $\gen\phi=S^\tau\phi+A^\tau\phi$, with $S^\tau\phi,\,A^\tau\phi$ as in~\eqref{e:GenCylFunctional}. 
As shown in, e.g.,~\cite{GPGen}, the Fock space representation of $S^\tau\phi$ coincides with $\gensy\phi$ 
in~\eqref{e:gensy}, which is clearly symmetric, so that we are left to verify that $A^\tau \Phi=\gena\phi$ 
and that the latter is skew-symmetric. Now, when $F$ is a polynomial, both properties hold thanks to Lemma~\ref{l:PolGen} 
and the linearity of $\gena$, and we now show that this implies 
the same is true for any $F$ satisfying Assumption~\ref{Assumption:F}, via an approximation argument. 

Denote by $A_P^\tau\phi$ the quantity $A^\tau\phi$ in~\eqref{e:GenCylFunctional} with $F=P$ and $P$ polynomial 
and assume that, for any $P$, $A_P^\tau\phi=\cA_P^\tau\phi\eqdef \sum_{m \geq 1} c_m(P) \, \cA^{\tau, m} \Phi$, 
with the sum at the r.h.s. having only finitely many terms. 
Let $(P_M)_{M \geq 1}$ be a sequence of polynomials converging to $F$ in the sense of 
Lemma~\ref{l:DensityPolynomial}. The claim follows provided we prove that, as $M\to\infty$,   
the sequences $(A_{P_M}^\tau\phi)_M$ and $(\cA_{P_M}^\tau\phi)_M$ converge respectively 
to $A^\tau\phi$ and $\cA^\tau\phi$, in $\L^2(\P^\tau)$. 
For the former, notice that for any $\eps\in(0,\frac{1}{2\kappa},2)$
\begin{equ}
\E[| \cN^\tau_F[\eta^\tau](h)-\cN^\tau_{P_M}[\eta^\tau](h)|^{2+\eps}]^{\frac{1}{2+\eps}} =\E[| \cN^\tau_{F-P_M}[\eta^\tau](h)|^{2+\eps}]^{\frac{1}{2+\eps}} \lesssim_{\tau} \|F-P_M\|_{\exp} \, \|h\|_{L^1(\R^2)}
\end{equ}
where in the last step we applied~\eqref{e:NonLinL2+eps} with $F-P_M$ in place of $F$ 
and the norm $\|\cdot\|_{\exp}$ was defined in~\eqref{e:AssF}. 
Hence, by Lemma~\ref{l:DensityPolynomial} the r.h.s. converges to $0$ and, 
using the representation~\eqref{e:GenCylFunctional} and H\"older's inequality, we 
conclude that $A_{P_M}^\tau\phi\to A^\tau\phi$ as $M\to\infty$ in $\L^2(\P^\tau)$. 
For the other, recalling the definition of the coefficients $c_m$ in~\eqref{e:DecompositionF} and 
using the estimate~\eqref{e:SummabilityA}, we have 
\begin{equs}
\|&\cA_{P_M}^\tau\phi-\cA^\tau\phi\|_\tau\leq \sum_{m\geq 1}|c_m(P_M)-c_m(F)|\|\genaF{m}{}\phi\|_\tau\\
&\lesssim \|\phi\|_\tau\Big(\sum_{m < n+3}\frac{m^{n/2}}{\sqrt{m}}+\sum_{m\geq n+3}\frac{m^{n/2}}{\sqrt{m\dots(m-n+1)}}\Big)\sup_{\substack{x\in\R\\ m\leq n+3}} |P_M^{(m)}(x)-F^{(m)}(x)|e^{\kappa x^2}
\end{equs}
where we applied Lemma~\ref{l:HerAppDecay} with $\ell=m-1$ for $m\leq n+3$ and $\ell=n+3$ elsewhere. 
The quantity in parenthesis is bounded (and its value depends on $n$) and the supremum converges to $0$ 
by Lemma~\ref{l:DensityPolynomial}.
\end{proof}

As we will see in the next section, the main ingredient in the proof of our main 
result is the convergence of the resolvent of $u^\tau$ to that of $u^{\eff}$ in~\eqref{e:limitingSHE} 
in $\fock{}{}{}$. To do so, one is led to estimating first the $\fock{}{1}{}$-norm 
of the (difference of the) resolvent(s), as this is the norm that naturally appears 
when testing the resolvent equation with its solution (recall that  
$\gena$ is skew-symmetric), in terms of the $\fock{}{-1}{}$-norm (by duality 
of $\fock{}{1}{}$ and $\fock{}{-1}{}$) of the function 
to which the resolvent is applied. 
However, this is only possible 
provided ${\rm Dom}(\gen)\subset \fock{}{1}{}$, which ensures that 
the $\fock{}{1}{}$-norm of the resolvent is indeed 
bounded. While in~\cite{CMT} Lemma 2.10 therein verifies such condition, 
in the present context, the singularity 
of $\gena$ due to the sum defining it in~\eqref{e:AmA} 
having infinitely many terms, makes it unclear whether this is the case (and even 
if the sum were up to a finite $m$ with $m>2$, we would not know how to prove it)\footnote{The problem 
essentially reduces to the fact that 
we do not know whether $\core$ is a {\it core} for $\gen$ for $\tau$ finite. 
Indeed, even for fixed $\tau$, the graded sector condition of~\cite[Section 3.7.4]{KLO} does not hold 
due to the growth in the number operator (see e.g.~\eqref{e:SummabilityA}). }. 
That said, as the next lemma shows, the so-called It\^o's trick in~\cite{GJ13} 
allows us to bypass the problem as it directly proves that the resolvent is continuous from 
$\fock{}{-1}{\tau}$ to $\fock{}{}{\tau}$ uniformly in $\tau$.   

\begin{lemma} \label{l:ItoTrick}
There exists a constant $C>0$ independent of $\tau$ such that for every $\Phi \in \fock{}{}{\tau}$ it holds that
\begin{equ}[e:H-1L2Res]
\|(1-\gen)^{-1} \Phi\|_\tau \leq C \, \|\Phi\|_{\fock{}{-1}{\tau}} \, .
\end{equ}
\end{lemma}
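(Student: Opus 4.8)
The plan is to exploit the It\^o trick of~\cite{GJ13} together with the martingale decomposition~\eqref{e:ItoFormula} in order to circumvent the fact that we do not know whether $\core$ is a core for $\gen$. Fix $\tau > 0$, let $\Phi \in \fock{}{}{\tau}$ and set $u \eqdef (1-\gen)^{-1}\Phi$, so that $u \in {\rm Dom}(\gen)$ and $u - \gen u = \Phi$. By a standard density argument (using the closedness of $\gen$ from Lemma~\ref{lem:Contractivity} and the continuity statement in Lemma~\ref{lem:ActionGen}) it suffices to establish the bound for $u \in \core$ and then $\Phi \in \core$; the general case follows by approximation, noting that the right-hand side $\|\Phi\|_{\fock{}{-1}{\tau}}$ is dominated by $\|\Phi\|_\tau$, which behaves well under the $\fock{}{}{\tau}$-limit.

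First I would write, for the stationary process $(u^\tau_t)_{t\ge 0}$ started from $\eta^\tau$ and with $u$ a good cylinder functional (or in $\core$), the forward and backward It\^o expansions over a time interval $[0,T]$. Using~\eqref{e:ItoFormula} for $u$ and for the time-reversed process (which, by the skew-reversibility in Proposition~\ref{p:MollifSol}, is governed by the generator $\gensy - \gena$), one gets
\begin{equ}
u(u^\tau_T) - u(u^\tau_0) = \int_0^T (\gensy + \gena) u(u^\tau_s)\,\dd s + M_T \, , \qquad
u(u^\tau_0) - u(u^\tau_T) = \int_0^T (\gensy - \gena) u(u^\tau_s)\,\dd s + \bar M_T \, ,
\end{equ}
where $M, \bar M$ are martingales (w.r.t.\ the forward, resp.\ backward, filtration) whose brackets are both equal to $2\int_0^T \langle -\gensy u, u\rangle(u^\tau_s)\,\dd s$ in expectation, hence $\E[M_T^2] = \E[\bar M_T^2] = 2T \langle -\gensy u, u\rangle_\tau = 2T\|u\|_{\fock{}{1}{\tau},0}^2$, where I abbreviate $\|u\|_{\fock{}{1}{\tau},0}^2 \eqdef \langle(-\gensy)u,u\rangle_\tau$ (the homogeneous $H^1$-seminorm; note $\|u\|_\tau^2 + \|u\|_{\fock{}{1}{\tau},0}^2 \asymp \|u\|_{\fock{}{1}{\tau}}^2$). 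Adding the two displays kills the symmetric part and shows $2\int_0^T \gena u(u^\tau_s)\,\dd s = M_T - \bar M_T$ up to boundary terms, so by stationarity $\|\int_0^T \gena u(u^\tau_s)\,\dd s\|_{\L^2} \lesssim \sqrt{T}\,\|u\|_{\fock{}{1}{\tau},0} + (\text{bdry})$; this is the key gain. Alternatively — and this is the cleaner route — I would pair the resolvent equation $u - \gensy u - \gena u = \Phi$ directly with $u$ in $\L^2(\P^\tau)$: since $\gena$ is skew-symmetric on $\core$ (Lemma~\ref{lem:ActionGen}), $\langle \gena u, u\rangle_\tau = 0$, giving the a priori bound $\|u\|_\tau^2 + \|u\|_{\fock{}{1}{\tau},0}^2 = \langle \Phi, u\rangle_\tau \le \|\Phi\|_{\fock{}{-1}{\tau}}\|u\|_{\fock{}{1}{\tau}}$, hence $\|u\|_{\fock{}{1}{\tau}} \lesssim \|\Phi\|_{\fock{}{-1}{\tau}}$ and in particular $\|u\|_\tau \lesssim \|\Phi\|_{\fock{}{-1}{\tau}}$ with a $\tau$-independent constant. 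The It\^o trick is what actually \emph{justifies} the formal pairing: since a priori we only know $u \in {\rm Dom}(\gen) \cap \fock{}{}{\tau}$ and not $u \in \fock{}{1}{\tau}$, one cannot naively integrate by parts against $\gensy$. Instead, one runs the above martingale argument with $u$ replaced by $\mu\mapsto(\mu-\gen)^{-1}\Phi$ tested against itself, or regularises $u$ through the semigroup $P^\tau_\delta u \in \core$-closure and lets $\delta\to 0$, using the uniform contractivity~\eqref{e:Contractivity} and the fact that $\langle \gena P_\delta u, P_\delta u\rangle_\tau = 0$ to obtain the bound for $P_\delta u$ with a constant independent of $\delta$ and $\tau$, then passing to the limit.

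Concretely, the steps in order: (i) reduce to $u\in\core$, $\Phi\in\core$ by density and closedness; (ii) for such $u$, test the resolvent identity against $u$, use skew-symmetry of $\gena$ (Lemma~\ref{lem:ActionGen}) and symmetry/negativity of $\gensy$ (so $-\langle\gensy u,u\rangle_\tau \ge 0$) to get $\|u\|_\tau^2 - \langle\gensy u,u\rangle_\tau = \langle\Phi,u\rangle_\tau$; (iii) bound the right side by $\|\Phi\|_{\fock{}{-1}{\tau}}\|u\|_{\fock{}{1}{\tau}}$ via the duality of $\fock{}{1}{\tau}$ and $\fock{}{-1}{\tau}$, and absorb, yielding $\|u\|_{\fock{}{1}{\tau}}^2 \lesssim \|\Phi\|_{\fock{}{-1}{\tau}}^2$ with universal constant; (iv) in particular $\|u\|_\tau \le \|u\|_{\fock{}{1}{\tau}} \le C\|\Phi\|_{\fock{}{-1}{\tau}}$; (v) remove the hypothesis $u\in\core$ by the It\^o-trick regularisation $u_\delta \eqdef P^\tau_\delta u$, which lies in the closure of $\core$ under $\gen$, satisfies $u_\delta - \gen u_\delta = P^\tau_\delta \Phi$, and for which the same chain of inequalities applies with $\|P^\tau_\delta\Phi\|_{\fock{}{-1}{\tau}} \le \|\Phi\|_{\fock{}{-1}{\tau}}$; finally let $\delta\to 0$ using $u_\delta \to u$ in $\fock{}{}{\tau}$ and lower semicontinuity of the $\fock{}{1}{\tau}$-norm.

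The main obstacle is step (v): making rigorous the pairing $\langle\gen u, u\rangle_\tau = \langle\gensy u,u\rangle_\tau$ when one does not know a priori that $u = (1-\gen)^{-1}\Phi$ lies in $\fock{}{1}{\tau}$ — precisely the difficulty flagged in the footnote before the lemma. The honest fix is the It\^o-trick computation: start from the martingale decomposition~\eqref{e:ItoFormula} applied to $P^\tau_\delta u$ (which is smooth enough along the flow), compute $\E[(P^\tau_\delta u(u^\tau_t))^2]$ two ways — once by stationarity, once by It\^o — to read off $\E[\langle M^{P^\tau_\delta u}\rangle_t] = -2t\langle\gensy P^\tau_\delta u, P^\tau_\delta u\rangle_\tau$, which simultaneously shows $P^\tau_\delta u \in \fock{}{1}{\tau}$ with controlled norm and that the cross term with $\gena$ drops out by antisymmetry (both forward and backward martingales having the same bracket). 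This is the technical heart and is exactly the "novel, direct proof based on the It\^o trick" advertised in the introduction; the rest is bookkeeping with the $\tau$-uniform contractivity from Lemma~\ref{lem:Contractivity}.
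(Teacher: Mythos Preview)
Your pairing argument is the right \emph{formal} computation, but it runs into precisely the obstruction the paper flags in the footnote before the lemma and does not actually close. The identity $\langle \gen u, u\rangle_\tau = \langle \gensy u, u\rangle_\tau$ requires that $\gen$ splits as $\gensy + \gena$ with $\gena$ skew-symmetric \emph{at the point $u = (1-\gen)^{-1}\Phi$}. Lemma~\ref{lem:ActionGen} only gives this on $\core$, and the resolvent does not map into $\core$. Your step~(i) (``reduce to $u\in\core$ by density'') is circular: the set of $\Phi$ for which $(1-\gen)^{-1}\Phi\in\core$ is exactly $(1-\gen)\core$, and asserting that this is dense is equivalent to $\core$ being a core for $\gen$ --- which is unknown. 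Your step~(v) has the same problem: $u_\delta = P^\tau_\delta u$ is not in $\core$ either, and the martingale decomposition~\eqref{e:ItoFormula} (hence also the computation of the bracket) is only established for cylinder functionals, so you cannot apply It\^o's formula to $P^\tau_\delta u$ without already knowing what you are trying to prove.

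The paper's proof avoids ever having to split $\gen$ or invoke skew-symmetry on the resolvent. Instead it writes $(1-\gen)^{-1}\Phi = \int_0^\infty e^{-t}\big(\int_0^t P_s^\tau\Phi\,\dd s\big)\dd t$ and reduces the estimate to bounding $\Exp\big[\big|\int_0^t \Phi(u^\tau_s)\,\dd s\big|^2\big]$ by $(t+t^2)\|\Phi\|_{\fock{}{-1}{\tau}}^2$. \emph{This} is where the It\^o trick enters: the forward/backward martingale decomposition is applied not to $u$ but to $\psi\eqdef(1-\gensy)^{-1}\Phi$, with $\Phi$ taken from a dense set chosen so that $\psi$ is a \emph{good cylinder functional}. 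On such $\psi$ the splitting $\gen\psi = \gensy\psi + \gena\psi$ and the skew-symmetry of $\gena$ are fully justified by Lemmas~\ref{l:CylDomain}--\ref{lem:ActionGen}, and summing the forward and backward decompositions eliminates $\gena\psi$, leaving $\Exp\big[\big|\int_0^t(-\gensy)\psi(u^\tau_s)\,\dd s\big|^2\big]\le 2t\|(-\gensy)^{1/2}\psi\|_\tau^2$, from which the bound on $\Phi=(1-\gensy)\psi$ follows. The crucial difference from your plan is that the functional to which It\^o is applied is chosen \emph{a priori} to lie in the good class, rather than being produced by the (uncontrolled) resolvent.
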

\begin{proof}
Since $(1-\gen)^{-1}$ is bounded by $1$ from $\fock{}{}{\tau}$ to itself 
by Lemma~\ref{lem:Contractivity}, 
a density argument ensures that it suffices to prove~\eqref{e:H-1L2Res} on 
a dense subspace of $\fock{}{}{\tau}$. For reasons that will become clear in what follows, 
we will take it to be the set of functionals 
$\phi$ such that $(1-\gensy)\phi=\psi$ with $\psi$ a {\it good} cylinder functionals as 
those used in the proof of Lemma~\ref{l:CylDomain}, i.e. 
$\psi=f(\eta^\tau(h_1),\dots,\eta^\tau(h_n))$ such that $f$ and 
$h_1,\dots,h_n$ are smooth with compact support.

Notice first that, by integration by parts and the contractivity bound~\eqref{e:Contractivity}, the resolvent satisfies 
\begin{equ}
(1 - \gen)^{-1} \Phi = \int_0^{+\infty} e^{-t} P^\tau_t \Phi \, \dd t = \int_0^{+\infty} e^{-t} \Big(\int_0^t P^\tau_s \Phi \, \dd s\Big) \, \dd t\,,
\end{equ}
where $P^\tau$ is the semigroup in~\eqref{e:semigroup}. As a consequence, denoting by $u^\tau$ 
the unique mild solution to~\eqref{e:GSBE} driven 
by $(\eta^\tau, \vec{\xi}^\tau)$, we get 
\begin{equ}[e:PreIto]
\|(1 - \gen)^{-1} \Phi \|_\tau\leq \Big\|\int_0^{+\infty} e^{-t} \E\Big[\int_0^t \Phi(u^\tau_s) \, \dd s\Big|\cF_0\Big] \, \dd t\Big\|_\tau\leq \int_0^\infty e^{-t}\Exp\bigg[\Big|\int_0^t \Phi(u^\tau_s) \, \dd s\Big|^2\bigg]^{\frac{1}{2}}\dd t
\end{equ}
an our goal is to bound the inner expectation in terms of the $\fock{}{-1}{\tau}$-norm of $\phi$. 
To do so, we will exploit the same strategy first presented in the proof of the 
celebrated It\^o's trick in~\cite{GJ13}. 

Proposition~\ref{p:MollifSol} and It\^o's formula give  
\begin{equ}
\psi(u_t^\tau) - \psi(u_0^\tau) - \int_0^t \cS^\tau \psi(u_s^\tau) \, \dd s - \int_0^t \cA^\tau \psi(u_s^\tau) \, \dd s = M_t^\tau(\psi) \, ,
\end{equ}
where we further used that $\psi\in\core$ so that, by Lemma~\ref{lem:ActionGen}, $\gen\psi=\gensy\psi+\gena\psi$, 
and $M^\tau(\psi)$ is a true continuous martingale whose quadratic variation can be readily checked to 
satisfy 
\begin{equ}
\Exp[\langle M^\tau(\psi), M^\tau(\psi) \rangle_t] 
= 2 t \, \|(- \cS^\tau)^{1/2} \psi\|_\tau^2\,.
\end{equ}
The very definition of quadratic variation then implies 
\begin{equ} \label{e:ItoTrick1}
\Exp\Big[\Big|\psi(u_t^\tau) - \psi(u_0^\tau) - \int_0^t \cS^\tau \psi(u_s^\tau) \, \dd s - \int_0^t \cA^\tau \psi(u_s^\tau) \, \dd s\Big|^2\Big] \leq 2 t \, \|(- \cS^\tau)^{1/2} \psi\|_\tau^2 \, .
\end{equ}
The same argument applied to the mild solution $\tilde u^\tau$ of~\eqref{e:GSBE} 
with $-F$ in place of $F$ together with the fact that, thanks to Proposition~\ref{p:MollifSol}, 
$(\tilde u^\tau_s)_{s\in[0,t]}\eqlaw (u^\tau_{t-s})_{s\in[0,t]}$, also gives 
\begin{equ} \label{e:ItoTrick2}
\Exp\Big[\Big|\psi(u_0^\tau) - \psi(u_t^\tau) - \int_0^t \cS^\tau \psi(u_s^\tau) \, \dd s + \int_0^t \cA^\tau \psi(u_s^\tau) \, \dd s\Big|^2\Big] \leq 2 t \, \|(- \cS^\tau)^{1/2} \psi\|_\tau^2 \, .
\end{equ}
Summing \eqref{e:ItoTrick1} and \eqref{e:ItoTrick2} (and using the basic inequality 
$2a^2+2b^2\geq (a+b)^2$, $a,b\in\R$), 
we end up with
\begin{equ}
\Exp\Big[\Big|\int_0^t (-\cS^\tau) \psi(u_s^\tau) \, \dd s\Big|^2\Big] \leq 2t \, \|(- \cS^\tau)^{1/2} \psi\|_\tau^2\,.
\end{equ}
We bound the l.h.s.  from below as 
\begin{equs}
\Exp\Big[\Big|\int_0^t (-\cS^\tau) \psi(u_s^\tau) \, \dd s\Big|^2\Big]&\geq \frac12\Exp\Big[\Big|\int_0^t (1-\cS^\tau) \psi(u_s^\tau) \, \dd s\Big|^2\Big]-\Exp\Big[\Big|\int_0^t \psi(u_s^\tau) \, \dd s\Big|^2\Big]\\
&\geq \frac12\Exp\Big[\Big|\int_0^t (1-\cS^\tau) \psi(u_s^\tau) \, \dd s\Big|^2\Big]-t^2\|\psi\|_\tau 
\end{equs}
the last step being a consequence of Jensen's inequality. Now, rearranging the terms 
and recalling that $(1-\gensy)\phi=\psi$, deduce 
\begin{equs}
\Exp\Big[\Big|\int_0^t \phi(u_s^\tau) \, \dd s\Big|^2\Big]\leq 4 t \|(- \cS^\tau)^{1/2} \psi\|_\tau^2 +2 t^2\|\psi\|_\tau^2\leq 4 (t+t^2)\|\psi\|^2_{\fock{}{1}{\tau}}=4 (t+t^2)\|\phi\|^2_{\fock{}{-1}{\tau}}
\end{equs}
which can be plugged at the r.h.s. of~\eqref{e:PreIto} to obtain~\eqref{e:H-1L2Res}. 
\end{proof}

\begin{remark}\label{rem:NoNeedForH1}
Let us point out that the proof of the previous lemma uses only the stationarity of 
$u^\tau$ and the boundedness of its resolvent so that its statement holds much more generally. 
\end{remark}

\section{Proof of  the main result} \label{s:proof_main}

This section represents the core of the paper as it contains the novel ideas and 
the crucial estimates thanks to which we can prove our main results, Theorems~\ref{th:CLT} and~\ref{th:conv_semigroup}. 
Before delving into the details, let us heuristically present the necessary steps 
and provide a roadmap of what will follow. 

\subsection{Main ideas and roadmap}\label{sec:Ideas}

The bulk of the proof of Theorems~\ref{th:CLT} and~\ref{th:conv_semigroup} consists of showing that 
$\gen$ converges to $\geneff$, the generator of $u^\eff$ in~\eqref{e:limitingSHE}, 
in the resolvent sense, i.e. that for a sufficiently large class 
of $\phi\in\fock{}{}{}$ we formally have 
\begin{equ}[e:ResConv]
\lim_{\tau\to\infty}\|(1-\gen)^{-1}\phi-(1-\geneff)^{-1}\phi\|=0\,.
\end{equ}
We will not prove~\eqref{e:ResConv} directly as we have very little control of  
the resolvent of $\gen$. Instead, since according to the heuristic presented in the 
introduction we expect the quadratic component of the nonlinearity $F$ (corresponding to 
its projection onto the space generated by the second Hermite polynomial) to be dominant, 
we will proceed in two steps and show instead 
that both the following limits hold
\begin{equs}
\lim_{\tau\to\infty}\|(1-\gen)^{-1}\phi-(1-\cL^{\tau,2})^{-1}\phi\|&=0\,,\label{e:gengen2}\\
\lim_{\tau\to\infty}\|(1-\cL^{\tau,2})^{-1}\phi-(1-\geneff)^{-1}\phi\|&=0\label{e:gen2geneff}
\end{equs}
where $\cL^{\tau,2}\eqdef\gensy+c_2(F)\genaF{2}{}$ is the generator of 
the solution of~\eqref{e:GSBE} with nonlinearity $\cN_G^\tau$ with $G(x)=c_2(F)x^2$. 
The validity of~\eqref{e:gen2geneff} is a direct consequence of what was 
shown in~\cite{CMT}. To see why we can expect~\eqref{e:gengen2} to be satisfied, 
notice that by~\eqref{e:H-1L2Res}, we (formally) have 
\begin{equs}[e:HeuH-1]
\|(1-\gen)^{-1}\phi-(1-\cL^{\tau,2})^{-1}\phi\|_\tau&=\|(1-\gen)^{-1}\big(\phi-(1-\gen)[(1-\cL^{\tau,2})^{-1}\phi]\big)\|_\tau\\
&\lesssim \|\phi-(1-\gen)[(1-\cL^{\tau,2})^{-1}\phi]\|_{\fock{}{-1}{\tau}}\\
&=\|\genaF{\neq2}{}[(1-\cL^{\tau,2})^{-1}\phi]\|_{\fock{}{-1}{\tau}}
\end{equs}
where $\genaF{\neq2}{}\eqdef \gena-c_2(F)\genaF{2}{}$ (and the last equality explains 
why we chose $G$ as such). 
To develop some intuition as to why the r.h.s. should vanish, let us replace $(1-\cL^{\tau,2})^{-1}\phi$ by a 
function $\psi$ in $\fock{1}{1}{}=H^1(\R^2)$. Then, by Lemma~\ref{lem:ActionGen}, 
since $j+1\leq n=1$ implies $j=0$, $\genaF{m}{a}\psi\neq 0$ only if $a=m-1$, 
so that $\genaF{\neq2}{}\psi=\sum_{m\neq 2}c_m(F)\genaF{m}{m-1}\psi$.  
By orthogonality of Fock spaces with different indices (and neglecting $m=1$ for the sake of the present discussion), 
we get  
\begin{equs}[e:1chaosHeu]
\|\genaF{\neq2}{}\psi\|_{\fock{}{-1}{\tau}}^2&=\sum_{m> 2}c_m(F)^2\|\genaF{m}{m-1}\psi\|^2_{\fock{m}{-1}{\tau}}\\
&=\sum_{m> 2} \frac{c_m(F)^2}{m!} \frac{\tau^{2-m}\nu_\tau^{\frac{1+m}{2}}}{(2\pi)^{m-1}}\int (\fe_1 \cdot p_{[1:m]})^2 |\psi(p_{[1:m]})|^2 \frac{ \Xi_{m}^\tau(\dd p_{1:m})}{1+\frac12|\sqrt{R_\tau} p_{1:m}|^2}\\
&=\sum_{m> 2} \frac{c_m(F)^2}{m!} \frac{\tau^{2-m}\nu_\tau^{\frac{1+m}{2}}}{(2\pi)^{m-1}}\int \dd p\,(\fe_1 \cdot p)^2 |\psi(p)|^2\int_{p_{[1:m]}=p} \frac{ \Xi_{m}^\tau(\dd p_{1:m})}{1+\frac12|\sqrt{R_\tau} p_{1:m}|^2}
\end{equs}
where we used~\eqref{e:Genamj}. 
Now, the inner integral can be bounded as 
\begin{equs}
\int_{p_{[1:m]}=p} \frac{ \Xi_{m}^\tau(\dd p_{1:m})}{1+\frac12|\sqrt{R_\tau} p_{1:m}|^2}&= \tau^{m-2} \nu_\tau^{\frac{1-m}{2}}\int_{p_{[1:m]}=\tau^{1/2}R_\tau^{-\frac12}p}\; \frac{ \Xi_{m}^1(\dd p_{1:m})}{\tau^{-1}+\frac12|p_{1:m}|^2}\\
&\leq (2\pi)^{m-1}\tau^{m-2} \nu_\tau^{\frac{1-m}{2}}\int_{\R^4}\; \frac{ \Xi_{2}^1(\dd p_{1:2})}{\frac12|p_{1:2}|^2}
\end{equs}
and the last integral is finite and independent of $m$. Plugging it back into~\eqref{e:1chaosHeu}, we conclude 
\begin{equ}[e:H-1anis]
\|\genaF{\neq2}{}\psi\|_{\fock{}{-1}{}}^2\lesssim \sum_{m> 2} \frac{c_m(F)^2}{m!}\int \dd p\,\nu_\tau(\fe_1 \cdot p)^2 |\psi(p)|^2\leq \|F\|^2_{L^2(\pi_1)} \|(-\nu_\tau  \gensyx)^{\frac12}\psi\|^2 
\end{equ}
where the norm on $F$ is defined in~\eqref{e:L2NormGaussian}. Therefore, 
provided the anisotropic norm of $\psi$ at the r.h.s. converges to $0$, as is the case since $\psi\in H^1(\R^2)$, 
then so does the $\fock{}{-1}{\tau}$-norm of $\genaF{\neq2}{}\psi$, thus suggesting that 
the operator $\genaF{\neq2}{}$ should not contribute to the limit. 
\medskip

Even though the arguments above give a useful indication as to what we should expect, 
they are by no means enough. Let us mention that 
computations similar to those in~\eqref{e:1chaosHeu} were first presented 
in~\cite{GP16}, where the authors proved a universality statement for the SBE in dimension $d=1$ 
in the so-called weak coupling scaling. But while for them, studying functionals 
in the first chaos was sufficient, in the present setting this is {\it not the case}. 
As a matter of fact, there are several difficulties we need to overcome:
\begin{enumerate}[label=(\Roman*),noitemsep]
\item\label{i:I} {\bf Chaos Growth:} to bound the r.h.s. of~\eqref{e:HeuH-1}, we need to control the $\fock{}{-1}{\tau}$-norm 
of $\genaF{\neq2}{}$ applied to $(1-\cL^{\tau,2})^{-1}\phi$. Now, even for $\phi$ in the first chaos, 
the latter will have non-trivial components in {\it every} chaos. This means that we need 
to estimate $\fock{}{-1}{\tau}$-norm of $\genaF{m}{a}$ for {\it every $m$ and $a$},  
when applied to rather general elements of the Fock space. 
As it turns out (and true even for $m=2$ and $a=\pm1$), these operators naturally introduce 
a dependence in the so-called number operator (see Definition~\ref{def:NoOp}), which in turn has to be tamed. 
\item\label{i:II} {\bf Well-posedness:} the first equality in~\eqref{e:HeuH-1} is {\it not rigorous}. Indeed, 
for it to make sense we should know that $(1-\cL^{\tau,2})^{-1}\phi$ belongs to the 
domain of $\gen$, which is all but obvious. According to Lemma~\ref{lem:ActionGen}, we only 
know that $\core\subset {\rm Dom}(\gen)$, for $\core$ the set of  
$f = (f_n)_{n \geq 0} \in \fock{}{}{\tau}$ such that for every $n$ the Fourier transform of $f_n$ 
is compactly supported and $f_n\neq 0$ only for finitely many $n$'s, 
but $(1-\cL^{\tau,2})^{-1}\phi\notin \core$. 
\end{enumerate}

This being said, in order to prove~\eqref{e:ResConv}, 
there was no need to choose {\it precisely} $(1-\cL^{\tau,2})^{-1}\phi$ in~\eqref{e:gengen2} 
and~\eqref{e:gen2geneff} in the first place. To retain (at least 
part of) the argument above, and in particular that in~\eqref{e:HeuH-1} 
and the results of~\cite{CMT}, all we need is an Ansatz 
that {\it approximates  $(1-\cL^{\tau,2})^{-1}\phi$ sufficiently well}. 
Let us mention that, already in~\cite{CMT}, an Ansatz was introduced to show~\eqref{e:gen2geneff}, 
but we want to modify it in such a way that, for every fixed $\tau$, it 
belongs to $\core$ (so that~\ref{i:II} holds). More precisely, our Ansatz $s^\tau=(s^\tau_n)_{n\in\N}$ 
will have finite chaos expansion, i.e. $s^\tau_n\equiv 0$ for $n\geq \bar n$ with $\bar n$ suitably 
depending on $\tau$, and, 
for each $n$, $s^\tau_n$ will have compactly supported Fourier transform, 
whose radius will depend on {\it both $n$ and $\tau$}. 
It is this latter property that will allow us to tame the growth in the chaos 
and ultimately overcome point~\ref{i:I} above. 
\medskip

The rest of the paper is organised as follows. In Section~\ref{s:gen_est_nonq}, 
we derive fine estimates on $\genaF{\neq2}{}\phi$, 
for $\phi\in\core$, that keep track of the size of the 
support of the Fourier transform of the kernels of $\phi$. This is the place at which 
the Assumption~\ref{Assumption:F} on $F$ is used in its full strength. 
Then, in Section~\ref{s:ansatz_QSBE}, we recall the Ansatz 
introduced in~\cite{CMT}, describe the novel Ansatz $s^\tau$ 
and show it still satisfies (the analogue of)~\eqref{e:gengen2} and~\eqref{e:gen2geneff}. 
At last, Section~\ref{s:pf_summary} is devoted to the proof of Theorems~\ref{th:CLT} and~\ref{th:conv_semigroup}.

\subsection{Estimates for the non-quadratic part of the generator} \label{s:gen_est_nonq}

In the following proposition we estimate the  $\fock{}{-1}{\tau}$-norm of $\genaF{m}{a}$, for every $m\neq 2$ and 
$a\in\{-m+1,\dots, m+1\}$. The elements to which these operators are applied 
belong to $\core$ and there are two crucial aspects whose dependence in the bounds we want to keep track of:
\begin{enumerate}[noitemsep]
\item the size of the support of the Fourier transform of their kernels, and 
\item the anisotropic $\fock{}{1}{\tau}$-norm of $\phi$, i.e. $\|(-\nu_\tau  \gensyx)^{\frac12}\phi\|$ (see~\eqref{e:H-1anis}). 
\end{enumerate}
The first is essential to tame the growth in the number operator, the latter guarantees that 
these terms stand a chance of having a vanishing contribution (see~\eqref{e:H-1anis} again).

\begin{proposition} \label{p:HighDegreeEstimate}
There exists a constant $C > 0$ such that for every $\tau > 0$, $m,\, n\in\N$, $m \neq 2$, $\chi\in[0,1]$  
and $\Phi \in \fock{n}{}{\tau}$ for which $\mathrm{Supp}(\hat{\Phi})\subset\{p_{1:n} \, \colon \, |\sqrt{R_\tau p_{1:n}}|^2 \leq  \tau \chi\}$, it holds that
\begin{equ}[e:BoundAmne2]
\|\sqrt{m!}\cA^{\tau, m} \Phi\|_{\fock{}{-1}{\tau}} \leq C \Big(n^{3} m^{3} + \chi \, 4^n m^{n}\Big) \|(- \nu_\tau \gensyx)^{\frac12}\Phi\|_\tau^\alpha\, \|\Phi\|_{\fock{}{1}{\tau}}^{1-\alpha} \, .
\end{equ}
for any $\alpha\in(0,1)$. 
\end{proposition}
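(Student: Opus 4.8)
The plan is to reduce the estimate to a single homogeneous operator $\genaF{m}{a}$, to write its $\fock{}{-1}{\tau}$-norm via the explicit Fourier kernel~\eqref{e:Genamj}, and to control the resulting momentum integrals by Cauchy--Schwarz together with two complementary mechanisms: a reduction to a UV-convergent two-dimensional integral, responsible for the polynomial factor $n^3m^3$, and the Fourier-support hypothesis on $\Phi$, which is what tames the number-operator (combinatorial) growth and yields the factor $\chi\,4^nm^n$. An interpolation at the very end produces the stated $\alpha$-dependent form.

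First I would reduce. We may assume $n\ge1$, since $\genaF{m}{a}\equiv0$ on $\fock{0}{}{\tau}$. As $\cA^{\tau,m}=\sum_{j=0}^{m-1}\genaF{m}{m-2j-1}$ and $\genaF{m}{m-2j-1}\equiv0$ unless $j+1\le n$, the sum has at most $\min\{m,n\}$ non-zero terms, so by the triangle inequality it suffices to bound $\|\sqrt{m!}\,\genaF{m}{m-2j-1}\Phi\|_{\fock{}{-1}{\tau}}$ for each such $j$ and pay a factor $\le m$ at the end. The case $m=1$ (the diagonal operator $\genaF{1}{0}$) is immediate: from $|\Theta_\tau(p)-1|\lesssim\min\{1,\tau^{-1}|\sqrt{R_\tau}p|^2\}$ (a consequence of $\hat\rho(0)=(2\pi)^{-1}$ and the smoothness of $\rho$) and $|\fe_1\cdot p_i|^2\le|\fe_1\cdot p_{1:n}|^2$ one gets $\|\genaF{1}{0}\Phi\|_{\fock{}{-1}{\tau}}\lesssim n\,\|(-\nu_\tau\gensyx)^{1/2}\Phi\|_\tau$, and since $\|(-\nu_\tau\gensyx)^{1/2}\Phi\|_\tau\le\sqrt2\,\|\Phi\|_{\fock{}{1}{\tau}}$ (as $\nu_\tau\le1$) this already has the required shape. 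From now on $m\ge3$; fix $m$ and $j$ and set $b:=j+1$ (the number of internal momenta $r_{1:b}$) and $d:=m-j$ (the number of external momenta merged at the vertex), so that $a=d-b$ and $b+d=m+1$.

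The core is the Fourier estimate. Plugging~\eqref{e:Genamj} into~\eqref{e:normFock}--\eqref{e:normHeps}, I would use the symmetry of $\hat\Phi$ to reduce the sum over $I\in\Delta_d^{n+a}$ to $\binom{n+a}{d}$ copies of the term $I=\{1,\dots,d\}$ (for the diagonal contribution), apply Cauchy--Schwarz in the internal integral, and split off the external-leg momentum integral
\[
\mathcal K^\tau_d(q)\ :=\ \int_{r_{[1:d]}=q}\frac{\Xi_d^\tau(\dd r_{1:d})}{1+\tfrac12|\sqrt{R_\tau}r_{1:d}|^2}\,,\qquad q\in\R^2.
\]
For $d\ge3$ one bounds $1+\tfrac12|\sqrt{R_\tau}r_{1:d}|^2\ge\tfrac12(|\sqrt{R_\tau}r_1|^2+|\sqrt{R_\tau}r_2|^2)$, rescales $r_i\mapsto\tau^{1/2}R_\tau^{-1/2}s_i$, integrates out $s_3,\dots,s_d$ using $\int_{\R^2}(2\pi\hat\rho)^2=(2\pi)^2$ and $\|(2\pi\hat\rho)^2\|_\infty\le1$, and is left with the finite constant $C_\rho:=\int_{\R^4}(2\pi\hat\rho(s_1))^2(2\pi\hat\rho(s_2))^2|s_{1:2}|^{-2}\,\dd s_{1:2}$; for $d=2$ (which, since $m\ge3$, forces $b\ge2$) one simply uses $1+\tfrac12|\sqrt{R_\tau}r_{1:2}|^2\ge1$ and $\int(2\pi\hat\rho)^2=(2\pi)^2$. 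In either case the powers of $\tau$ and of $2\pi$ carried by $\mathcal K^\tau_d$ cancel exactly those in the prefactor of~\eqref{e:Genamj} (using $b+d=m+1$, and $b\ge2$ when $d=2$), the leftover power of $\nu_\tau$ is at least $1$, and after the elementary bounds $(\fe_1\cdot r_{[1:b]})^2\le b\,|\fe_1\cdot r_{1:b}|^2\le b\,|\fe_1\cdot p_{1:n}|^2$ this reconstructs $\|(-\nu_\tau\gensyx)^{1/2}\Phi\|_\tau^2$. It remains to control the index sum: a naive Cauchy--Schwarz gives only $\binom{n+a}{d}^2$, which is too large, so here one invokes the Fourier support of $\Phi$ --- $\mathrm{Supp}(\hat\Phi)\subset\{|\sqrt{R_\tau}p_{1:n}|^2\le\tau\chi\}$ --- to observe that the cross-terms indexed by $I_0\ne I'$ are supported where all momenta outside $I_0\cap I'$ are $\sqrt{R_\tau}$-confined to scale $(\tau\chi)^{1/2}$, hence carry powers of $\chi$ and a compensating factorial from the volume of that confining ellipsoid; weighing these gains against the number of index pairs yields, after the bookkeeping, the contributions surviving with no $\chi$ bounded by a polynomial of degree $\le3$ in each of $n$ and $m$, and the remaining ones by $\chi\,4^nm^n$. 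The case $d=1$, i.e.\ the full-annihilation term $\genaF{m}{1-m}$ (present only when $n\ge m$), is handled separately and is the most delicate: $\mathcal K^\tau_1$ is not finite on its own, but the internal momenta $r_{1:m}$ are now \emph{arguments} of $\hat\Phi$, so the support hypothesis confines them to $\{|\sqrt{R_\tau}r_{1:m}|^2\le\tau\chi\}$, whose scaled $(2m-2)$-dimensional measure along $\{r_{[1:m]}=q\}$ is $\lesssim\pi^{m-1}(\tau\chi)^{m-1}\nu_\tau^{-(m-1)/2}/(m-1)!$; inserting this, the $\tau$-powers again cancel, $1/(m-1)!$ absorbs part of the $m!$, $\chi^{m-1}\le\chi$, and what is left is $\lesssim4^nm^n$.

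Summing the $\le\min\{m,n\}$ contributions gives $\|\sqrt{m!}\,\cA^{\tau,m}\Phi\|_{\fock{}{-1}{\tau}}\le C(n^3m^3+\chi\,4^nm^n)\,\|(-\nu_\tau\gensyx)^{1/2}\Phi\|_\tau$ and, a fortiori, the same with $\|\Phi\|_{\fock{}{1}{\tau}}$ on the right; writing any $X$ as $X^\alpha X^{1-\alpha}$ and estimating the first factor by the anisotropic bound and the second by the $\fock{}{1}{\tau}$-bound gives~\eqref{e:BoundAmne2} for every $\alpha\in(0,1)$ with a constant independent of $\alpha$. I expect the hard part to be the control of the index sum in the core estimate: without the Fourier-support hypothesis the factor $\binom{n+a}{d}$ (and its square) is exponential in $n$, and it is precisely by using $\mathrm{Supp}(\hat\Phi)\subset\{|\sqrt{R_\tau}p_{1:n}|^2\le\tau\chi\}$ to confine the merged (or, for $d=1$, internal) momenta to a $\chi$-small region that one simultaneously produces the prefactor $\chi$ and keeps the surviving combinatorial growth down to $4^nm^n$. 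This is also where $m\ne2$ genuinely matters: for $m=2$ the analogous momentum integral is only logarithmically divergent in $\tau$, so it cannot be absorbed and instead carries the dominant superdiffusive contribution handled via~\cite{CMT}.
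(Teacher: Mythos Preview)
Your overall plan---reduce to single $\genaF{m}{a}$, write the $\fock{}{-1}{\tau}$-norm via the Fourier kernel, Cauchy--Schwarz in the internal $r$-integral, split off an external $\mathcal K^\tau_d$ integral, and use the support hypothesis for the combinatorics---matches the paper's architecture. But the claim that you can reach the bound with the pure anisotropic norm (your ``$\alpha=1$'' endpoint) and then interpolate down is wrong, and the failure is exactly at your cases $d=2$ and $d=1$.

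For $d=2$ you write ``one simply uses $1+\tfrac12|\sqrt{R_\tau}r_{1:2}|^2\ge1$'' and assert that the $\tau$-powers then cancel. They do not. With the unweighted Cauchy--Schwarz you use, the mass integral over the $b-1$ free internal momenta contributes $(\tau\nu_\tau^{-1/2})^{b-1}$, your $\mathcal K^\tau_2$ bound gives another $\tau\nu_\tau^{-1/2}$, and together with the prefactor $\tau^{2-m}\nu_\tau^{(m+1)/2}$ and the $\nu_\tau^{-1}$ you spend to form $\nu_\tau|\fe_1\cdot r|^2$, the product is $\tau^{2-m+(b-1)+1}\,\nu_\tau^{\cdots}=\tau$ (since $b=m-1$). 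Keeping the denominator instead of dropping it does not save you: the constrained two-variable integral then produces a $\log\tau$, which in the present scaling is $\nu_\tau^{-3/2}$ and still diverges. The same accounting for $d=1$ leaves $\tau\,\chi^{m-1}$, which is not bounded for $\chi\in[0,1]$.

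What is missing is precisely the mechanism the paper uses: a \emph{weighted} Cauchy--Schwarz in the internal integral, with weight $(\nu_\tau|\fe_1\cdot r|^2)^\delta|\sqrt{R_\tau}r|^{2\beta}$, together with a split $|\fe_1\cdot p_{[I]}|^2\le(m+1)|\fe_1\cdot p_I|^{2(1-\gamma)}|\fe_1\cdot r_{1:b}|^{2\gamma}$ that sends a $\gamma$-fraction of the momentum factor to the $p_I$-side. The constraint $\delta+\beta+\gamma=1$ is exactly what makes the $\tau$- and $\nu_\tau$-powers cancel for \emph{every} $j$. Convergence of the two resulting scalar integrals (the paper's $\mathbf A^j_{\delta,\beta}$ and $\mathbf B^I_\gamma$) then forces case-dependent restrictions on $(\delta,\beta,\gamma)$; in particular, the case $j=2$ requires $\delta<1$ and $\beta<1$ simultaneously, hence $\beta>0$, hence $\alpha=\delta+\gamma<1$. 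This is why the proposition is stated only for $\alpha\in(0,1)$: the endpoint $\alpha=1$ is genuinely unavailable by this route, so your ``write $X=X^\alpha X^{1-\alpha}$'' interpolation is not the right way to produce the $\alpha$-dependence---it has to come from H\"older applied to the mixed weight $(\nu_\tau|\fe_1\cdot q|^2)^{\delta+\gamma}|\sqrt{R_\tau}q|^{2\beta}$ inside the $\hat\Phi$-integral.

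Your treatment of the off-diagonal index sum (cross terms $I\ne I'$) is in the right spirit---the support hypothesis on $\hat\Phi$ does localise the momenta in $I\setminus I'$ and $I'\setminus I$---but you have not explained how this yields the specific splitting $n^3m^3+\chi\,4^nm^n$; in the paper this comes out of tracking which index configurations leave at least a fixed number of confined variables to integrate via~\eqref{e:ReallyBasic}, and summing $\sum_j\binom{n}{j}\binom{m+1}{j}\chi^{\1_{j\ge4}}$.
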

\begin{proof}
Let us first deal with the case $m=1$. From \eqref{e:Gena1}, we have 
\begin{equ}
\|\genaF{1}{} \Phi\|_{\fock{}{-1}{\tau}}^2 = n! \tau \nu_\tau \int \Xi^\tau_n(\dd p_{1:n}) \, |\hat{\Phi}(p_{1:n})|^2 \, \frac{\big(\sum_{i=1}^n \fe_1 \cdot p_i \, (\Theta^\tau(p_i) - 1)\big)^2}{1 + |\sqrt{R_\tau} p_{1:n}|^2/2} \, .
\end{equ}
where the factor $n!$ comes from the definition of the $\fock{}{-1}{\tau}$-norm 
(see~\eqref{e:normFock} with the $H^{-1}_\tau$-norm in~\eqref{e:normHeps} in place of the $L^2_\tau$ one). 
Recall from~\eqref{e:Mollifiers} the definition of $\Theta^\tau$, i.e. 
$\Theta^\tau(p) = \big(2 \pi \hat{\rho}(\tau^{-1/2} \sqrt{R_\tau} p)\big)^2$, and that 
$\rho$ is smooth, compactly supported and of mass $1$ so that $\Theta^\tau(0) = 1$. 
This means that $\Theta^\tau$ is globally Lipschitz and we have 
$|\Theta^\tau(p) - 1| \lesssim \tau^{-1/2} \, |\sqrt{R_\tau} p|$ uniformly in $p\in\R^2$. 
Thus, Cauchy--Schwarz inequality leads to
\begin{equ}
\tau \nu_\tau \, \frac{\big(\sum_{i=1}^n \fe_1 \cdot p_i \, (\Theta^\tau(p_i) - 1)\big)^2}{1 + |\sqrt{R_\tau} p_{1:n}|^2/2} \lesssim \nu_\tau \frac{|\fe_1 \cdot p_{1:n}|^2 \, |\sqrt{R_\tau} p_{1:n}|^2}{1+|\sqrt{R_\tau} p_{1:n}|^2} \leq \nu_\tau |\fe_1 \cdot p_{1:n}|^2 \, 
\end{equ}
from which we immediately deduce~\eqref{e:BoundAmne2} with a prefactor independent of $n$. 
\medskip

Now, let $m \geq 3$, $j \in \{0, \dots, m-1\}$ and $a = m-2j-1$. From \eqref{e:Genamj}, we get 
\begin{equs} \label{e:Gena3+Bound1}
{}&\|\genaF{m}{a} \Phi\|_{\fock{}{-1}{\tau}}^2 = (n+a)!  \, \frac{n!^2}{(n+a)!^2 (j+1)!^2} \frac{\tau^{- m + 2} \nu_\tau^{\frac{m+1}{2}}}{(2 \pi)^{2(m-1)}} \times \\
&\times \int \frac{\Xi_{n+a}^\tau(\dd p_{1:n+a})}{1 + \frac12|\sqrt{R_\tau} p_{1:n+a}|^2} 
\ \abs[4]{\sum_{I \in \Delta_{m-j}^{n+a}} \fe_1 \cdot p_{[I]} \int_{r_{[1:j+1]} = p_{[I]}} \Xi_{j+1}^\tau(\dd r_{1:j+1}) \, \hat{\Phi}(r_{1:j+1}, p_{1:n+a\setminus I})}^2 \, .
\end{equs}
To take into account the assumption that $\phi$ has compactly supported Fourier transform, 
let $V_\chi^\tau \eqdef \bigsqcup_{k \geq 0} \{x_{1:k} \in \R^{2k} \, , \ |\sqrt{R_\tau} x_{1:k}|^2 \leq \chi \, \tau\}$, 
so that in particular ${\rm Supp}(\hat\phi)\subset V_\chi^\tau$. Note that $V_\chi^\tau$ 
is stable under restriction of components meaning that, for subsets~$A \subseteq B\subset \N$, 
if $p_B=(p_i)_{i\in B} \in V_\chi^\tau$, then so does $p_A$. 
Hence, for any $p_{1:n}\in\R^{2n}$ and $I\subset \{1:n\}$, we have 
\begin{equ}[e:Support]
\hat\phi(p_{1:n})=\1_{p_I\in V^\tau_\chi} \hat\phi(p_{1:n})\,,
\end{equ}
a fact that will be repeatedly used throughout the proof. 
With this in mind, let us expand the square in~\eqref{e:Gena3+Bound1} 
\begin{equs}
\Big|&\sum_{I \in \Delta_{m-j}^{n+a}} \fe_1 \cdot p_{[I]} \int_{r_{[1:j+1]} = p_{[I]}} \Xi_{j+1}^\tau(\dd r_{1:j+1}) \, \hat{\Phi}(r_{1:j+1}, p_{1:n+a\setminus I})\Big|^2\\
&\leq \sum_{I,J \in \Delta_{m-j}^{n+a}} \1_{p_{I\setminus J}\in V^\tau_\chi}|\fe_1 \cdot p_{[I]}|\int_{r_{[1:j+1]} = p_{[I]}} \Xi_{j+1}^\tau(\dd r_{1:j+1}) \, |\hat{\Phi}(r_{1:j+1}, p_{1:n+a\setminus I})|\\
&\qquad\qquad\times\1_{p_{I\setminus J}\in V^\tau_\chi}|\fe_1 \cdot p_{[J]}|\int_{r_{[1:j+1]} = p_{[J]}} \Xi_{j+1}^\tau(\dd r_{1:j+1}) \, |\hat{\Phi}(r_{1:j+1}, p_{1:n+a\setminus J})|\\
&\leq \binom{n+a}{m-j} \sum_{I \in \Delta_{m-j}^{n+a}} \1_{p_{I \setminus \{1:m-j\} \in V_{\chi}^{\tau}}} |\fe_1 \cdot p_{[I]}|^2 \times \\
&\qquad \qquad \times \Big(\int_{r_{[1:j+1]} = p_{[I]}} \Xi_{j+1}^\tau(\dd r_{1:j+1}) \, |\hat{\Phi}(r_{1:j+1}, p_{1:n+a\setminus I})|\Big)^2\label{e:square_term}
\end{equs}
where, in the first step, we used~\eqref{e:Support} and 
that both $I\setminus J$ and $J\setminus I\subseteq (\{1:n+a\}\setminus I)\cup  (\{1:n+a\}\setminus J)$, 
while in the second the trivial bound $2ab\leq a^2+b^2$  
(fixing w.l.o.g. $J=\{1:m-j\}$) and that 
the cardinality of $\Delta_{m-j}^{n+a}$ is $\binom{n+a}{m-j}$. 
To bound the square of the integral, let $\delta,\beta\in[0,1]$ be arbitrary for now 
(we will choose them later on) and apply Cauchy--Schwarz, so that 
\begin{equs}
\bigg(&\int_{r_{[1:j+1]} = p_{[1:m-j]}} \Xi^\tau_{j+1}(\dd r_{1:j+1}) \, |\hat{\Phi}(r_{1:j+1}, p_{1:n+a \setminus I})|\bigg)^2 \\
&\leq\bigg(\int_{ r_{[1:j+1]} = p_{[I]}} \Xi^\tau_{j+1}(\dd  r_{1:j+1}) \, (\nu_\tau |\fe_1 \cdot  r_{1:j+1}|^2)^\delta |\sqrt{R_\tau}  r_{1:j+1}|^{2 \beta} \,  |\hat{\Phi}( r_{1:j+1}, p_{1:n+a \setminus I})|^2\bigg) \\
&\qquad\times \bigg(\int_{\tilde r_{[1:j+1]} = p_{[I]}} \Xi_{j+1}^\tau(\dd \tilde r_{1:j+1}) \frac{\1_{\tilde r_{1:j+1} \in V_{\chi}^{\tau} }}{(\nu_\tau |\fe_1 \cdot \tilde r_{1:j+1}|^2)^\delta |\sqrt{R_\tau} \tilde r_{1:j+1}|^{2 \beta}}\bigg)\\
&\leq \bigg(\int_{ r_{[1:j+1]} = p_{[I]}} \Xi^\tau_{j+1}(\dd  r_{1:j+1}) \, (\nu_\tau |\fe_1 \cdot  r_{1:j+1}|^2)^\delta |\sqrt{R_\tau}  r_{1:j+1}|^{2 \beta} \,  |\hat{\Phi}( r_{1:j+1}, p_{1:n+a \setminus I})|^2\bigg) \\
&\qquad\times \bigg(\int \Xi_{j}^\tau(\dd \tilde r_{1:j}) \frac{\1_{\tilde r_{1:j} \in V_{\chi}^{\tau} }}{(\nu_\tau |\fe_1 \cdot \tilde r_{1:j}|^2)^\delta |\sqrt{R_\tau} \tilde r_{1:j}|^{2 \beta}}\bigg)\label{e:aux_bound:1}
\end{equs}
where we inserted the indicator function of $\{r_{1:j+1}\in V^\tau_\chi\}$ for free thanks to~\eqref{e:Support} 
and in the last bound we removed an integration variable and the corresponding constraint on the integral.  
For the scalar product multiplying the square in~\eqref{e:square_term}, notice that, 
since $p_{[I]}=r_{[1:j+1]}$, we can estimate $|\fe_1 \cdot p_{[I]}|^2 \leq (m-j) |\fe_1 \cdot p_{I}|^2$ 
and $ |\fe_1 \cdot p_{[I]}|^2 \leq (j+1) |\fe_1 \cdot r_{1:j+1}|^2$. Therefore, since further 
$(m-j)\vee(j+1)\leq m+1$, for any $\gamma\in[0,1]$, we get 
\begin{equ}
|\fe_1 \cdot p_{[I]}|^2\leq (m+1) |\fe_1 \cdot p_{I}|^{2(1-\gamma)}|\fe_1 \cdot r_{1:j+1}|^{2\gamma}
\end{equ}
which then gives 
\begin{equ}[e:aux_bound:2]
\frac{ |\fe_1 \cdot p_{[I]}|^2}{1 + |\sqrt{R_\tau} p_{1:n+a}|^2/2} \lesssim \nu_\tau^{-1}(m+1) \,  \frac{(\nu_\tau |\fe_1 \cdot r_{1:j+1}|^2)^{\gamma}}{|\sqrt{R_\tau} p_{I}|^{2 \gamma}} \, .
\end{equ}
Plugging~\eqref{e:square_term},~\eqref{e:aux_bound:1} and~\eqref{e:aux_bound:2} 
into~\eqref{e:Gena3+Bound1}, relabelling the variables $(r_{1:j+1}, p_{1:n+a\setminus I})=q_{1:n}$ 
and exploiting the multiplicative structure of $\Xi^\tau_{n+a}$, 
we deduce 
\begin{equs}
\|&\genaF{m}{a} \Phi\|_{\fock{}{-1}{\tau}}^2 \lesssim   (n+a)! \frac{n!^2}{(n+a)!^2 (j+1)!^2} \binom{n+a}{m-j} (m+1)\frac{\tau^{- m + 2} \nu_\tau^{\frac{m-1}{2}}}{(2 \pi)^{2(m-1)}} \times \\
&\quad\times \int \Xi^\tau_{n}(\dd q_{1:n}) (\nu_\tau |\fe_1\cdot q_{1:j+1}|^2)^{\delta+\gamma} |\sqrt{R_\tau}q_{1:j+1}|^{2\beta} |\hat\phi(q_{1:n})|^2\times\\
&\quad\times \int \Xi_{j}^\tau(\dd \tilde r_{1:j}) \frac{\1_{\tilde r_{1:j} \in V_{\chi}^{\tau} }}{(\nu_\tau |\fe_1 \cdot \tilde r_{1:j}|^2)^\delta |\sqrt{R_\tau} \tilde r_{1:j}|^{2 \beta}} \sum_{I \in \Delta_{m-j}^{n+a}}\int_{p_{[I]}=q_{[1:j+1]}}\Xi_{m-j}^\tau(\dd p_{I}) \frac{\1_{p_{I\setminus\{1:m-j\}}\in V^\tau_\chi}}{|\sqrt{R_\tau}p_I|^2} \\
&=n!  \, \frac{n!}{(n+a)! (j+1)!^2} \binom{n+a}{m-j} (m+1)\frac{1}{(2 \pi)^{2(m-1)}} \times \\
&\quad\times \int \Xi^\tau_{n}(\dd q_{1:n}) (\nu_\tau |\fe_1\cdot q_{1:j+1}|^2)^{\delta+\gamma} |\sqrt{R_\tau}q_{1:j+1}|^{2\beta} |\hat\phi(q_{1:n})|^2 \,\mathbf{A}^j_{\delta,\beta}\sum_{I \in \Delta_{m-j}^{n+a}}\mathbf{B}_{\gamma}^I\label{e:Gena3+Bound2}
\end{equs}
where in the second step we applied the change of variables $\tau^{-\frac12}\sqrt{R_\tau}\tilde r_i=x_i$, 
$i=1,\dots, j+1$ to the integral over $\tilde r_{1:j+1}$ and 
similarly for that over $p_{I}$. For $\delta,\beta,\gamma\in[0,1]$ and $I\in\Delta^{n+a}_{m-j}$, 
$\mathbf{A}^j_{\delta,\beta}$ and 
$\mathbf{B}_{\gamma}^I$ are the (essentially) $\tau$-independent integrals defined by 
\begin{equs}
\mathbf{A}^j_{\delta,\beta}&\eqdef \int \Xi_{j}^1(\dd x_{1:j}) \frac{\1_{x_{1:j} \in V_{\chi}^{1} }}{|\fe_1 \cdot  x_{1:j}|^{2\delta} | x_{1:j}|^{2 \beta}}\,,\label{e:A1}\\
\mathbf{B}^I_{\gamma}&\eqdef \int_{y_{[I]}=\tau^{-\frac12}\sqrt{R_\tau}q_{[1:j+1]}}\Xi_{m-j}^1(\dd y_{I}) \frac{\1_{y_{I\setminus\{1:m-j\}}\in V^1_\chi}}{|y_I|^{2\gamma}}\,.\label{e:A2}
\end{equs}
We now choose $\delta,\beta,\gamma\in[0,1]$ depending on $m$ and $j$ 
so that~\eqref{e:A1} and~\eqref{e:A2} are finite 
and bounded by a quantity that, at least for large enough $m$, depends explicitly on $\chi$. 
To do so, we will consider four different cases: (1) $j\geq 3,\,m\geq 4$; (2) $j=2$; (3) $j=1$; and (4) $j=0$. 
In each of them, our choice, which is neither unique nor optimal, 
will be made to minimise the value of $\beta$ as this is 
responsible for the full $\fock{}{1}{\tau}$-norm of $\phi$ at the r.h.s. of~\eqref{e:BoundAmne2} (see~\eqref{e:Gena3+Bound3}). Our bound is   
obtained by the following procedure: remove from the denominator the maximal number of variables 
for which the resulting integral is finite; remove from the indicator function 
those variables which are still at the denominator; and integrate one of the remaining variables 
(provided there is one) 
using the trivial estimate 
\begin{equ}[e:ReallyBasic]
\int \Xi^1_1(\dd z) \1_{z\in V^1_\chi}=(2\pi)^2 \int \hat\rho(z)^2 \1_{z\in V^1_\chi} \lesssim (2\pi)^2 \chi\,.
\end{equ}
More concretely, in case (1) (i.e. $j\geq 3,\,m\geq 4$) we take $\beta=\gamma=0$, 
and estimate $\mathbf{A}^j_{\delta,0}$ as  
\begin{equ}[e:A11]
\mathbf{A}^j_{\delta,0}\leq 
\int  \frac{\Xi_{3}^1(\dd x_{1:3})}{|\fe_1 \cdot  x_{1:3}|^{2\delta}} \bigg(\int \Xi_{1}^1(\dd x_{1:j}) \1_{x_{4:j} \in V_{\chi}^{1}} \bigg)^{j-3}\lesssim (2\pi)^{2j}\chi^{\1_{j\geq 4}}
\end{equ}
the last bound being a consequence of the fact that for any $x\in\R^2$, $\fe_1\cdot x$ is a 
one-dimensional variable (meaning that the first integral is effectively over $\R^3$) so that it 
holds for any $\delta'\in[0,1]$, while for $\mathbf{B}^I_{0}$ we have  
\begin{equ}[e:21]
\mathbf{B}^I_{0}\lesssim \int_{y_{[I]}=\tau^{-\frac12}\sqrt{R_\tau}q_{[1:j+1]}}\Xi_{m-j}^1(\dd y_{I}) \1_{y_{I\setminus\{1:m-j\}}\in V^1_\chi}\lesssim (2\pi)^{2(m-j-1)}\chi^{\1_{|I\setminus\{1:m-j\}|\geq 2}}\,.
\end{equ}
In case (2), i.e. $j=2$, we take $\gamma=0$, and bound $\mathbf{B}^I_{0}$ as in~\eqref{e:21}, 
and $\mathbf{A}^j_{\delta,\beta}$ as 
\begin{equ}
\mathbf{A}^2_{\delta,\beta}\leq \int  \frac{\Xi_{2}^1(\dd x_{1:2})}{|\fe_1 \cdot  x_{1:2}|^{2\delta} | x_{1:2}|^{2 \beta}}\leq \int \frac{\Xi_{2}^1(\dd x_{1:2})}{|\fe_1 \cdot  x_{1:2}|^{2\delta} | \fe_2\cdot x_{1:2}|^{2 \beta}} 
\end{equ}
that is bounded by $(2\pi)^2$ for any $\delta,\beta$ such that $\delta\vee\beta<1$. 
In case (3), i.e. $j=1$, take $\beta=0$, so that   
\begin{equs}
\mathbf{A}^1_{\delta,0}&\leq \int \frac{\Xi_{1}^1(\dd x_1)}{|\fe_1 \cdot  x_{1}|^{2\delta} }\lesssim (2\pi)^2
\end{equs}
for any $\delta<1/2$, and
\begin{equ}[e:A23]
\mathbf{B}_{\gamma}^I\leq \int_{y_{[I]}=\tau^{-\frac12}\sqrt{R_\tau}q_{[1:2]}}\Xi_{m-1}^1(\dd y_{I}) \frac{\1_{y_{I\setminus\{1:m-1, i_1,i_2\}}\in V^1_\chi}}{|(y_{i_1},y_{i_2})|^{2\gamma}}\lesssim (2\pi)^{2(m-2)}\chi^{\1_{|I\setminus\{1:m-j\}|\geq 3}}\,,
\end{equ}
where we took $i_1,\,i_2\in I$, and the last bound is valid for any $\gamma\in[0,1]$. 
At last, in case (4), i.e. $j=0$, $\mathbf{A}^j_{\delta,\beta}=1$ as there is no integral (so we take $\delta=\beta=0$), 
and the other is estimated as in~\eqref{e:A23} for an arbitrary $\gamma\in[0,1]$.  

Overall, getting back to~\eqref{e:Gena3+Bound2}, 
we have shown that, for suitable values of $\delta,\beta,\gamma\in[0,1]$, and since $a=m-2j-1$, 
we have 
\begin{equs}[e:Gena3+Bound3]
\|&\sqrt{m!}\genaF{m}{a} \Phi\|_{\fock{}{-1}{\tau}}^2 \lesssim   n! \frac{n! m!}{(n+a)! (j+1)!^2} \binom{n+a}{m-j} (m+1)\times \label{e:Gena3+Bound3}\\
&\quad\times \int \Xi^\tau_{n}(\dd q_{1:n}) (\nu_\tau |\fe_1\cdot q_{1:j+1}|^2)^{\delta+\gamma} |\sqrt{R_\tau}q_{1:j+1}|^{2\beta} |\hat\phi(q_{1:n})|^2 \,\chi^{\1_{j\geq 4}}\sum_{I \in \Delta_{m-j}^{n+a}} \chi^{\1_{|I\setminus\{1:m-j\}|\geq 3}}\\
&= \binom{n}{j}\binom{m+1}{j+1}\chi^{\1_{j\geq 4}} \sum_{k=0}^{m-j} \binom{n-j-1}{k}\binom{m-j}{k}\chi^{\1_{k\geq 3}}\times\\
&\quad\times n! \int \Xi^\tau_{n}(\dd q_{1:n}) (\nu_\tau |\fe_1\cdot q_{1:j+1}|^2)^{\delta+\gamma} |\sqrt{R_\tau}q_{1:j+1}|^{2\beta} |\hat\phi(q_{1:n})|^2\\
&\leq  \binom{n}{j}\binom{m+1}{j+1}\chi^{\1_{j\geq 4}}\sum_{k=0}^{m+1} \binom{n}{k}\binom{m+1}{k}\chi^{\1_{k\geq 3}} \|(- \nu_\tau \gensyx)^{\frac12}\Phi\|_\tau^{2(\delta+\gamma)} \| (- \gensy)^{\frac12} \Phi\|_\tau^{2\beta}
\end{equs}
where in the last step, we used monotonicity of the binomial, 
chose $\delta+\beta+\gamma=1$, applied H\"older's inequality to 
the integral (with exponents $1/(\delta+\gamma)$ and $1/\beta$), and 
included the $n!$ in the norm. 
As a consequence, upon summing the previous over $j\in\{0,m-1\}$, we get 
\begin{equs}
\|&\sqrt{m!}\genaF{m}{} \Phi\|_{\fock{}{-1}{\tau}}^2\lesssim \bigg(\sum_{j=0}^{m+1} \binom{n}{j}\binom{m+1}{j}\chi^{\1_{j\geq 4}}\bigg)^2\|(- \nu_\tau \gensyx)^{\frac12}\Phi\|_\tau^{2(\delta+\gamma)} \| (- \gensy)^{\frac12} \Phi\|_\tau^{2\beta}
\end{equs}
and we can bound the sum as 
\begin{equ}
\sum_{j = 0}^{m+1} \binom{n}{j} \binom{m+1}{j} \chi^{\1_{\ell \geq 4}} \lesssim n^3 m^3 + \chi \, 4^n \,  m^n \, ,
\end{equ}
Therefore, the statement follows at once upon choosing $\alpha$ in the statement to be $\delta+\gamma$ 
herein. Indeed, the anisotropic $\fock{}{-1}{\tau}$-norm $\|(- \nu_\tau \gensyx)^{\frac12}\cdot\|_\tau$ 
is upperbounded by the full $\fock{}{1}{\tau}$-norm and for any of the choices of $\delta,\beta,\gamma\in[0,1]$ 
such that $\delta+\beta+\gamma=1$, the only restriction we have is $\delta+\gamma<1$ and $\beta>0$ 
(which are due to case (2) above, in all the others we could have chosen $\delta+\gamma=1$ and $\beta=0$).  
\end{proof}

In order to deduce a bound on $\genaF{\neq2}{}$ from Proposition~\ref{p:HighDegreeEstimate}, 
it remains to use the fact 
that $\genaF{\neq2}{}=\genaF{}{}-\genaF{2}{}$, for $\genaF{}{}$ defined in~\eqref{e:AmA}, and 
choose suitably the way in which the size of the Fourier support of the elements of $\core$ 
grows with the number operator so that 
Assumption~\ref{Assumption:F} can be exploited.

\begin{corollary}\label{cor:HighDegreeEstimate}
Under Assumption \ref{Assumption:F}, for every $\Phi \in \core$ for which, for every $n \geq 1$,  
$\mathrm{Supp}(\hat{\Phi}_n)\subset\{p_{1:n} \, \colon \, |\sqrt{R_\tau p_{1:n}}|^2 \leq \tau\chi_n\}$ 
with $\chi_n=\exp(-\exp(c n))$ for some $c>0$, 
there exists a constant $C = C(c)>0$ such that, for any $\alpha\in(0,1)$, we have 
	\begin{equ}  \label{e:BoundAne2}
		\|\genaF{\neq 2}{} \Phi\|_{\fock{}{-1}{\tau}} \leq C  \, \|\cN^{\frac{3}{\alpha}}(- \nu_\tau \gensyx)^{\frac12}\Phi\|_\tau^\alpha \,\| \Phi\|_{\fock{}{1}{\tau}}^{1-\alpha} \,.
	\end{equ}
\end{corollary}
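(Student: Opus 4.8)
The plan is to derive~\eqref{e:BoundAne2} from Proposition~\ref{p:HighDegreeEstimate} applied separately on each homogeneous chaos, combined with the fast decay of the Hermite coefficients $c_m(F)$ furnished by Assumption~\ref{Assumption:F} via the estimates of Appendix~\ref{a:DecayCoeff}. Since $\genaF{\neq 2}{} = \sum_{m\ge 1,\,m\neq 2} c_m(F)\,\genaF{m}{}$, I begin with the triangle inequality over $m$, reducing matters to a bound on each $\|\genaF{m}{}\Phi\|_{\fock{}{-1}{\tau}}$. Writing $\Phi = (\Phi_n)_n$ and recalling that $\genaF{m}{}$ maps $\core_n$ into $\bigoplus_{|\ell - n|\le m-1}\core_\ell$ — so that at most $m$ source chaoses feed into any fixed target chaos — Cauchy--Schwarz in the target chaos gives $\|\genaF{m}{}\Phi\|_{\fock{}{-1}{\tau}}^2 \le m\sum_n\|\genaF{m}{}\Phi_n\|_{\fock{}{-1}{\tau}}^2$. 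I then apply Proposition~\ref{p:HighDegreeEstimate} to each $\Phi_n$ with truncation parameter $\chi = \chi_n = \exp(-\exp(cn))$, controlling $\sqrt{m!}\,\|\genaF{m}{}\Phi_n\|_{\fock{}{-1}{\tau}}$ by $C\,(n^3m^3 + \chi_n 4^n m^n)\,\|(-\nu_\tau\gensyx)^{1/2}\Phi_n\|_\tau^\alpha\,\|\Phi_n\|_{\fock{}{1}{\tau}}^{1-\alpha}$.

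Summing the squares over $n$ and applying H\"older's inequality with conjugate exponents $1/\alpha$ and $1/(1-\alpha)$ factors the $n$-sum as $\bigl(\sum_n (n^3m^3 + \chi_n 4^n m^n)^{2/\alpha}\,\|(-\nu_\tau\gensyx)^{1/2}\Phi_n\|_\tau^2\bigr)^\alpha$ times $\|\Phi\|_{\fock{}{1}{\tau}}^{2(1-\alpha)}$. The piece arising from $(n^3m^3)^{2/\alpha} = n^{6/\alpha}m^{6/\alpha}$ reassembles, upon summing over $n$, into $m^{6/\alpha}\,\|\cN^{3/\alpha}(-\nu_\tau\gensyx)^{1/2}\Phi\|_\tau^2$ — this is precisely where the number-operator power $3/\alpha$ in~\eqref{e:BoundAne2} is born. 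For the piece arising from $(\chi_n 4^n m^n)^{2/\alpha}$ I bound $\chi_n^{2/\alpha}(4m)^{2n/\alpha} = \exp\!\bigl(\tfrac{2}{\alpha}(n\log(4m) - e^{cn})\bigr)$ uniformly over $n\ge 1$ by $G_m \eqdef \exp\!\bigl(\tfrac{2}{\alpha}\sup_{n\ge1}(n\log(4m) - e^{cn})\bigr)$; an elementary optimisation of $n\mapsto n\log(4m) - e^{cn}$ shows $G_m^{\alpha/2} = \exp\!\bigl(\sup_{n\ge1}(n\log(4m)-e^{cn})\bigr) \le \exp\!\bigl(C(c)(1+\log m)\log(2+\log m)\bigr)$, a quasi-polynomial in $m$ with no residual $\alpha$-dependence. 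Taking square roots and collecting terms gives $\|\genaF{m}{}\Phi\|_{\fock{}{-1}{\tau}} \lesssim \sqrt{m/m!}\,(m^3 + G_m^{\alpha/2})\,\|\cN^{3/\alpha}(-\nu_\tau\gensyx)^{1/2}\Phi\|_\tau^\alpha\,\|\Phi\|_{\fock{}{1}{\tau}}^{1-\alpha}$.

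To close, I have to check $\sum_{m\ge1}\tfrac{|c_m(F)|\sqrt m}{\sqrt{m!}}(m^3 + G_m^{\alpha/2}) < \infty$, and this is where Assumption~\ref{Assumption:F} is used at full strength: repeated Gaussian integration by parts (the recursion already exploited in the proof of Lemma~\ref{lem:ActionGen}, cf.\ Proposition~\ref{p:cm(F)asymptotic} and Lemma~\ref{l:HerAppDecay}) gives $\tfrac{|c_m(F)|}{\sqrt{m!}} \lesssim \tfrac{\exp(\exp(o(\ell)))}{\sqrt{m(m-1)\cdots(m-\ell+1)}}$ for every $\ell\le m$. Choosing $\ell = \ell(m)$ to grow like a large enough multiple of $\log\log m$ renders the numerator $m^{o(1)}$ while the denominator is $\exp(\Omega(\log m\log\log m))$ with an implied constant we may take as large as we wish, which dominates $\sqrt m\,(m^3 + G_m^{\alpha/2})$ and leaves a summable tail; collecting constants yields~\eqref{e:BoundAne2}.

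The step I expect to be the main obstacle is the three-way balancing act just described: the polynomial-in-$m$ factor $m^n$ that $\genaF{m}{a}$ injects into the number operator must be absorbed by the doubly-exponential smallness $\chi_n = \exp(-\exp(cn))$ of the Fourier cutoff — uniformly in $n$, at the price of the quasi-polynomial loss $G_m$ — and in turn this loss must be beaten by the decay of $c_m(F)$, which forces $\ell(m)\to\infty$ rather than a fixed number of integrations by parts and thereby makes the double exponential in~\eqref{e:assumptionF} indispensable. Verifying that these three rates line up, and in particular that $G_m^{\alpha/2}$ carries no $\alpha$ so that the final constant depends only on $c$, is the genuine content of the argument.
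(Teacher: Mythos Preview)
Your proposal is correct and in fact handles the chaos-summation step more carefully than the paper's own proof. The two arguments differ chiefly in the order of the double sum: the paper first fixes a chaos~$n$, sums over~$m$ via the pre-packaged moment bound~\eqref{e:expexpMomentGrowth} of Proposition~\ref{p:AssF} to obtain
\[
\|\genaF{\neq 2}{}\Phi_n\|_{\fock{}{-1}{\tau}}\lesssim_c n^3\,\|(-\nu_\tau\gensyx)^{1/2}\Phi_n\|_\tau^\alpha\,\|\Phi_n\|_{\fock{}{1}{\tau}}^{1-\alpha}\,,
\]
and then declares that the full statement ``follows at once''. You instead fix~$m$, observe that at most~$m$ source chaoses feed any target chaos and use this to turn the triangle inequality into an honest $\ell^2$-sum over~$n$ (at the cost of an extra~$\sqrt m$), apply H\"older in~$n$ to reassemble the $\cN^{3/\alpha}$-weighted norm, and only then sum over~$m$ by a direct optimisation of the depth~$\ell(m)$ in Lemma~\ref{l:HerAppDecay}. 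Your route makes explicit why the constant is uniform in the number of nonzero chaoses of~$\Phi$ and in~$\alpha$ (the $2^{2/\alpha}$ arising from splitting $(a+b)^{2/\alpha}$ becomes a harmless~$2$ after the outer $\alpha/2$-power); the paper's route is shorter because~\eqref{e:expexpMomentGrowth} packages the $m$-sum cleanly, but it leaves the passage from per-chaos to full~$\Phi$ implicit. Both lead to~\eqref{e:BoundAne2}.
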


\begin{proof}
In essence, the statement is a consequence of Proposition~\ref{p:HighDegreeEstimate} and Proposition~\ref{p:AssF}. 
Indeed, we have 
\begin{equs}
\|\cA^{\tau, \ne 2} \Phi_n\|_{\fock{}{-1}{\tau}}&\leq \sum_{m\geq 3} \frac{|c_m(F)|}{\sqrt{m!}}\|\genaF{m}{}\Phi_n\|_{\fock{}{-1}{\tau}}\\
&\lesssim \|(- \nu_\tau \gensyx)^{\frac12}\Phi_n\|_\tau^\alpha \|\Phi_n\|_{\fock{}{1}{\tau}}^{1-\alpha} \sum_{m\geq 3} \frac{c_m(F)}{\sqrt{m!}} \Big(n^{3} m^{3} + \chi_n \, 4^n m^{n}\Big) \,
\end{equs}
and we can estimate the sum at the r.h.s.  by using~\eqref{e:expexpMomentGrowth}. 
In formulas, for every $\gamma>0$, we have 
\begin{equs}
 \sum_{m\geq 3} \frac{|c_m(F)|}{\sqrt{m!}} \Big(n^{3} m^{3} + \chi_n \, 4^n m^{n}\Big)\lesssim n^{3} \exp(\exp(3\gamma/2)) + \chi_n 4^{n}  \exp(\exp(n\gamma/2))
\end{equs}
so that, since $\chi_n=\exp(-\exp(c n))$, it suffices to choose $\gamma\in(0,2c)$, and the statement 
follows at once. 
\end{proof}

\subsection{The amended Ansatz for the resolvent of the quadratic Burgers equation} \label{s:ansatz_QSBE}

Before presenting the modification we want to perform, let us recall the original Ansatz introduced  
in~\cite[Sec. 4.1]{CMT} for the solution of the Burgers equation with quadratic 
nonlinearity, and state some of the properties it enjoys. 
For a heuristic explaining how it was deduced, we refer the interested reader to~\cite[Sec. 2.3]{CMT}. 
\medskip

Let $\phi$ be an element of $\fock{n_0}{}{\tau}$ for some fixed $n_0\ge 1$. 
We define $\tilde s^\tau$ according to~\cite[Eq. (4.7)]{CMT}, i.e.  
\begin{equ}[e:AnsatzFormula]
\tilde s^\tau \eqdef \sum_{k=0}^{+\infty} c_2(F)^k \,(\cG_M^\tau \genaFsh{1}{\sharp})^k \, \cG_M^\tau \phi \, .
\end{equ}
where, for $M\geq 0$, $\cG_M^\tau$ is the diagonal operator, according to Definition~\ref{def:DiagOp}, 
in~\cite[Eq. (3.18)]{CMT} given by 
\begin{equ}[e:cgG]
\cG_M^\tau \eqdef \big(1 - \gensy + \cg_M^\tau\big)^{-1}\qquad \text{with}\qquad \cg_M^\tau 
\eqdef -g_M^\tau(L^\tau(- \cS^\tau)) \, \gensyx
\end{equ}
for $g_M^\tau(x)\eqdef  (M \nu_\tau) \vee g^\tau(x)$, and $g^\tau$ and $L^\tau$ the functions on $\R_+$ 
respectively defined by 
\begin{equ}[e:gMLtau]
g^\tau(x) \eqdef \big(\tfrac{3}{2} \, x + \nu_\tau^{3/2}\big)^{2/3} - \nu_\tau\quad\text{and}\quad L^\tau(x) \eqdef \frac{c_2(F)^2 \nu_\tau^{3/2}}{\pi} \log \Big(1 + \frac{\tau}{1+x}\Big)\,,
\end{equ}
while $\genaFsh{1}{\sharp}$ is the operator in~\cite[Eq. (3.7)]{CMT} whose action in Fourier 
on a given element $\psi\in\fock{n}{}{\tau}$ reads 
\begin{equs}[e:A+sharp]
\cF(\genaFsh{1}{\sharp} \psi) \, (p_{1:n+1}) &\eqdef  -\iota\frac{\nu_\tau^{3/4}}{2\pi} \frac{2}{n+1} \sum_{1 \leq i < j \leq n+1} [\fe_1 \cdot (p_i+p_j)]\Theta_\tau(p_i+p_j)  \times \\
&\qquad\qquad\qquad\times\chi_{p_i, p_j}^{\sharp,\tau}(p_i+p_j, p_{1:n+1\setminus\{i,j\}}) \hat{\psi}(p_i+p_j, p_{1:n+1\setminus\{i,j\}})
\end{equs}
for $\chi^{\sharp,\tau}$ the multiplier
\begin{equ}\label{e:ChiSharp}
\chi_{q,q'}^{\sharp,\tau}(p_{1:N}) \eqdef \1_{|\sqrt{R_\tau} q| \wedge |\sqrt{R_\tau} q'| > 2 \, |\sqrt{R_\tau} p_{1:N}|} \, .
\end{equ}
For future reference, let us also define $\genaFsh{-1}{\sharp}, \,\genaFsh{\pm1}{\flat}$ as 
\begin{equ}[e:Asharp-1Aflats]
\genaFsh{-1}{\sharp}\eqdef \big(-\genaFsh{1}{\sharp}\big)^\ast\qquad \text{and} \qquad \genaFsh{\pm1}{\flat}\eqdef\genaF{2}{\pm1}-\genaFsh{\pm1}{\sharp}\,.
\end{equ} 

\begin{remark}\label{rem:Notations}
Compared to~\cite{CMT}, we adapted the notations to be in line with those in the present paper: 
$\tilde s^\tau$ above is $s^\tau$ therein; 
the operators $\genaFsh{\pm1}{\sharp},\,\genaFsh{\pm1}{\flat}$ above correspond to the operators 
$\genapmsh,\,\genapmfl$; the generic coupling constant $\lambda>0$, 
which multiplied the nonlinearity in~\cite[Eq. (1.1)]{CMT} has been replaced in each of its 
occurrences by $c_2(F)$ and omitted from the definition of $\genaFsh{\pm1}{\sharp},\,\genaFsh{\pm1}{\flat}$ 
(this is the reason why it appears as a multiplicative factor in~\eqref{e:AnsatzFormula}). 
\end{remark}

With the previous definitions at hand, we can now recall the main properties of $\tilde s^\tau$. 
We will provide a thorough proof below providing precise references to~\cite[Sec. 4]{CMT} for the details. 

\begin{proposition}\label{p:Ansatz}
Assume $c_2(F)\neq 0$. Let $n_0\in\N$, $\phi\in\core_{n_0}$ be such that 
$\mathrm{Supp}(\cF \phi) \subset \{- \cS^\tau \leq R\}$ for some $R>2$, 
and $\tilde s^\tau$ be defined according to~\eqref{e:AnsatzFormula}. 
Then, there exists constants $C=C(c_2(F), n_0, R, M)>0$ and $q>0$ independent of $\tau$ such that 
for every $\delta\in(0,\frac14]$, we have 
\begin{equs}
\|\tilde s^\tau\|_{\fock{}{1}{\tau}}&\leq C\|\phi\|_\tau\label{e:AnsatzH1}\\
\|e^{q\cN} (-\nu_\tau\gensyx)^{\frac12}\tilde s^\tau\|_\tau&\leq C \nu_\tau^{\frac14-\delta}\|\phi\|_\tau\label{e:AnsatzAH1}
\end{equs}
for $\cN$ the number operator in Definition~\ref{def:NoOp}, and further 
\begin{equs}
\|\tilde s^\tau - \cG_M^\tau \phi\|_\tau &\leq C\,\nu_\tau^{3/4} \|\phi\|_\tau\label{e:AnsatzL2}\\
\|(1 - \cL^{\tau,2}) \tilde s^\tau - \phi\|_{\fock{}{-1}{\tau}} &\leq C \nu_\tau^{\frac14-\delta} \|\phi\|_\tau\label{e:AnsatzH-1}
\end{equs}
where $\cL^{\tau,2}\eqdef \gensy+c_2(F)\genaF{2}{}$. 
\end{proposition}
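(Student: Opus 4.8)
The plan is to obtain all four bounds from their counterparts in~\cite[Sec.~4]{CMT}, transported through the notational dictionary of Remark~\ref{rem:Notations} (in particular replacing the generic coupling constant~$\lambda$ of~\cite{CMT} by~$c_2(F)$ wherever it occurs). Only two features separate the present situation from that of~\cite{CMT}: first,~$\phi$ is now an \emph{arbitrary} element of~$\core_{n_0}$ with~$\mathrm{Supp}(\cF\phi)\subset\{-\gensy\leq R\}$, rather than one of the specific functionals used there; and second,~\eqref{e:AnsatzAH1} carries the exponential number-operator weight~$e^{q\cN}$, which is absent in~\cite{CMT}. The first point is harmless, since the relevant estimates of~\cite{CMT} are already quantitative in~$\|\phi\|_\tau$ and in the support radius~$R$ alone; the second I would accommodate by keeping track of the \emph{chaos grading} of the Ansatz.

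First I would record the fixed-point identity satisfied by~$\tilde s^\tau$: by~\eqref{e:AnsatzFormula} and~\eqref{e:cgG} one has~$\tilde s^\tau=\cG_M^\tau\phi+c_2(F)\,\cG_M^\tau\genaFsh{1}{\sharp}\tilde s^\tau$, so applying~$(\cG_M^\tau)^{-1}=1-\gensy+\cg_M^\tau$ gives~$(1-\gensy+\cg_M^\tau)\tilde s^\tau=\phi+c_2(F)\genaFsh{1}{\sharp}\tilde s^\tau$. Using the splitting~$\genaF{2}{}=\genaF{2}{1}+\genaF{2}{-1}=\genaFsh{1}{\sharp}+\genaFsh{1}{\flat}+\genaFsh{-1}{\sharp}+\genaFsh{-1}{\flat}$ that follows from~\eqref{e:AmA} and~\eqref{e:Asharp-1Aflats}, this rearranges into
\begin{equ}[e:AnsatzResidue]
(1-\cL^{\tau,2})\,\tilde s^\tau-\phi=-\cg_M^\tau\tilde s^\tau-c_2(F)\big(\genaFsh{-1}{\sharp}+\genaFsh{1}{\flat}+\genaFsh{-1}{\flat}\big)\tilde s^\tau\,.
\end{equ}
I would then bound the~$\fock{}{-1}{\tau}$-norm of the right-hand side of~\eqref{e:AnsatzResidue} term by term, exactly as in~\cite[Sec.~4]{CMT}: the a priori dangerous piece~$c_2(F)\genaFsh{-1}{\sharp}\tilde s^\tau$ is cancelled to leading order by~$-\cg_M^\tau\tilde s^\tau$ --- this is precisely what the choice of~$g_M^\tau$ and of the logarithmic correction~$L^\tau$ in~\eqref{e:cgG}--\eqref{e:gMLtau} is engineered to achieve, and it is where the renormalisation of the effective diffusivity sits --- whereas the ``flat'' remainders~$\genaFsh{\pm1}{\flat}\tilde s^\tau$ are small by the momentum constraint in the multiplier~$\chi^{\sharp,\tau}$ of~\eqref{e:ChiSharp}. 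Altogether the~$\fock{}{-1}{\tau}$-residue is~$O(\nu_\tau^{1/4-\delta}\|\phi\|_\tau)$, which is~\eqref{e:AnsatzH-1}.

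For~\eqref{e:AnsatzH1},~\eqref{e:AnsatzL2} and~\eqref{e:AnsatzAH1} I would rely on the per-summand estimates underpinning the convergence of the series~\eqref{e:AnsatzFormula} in~\cite[Sec.~4]{CMT}. Writing~$\tilde s^\tau_{(k)}\eqdef c_2(F)^k(\cG_M^\tau\genaFsh{1}{\sharp})^k\cG_M^\tau\phi\in\fock{n_0+k}{}{\tau}$, those estimates furnish a constant~$C=C(c_2(F),n_0,R,M)$ and, for each~$\delta\in(0,1/4]$, some~$\vartheta\in(0,1)$ (which can be made small by taking~$M$ large) such that
\begin{equ}
\|\tilde s^\tau_{(k)}\|_{\fock{}{1}{\tau}}\leq C\,\vartheta^k\,\|\phi\|_\tau\,,\qquad \|(-\nu_\tau\gensyx)^{1/2}\tilde s^\tau_{(k)}\|_\tau\leq C\,\nu_\tau^{1/4-\delta}\,\vartheta^k\,\|\phi\|_\tau\,,
\end{equ}
together with~$\|\tilde s^\tau-\cG_M^\tau\phi\|_\tau\leq C\,\nu_\tau^{3/4}\,\|\phi\|_\tau$ (the extra~$\nu_\tau^{3/4}$ coming from the prefactor of~$\genaFsh{1}{\sharp}$ in~\eqref{e:A+sharp}). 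Summing the first displayed bound over~$k\geq0$ yields~\eqref{e:AnsatzH1}, and the last inequality is~\eqref{e:AnsatzL2}. For~\eqref{e:AnsatzAH1}, since~$e^{q\cN}$ multiplies~$\tilde s^\tau_{(k)}$ by~$e^{q(n_0+k)}$, choosing~$q>0$ with~$e^q\vartheta<1$ (possible since~$\vartheta<1$, and this~$q$ is~$\tau$-independent) and summing the second per-summand bound against this factor gives
\begin{equ}
\|e^{q\cN}(-\nu_\tau\gensyx)^{1/2}\tilde s^\tau\|_\tau\leq C\,e^{qn_0}\,\nu_\tau^{1/4-\delta}\,\|\phi\|_\tau\sum_{k\geq0}(e^q\vartheta)^k\lesssim\nu_\tau^{1/4-\delta}\,\|\phi\|_\tau\,,
\end{equ}
which proves~\eqref{e:AnsatzAH1}.

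The hard part will be the quantitative bookkeeping that makes this importation fully legitimate. Concretely, one has to verify that the term-by-term bounds of~\cite[Sec.~4]{CMT} on the residue~\eqref{e:AnsatzResidue}, as well as the per-summand estimates above, were proved with constants depending only on~$\|\phi\|_\tau$, the support radius~$R$ and the parameters~$(c_2(F),n_0,M)$ --- so that they transfer verbatim to a general~$\phi\in\core_{n_0}$ rather than to the particular test functionals of~\cite{CMT} --- and, more delicately, that the convergence of the Ansatz series in~\cite{CMT} was established with a genuine geometric rate~$\vartheta^k$ in the chaos index, and not merely with summability, since it is exactly this quantitative decay in~$k$ that renders the exponentially weighted norm in~\eqref{e:AnsatzAH1} finite.
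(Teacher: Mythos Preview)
Your proposal is on the right track and aligns with the paper's approach of importing the estimates from~\cite[Sec.~4]{CMT}, but there are two places where your bookkeeping differs from how the paper actually proceeds.

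For~\eqref{e:AnsatzH-1}, your residue identity~(e:AnsatzResidue) is algebraically correct but one substitution short. The Replacement Lemma~\cite[Lem.~3.6]{CMT} controls the \emph{operator} combination $-c_2(F)^2\genaFsh{-1}{\sharp}\cG_M^\tau\genaFsh{1}{\sharp}-\cg_M^\tau$, not $-c_2(F)\genaFsh{-1}{\sharp}-\cg_M^\tau$ applied to~$\tilde s^\tau$. To reach that form the paper (via~\cite[Lem.~4.2]{CMT}) feeds the fixed-point identity $\tilde s^\tau=\cG_M^\tau\phi+c_2(F)\cG_M^\tau\genaFsh{1}{\sharp}\tilde s^\tau$ back into the $\genaFsh{-1}{\sharp}\tilde s^\tau$ term, producing the decomposition
\[
(1-\cL^{\tau,2})\tilde s^\tau-\phi=\big(-c_2(F)^2\genaFsh{-1}{\sharp}\cG_M^\tau\genaFsh{1}{\sharp}-\cg_M^\tau\big)\tilde s^\tau-c_2(F)\cA^{\tau,\scalebox{0.6}{2}}_{\flat}\tilde s^\tau-c_2(F)\genaFsh{-1}{\sharp}\cG_M^\tau\phi\,,
\]
with the first summand handled by the Replacement Lemma, the flat terms by~\cite[Prop.~3.4]{CMT}, and the additional remainder $\genaFsh{-1}{\sharp}\cG_M^\tau\phi$ by~\cite[Prop.~3.12]{CMT}. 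Your description of the cancellation (``$c_2(F)\genaFsh{-1}{\sharp}\tilde s^\tau$ cancelled by $-\cg_M^\tau\tilde s^\tau$'') is the right intuition, but without the extra substitution you cannot invoke the available lemma.

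For~\eqref{e:AnsatzAH1}, the paper does not argue via per-summand geometric decay~$\vartheta^k$ with~$\vartheta$ made small through~$M$. Instead it invokes~\cite[Prop.~4.8 and Rem.~4.10]{CMT} directly with the weight $w(k)=e^{2qk}$, which yields for any $q\in[0,\log\sqrt2)$ the bound $\|e^{q\cN}(-\nu_\tau\gensyx)^{1/2}\tilde s^\tau\|_\tau\lesssim\|(-f_w^\tau(L^\tau(-\gensy))\gensyx)^{1/2}\cG_M^\tau\phi\|_\tau$ with $f_w^\tau(x)=\nu_\tau^{1-e^{2q}/2}(\nu_\tau+g^\tau(x))^{e^{2q}/2}$; the exponent $\tfrac14-\delta$ then drops out of this last expression. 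So the admissible range of~$q$ is fixed by the structure of~\cite[Prop.~4.8]{CMT} rather than by tuning~$M$. The paper also reverses your order: \eqref{e:AnsatzAH1} is established first, and~\eqref{e:AnsatzH1} is deduced from it together with~\cite[Eq.~(4.13)]{CMT}.
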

\begin{proof}
Let us separately discuss each of the estimates~\eqref{e:AnsatzH1}--\eqref{e:AnsatzH-1}, starting from~\eqref{e:AnsatzAH1}. 
\medskip

\noindent {\it Estimate~\eqref{e:AnsatzAH1}.} 
It is a direct consequence of~\cite[Prop.~4.8 (and Rem.~4.10)]{CMT} as can be seen 
upon taking $q\in[0, \log \sqrt{2})$, $w(k)\eqdef e^{2q k}$ so that the function $f^\tau_w$ in~\cite[Eq. (4.22)]{CMT} 
reads $f_w^\tau(x) = \nu_\tau^{1-\frac12e^{2q}} \, (\nu_\tau + g^\tau(x))^{e^{2q}/2}$. Indeed,~\cite[Eq. (4.21)]{CMT} 
gives 
\begin{equ}
e^{-q\,n_0}\| e^{-q\cN} (-\nu_\tau\gensyx)^{\frac12}\tilde s^\tau\|_\tau\leq 2 \|\big(-f^\tau_w(L^\tau(- \cS^\tau)) \, \gensyx\big)^{1/2}\cG^\tau_M\phi\|_\tau\lesssim \nu_\tau^{\frac12\left(1-\frac12e^{2q}\right)}\|\phi\|_\tau
\end{equ}
where in the last step we used that the operators $-f^\tau_w(L^\tau(- \cS^\tau)) \, \gensyx$ and $\cG^\tau_M$ 
commute, that 
\begin{equs}
\cG^\tau_M&=\big(1-\gensy-g_M^\tau(L^\tau(- \cS^\tau))\gensyx\big)^{-1}
\leq \big(1-\gensy-g^\tau(L^\tau(- \cS^\tau))\gensyx\big)^{-1}
\\
&=\big(1-\gensyy-[\nu_\tau+g^\tau(L^\tau(- \cS^\tau))]\gensyx\big)^{-1}\leq \big(1-[\nu_\tau+g^\tau(L^\tau(- \cS^\tau))]\gensyx\big)^{-1}
\end{equs}
 and $\cG_M^\tau\leq 1$, and that, as a consequence, for any $q\in[0,\log\sqrt{2})$, we get 
 \begin{equs}
(-f^\tau_w(L^\tau(- \cS^\tau)) \, \gensyx)^{1/2}\cG^\tau_M&\leq (-f^\tau_w(L^\tau(- \cS^\tau)) \, \gensyx)^{1/2}(\cG^\tau_M)^{1/2}\\
&\leq \nu_\tau^{\frac12\left(1-\frac12e^{2q}\right)}[\nu_\tau+g^\tau(L^\tau(0))]^{\frac12\left(e^{2q}-1\right)}\lesssim \nu_\tau^{\frac12\left(1-\frac12e^{2q}\right)}\,.
 \end{equs}

\noindent {\it Estimate~\eqref{e:AnsatzH1}.} It follows by~\cite[Eq. (4.13)]{CMT},~\eqref{e:AnsatzH1} 
and the fact that $\cG_M^\tau$ is a bounded 
operator. 
\medskip

\noindent {\it Estimate~\eqref{e:AnsatzL2}.} See Eq. (4.14) in~\cite[Prop. 4.3]{CMT}. 
\medskip

\noindent {\it Estimate~\eqref{e:AnsatzH-1}.} This estimate is not explicitly written in~\cite{CMT} 
but can be easily deduced from the results therein. Let us provide a few details. 
By~\cite[Lem.~4.2]{CMT}, $\tilde s^\tau$ satisfies 
\begin{equs}
	(1-\cL^{\tau,2}) \tilde{s}^\tau - \phi 
	&= 
\big(-c_2(F)^2\genaFsh{-1}{\sharp} \cG_M^\tau\genaFsh{1}{\sharp} -\cg_M^\tau\big) \tilde{s}^\tau- c_2(F)\cA^{\tau,\scalebox{0.6}{2}}_{\flat} \tilde{s}^\tau -  c_2(F)\genaFsh{-1}{\sharp} \cG_M^\tau \phi_\tau
	%
\end{equs}
where $\cA^{\tau,\scalebox{0.6}{2}}_{\flat}\eqdef \genaFsh{1}{\flat}+\genaFsh{-1}{\flat}$. 
Let us separately consider each summand. For the first, we exploit 
the Replacement Lemma~\cite[Lem.~3.6]{CMT}, which gives 
\begin{equ} 
	\|\big(-c_2(F)^2\genaFsh{-1}{\sharp} \cG_M^\tau\genaFsh{1}{\sharp} -\cg_M^\tau\big) \tilde{s}^\tau\|_{\fock{}{-1}{\tau}} \leq C \, \|(- \nu_\tau \gensyx)^{1/2} \tilde{s}^\tau\|_\tau \, .
\end{equ}
For the second, we use instead~\cite[Prop.~3.4]{CMT}, obtaining
\begin{equ}
	\|\genaFsh{\pm}{\flat} \tilde{s}^\tau\|_{\fock{}{-1}{\tau}} 
	\leq C \, \|\cN(- \nu_\tau \gensyx)^{1/2} \tilde{s}^\tau\|_\tau \, 
\end{equ}
and at last, the bound on the third follows by Eq. (3.31) in~\cite[Prop. 3.12]{CMT}. 
Collecting these estimates, and using~\eqref{e:AnsatzAH1} once again, one can easily deduce~\eqref{e:AnsatzH-1}. 
\end{proof}

What the previous proposition says is that, on the one hand, $\tilde s^\tau$ is a good 
approximation of $(1-\cL^{\tau,2})^{-1}\phi$ (by~\eqref{e:AnsatzH-1}) and, on the other, 
it is close in $\fock{}{}{\tau}$ to the resolvent of the effective equation~\eqref{e:limitingSHE} 
applied to $\phi$. This latter point follows by~\eqref{e:AnsatzL2} and the fact that, as $\tau\to\infty$, 
the operator $\cG^\tau_M$ converges to $(1-\geneff)^{-1}$ as argued in~\cite[Eq. (3.20)]{CMT}. 

In other words, $\tilde s^\tau$ satisfies most of the main properties we desired an ansatz to have 
in Section~\ref{sec:Ideas}. At the same time, it lacks arguably the most important: its 
chaos expansion is infinite and its kernels 
do not have a compactly supported Fourier transform. Thus, the 
results of Section~\ref{s:gen_est_nonq} are not applicable, which 
in turn implies that we are unable to control the r.h.s.  of~\eqref{e:HeuH-1}. 

To remedy this, we introduce an {\it amended Ansatz} $s^\tau$ which is obtained from 
$\tilde s^\tau$ by truncating the support of the Fourier transform (and the chaos) of $\tilde s^\tau$ 
in such a way that the assumptions of Corollary~\ref{cor:HighDegreeEstimate} are satisfied. 
More precisely, let us define 
\begin{equ}[e:amended_ansatz]
	s^\tau \eqdef  \Pi_{\Yleft}^\tau \tilde s^\tau \, , \qquad \Pi_{\Yleft}^\tau \eqdef \1_{\{\cN\leq -\frac1{c}\log \nu_\tau,\,- 2 \gensy \leq \tau \,\exp(-\exp(c\cN))\} }
\end{equ}
where the r.h.s. in the definition of $\Pi_{\Yleft}^\tau$ 
is the diagonal operator whose Fourier kernel on the $n$-th chaos is 
$\1_{\{n\leq -\frac1{c}\log \nu_\tau,\,\abs[0]{\sqrt{R_\tau} p_{1:n}}^2 \leq \tau \exp(-\exp(cn) )\}}$, 
and $c>0$ is a parameter to be chosen later. 

The next proposition is the core of our analysis as it is the one that shows that indeed the amended ansatz 
tames both~\ref{i:I} and~\ref{i:II} and provides an approximation thanks to which 
we will be able to control the norms in~\eqref{e:gengen2} and~\eqref{e:gen2geneff}. 

\begin{proposition}\label{p:MAnsatz} 
Assume $c_2(F)\neq 0$. Let $n_0\in\N$, $\phi\in\core_{n_0}$ be such that 
$\mathrm{Supp}(\cF \phi) \subset \{- \cS^\tau \leq R\}$ for some $R>0$. 
Let $q>0$ be such that Proposition~\ref{p:Ansatz} holds. For $c\in(0,q/2)$, 
define $s^\tau$ according to~\eqref{e:amended_ansatz}. 
Then, $s^\tau\in\core$ for every $\tau>0$, and 
there exist $\tau_0>0$ and a constant $C=C(c_2(F), n_0, R, M, c, \tau_0)>0$ independent of $\tau$ 
such that, for every $\tau\geq \tau_0$ and 
 $\eps\in(0,1/8]$ we have 
\begin{equs}
\| s^\tau - \cG_M^\tau \phi\|_\tau &\leq C\,\nu_\tau^{3/4} \|\phi\|_\tau\,,\label{e:MAnsatzL2}\\
\|(1 - \cL^{\tau}) s^\tau - \phi\|_{\fock{}{-1}{\tau}} &\leq C \nu_\tau^{1/8 - \eps} \|\phi\|_\tau\,.\label{e:MAnsatzH-1}
\end{equs}
\end{proposition}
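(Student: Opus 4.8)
The plan is to transfer the four properties of $\tilde s^\tau$ recorded in Proposition~\ref{p:Ansatz} onto $s^\tau=\Pi_{\Yleft}^\tau\tilde s^\tau$, exploiting that $\Pi_{\Yleft}^\tau$ is a diagonal contraction on every (weighted) Fock space, and then to feed the Fourier localisation that $\Pi_{\Yleft}^\tau$ builds in into Corollary~\ref{cor:HighDegreeEstimate}. That $s^\tau\in\core$ is clear: $\Pi_{\Yleft}^\tau$ annihilates all chaoses above $-\tfrac1c\log\nu_\tau$ — of which only finitely many survive once $\nu_\tau<1$ — and restricts the Fourier support of the surviving $n$-th one to the compact set $\{|\sqrt{R_\tau}p_{1:n}|^2\le\tau\exp(-\exp(cn))\}$. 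Since $\phi\in\core_{n_0}$ has $\mathrm{Supp}(\cF\phi)\subset\{-\gensy\le R\}$ and the $k=0$ term of the series~\eqref{e:AnsatzFormula} is $\cG_M^\tau\phi$, which being diagonal has the same Fourier support, there is $\tau_0=\tau_0(n_0,R,c)$ so that for $\tau\ge\tau_0$ both $n_0\le-\tfrac1c\log\nu_\tau$ and $2R\le\tau\exp(-\exp(cn_0))$ hold; hence $\Pi_{\Yleft}^\tau$ acts as the identity on $\phi$ and on the chaos-$n_0$ component of $\tilde s^\tau$, so that $(1-\Pi_{\Yleft}^\tau)\tilde s^\tau=(1-\Pi_{\Yleft}^\tau)(\tilde s^\tau-\cG_M^\tau\phi)$. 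Then~\eqref{e:MAnsatzL2} is immediate, since by the triangle inequality and contractivity of $\Pi_{\Yleft}^\tau$,
\begin{equ}
\|s^\tau-\cG_M^\tau\phi\|_\tau\le\|(1-\Pi_{\Yleft}^\tau)(\tilde s^\tau-\cG_M^\tau\phi)\|_\tau+\|\tilde s^\tau-\cG_M^\tau\phi\|_\tau\le 2\,\|\tilde s^\tau-\cG_M^\tau\phi\|_\tau\,,
\end{equ}
which is $\lesssim\nu_\tau^{3/4}\|\phi\|_\tau$ by~\eqref{e:AnsatzL2}.

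For~\eqref{e:MAnsatzH-1} I would write $s^\tau=\tilde s^\tau-(1-\Pi_{\Yleft}^\tau)\tilde s^\tau$ and use $\cL^\tau=\cL^{\tau,2}+\genaF{\neq2}{}$ on $\core$ (Lemma~\ref{lem:ActionGen}), obtaining
\begin{equ}\label{e:decomp_MAnsatz}
(1-\cL^\tau)s^\tau-\phi=\big[(1-\cL^{\tau,2})\tilde s^\tau-\phi\big]-(1-\cL^{\tau,2})(1-\Pi_{\Yleft}^\tau)\tilde s^\tau-\genaF{\neq2}{}s^\tau\,,
\end{equ}
which I would estimate term by term in $\fock{}{-1}{\tau}$. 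The first term is $\lesssim\nu_\tau^{1/4-\delta}\|\phi\|_\tau$ by~\eqref{e:AnsatzH-1}. For the last term, Corollary~\ref{cor:HighDegreeEstimate} applies to $s^\tau\in\core$ precisely because its Fourier profile is $\chi_n=\exp(-\exp(cn))$ — the one engineered into $\Pi_{\Yleft}^\tau$ — so that, for any $\alpha\in(0,1)$,
\begin{equ}
\|\genaF{\neq2}{}s^\tau\|_{\fock{}{-1}{\tau}}\lesssim\|\cN^{3/\alpha}(-\nu_\tau\gensyx)^{1/2}s^\tau\|_\tau^\alpha\,\|s^\tau\|_{\fock{}{1}{\tau}}^{1-\alpha}\,;
\end{equ}
bounding $\cN^{3/\alpha}\lesssim_{\alpha,q}e^{q\cN}$, using that $\Pi_{\Yleft}^\tau$ is a contraction on the relevant spaces, and invoking the weighted estimate~\eqref{e:AnsatzAH1} and the bound~\eqref{e:AnsatzH1}, one gets $\|\cN^{3/\alpha}(-\nu_\tau\gensyx)^{1/2}s^\tau\|_\tau\lesssim\nu_\tau^{1/4-\delta}\|\phi\|_\tau$ and $\|s^\tau\|_{\fock{}{1}{\tau}}\lesssim\|\phi\|_\tau$, whence $\|\genaF{\neq2}{}s^\tau\|_{\fock{}{-1}{\tau}}\lesssim\nu_\tau^{\alpha(1/4-\delta)}\|\phi\|_\tau$; choosing $\alpha=\tfrac12$ and $\delta$ small turns this into $\nu_\tau^{1/8-\eps}\|\phi\|_\tau$, which is the slowest of the three contributions and hence fixes the rate in~\eqref{e:MAnsatzH-1}.

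The middle term of~\eqref{e:decomp_MAnsatz}, the truncation error $(1-\cL^{\tau,2})(1-\Pi_{\Yleft}^\tau)\tilde s^\tau$, is the main obstacle. I would split $(1-\Pi_{\Yleft}^\tau)\tilde s^\tau=(1-\Pi_{\Yleft}^\tau)(\tilde s^\tau-\cG_M^\tau\phi)=w_{\cN}+w_p$, with $w_{\cN}=\1_{\{\cN>-\frac1c\log\nu_\tau\}}\tilde s^\tau$ the chaos-truncated part and $w_p$ the Fourier-truncated part, and treat them by complementary mechanisms. For $w_{\cN}$ the hypothesis $c<q/2$ is essential: on $\{\cN>-\tfrac1c\log\nu_\tau\}$ one has $e^{q\cN}>\nu_\tau^{-q/c}>\nu_\tau^{-2}$, so $\1_{\{\cN>-\frac1c\log\nu_\tau\}}\le\nu_\tau^{2}e^{q\cN}$ as diagonal operators, and hence every weighted norm of $w_{\cN}$ entering the quadratic continuity estimates that bound $(1-\cL^{\tau,2})w_{\cN}$ is at most $\nu_\tau^{2}$ times the corresponding weighted norm of $\tilde s^\tau$ — these being controlled by~\eqref{e:AnsatzAH1},~\eqref{e:AnsatzH1} together with their $e^{q\cN}$-weighted versions, which follow from the weighted machinery of~\cite[Prop.~4.8 and Rem.~4.10]{CMT} — so that $\|(1-\cL^{\tau,2})w_{\cN}\|_{\fock{}{-1}{\tau}}\lesssim\nu_\tau^{9/4-\delta}\|\phi\|_\tau$, which is negligible against the target.

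For $w_p$, which lives on $\{|\sqrt{R_\tau}p_{1:n}|^2>\tau\exp(-\exp(cn))\}$ with $n\le-\tfrac1c\log\nu_\tau$, one has $|\sqrt{R_\tau}p_{1:n}|^2>\tau\exp(-\nu_\tau^{-1})=\tau^{1-(\log\tau)^{-1/3}}\to\infty$; since the kernels of $\tilde s^\tau$ decay polynomially in the momenta — each factor $\cG_M^\tau$ in~\eqref{e:AnsatzFormula} supplying a $(1+\tfrac12|\sqrt{R_\tau}\cdot|^2)^{-1}$, and a slightly-better-than-$H^1$ bound on $\tilde s^\tau$ being available from the same construction — the weighted norms of $w_p$ needed to control $(1-\cL^{\tau,2})w_p$ are bounded by a negative power of $\tau$ times $\|\phi\|_\tau$, which beats any power of $\nu_\tau=(\log\tau)^{-2/3}$, so this term is negligible as well. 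Collecting the three contributions of~\eqref{e:decomp_MAnsatz} gives~\eqref{e:MAnsatzH-1} with the non-quadratic term dictating the exponent $\tfrac18-\eps$. A point worth flagging is the order of quantifiers: $c>0$ is chosen first, which fixes both the Fourier profile inside $\Pi_{\Yleft}^\tau$ and the constant $C(c)$ in Corollary~\ref{cor:HighDegreeEstimate}, and $\tau_0$ only afterwards, so that all constants above are genuinely uniform in $\tau\ge\tau_0$.
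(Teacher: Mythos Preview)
Your overall strategy — transfer~\eqref{e:AnsatzH1}--\eqref{e:AnsatzH-1} to $s^\tau$ via contractivity of $\Pi_{\Yleft}^\tau$, then split~\eqref{e:MAnsatzH-1} into the three terms of~\eqref{e:decomp_MAnsatz} — matches the paper's, and your treatment of the first and third terms is correct. The genuine gap is in the middle term, precisely at the point where you need to control $\|(1-\Pi_{\Yleft}^\tau)\tilde s^\tau\|_{\fock{}{1}{\tau}}$, which is the $(1-\gensy)$ contribution to $(1-\cL^{\tau,2})(1-\Pi_{\Yleft}^\tau)\tilde s^\tau$.

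For the Fourier tail $w_p$, your argument rests on a ``slightly-better-than-$H^1$'' bound on $\tilde s^\tau$ uniform in $\tau$. No such bound is available: Proposition~\ref{p:Ansatz} gives exactly~\eqref{e:AnsatzH1}, and the heuristic that each $\cG_M^\tau$ supplies extra decay ignores that $\genaFsh{1}{\sharp}$ is non-diagonal and creates large momenta (up to scale $\tau^{1/2}$) at every step of the series; the compensation between $\cG_M^\tau$ and $\genaFsh{1}{\sharp}$ is in fact exactly order-zero in the relevant norms. For the chaos tail $w_{\cN}$, your bound $\1_{\{\cN>-\frac1c\log\nu_\tau\}}\le\nu_\tau^2 e^{q\cN}$ is correct, but it reduces the problem to an $e^{q\cN}$-\emph{weighted} $\fock{}{1}{\tau}$ bound on $\tilde s^\tau$, which is not what~\eqref{e:AnsatzAH1} gives (that estimate controls only the anisotropic piece $(-\nu_\tau\gensyx)^{1/2}$, not the full $(1-\gensy)^{1/2}$). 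The paper in fact flags this exact obstruction: dropping the cutoff yields only an order-one bound from~\eqref{e:AnsatzH1}, with no decay.

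The paper's remedy is structural. It invokes the recursion $\tilde s^\tau=c_2(F)\,\cG_M^\tau\genaFsh{1}{\sharp}\tilde s^\tau+\cG_M^\tau\phi$ to convert $\|\Pi_{\Yright}^\tau\tilde s^\tau\|_{\fock{}{1}{\tau}}$ into $\|\Pi_{\Yright}^\tau\genaFsh{1}{\sharp}\tilde s^\tau\|_{\fock{}{-1}{\tau}}$ via $\cG_M^\tau\le(1-\gensy)^{-1}$, and then proves a dedicated commutator-type estimate (Lemma~\ref{l:Comm}) bounding both $\|\Pi_{\Yright}^\tau\genaFsh{\pm1}{\bullet}\tilde s^\tau\|_{\fock{}{-1}{\tau}}$ and $\|\genaFsh{\pm1}{\bullet}\Pi_{\Yright}^\tau\tilde s^\tau\|_{\fock{}{-1}{\tau}}$ by $\|e^{\frac{c}{\beta}\cN}(-\nu_\tau\gensyx)^{1/2}\tilde s^\tau\|_\tau^\beta\,\|\tilde s^\tau\|_{\fock{}{1}{\tau}}^{1-\beta}$ for $\beta\in(0,\tfrac12)$; it is here that the constraint $c<q/2$ enters, to match the weight $e^{c\cN/\beta}$ against~\eqref{e:AnsatzAH1}. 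This lemma is the missing ingredient in your argument: it extracts the anisotropic decay~\eqref{e:AnsatzAH1} from the interaction of the Fourier cutoff with the specific kernel structure of $\genaFsh{\pm1}{\bullet}$, rather than from any extra global regularity of $\tilde s^\tau$.
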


\begin{remark}\label{rem:OtherApprox}
Let us mention that the choice of $s^\tau$ is, clearly, {\it not unique}. In principle, 
any element $s^\tau\in\core$ such that both~\eqref{e:MAnsatzL2} and~\eqref{e:MAnsatzH-1} hold, would do. 
The difficulty lies precisely in the ability to {\it find at least one} such element and we exhibit an explicit one.  
\end{remark}

\begin{remark}\label{rem:Rates}
While the statements at the beginning of the section (see~\eqref{e:gengen2},~\eqref{e:gen2geneff}) 
were purely qualitative, the previous proposition is {\it quantitative} in that 
the bounds in~\eqref{e:MAnsatzL2} and~\eqref{e:MAnsatzH-1} provide {\it rates} at which 
the norms at the l.h.s. converge to $0$. That said, we do not expect them to be optimal 
(especially the second). 
\end{remark}

In the proof of the above proposition, we will need the following lemma 
that allows to control the terms arising from the commutator of the 
operators $\Pi^\tau_{\Yleft}$ in~\eqref{e:amended_ansatz} and 
$\genaFsh{\pm1}{\bullet}$, $\bullet\in\{\sharp,\flat\}$, in~\eqref{e:A+sharp},~\eqref{e:Asharp-1Aflats}. 

\begin{lemma}\label{l:Comm}
Let $c>0$. Then, there exists a constant $C=C(c)>0$ such that for any sufficiently regular $\psi\in\fock{}{}{\tau}$  
and $\beta\in(0,1/2)$ we have 
\begin{equ}[e:ComplPi]
\|\Pi^\tau_{\Yright} \genaFsh{\pm1}{\bullet} \psi\|_{\fock{}{1}{\tau}} \vee \|\genaFsh{\pm1}{\bullet}  \Pi^\tau_{\Yright} \psi\|_{\fock{}{1}{\tau}}  \leq C \, \|e^{\frac{c}{\beta}\cN}(- \nu_\tau \gensyx)^{\frac12}\psi\|_\tau^\beta \| \psi\|_{\fock{}{1}{\tau}}^{1-\beta} \, ,
\end{equ}
where $\Pi^\tau_{\Yright}\eqdef 1-\Pi^\tau_{\Yleft}$, $\Pi^\tau_{\Yleft}$ as in~\eqref{e:amended_ansatz}, 
and $\genaFsh{\pm1}{\bullet}$, for $\bullet\in\{\sharp,\flat\}$, are the operators in~\eqref{e:A+sharp},~\eqref{e:Asharp-1Aflats}. 
\end{lemma}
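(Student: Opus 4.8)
The plan is to bound the two quantities $\Pi^\tau_{\Yright} \genaFsh{\pm1}{\bullet} \psi$ and $\genaFsh{\pm1}{\bullet} \Pi^\tau_{\Yright} \psi$ separately, and to note first that by~\eqref{e:Asharp-1Aflats} it suffices to treat $\genaFsh{1}{\sharp}$, its adjoint $\genaFsh{-1}{\sharp}$, and $\genaF{2}{\pm1}$ from~\eqref{e:Genamj} with $m=2$, since $\genaFsh{\pm1}{\flat}=\genaF{2}{\pm1}-\genaFsh{\pm1}{\sharp}$. All of these operators share the structure on which the argument rests: they shift the chaos index by exactly one, they carry the anisotropic prefactor $\nu_\tau^{3/4}\,\fe_1\cdot(\text{sum of the relevant momenta})$, a mollifier factor $\Theta_\tau$ at the created/annihilated momentum (so that, on their support, that momentum is $\lesssim \tau^{1/2}$ in $|\sqrt{R_\tau}\cdot|$), and a combinatorial weight that is polynomial in the chaos index $n$. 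The essential difficulty is that the target norm is the \emph{stronger} $\fock{}{1}{\tau}$-norm: unlike the $\fock{}{-1}{\tau}$ estimates for $\genaFsh{\pm1}{\sharp}$ and $\genaFsh{\pm1}{\flat}$ quoted from~\cite{CMT} in the proof of Proposition~\ref{p:Ansatz}, the creation operator $\genaFsh{1}{\sharp}$ is \emph{not} bounded $\fock{}{1}{\tau}\to\fock{}{1}{\tau}$ on its own (the fresh momenta created on the $\sharp$-support are large but invisible to the input kernel), which is precisely why the support truncation carried by $\Pi^\tau_{\Yright}$ must be exploited. The term $\genaFsh{\pm1}{\bullet}\Pi^\tau_{\Yright}\psi$ is the easier of the two, since $\Pi^\tau_{\Yright}\psi$ is directly supported where either $\cN$ or $-\gensy$ is large, a feature the right-hand weight sees immediately; for $\Pi^\tau_{\Yright}\genaFsh{\pm1}{\bullet}\psi$ the same feature must be tracked through the creation/annihilation, using that the chaos index only moves by one.

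\textbf{The two-regime split.} I would estimate both contributions by the elementary decomposition
\begin{equ}
\Pi^\tau_{\Yright}\;\leq\;\1_{\{\cN \,>\, -\frac1c\log\nu_\tau\}}\;+\;\1_{\{-2\gensy \,>\, \tau\exp(-\exp(c\cN))\}}\,.
\end{equ}
On the first piece the chaos index exceeds $-\tfrac1c\log\nu_\tau$ on one side of the composition, hence — because $\genaFsh{\pm1}{\bullet}$ moves it by only $\pm1$ — it is still $\gtrsim -\tfrac1c\log\nu_\tau$ on the other side as well (for $\nu_\tau$ small); there the weight $e^{(c/\beta)\cN}$ is $\gtrsim \nu_\tau^{-1/\beta}$, so it both supplies an enormous factor and, a fortiori, swallows the polynomial‑in‑$n$ combinatorial prefactors and the $\tau^{1/2}$-type losses from the $\Theta_\tau$-capped created momentum. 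On the second piece $-2\gensy$ already exceeds $\tau\exp(-\exp(c\cN))$, so the extra half-derivative needed to upgrade the free $\fock{}{-1}{\tau}$-type Cauchy–Schwarz bound (obtained exactly as in the proof of Proposition~\ref{p:HighDegreeEstimate}) to the $\fock{}{1}{\tau}$-norm can be absorbed against the frequency, at the price of a factor $\exp(\exp(c\cN))$; since on this piece $\cN \leq -\tfrac1c\log\nu_\tau = O(\log(1/\nu_\tau))$, that factor is only $\nu_\tau^{-o(1)}$ and, again, it and the combinatorics are harmless against the exponential weight.

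\textbf{Interpolation and conclusion.} To land \emph{exactly} on $\|e^{\frac{c}{\beta}\cN}(-\nu_\tau\gensyx)^{\frac12}\psi\|_\tau^\beta\|\psi\|_{\fock{}{1}{\tau}}^{1-\beta}$ with a constant depending only on $c$, I would, within each piece, apply H\"older at the level of the Fourier integral with exponents $(1/\beta,1/(1-\beta))$: write the one full power of gradient in the output $\fock{}{1}{\tau}$-weight as the product of an anisotropic factor $(\nu_\tau|\fe_1\cdot p|^2)^{\beta}$ — to be matched, via the one-chaos shift and the change of variables already used in Proposition~\ref{p:HighDegreeEstimate}, with $\|e^{\frac{c}{\beta}\cN}(-\nu_\tau\gensyx)^{\frac12}\psi\|_\tau$ at power $\beta$ — and a full factor $(1+|\sqrt{R_\tau}p|^2)^{1-\beta}$ matched with $\|\psi\|_{\fock{}{1}{\tau}}$ at power $1-\beta$ (this is where the restriction $\beta<1/2$ enters, guaranteeing enough of the full gradient is left over). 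The weight $e^{\frac{c}{\beta}\cN}$ is tuned so that $(e^{\frac{c}{\beta}n})^{\beta}=e^{cn}$ dominates both the $\exp(\exp(c\cN))$ bookkeeping from the frequency threshold and the combinatorial/$\tau^{1/2}$ losses, uniformly in $\beta\in(0,1/2)$; summing the resulting geometric (indeed super-geometric) series over the chaos index and over the finitely many operator types $\genaFsh{\pm1}{\sharp},\genaF{2}{\pm1}$ yields~\eqref{e:ComplPi}. The main obstacle is precisely the $\fock{}{1}{\tau}$ control of the creation operator: since no $\fock{}{1}{\tau}\to\fock{}{1}{\tau}$ bound is available for free, one must play the size of the Fourier support encoded in $\Pi^\tau_{\Yright}$ against the loss, and the delicate bookkeeping is to keep track of whether $\Pi^\tau_{\Yright}$ sits before or after the chaos shift and to route the two regimes above through the single interpolation so that the constant stays independent of $\beta$.
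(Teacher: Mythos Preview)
Your proposal is built around the target norm $\fock{}{1}{\tau}$, but this is a typo in the statement: the lemma the paper actually proves and uses is for the $\fock{}{-1}{\tau}$-norm. This is unambiguous from the paper's own proof (Step~1 begins with $\|\cdot\|_{\fock{}{-1}{\tau}}=\|(1-\gensy)^{-1/2}\cdot\|_\tau$ and the operator inequality $(1-\gensy)^{-1}\le(-\nu_\tau\gensyx)^{-\beta}(-\gensy)^{-\gamma}$) and from its application in Proposition~\ref{p:MAnsatz}, where only the quantities $\|\genaFsh{i}{\bullet}\Pi^\tau_\Yright\tilde s^\tau\|_{\fock{}{-1}{\tau}}$ and $\|\Pi^\tau_\Yright\genaFsh{i}{\bullet}\tilde s^\tau\|_{\fock{}{-1}{\tau}}$ are needed. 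With that correction, your overall architecture --- the split $\Pi^\tau_\Yright\le\1_{E_1}+\1_{E_2}$ into a large-$\cN$ and a large-frequency regime, the reduction by duality from the $-1$ to the $+1$ operators, and the final H\"older interpolation at exponents $(1/\beta,1/(1-\beta))$ --- is exactly the paper's route.

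Taken at face value (target $\fock{}{1}{\tau}$), your sketch has a real gap. For the creation operator the output $\fock{}{1}{\tau}$-weight $1+|\sqrt{R_\tau}p_{1:n+1}|^2$ sees the freshly created momenta, and on the $\Theta_\tau$-support these range up to $|\sqrt{R_\tau}q|\sim\tau^{1/2}$, costing a full factor of order $\tau$ in the squared norm. The weight $e^{(c/\beta)\cN}$ cannot absorb this: on your large-$\cN$ piece it is only $\gtrsim\nu_\tau^{-1/\beta}=(\log\tau)^{2/(3\beta)}$, polylogarithmic in $\tau$; and on your large-frequency piece the condition $-2\gensy>\tau\chi_\cN$ is a \emph{lower} bound on the output frequency and gives no control on the $H^1$ weight. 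A secondary inaccuracy: on the frequency-large piece the loss in the paper is $e^{c\cN}$ (coming from the logarithm $\log(n/\chi_n)\lesssim e^{cn}$ in~\eqref{e:E22}), not $\exp(\exp(c\cN))$ --- and that is precisely what, after raising to the $1/\beta$, matches the weight $e^{(c/\beta)\cN}$ on the right-hand side. The restriction $\beta<1/2$ in the paper enters through integrability of $\int_0^{2\pi}\cos(\theta)^{-2\beta}\,\dd\theta$ in that same step, not quite the ``enough full gradient left over'' mechanism you describe.
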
 
\begin{proof}
Clearly, it suffices to consider $\psi\in\fock{n}{1}{\tau}$ for some $n\in\N$. 
To lighten the notations, set $\chi_n\eqdef \exp(-\exp(c\,n))$ and let 
$E_1^\tau(\cN)\eqdef \{\cN> -\frac1{c}\log \nu_\tau\}$ and 
$E_2^\tau(\cN)\eqdef \{- 2 \gensy > \tau \,\chi_n\}$, so that $\Pi^\tau_{\Yright} =\1_{E_1^\tau(\cN)\cup E_2^\tau(\cN)}$, 
where we recall that the Fourier representation of $\gensy$ in~\eqref{e:gensy} clearly depends 
on the order of the Fock space to which it is applied. 
We will split the argument in four steps. 
\medskip

\noindent{\it Step 1.} Notice that, for any $\beta,\gamma\in(0,1)$ with $\beta+\gamma=1$, 
we have $(1-\gensy)^{-1}\leq (-\nu_\tau\gensyx)^{-\beta}(-\gensy)^{-\gamma}$, so that 
we can write
\begin{equ}
\|\psi\|_{\fock{}{-1}{\tau}}=\|(1-\gensy)^{-1/2}\psi\|_\tau\leq \|(-\nu_\tau\gensyx)^{-\frac{\beta}{2}}(-\gensy)^{-\frac{\gamma}{2}}\psi\|_\tau\,.
\end{equ}
We now focus on the latter norm. As, trivially, 
$\1_{E_1^\tau(\cN)\cup E_2^\tau(\cN)}\leq \1_{E_1^\tau(\cN)}+\1_{E_2^\tau(\cN)}$
and, by construction, $(-\genaFsh{1}{\bullet})^{\ast}=\genaFsh{-1}{\bullet}$, the triangle inequality and  
a simple duality argument imply that if we show that, for $i=1,2$, there exists 
$f_i(\tau,n)>0$ such that for all $n\in\N$ and $\psi\in\core_n$  
\begin{equs}\label{e:Duality}
\|(- \nu_\tau \gensyx)^{-\frac{\beta}2} (- \gensy)^{-\frac{\gamma}{2}}\1_{E_i^\tau(\cN)} \genaFsh{1}{\bullet} \psi\|_{\tau}^2 \vee \|(- \nu_\tau &\gensyx)^{-\frac{\beta}{2}} (- \gensy)^{-\frac{\gamma}{2}} \genaFsh{1}{\bullet} \1_{E_i^\tau(\cN)} \psi\|_{\tau}^2 \\
&\lesssim f_i(\tau,n) \|(- \nu_\tau \gensyx)^{\frac{\beta}{2}} (- \gensy)^{\frac{\gamma}{2}} \psi\|_\tau^2 \, ,
\end{equs}
then the same bounds will hold with $\genaFsh{-1}{\bullet}$. 
\medskip

\noindent{\it Step 2.} To verify~\eqref{e:Duality}, let us begin with a preliminary computation 
of its l.h.s.. Setting $\chi^{\flat,\tau}\eqdef 1-\chi^{\sharp,\tau}$, 
and $\chi^{\bullet,\tau}$ to be either $\chi^{\sharp,\tau}$ or $\chi^{\flat,\tau}$, 
the definition of the operator $\genaFsh{1}{\bullet}$ in~\eqref{e:A+sharp} gives
\begin{equs}
\,&\|(- \nu_\tau \gensyx)^{-\frac{\beta}2} (- \gensy)^{-\frac{\gamma}{2}}\1_{E_i^\tau(\cN)} \genaFsh{1}{\bullet} \psi\|_{\tau}^2 \vee \|(- \nu_\tau \gensyx)^{-\frac{\beta}{2}} (- \gensy)^{-\frac{\gamma}{2}} \genaFsh{1}{\bullet} \1_{E_i^\tau(\cN)} \psi\|_{\tau}^2 \\
&=\max_{\theta\in\{0,1\}}\Big\{(n+1)!  \frac{\nu_\tau^{3/2}}{(2\pi)^2} \frac{4}{(n+1)^2}\int \Xi_{n+1}^\tau(\dd p_{1:n+1})(\1_{E_i^\tau(n+1)})^\theta\times\\
&\quad\times\Big| \sum_{1 \leq i < j \leq n+1}  
\frac{ [\fe_1 \cdot (p_i+p_j)]\Theta_\tau(p_i+p_j) \chi_{p_i, p_j}^{\bullet,\tau}(p_i+p_j, p_{1:n+1\setminus\{i,j\}}) }{|\sqrt{\nu_\tau} \fe_1\cdot(p_i+p_j, p_{1:n+1\setminus\{i,j\}})|^{\beta}|\sqrt{R^\tau}(p_i+p_j, p_{1:n+1\setminus\{i,j\}}))|^\gamma}\times\\
&\qquad\qquad\qquad\qquad\qquad\qquad\qquad\qquad \qquad\qquad\times (\1_{E_i^\tau(n)})^{1-\theta}\hat{\psi}(p_i+p_j, p_{1:n+1\setminus\{i,j\}})\Big|^2\Big\}\\
&\lesssim \max_{\theta\in\{0,1\}}\Big\{ n! n^3 \int \Xi_n^\tau(\dd p_{1:n}) \big(\nu_\tau |\fe_1 \cdot p_1|^2\big) \, |\hat{\psi}(p_{1:n})|^2\times \label{e:E21Prelim}\\
&\quad\times \nu_\tau^{1/2} \int_{\R^2} \dd q \, (\1_{E_i^\tau(n+1)})^\theta (\1_{E_i^\tau(n)})^{1-\theta}\frac{\Theta^1(\tau^{-1/2} \sqrt{R^\tau} q) \Theta^1(\tau^{-1/2} \sqrt{R^\tau} (p_1-q)) }{|\sqrt{\nu_\tau} \fe_1 \cdot (q,p_1-q,p_{2:n})|^{2\beta} |\sqrt{R^\tau} (q,p_1-q,p_{2:n})|^{2\gamma}}
\end{equs}
The next two steps describe how to estimate the r.h.s. of~\eqref{e:E21Prelim} 
in the cases $i=1$ or $2$. 
\medskip

\noindent{\it Step 3.} Note that, for $i=1$, then $E_1^\tau(n)\subset E_1^\tau(n+1)$, so that 
$(\1_{E_i^\tau(n+1)})^\theta (\1_{E_i^\tau(n)})^{1-\theta}\leq \1_{E_i^\tau(n)}$. 
Thus, the integral in $q$ is upper bounded by 
\begin{equs}
\1_{E_i^\tau(n)} \nu_\tau^{1/2} \int_{\R^2}  \frac{\dd q \,\Theta^1(\tau^{-1/2} \sqrt{R^\tau} q)  }{|\sqrt{\nu_\tau} \fe_1 \cdot q|^{2\beta} (1+|\sqrt{R^\tau} q|^{2})^{\gamma}} &=\1_{E_i^\tau(n)}\nu_\tau^{1/2} \int_{\R^2} \frac{\dd q \, \Theta^1(q)  }{| \fe_1 \cdot q|^{2\beta} (\tau^{-1}+|q|^{2})^{\gamma}}\lesssim \nu_\tau^{-1}
 \end{equs}
from which, since $\nu_\tau |\fe_1 \cdot p_{1:n}|^2\leq (\nu_\tau |\fe_1 \cdot p_{1:n}|^2)^{\beta}(1+|\sqrt{R^\tau}p_{1:n}|^2)^\gamma$, we ultimately deduce that~\eqref{e:Duality} holds 
with $f_1(\tau,n) \eqdef n^3  \1_{E_i^\tau(n)} \nu_\tau^{-\frac12}$\,.
\medskip

\noindent{\it Step 4.} For $i=2$ instead, we claim that~\eqref{e:E21Prelim} can be estimated 
from above by  
\begin{equs}
\,&\lesssim n^3 n! \int \Xi_n^\tau(\dd p_{1:n}) \, \big(\nu_\tau |\fe_1 \cdot p_1|^2\big) \, |\hat{\psi}(p_{1:n})|^2  \nu_\tau^{\frac12} \int \dd q \, \Theta^1(\tau^{-1/2} \sqrt{R^\tau} q) \Theta^1(\tau^{-1/2} \sqrt{R^\tau} (p_1-q)) \\
&\qquad\qquad\quad\times \frac{\1_{|\sqrt{R^\tau} q|^2 \gtrsim \tau\chi_n /n} + \1_{|\sqrt{R^\tau} (p_1-q)|^2 \gtrsim \tau\chi_n /n} + \1_{|\sqrt{R^\tau} q|^2 \leq |\sqrt{R^\tau} p_{1:n}|^2}}{|\sqrt{\nu_\tau} \fe_1 \cdot (q,p_1-q,p_{2:n})|^{2\beta} |\sqrt{R^\tau} (q,p_1-q,p_{2:n})|^{2\gamma}}\,.\label{e:E21}
\end{equs}
To see this, observe that if $\theta=1$, then $|\sqrt{R^\tau} (q,p_1-q,p_{2:n})|^2>\tau\chi_{n+1}$ which implies 
that 
\begin{equ}[e:MAx]
|\sqrt{R^\tau}q|^2\vee |\sqrt{R^\tau}(p_1-q)|^2\vee\max_{\ell\in\{2,\dots,n\}} |\sqrt{R^\tau}p_\ell|^2\geq \frac{\tau \chi_{n+1}}{n+1}\gtrsim \frac{\tau \chi_{n}}{n}
\end{equ}
where the last bound depends on $c$. 
On the other hand, if $\theta=0$, then $|\sqrt{R^\tau} p_{1:n}|^2>\tau\chi_{n}$ 
so that $\max_{\ell\in\{1,\dots,n\}} |\sqrt{R^\tau}p_\ell|^2\geq \tau\chi_{n}/n$. 
This means that either $\max_{\ell\in\{2,\dots,n\}} |\sqrt{R^\tau}p_\ell|^2\geq \tau\chi_{n}/n$ 
or $|\sqrt{R^\tau}p_1|^2\geq \tau\chi_{n}/n$, and in this latter case, by the triangle inequality, 
$|\sqrt{R^\tau}q|^2\vee |\sqrt{R^\tau}(p_1-q)|^2\gtrsim \tau\chi_{n}/n$. 
In other words, no matter whether $\theta=0$ or $1$,~\eqref{e:MAx} holds. 
Now, there are two scenarios: (1) $|\sqrt{R^\tau}q|^2\vee |\sqrt{R^\tau}(p_1-q)|^2\gtrsim \tau\chi_{n}/n$;  
(2) $|\sqrt{R^\tau}p_\ell|^2\gtrsim \tau\chi_{n}/n$ for some $\ell\in\{2,\dots,n\}$. If we are in 
the setting of (2), then either $|q|\wedge |p_1-q|\geq |p_\ell|$ so that we fall back into (1), 
or $|q|\vee |p_1-q|\leq |p_\ell|$ thus implying that 
$|\sqrt{R^\tau}q|^2\leq |\sqrt{R^\tau}p_\ell|^2\leq |\sqrt{R^\tau} p_{1:n}|^2$. 
In conclusion, we have one of this three possibilities which correspond 
to each of the indicator functions at the r.h.s. of~\eqref{e:E21}: 
$|\sqrt{R^\tau}q|^2\gtrsim \tau\chi_{n}/n$; $ |\sqrt{R^\tau}(p_1-q)|^2\gtrsim \tau\chi_{n}/n$; 
$|\sqrt{R^\tau}q|^2\leq |\sqrt{R^\tau} p_{1:n}|^2$. 

Getting back to~\eqref{e:E21}, we estimate the integral in $q$ separately for 
each summand corresponding to a different indicator function. The first two 
allow for the same bound (by symmetry) which is 
\begin{equs}
\nu_\tau^{\frac12}\int \dd q  &\frac{\1_{|\sqrt{R^\tau} q|^2 \gtrsim \tau\chi_{n}/n} \Theta^1(\tau^{-1/2} \sqrt{R^\tau} q) \Theta^1(\tau^{-1/2} \sqrt{R^\tau} (p_1-q))}{|\sqrt{\nu_\tau} \fe_1 \cdot (q,p_1-q,p_{2:n})|^{2\beta} (1+|\sqrt{R^\tau} (q,p_1-q,p_{2:n})|^2)^{\gamma}}\\
&\leq \nu_\tau^{\frac12}\int \dd q  \frac{\1_{|\sqrt{R^\tau} q|^2 \gtrsim \tau\chi_n /n} \Theta^1(\tau^{-1/2} \sqrt{R^\tau} q) }{|\sqrt{\nu_\tau} \fe_1 \cdot q|^{2\beta} |\sqrt{R^\tau} q|^{2\gamma}}= \int \dd q  \frac{\1_{|q|^2 \gtrsim \chi_n /n} \Theta^1( q) }{|\fe_1 \cdot q|^{2\beta} |q|^{2\gamma}}\\
&\lesssim \int_{\sqrt{\frac{\chi_n}n}}^\infty \frac{\dd r}{r(1+r^4)}\int_0^{2\pi}\frac{\dd \theta}{\cos(\theta)^{2\beta}}\lesssim \log (n/\chi_n)\lesssim e^{c \,n}\label{e:E22}
\end{equs}
where we used the decay properties of $\Theta^1$ to insert $(1+r^4)$ at the denominator 
and the second to last step holds provided $2\beta<1$. 
For the third instead, we note that $|\fe_1 \cdot (q,p_1-q,p_{2:n})|\gtrsim |\fe_1 \cdot p_{1:n}|$ 
and $|\sqrt{R^\tau} (q,p_1-q,p_{2:n})|\gtrsim |\sqrt{R^\tau} p_{1:n}|$, so that 
\begin{equs}
\nu_\tau^{\frac12}\int \dd q  &\frac{\1_{|\sqrt{R^\tau} q|^2 \leq |\sqrt{R^\tau} p_{1:n}|^2} \Theta^1(\tau^{-1/2} \sqrt{R^\tau} q) \Theta^1(\tau^{-1/2} \sqrt{R^\tau} (p_1-q))}{|\sqrt{\nu_\tau} \fe_1 \cdot (q,p_1-q,p_{2:n})|^{2\beta} (1+|\sqrt{R^\tau} (q,p_1-q,p_{2:n})|^2)^{\gamma}}\label{e:E23}\\
&\lesssim \frac{\nu_\tau^{\frac12}}{(\sqrt{\nu_\tau}|\fe_1 \cdot p_{1:n}|)^{2\beta}|\sqrt{R^\tau} p_{1:n}|^{2\gamma}}\int \dd q  \1_{|\sqrt{R^\tau} q|^2 \leq |\sqrt{R^\tau} p_{1:n}|^2}\lesssim \frac{|\sqrt{R^\tau} p_{1:n}|^{2\beta}}{(\sqrt{\nu_\tau}|\fe_1 \cdot p_{1:n}|)^{2\beta}}
\end{equs} 
since $\beta+\gamma=1$. 

Plugging~\eqref{e:E22} and~\eqref{e:E23} back into~\eqref{e:E21}, it is not hard to conclude that~\eqref{e:Duality} 
holds with $f_2(n,\tau)=\exp(c\,n)$
\medskip

\noindent{\it Step 5.} Let us now inspect the r.h.s. of~\eqref{e:Duality}. 
As we did in the last step of~\eqref{e:Gena3+Bound3}, we apply H\"older's inequality 
with exponents $1/\beta$ and $1/\gamma$ and obtain 
\begin{equs}
f_i(\tau,n)&\|(- \nu_\tau \gensyx)^{\frac{\beta}{2}} (- \gensy)^{\frac{\gamma}{2}} \psi\|_\tau^2\\
&=n! f_i(\tau,n)\int \Xi_n^\tau(\dd p_{1:n}) \big(\nu_\tau |\fe_1 \cdot p_{1:n}|^2\big)^\beta|\sqrt{R^\tau}p_{1:n}|^{2\gamma} \, |\hat{\psi}(p_{1:n})|^2\\
&\leq \|  f_i(\tau,\cN)^{\frac1{\beta}}e^{\frac{c}{\beta}\cN}(- \nu_\tau \gensyx)^{\frac12}\psi\|_\tau^\beta\|\psi\|_{\fock{}{1}{\tau}}^\gamma
\end{equs}
and now, for $i=2$, the definition of $f_2$ immediately gives the required upper bound in~\eqref{e:ComplPi}, 
while for $i=1$, we note that 
\begin{equ}
f_1(\tau,\cN)^{\frac1{\beta}}=n^{\frac3{\beta}}\1_{n>-\frac1{c}\log \nu_\tau}\nu_\tau^{-\frac1{2\beta}}\lesssim e^{\frac{c}{\beta}\,n} e^{-\frac{c}{\beta}\log \nu_\tau^{-\frac{1}{2c}}} \nu_\tau^{-\frac1{2\beta}}\lesssim e^{\frac{c}{\beta}\,n}
\end{equ}
and the same conclusion holds.  
\end{proof}
 
We are now ready to complete the proof of Proposition~\ref{p:MAnsatz}. 
 
\begin{proof}[of Proposition~\ref{p:MAnsatz}]
The double truncation in the definition of $s^\tau$ in~\eqref{e:amended_ansatz} immediately implies 
that, for every $\tau>0$ fixed, it belongs to $\core$. 

Concerning~\eqref{e:MAnsatzL2}, note that, as $\phi$ has compactly supported Fourier transform 
and $\cG^\tau_M$ is diagonal, for $\tau$ large enough, 
$ \Pi_{\Yleft}^\tau \cG_M^\tau \phi=\cG_M^\tau \phi$. 
Since $s^\tau$ is obtained via a hard Fourier cut-off of $\tilde s^\tau$ in~\eqref{e:AnsatzFormula} 
and, by removing 
the truncation, we make the $\fock{}{}{\tau}$-norm bigger, we get 
\begin{equs}
\| s^\tau - \cG_M^\tau \phi\|_\tau=\| \Pi_{\Yleft}^\tau(\tilde s- \cG_M^\tau \phi)\|_\tau\leq \|\tilde s-\cG^M_\tau\phi\|_\tau\leq C\nu_\tau^{3/4}\|\phi\|_\tau
\end{equs}
where the last step is a consequence of~\eqref{e:AnsatzL2}. 
\medskip

A similar argument also shows that the $\fock{}{1}{\tau}$ and the weighted anisotropic norms 
of $s^\tau$ can be controlled in terms of those of $\tilde s^\tau$, i.e. 
\begin{equs}
\|s^\tau\|_{\fock{}{1}{\tau}}&\leq \|\tilde s^\tau\|_{\fock{}{1}{\tau}}\leq C\|\phi\|_\tau\,,\label{e:MAnsatzH1}\\
\|e^{q\cN} (-\nu_\tau\gensyx)^{\frac12}s^\tau\|_\tau&\leq \|e^{q\cN} (-\nu_\tau\gensyx)^{\frac12}\tilde s^\tau\|_\tau\leq C \nu_\tau^{\frac14-\delta}\|\phi\|_\tau\,,\label{e:MAnsatzAH1}
\end{equs}
thanks to~\eqref{e:AnsatzH1} and~\eqref{e:AnsatzAH1}. 
\medskip

We now turn our attention to~\eqref{e:MAnsatzH-1}. Since $s^\tau\in\core$ and 
$\core$ is contained in the domain of $\gen$ by Lemma~\ref{l:CylDomain} we can compute 
$\gen s^\tau$ and, in view of Lemma~\ref{lem:ActionGen}, the action of $\gen$ can be explicitly written in terms 
of $\gensy$ in~\eqref{e:gensy} and $\genaF{m}{}$ in~\eqref{e:AmA}. More precisely, we have  
\begin{equs}[e:ExpH-1]
\|(1 - \gen) s^\tau - &\phi\|_{\fock{}{-1}{\tau}}=\|(1-\cL^{\tau,2})s^\tau-\phi-\genaF{\neq 2}{} s^\tau\|_{\fock{}{-1}{\tau}} \\
&=\|(1-\cL^{\tau,2})\tilde s^\tau-\phi +(1-\cL^{\tau,2})\Pi_{\Yright}^\tau\tilde s^\tau -\genaF{\neq 2}{} s^\tau\|_{\fock{}{-1}{\tau}}\\
&\leq \|(1-\cL^{\tau,2})\tilde s^\tau-\phi \|_{\fock{}{-1}{\tau}} +\|(1-\cL^{\tau,2})\Pi_{\Yright}^\tau\tilde s^\tau \|_{\fock{}{-1}{\tau}}+\|\genaF{\neq 2}{} s^\tau\|_{\fock{}{-1}{\tau}}
\end{equs}
where $\Pi_{\Yright}^\tau\eqdef 1-\Pi_{\Yleft}^\tau$ and $\Pi_{\Yleft}^\tau$ is the operator in~\eqref{e:amended_ansatz}. 
The first term is controlled by~\eqref{e:AnsatzH-1}, while, for the last, 
we can use Corollary~\ref{cor:HighDegreeEstimate} as $s^\tau$ satisfies all of 
its assumptions. In particular,~\eqref{e:BoundAne2} 
implies that, for every $\alpha\in(0,1)$, 
\begin{equs}
\|\genaF{\neq 2}{} s^\tau\|_{\fock{}{-1}{\tau}}\lesssim \|\cN^{\frac{3}{\alpha}}(- \nu_\tau \gensyx)^{\frac12}s^\tau\|_\tau^\alpha \,\|  s^\tau\|_{\fock{}{1}{\tau}}^{1-\alpha}\lesssim \nu_\tau^{\alpha\big(\frac14-\delta\big)}\|\phi\|_\tau
\end{equs}
where we used~\eqref{e:MAnsatzH1} and~\eqref{e:MAnsatzAH1}. 
It remains to look at the second summand, which is bounded by 
\begin{equs}[e:CommFinal1]
\|(1-\cL^{\tau,2})\Pi_{\Yright}^\tau\tilde s^\tau \|_{\fock{}{-1}{\tau}}&\leq \|\Pi_{\Yright}^\tau\tilde s^\tau\|_{\fock{}{1}{\tau}}+c_2(F)\|\genaF{2}{}\Pi_{\Yright}^\tau\tilde s^\tau \|_{\fock{}{-1}{\tau}}\\
&\lesssim \|\Pi_{\Yright}^\tau\tilde s^\tau\|_{\fock{}{1}{\tau}} +\sum_{i=\pm 1,\,\bullet\in\{\sharp,\flat\}}\|\genaFsh{i}{\bullet}\Pi_{\Yright}^\tau\tilde s^\tau \|_{\fock{}{-1}{\tau}}\,.
\end{equs}
A priori, it is not immediately clear how to control the $\fock{}{1}{\tau}$-norm of $\Pi_{\Yright}^\tau\tilde s^\tau$. 
Indeed, if we simply neglect the cut-off, then what we obtain is of order 
one by~\eqref{e:AnsatzH1}. But, by~\cite[Lem. 4.2]{CMT} (adapted to our notations, see Remark~\ref{rem:Notations}), 
we know that 
\begin{equ}
\tilde s^\tau=c_2(F)\cG^\tau_M\genaFsh{1}{\sharp}\tilde s^\tau +\cG^\tau_M\phi\,.
\end{equ}
Applying $\Pi_{\Yright}^\tau$ at both sides, the summand containing $\phi$ vanishes 
for $\tau$ large enough, thus we deduce 
\begin{equ}
\|\Pi_{\Yright}^\tau\tilde s^\tau\|_{\fock{}{1}{\tau}}\lesssim \|\Pi_{\Yright}^\tau\cG^\tau_M\genaFsh{1}{\sharp}\tilde s^\tau\|_{\fock{}{1}{\tau}}= \|\cG^\tau_M\Pi_{\Yright}^\tau\genaFsh{1}{\sharp}\tilde s^\tau\|_{\fock{}{1}{\tau}}\leq  \|\Pi_{\Yright}^\tau\genaFsh{1}{\sharp}\tilde s^\tau\|_{\fock{}{-1}{\tau}}
\end{equ}
where we used that $\Pi_{\Yright}^\tau$ and $\cG^\tau_M$ commute, and that, by its definition, 
$\cG^\tau_M\leq (1-\gensy)^{-1}$. 
Plugging the above estimate into~\eqref{e:CommFinal1}, we reach a position in which we can 
apply Lemma~\ref{l:Comm}. Its statement gives  
that, for every $\beta\in(0,1/2)$, we have  
\begin{equs}
\|(1-\cL^{\tau,2})\Pi_{\Yright}^\tau\tilde s^\tau \|_{\fock{}{-1}{\tau}}&\lesssim \max_{i=\pm1, \bullet\in\{\sharp,\flat\}}\big\{\|\genaFsh{i}{\bullet}\Pi_{\Yright}^\tau\tilde s^\tau \|_{\fock{}{-1}{\tau}}, \|\Pi_{\Yright}^\tau\genaFsh{i}{\bullet}\tilde s^\tau \|_{\fock{}{-1}{\tau}}\big\}\\
&\lesssim \|e^{\frac{c}{\beta}\cN}(- \nu_\tau \gensyx)^{\frac12}\tilde s^\tau\|_\tau^\beta \,\| \tilde s^\tau\|_{\fock{}{1}{\tau}}^{1-\beta} \lesssim  \nu_\tau^{\beta\big(\frac14-\delta\big)}\|\phi\|_\tau
\end{equs}
where in the last step we exploited~\eqref{e:AnsatzH1},~\eqref{e:AnsatzAH1}, which is allowed 
since we have taken $c<q/2$. 

Collecting all the observations made so far, choosing $\beta$ sufficiently close to $1/2$ 
and defining $\eps$ accordingly,~\eqref{e:MAnsatzH-1} 
follows and the proof of the proposition is thus complete. 
\end{proof}

\subsection{Resolvent Convergence and proof of Theorems~\ref{th:CLT} and~\ref{th:conv_semigroup}} 
\label{s:pf_summary}

In this subsection, we complete the proof of the main results of the paper, Theorems~\ref{th:CLT} and~\ref{th:conv_semigroup}. 
Before turning to it, let us highlight in the next theorem the main consequence of the analysis carried out so far 
(from which both main results will follow as a corollary), namely the 
convergence of $\gen$ to $\geneff$, the generator of~\eqref{e:limitingSHE}, 
in the resolvent sense. Its proof makes rigorous sense of the analysis sketched in Subsection~\ref{sec:Ideas} and heavily relies 
on Proposition~\ref{p:MAnsatz} and the definition of the operator $\cG^\tau_M$ in~\eqref{e:cgG}. 

\begin{theorem}\label{thm:ResConv}
For every $F\in\L^2(\P)$, we have
\begin{equ}[e:StrongCvResL2]
\iota^\tau (1 - \gen)^{-1} j^\tau F \xrightarrow[\tau \to \infty]{\L^2(\P)} (1 - \cL^{\mathrm{eff}})^{-1} F\,,
\end{equ}
where $\iota^\tau$ and $j^\tau$ are the maps in Definition~\ref{def:iotatau} and 
$\geneff$ is the generator of~\eqref{e:limitingSHE}, i.e. the diagonal operator 
whose action on $\fock{}{}{}$ coincides with that of $D^\eff_{1,1}\gensyx+\gensyy$ 
for $D^\eff_{1,1}$ the first entry of the matrix $D^\eff$ in~\eqref{e:Dlim}.  
\end{theorem}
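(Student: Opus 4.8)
The plan is to upgrade the single-scale estimates of Proposition~\ref{p:MAnsatz}, via the It\^o-trick bound of Lemma~\ref{l:ItoTrick}, to a statement about the full resolvent on a dense class, and then pass to the general case using the uniform contractivity of Lemma~\ref{lem:Contractivity}. (Here, and only here, $F$ denotes the generic element of $\L^2(\P)$ in the statement, not the nonlinearity.) First I would reduce to a convenient dense class. Since $\iota^\tau$ and $j^\tau$ are isometries and $(1-\gen)^{-1}$, $(1-\geneff)^{-1}$ are bounded by $1$ on their respective spaces, a standard approximation argument lets me assume $F = \phi \in \fock{n_0}{}{}$ for some fixed $n_0 \in \N$, with $\hat\phi$ supported in a fixed compact set $K \subset \R^{2n_0}$; these $\phi$ span a dense subspace of $\L^2(\P)$. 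Writing $\phi_\tau \eqdef j^\tau \phi \in \fock{n_0}{}{\tau}$ and recalling from Definition~\ref{def:iotatau} that $\iota^\tau$ acts in Fourier by multiplying the kernel by $\prod_i 2\pi\hat\rho_\tau(p_i)$ — so that $j^\tau$ divides by the same factor — one sees that $\cF\phi_\tau$ vanishes wherever $\hat\phi$ does; hence $\mathrm{Supp}(\cF\phi_\tau) \subseteq K$ and $\phi_\tau \in \core_{n_0}$. Since $\nu_\tau \leq 1$, the Fourier symbol $\tfrac12 |\sqrt{R_\tau} p_{1:n_0}|^2$ of $-\cS^\tau$ is bounded on $K$ by $R \eqdef 1 \vee \tfrac12 \sup_{p \in K}|p|^2$ uniformly in $\tau$, so $\mathrm{Supp}(\cF\phi_\tau) \subset \{-\cS^\tau \leq R\}$; moreover $\|\phi_\tau\|_\tau = \|\phi\|$. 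Thus $\phi_\tau$ meets the hypotheses of Proposition~\ref{p:MAnsatz} with constants uniform in $\tau$.

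Next, fixing $M$ large enough (so that Proposition~\ref{p:MAnsatz} applies and the limit in the last step holds), I would take $s^\tau \in \core$ to be the amended Ansatz~\eqref{e:amended_ansatz} associated with $\phi_\tau$. Since $\core \subset \mathrm{Dom}(\gen)$ by Lemmas~\ref{l:CylDomain} and~\ref{lem:ActionGen}, the element $(1-\gen)s^\tau$ is a well-defined element of $\fock{}{}{\tau}$ and the resolvent identity gives $(1-\gen)^{-1}\phi_\tau - s^\tau = (1-\gen)^{-1}\bigl(\phi_\tau - (1-\gen)s^\tau\bigr)$. Applying Lemma~\ref{l:ItoTrick} and then~\eqref{e:MAnsatzH-1} bounds the left-hand side by $C\,\|\phi_\tau - (1-\gen)s^\tau\|_{\fock{}{-1}{\tau}} \leq C\,\nu_\tau^{1/8-\eps}\|\phi\|$, and combining this with the $\L^2$-estimate~\eqref{e:MAnsatzL2} yields
\begin{equ}\label{e:plan-res-vs-G}
\|(1-\gen)^{-1}\phi_\tau - \cG_M^\tau \phi_\tau\|_\tau \,\leq\, C\bigl(\nu_\tau^{1/8-\eps} + \nu_\tau^{3/4}\bigr)\|\phi\| \,\longrightarrow\, 0 \qquad (\tau \to \infty)\,.
\end{equ}
This is the rigorous form of the heuristic~\eqref{e:HeuH-1}: the non-quadratic part $\genaF{\neq2}{}$ of the generator, tamed through Corollary~\ref{cor:HighDegreeEstimate} inside Proposition~\ref{p:MAnsatz}, is asymptotically negligible, and $(1-\gen)^{-1}$ is governed by $\cG_M^\tau$.

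Finally I would identify $\lim_{\tau}\iota^\tau \cG_M^\tau j^\tau \phi$. Because $\cG_M^\tau = (1-\gensy+\cg_M^\tau)^{-1}$ is diagonal in Fourier while $\iota^\tau, j^\tau$ are Fourier multiplications, the operator $\iota^\tau \cG_M^\tau j^\tau$ acts on $\fock{n_0}{}{}$ by multiplying the kernel by $\bigl(1 + \tfrac12|\sqrt{R_\tau}p_{1:n_0}|^2 + g_M^\tau\bigl(L^\tau(\tfrac12|\sqrt{R_\tau}p_{1:n_0}|^2)\bigr)\tfrac12|\fe_1\cdot p_{1:n_0}|^2\bigr)^{-1}$, with $g^\tau, L^\tau$ as in~\eqref{e:gMLtau}. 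On the compact set $K$, using $\nu_\tau \to 0$ and $\nu_\tau^{3/2} = 1/\log\tau$ for large $\tau$, one checks that $\tfrac12|\sqrt{R_\tau}p_{1:n_0}|^2 \to \tfrac12|\fe_2\cdot p_{1:n_0}|^2$ and $g_M^\tau(L^\tau(\cdot)) \to D^\eff_{1,1}$ uniformly on $K$ — this is exactly the computation of~\cite[Eq.~(3.20)]{CMT}, which also fixes the constant $D^\eff_{1,1}$ — so the above symbol converges boundedly to $\bigl(1 + \tfrac12|\fe_2\cdot p_{1:n_0}|^2 + \tfrac12 D^\eff_{1,1}|\fe_1\cdot p_{1:n_0}|^2\bigr)^{-1}$, the Fourier symbol of $(1-\geneff)^{-1}$. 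Since $\mathrm{Supp}(\hat\phi) \subset K$, dominated convergence gives $\iota^\tau \cG_M^\tau j^\tau \phi \to (1-\geneff)^{-1}\phi$ in $\L^2(\P)$. Applying the isometry $\iota^\tau$ to~\eqref{e:plan-res-vs-G} and combining with this limit proves~\eqref{e:StrongCvResL2} for every $\phi$ in the dense class, and the general case follows by density and Lemma~\ref{lem:Contractivity}.

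The hard part will not be in this argument, as essentially all of the analysis has been front-loaded into Proposition~\ref{p:MAnsatz} (and, through it, Corollary~\ref{cor:HighDegreeEstimate} and the It\^o trick of Lemma~\ref{l:ItoTrick}). Within the present proof, the one point that genuinely needs care is the legitimacy of the resolvent identity above, which requires $s^\tau \in \mathrm{Dom}(\gen)$: this is precisely why the amended Ansatz~\eqref{e:amended_ansatz} is truncated so as to lie in $\core$, serving as the natural replacement for $(1-\cL^{\tau,2})^{-1}\phi$, which is not known to belong to $\mathrm{Dom}(\gen)$. The remaining ingredient — the purely deterministic, diagonal limit $\iota^\tau \cG_M^\tau j^\tau \to (1-\geneff)^{-1}$ — is inherited from~\cite{CMT}.
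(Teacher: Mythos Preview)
Your proof is correct and follows essentially the same approach as the paper's: reduce to the dense class of single-chaos elements with compactly supported Fourier transform, introduce the amended Ansatz $s^\tau$, use the triangle inequality together with Lemma~\ref{l:ItoTrick} and Proposition~\ref{p:MAnsatz} to control $\|(1-\gen)^{-1}\phi_\tau - \cG_M^\tau\phi_\tau\|_\tau$, and finish via the pointwise convergence of the diagonal symbol of $\cG_M^\tau$ to that of $(1-\geneff)^{-1}$. You are in fact slightly more careful than the paper in explicitly writing $\phi_\tau = j^\tau\phi$, checking that its Fourier support is contained in $K \subset \{-\gensy \leq R\}$ uniformly in $\tau$, and spelling out that $\iota^\tau \cG_M^\tau j^\tau$ has the same Fourier symbol as $\cG_M^\tau$; the paper abuses notation at this point.
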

\begin{proof}
As noted in the proof of~\cite[Thm. 5.1]{CMT}, it suffices to check~\eqref{e:StrongCvResL2} 
on a dense subset of $\L^2(\P)$, e.g. that which consists of elements of the form $I_{n_0}(\phi)$ 
for $n_0\in\N$ and $\phi$ smooth and with compactly supported Fourier transform. 
For such an observable, let $s^\tau$ be our amended ansatz as given in~\eqref{e:amended_ansatz} 
and note that 
\begin{equs}
\|&\iota^\tau (1 - \gen)^{-1} j^\tau \phi- (1 - \geneff)^{-1} \phi\|\\
&\leq \|\iota^\tau (1 - \gen)^{-1} j^\tau \phi- \iota^\tau s^\tau j^\tau \|+ \|\iota^\tau s^\tau j^\tau - \iota^\tau \cG^\tau_M j^\tau \phi \| +\|\iota^\tau \cG^\tau_M j^\tau \phi- (1 - \geneff)^{-1} \phi\|\\
&=\|(1 - \gen)^{-1} \phi-  s^\tau  \|_\tau+ \| s^\tau  - \cG^\tau_M \phi \|_\tau +\|\cG^\tau_M \phi- (1 - \geneff)^{-1} \phi\|\\
&\lesssim \|\phi-  (1 - \gen)s^\tau  \|_{\fock{}{-1}{\tau}}+ \| s^\tau  - \cG^\tau_M \phi \|_\tau +\| \cG^\tau_M  \phi- (1 - \geneff)^{-1} \phi\|
\end{equs}
where we denoted by $\|\cdot\|$ the norm on $\fock{}{}{}$ (without the $\tau$), we used~\cite[Lem. 1.7]{CMT} 
to pass from $\fock{}{}{}$ to $\fock{}{}{\tau}$ and to replace $\iota^\tau \cG^\tau_M j^\tau$ with $\cG^\tau_M$ 
(this is a slight abuse of notation, but the two operators have the same kernels) 
and at last we applied Lemma~\ref{l:ItoTrick} 
to estimate the $\fock{}{}{\tau}$-norm with the $\fock{}{-1}{\tau}$-norm.
Now, thanks to Proposition~\ref{p:MAnsatz}, and in particular~\eqref{e:MAnsatzL2} and~\eqref{e:MAnsatzH-1}, 
the first two summands go to $0$. For the last instead, we exploit the observation 
made in~\cite[Eq. (3.20)]{CMT} according to which the Fourier 
transform of the kernel of $\cg^\tau_M$ converges pointwise in the limit 
for $\tau, M\to\infty$ to that of $D^\eff_{1,1}\gensyx$. As a consequence, the 
kernel of $\cG^\tau_M$ converges in the same sense 
to that of $(1-D^\eff_{1,1}\gensyx-\gensyy)^{-1}=(1-\geneff)^{-1}$. Since $\phi$ 
is smooth, by the dominated convergence theorem, the conclusion follows at once.
\end{proof}
 
We are now ready to present the proof of Theorems~\ref{th:CLT} and~\ref{th:conv_semigroup}.

\begin{proof}[of Theorems~\ref{th:CLT} and~\ref{th:conv_semigroup}] 
In light of Theorem~\ref{thm:ResConv}, the proof follows the exact same steps  as that 
of Theorems 1.3 and 1.4 detailed in~\cite[Sec. 5]{CMT}. The proof therein uses in 
no special way the specific structure of the equation or of its generator 
but only the analog of Theorem~\ref{thm:ResConv} and properties of the limiting 
equation. Since our limiting equation is the same, that proof applies here {\it mutatis mutandis}. 

To witness, an immediate consequence of Theorem~\ref{thm:ResConv} and 
the Trotter--Kato theorem~\cite[Thm. 4.2]{IK} is the convergence 
of the semigroup $P^\tau$ to that of the effective equation, $P^\eff$, 
in $\L^2(\P)$ uniformly on compact time intervals. The extension to 
infinite time horizons and to $\L^\theta(\P)$, for $\theta\in[1,\infty)$ which 
would imply~\eqref{e:semigroups_L_theta}, 
is given in~\cite[{\it Part I}, pg. 45]{CMT}. It only relies on the contractivity of 
$P^\tau$ (which we have by Lemma~\ref{lem:Contractivity}) and of $P^\eff$, 
and on mixing properties of the latter (see~\cite[Lem. 2.11]{CMT}), 
all of which hold in the present context as well. 

The argument in~\cite[{\it Part II}, pg. 46]{CMT} 
ensures that~\eqref{e:semigroups_L_theta} implies that if 
for $\theta \in [1, +\infty)$ and $k\geq 1$  
$(F_1^\tau)_{\tau > 0}, ..., (F_k^\tau)_{\tau > 0}$ converge to $F_1, ..., F_k$ in $\mathbb L^\theta(\mathbb P)$ 
then for any times $0 \leq t_1 \leq \dots \leq t_k$, 
 \begin{equ} \label{e:CLTold}
 		\int  \, \abs[2]{\BE_{u_0}^\tau\big[F^\tau_1\big(u^{\tau}_{t_1}) \dots F^\tau_k\big(u^{\tau}_{t_k})\big)\big] - \BE_{u_0}^{\eff}\big[F_1\big(u^{\eff}_{t_1}) \dots F_k\big(u^{\eff}_{t_k})\big)\big]}^\theta \mathbb P(\dd u_0) \to 0 \, . 
 \end{equ}	
Hence, upon taking $F^\tau_i(\eta^\tau) \eqdef f_i(\eta^\tau(\phi_i))$, for $i=1,\dots,k$, 
$\phi_1,\dots,\phi_k\in\cS(\R^2)$ 
and $f_1,\dots,f_k$ bounded continuous functions on $\R$, 
we immediately deduce Theorem~\ref{th:CLT} for~$\mathbb Q = \mathbb P$;
the extension to $\mathbb Q \ll \mathbb P$ is then obvious.

But in~\eqref{e:CLTold}, there is no need to take the $f_i$'s to be {\it bounded}, thus we also 
deduce that~\eqref{e:CLT} holds for $f$ polynomial. To move to a generic $f$ 
continuous and of at most polynomial growth, 
we proceed by approximation. Indeed, for such $f$, 
there exists a sequence of polynomials $(P_n)_{n \geq 1}$ such that for every $c > 0$ it holds that 
\begin{equ}
\sup_{x_{1:k}\in\R^k} \frac{|f(x_{1:k}) - P_n(x_{1:k})|}{\sum_{i=1}^k e^{c x_i^2} } \to 0 \,, \qquad \text{as $n \to \infty$} .
\end{equ}
But then, by Jensen's inequality and stationarity, we get 
   \begin{equs}
    \int \mathbb P(\dd u_0) \, &\Big|\BE_{u_0}^\tau\big[(f - P_n)\big(u^{\tau}_{t_1}(\phi_1), \dots, u^{\tau}_{t_k}(\phi_k)\big)\big] - \BE_{u_0}^{\eff}\big[(f-P_n)\big(u^{\eff}_{t_1}(\phi_1), \dots, u^{\eff}_{t_k}(\phi_k)\big)\big]\Big|^p \\
    &\leq \BE^\tau\big[|f - P_n|^p\big(u^{\tau}_{t_1}(\phi_1), \dots, u^{\tau}_{t_k}(\phi_k\big)\big] + \BE^{\eff}\big[|f-P_n|^p\big(u^{\eff}_{t_1}(\phi_1), \dots, u^{\eff}_{t_k}(\phi_k)\big)\big] \\
    &\lesssim \sup_{x_{1:k}\in\R^k} \frac{|f(x_{1:k}) - P_n(x_{1:k})|}{\sum_{i=1}^k e^{c x_i^2} } \sum_{i=1}^k \Big(\E[e^{p c \langle \eta^\tau, \phi_i\rangle^2}] + \E[e^{p c \langle \eta, \phi_i\rangle^2}] \Big)\,.
    \end{equs}
Now, the sum is finite uniformly in $\tau$ and $n$ provided 
$c < \frac{1}{2 p \sigma^2}$ with 
$\sigma = \max\{\|\phi_i\|_{L^2(\R^2)}\colon i=1,\dots,n\}$ since $\langle \eta^\tau, \phi_i\rangle$ and 
$\langle \eta, \phi_i\rangle$ are centred Gaussian random variables of variance 
$\|\phi_i\|_{L_\tau^2(\R^2)} (\leq \|\phi_i\|_{L^2(\R^2)})$ and 
$\|\phi_i\|_{L^2(\R^2)}$, 
and the prefactor converges to $0$ as $n\to\infty$ from which~\eqref{e:CLT} follows at once.  
\end{proof}

\begin{appendix}

\section{Regularity, approximation, and Hermite coefficients decay}\label{a:DecayCoeff} 

The goal of this appendix is to provide some insight over the relation 
between the regularity properties of a function $F\colon \R\to\R$ 
and its coefficients in the Hermite polynomials expansion, so 
to have a better understanding of Assumption~\ref{Assumption:F}. 
What follows is probably classical but since we could not 
find any good reference, we present it here.

Recall that $\pi_1$ denotes the law of a real valued centred Gaussian random variable with unit variance, 
and $L^2(\pi_1)$ is the space of square-integrable functions with respect to~$\pi_1$. 
For $m\in\N$, denote by $H_m$ the $m$-th Hermite polynomial which is defined by 
\begin{equ}[e:HermitePol]
H_m(x)\eqdef \frac{(-1)^m}{m!} e^{\frac{x^2}{2}}\frac{\dd^m}{\dd x^m}\big(e^{-\frac{x^2}{2}}\big)\,,\qquad x\in\R\,.
\end{equ}
The family $(\sqrt{m!} \, H_m)_{m \geq 0}$ forms an orthonormal basis of $L^2(\pi_1)$, 
so that any $F\in L^2(\pi_1)$ can be written in terms of the Hermite coefficients $(c_m(F))_{m\geq 0}$ as 
\begin{equ}[e:HermiteCoeff]
F = \sum_{m \geq 0} c_m(F) \, H_m \, , \qquad c_m(F) \eqdef m! \int_{\R} F(x) \, H_m(x) \, \frac{e^{-x^2/2}}{\sqrt{2 \pi}} \, \dd x \, ,
\end{equ}
and its $L^2(\pi_1)$-norm satisfies 
\begin{equ}[e:L2NormGaussian]
\|F\|_{L^2(\pi_1)}^2 = \sum_{m \geq 0} \frac{c_m(F)^2}{m!} \, .
\end{equ}
For convenience, let us introduce the {\it normalised} Hermite coeffients of $F$, i.e. 
the coefficients that corresponds to the coordinates of $F$ in the orthonormal basis 
$(\sqrt{m!} \, H_m)_{m \geq 0}$, as 
\begin{equ}[e:NormCoeff]
\hat{c}_m(F) \eqdef \frac{c_m(F)}{\sqrt{m!}} = \langle F, \sqrt{m!}H_m\rangle_{L^2(\pi_1)}\,.
\end{equ}
At first, we will establish the relation 
between the decay in $m$ of $\hat c_m(F)$ and the $L^2(\pi_1)$-norms 
of the derivatives of $F$. The following lemma provides 
an upper bound of the former in terms of the latter. 

\begin{lemma}\label{l:HerAppDecay}
Let $F\in\CC^k(\R)$ be such that $F^{(\ell)}\in L^2(\pi_1)$ for all $\ell\leq k$. Then, 
for every $\ell\leq k$ and $m\geq \ell$, we have 
\begin{equ}[e:HerAppDecay]
|\hat c_m(F)|\leq \frac{\|F^{(\ell)}\|_{L^2(\pi_1)}}{\sqrt{(m-\ell+1) \dots m}}\,.
\end{equ} 
\end{lemma}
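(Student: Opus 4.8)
The plan is to reduce the whole statement to one integration-by-parts identity relating $c_m(F)$ to the Hermite coefficients of $F^{(\ell)}$, and then to invoke Bessel's inequality; the only genuinely delicate point will be disposing of the boundary terms, which is handled by an a priori growth bound.

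\textbf{Step 1: a pointwise growth bound from square-integrability of the derivative.} First I would note that the hypothesis $F^{(j)}\in L^2(\pi_1)$ for $1\le j\le\ell$ forces every lower derivative to grow strictly slower than $e^{x^2/2}$. Indeed, for $x\ge 1$ one writes $F^{(j-1)}(x)=F^{(j-1)}(1)+\int_1^x F^{(j)}(t)\,\dd t$, and by Cauchy--Schwarz against the Gaussian weight together with the elementary bound $\int_1^x e^{t^2/2}\,\dd t\le e^{x^2/2}$ one gets $|F^{(j-1)}(x)|\lesssim 1+e^{x^2/4}$, and symmetrically for $x\le -1$. In particular each $F^{(j-1)}$ with $j\le\ell$ again lies in $L^2(\pi_1)$, and for every polynomial $Q$ the function $x\mapsto F^{(j-1)}(x)\,Q(x)\,e^{-x^2/2}$ tends to $0$ as $|x|\to\infty$ — exactly what is needed to kill the boundary terms later.

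\textbf{Step 2: the key identity $c_m(F)=c_{m-\ell}(F^{(\ell)})$.} Starting from the Rodrigues-type formula \eqref{e:HermitePol}, which gives $H_m(x)e^{-x^2/2}=\frac{(-1)^m}{m!}\frac{\dd^m}{\dd x^m}(e^{-x^2/2})$, the definition \eqref{e:HermiteCoeff} becomes $c_m(F)=\frac{(-1)^m}{\sqrt{2\pi}}\int_\R F(x)\frac{\dd^m}{\dd x^m}(e^{-x^2/2})\,\dd x$. I would then integrate by parts $\ell$ times: this is legitimate since, by Step 1, at the $j$-th stage the boundary term is $F^{(j-1)}$ times a polynomial multiple of $e^{-x^2/2}$, which vanishes at $\pm\infty$, and all intermediate integrands are in $L^1(\pi_1)$. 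Applying the Rodrigues formula once more to $\frac{\dd^{m-\ell}}{\dd x^{m-\ell}}(e^{-x^2/2})$, the signs $(-1)^{m+\ell}$ and $(-1)^{m-\ell}$ cancel and one is left with $c_m(F)=(m-\ell)!\int_\R F^{(\ell)}(x)H_{m-\ell}(x)\frac{e^{-x^2/2}}{\sqrt{2\pi}}\,\dd x=c_{m-\ell}(F^{(\ell)})$, valid for all $m\ge\ell$.

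\textbf{Step 3: normalisation and Bessel's inequality.} Passing to normalised coefficients via \eqref{e:NormCoeff}, the identity of Step 2 reads $\hat c_m(F)=\sqrt{(m-\ell)!/m!}\;\hat c_{m-\ell}(F^{(\ell)})=\bigl(m(m-1)\cdots(m-\ell+1)\bigr)^{-1/2}\hat c_{m-\ell}(F^{(\ell)})$, and the product in the denominator is precisely $(m-\ell+1)\cdots m$. Since $(\sqrt{j!}\,H_j)_{j\ge 0}$ is an orthonormal basis of $L^2(\pi_1)$ and $\hat c_{m-\ell}(F^{(\ell)})=\langle F^{(\ell)},\sqrt{(m-\ell)!}\,H_{m-\ell}\rangle_{L^2(\pi_1)}$ is one of the corresponding Fourier coordinates, Bessel's inequality gives $|\hat c_{m-\ell}(F^{(\ell)})|\le\|F^{(\ell)}\|_{L^2(\pi_1)}$; combining the two displays yields \eqref{e:HerAppDecay}. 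The only non-mechanical point in this argument is the vanishing of the boundary terms in the $\ell$-fold integration by parts, and this is settled once and for all by the growth estimate of Step 1, which upgrades the mere square-integrability of the top derivative to a pointwise bound $|F^{(j-1)}(x)|=o(e^{x^2/2})$ for every $j\le\ell$; everything else is bookkeeping of signs and factorials.
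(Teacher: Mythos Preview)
Your proof is correct and follows essentially the same route as the paper's: both hinge on the identity $c_m(F)=c_{m-\ell}(F^{(\ell)})$ (equivalently, the paper's $\hat c_m(F^{(\ell)})=\sqrt{(m+1)\cdots(m+\ell)}\,\hat c_{m+\ell}(F)$), then bound a single coefficient by the full $L^2(\pi_1)$-norm. The only difference is in how the identity is justified: the paper invokes the relation $H_{n+1}'=H_n$ together with a density argument, whereas you integrate by parts directly against the Rodrigues formula and dispose of the boundary terms via the explicit growth bound of Step~1. Your treatment is slightly more self-contained; the paper's is shorter but leaves the density argument to the reader.
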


\begin{proof}
The result is a direct consequence of basic properties of Hermite polynomials. 
In particular, the key identity is~\cite[Eq. (1.2)]{Nualart} which states that for every $n\geq 0$, $H_{n+1}' \equiv H_{n}$.
This, together with a density argument, implies that for any $\ell \leq k$, 
\begin{equ}[e:CoeffDer]
\hat{c}_m(F^{(\ell)}) = \sqrt{(m+1) \dots (m+\ell) } \, \hat{c}_{m+\ell}(F)\,,
\end{equ} 
and thus, by~\eqref{e:L2NormGaussian} we get 
\begin{equ}[e:LinkRegDecay]
\|F^{(\ell)}\|_{L^2(\pi_1)}^2 = \sum_{m \geq \ell} (m-\ell+1) \dots m \, \hat{c}_m(F)^2
\end{equ}
which, since the l.h.s. is finite by assumption, immediately gives~\eqref{e:HerAppDecay}. 
\end{proof}

\begin{remark}
Compare \eqref{e:CoeffDer} to the classical analogue for the Fourier transform, say for $F \in \cS(\R)$,
\begin{equ}
\cF(F^{(\ell)}) \, (m) = (\iota m)^\ell \ \cF(F) \, (m) \, .
\end{equ}
Here the square root of the ascending $\ell$-power of $m$ has been replaced by $(\iota m)^\ell$. 
This suggests that, as for the Fourier transform, the regularity of 
$F$ is tightly connected to the decay of $(c_m(F))_{m \geq 0}$.
\end{remark}

The next proposition addresses the main question of the appendix as it states a necessary 
and sufficient condition for the normalised Hermite coefficients of a function 
to decay faster than any polynomial.

\begin{proposition} \label{p:cm(F)asymptotic}
Let $F \in L^2(\pi_1)$. Then, the sequence $(\hat{c}_m(F))_{m \geq 0}$ decays  
faster than any inverse power of $m$ if and only if the derivatives of every order of $F$ belong to $L^2(\pi_1)$.
\end{proposition}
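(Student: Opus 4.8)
The plan is to prove Proposition~\ref{p:cm(F)asymptotic} by leveraging the two identities established in Lemma~\ref{l:HerAppDecay}, namely the bound~\eqref{e:HerAppDecay} and the exact formula~\eqref{e:LinkRegDecay} relating $\|F^{(\ell)}\|_{L^2(\pi_1)}^2$ to the Hermite coefficients weighted by the ascending factorial $(m-\ell+1)\dots m$. Since both directions of the equivalence hinge on comparing the rate of decay of $\hat c_m(F)$ against powers of $m$, and the ascending factorial $(m-\ell+1)\dots m$ behaves like $m^\ell$ up to constants (for $m \geq 2\ell$, say, it is sandwiched between $(m/2)^\ell$ and $m^\ell$), the argument is essentially a bookkeeping exercise once these identities are in hand.

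For the ``only if'' direction, suppose $(\hat c_m(F))_{m\geq 0}$ decays faster than any inverse power of $m$. Fix $\ell \geq 1$; I would show $F^{(\ell)} \in L^2(\pi_1)$ by showing the series in~\eqref{e:LinkRegDecay} converges. Indeed $(m-\ell+1)\dots m \leq m^\ell$, and by hypothesis, given the exponent $\ell+2$, there is $C>0$ with $|\hat c_m(F)| \leq C m^{-(\ell+2)}$ for all $m \geq 1$; hence $(m-\ell+1)\dots m \, \hat c_m(F)^2 \leq C^2 m^{\ell} m^{-2(\ell+2)} = C^2 m^{-\ell-4}$, which is summable. One subtlety: the identity~\eqref{e:LinkRegDecay}, as stated in the proof of Lemma~\ref{l:HerAppDecay}, presupposes $F \in \CC^\ell$ with $F^{(j)} \in L^2(\pi_1)$ for $j \leq \ell$. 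To use it as a criterion for membership, I would instead invoke the standard fact that $F \in L^2(\pi_1)$ lies in the domain of the $\ell$-th power of the Ornstein--Uhlenbeck ``number'' operator (equivalently, the weighted $\ell^2$-condition $\sum_m (m-\ell+1)\dots m\,\hat c_m(F)^2 < \infty$ holds) if and only if $F$ is $\ell$-times weakly differentiable with $F^{(\ell)} \in L^2(\pi_1)$; this is precisely the content of~\eqref{e:CoeffDer} read backwards, i.e. the series $\sum_m \hat c_m(F^{(\ell)}) \sqrt{m!}\,H_m$ with $\hat c_m(F^{(\ell)}) \eqdef \sqrt{(m+1)\dots(m+\ell)}\,\hat c_{m+\ell}(F)$ converges in $L^2(\pi_1)$ to a function that one identifies with $F^{(\ell)}$ via integration by parts against the Gaussian weight (the boundary terms vanish because Hermite functions decay). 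I would spell this identification out carefully, as it is the one genuinely non-cosmetic point.

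For the ``if'' direction, suppose all derivatives $F^{(\ell)}$ lie in $L^2(\pi_1)$. Fix $k \geq 1$ arbitrary; I want $|\hat c_m(F)| = o(m^{-k})$. Apply~\eqref{e:HerAppDecay} with $\ell = k$: for $m \geq k$,
\begin{equ}
|\hat c_m(F)| \leq \frac{\|F^{(k)}\|_{L^2(\pi_1)}}{\sqrt{(m-k+1)\dots m}} \leq \frac{\|F^{(k)}\|_{L^2(\pi_1)}}{(m-k+1)^{k/2}} \, .
\end{equ}
This already gives decay of order $m^{-k/2}$, not $m^{-k}$; to upgrade, simply note that $k$ was arbitrary, so applying the bound with $2k$ in place of $k$ yields $|\hat c_m(F)| \lesssim_{k} m^{-k}$ for all $m$ large, and in fact $o(m^{-k})$ follows since the tail of the convergent series in~\eqref{e:LinkRegDecay} with $\ell = 2k+1$ forces $(m-2k)^{2k+1}\hat c_m(F)^2 \to 0$. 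Since $k$ was arbitrary this establishes super-polynomial decay.

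The main obstacle is not any of the inequalities, which are routine, but the clean justification of formula~\eqref{e:CoeffDer} (and hence~\eqref{e:LinkRegDecay}) in the generality needed: one must argue that the formal term-by-term differentiation of the Hermite expansion is legitimate and that the resulting $L^2(\pi_1)$-function genuinely is the weak (hence, by elliptic regularity for the smooth Gaussian weight, classical) derivative $F^{(\ell)}$. The cleanest route is to record this once as a lemma: for $F \in L^2(\pi_1)$ and $\ell \in \N$, the conditions (a) $F \in W^{\ell,2}(\pi_1)$ and (b) $\sum_{m\geq \ell}(m-\ell+1)\dots m\,\hat c_m(F)^2 < \infty$ are equivalent, with $\hat c_m(F^{(\ell)}) = \sqrt{(m+1)\dots(m+\ell)}\,\hat c_{m+\ell}(F)$; this follows from $H'_{n+1} = H_n$, the density of polynomials in $W^{\ell,2}(\pi_1)$, and integration by parts. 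Granting this lemma, Proposition~\ref{p:cm(F)asymptotic} is immediate from the two displayed estimates above, with ``all derivatives in $L^2(\pi_1)$'' being exactly the statement that (b) holds for every $\ell$, which is equivalent to super-polynomial decay of $(\hat c_m(F))_m$.
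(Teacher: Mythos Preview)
Your proof is correct and follows essentially the same strategy as the paper: both directions rest on \eqref{e:HerAppDecay} and \eqref{e:LinkRegDecay}, and both flag and address the subtlety that \eqref{e:LinkRegDecay} a priori presupposes regularity of $F$. The paper resolves this last point slightly more elementarily, by applying \eqref{e:LinkRegDecay} to the truncated Hermite expansions $\sum_{m=0}^N c_m(F) H_m$ and showing their derivatives form a Cauchy sequence in $L^2(\pi_1)$ whose limit is identified with $F^{(\ell)}$, rather than invoking the abstract $W^{\ell,2}(\pi_1)$ characterisation you propose.
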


\begin{remark}
    In view of the analogy with the Fourier transform as explained above, this is the exact analogue of the classical fact that a (say periodic, to avoid integrability issues) function is smooth if and only if its Fourier transform decays faster than any inverse powers.
\end{remark}

\begin{proof}
``$\Longleftarrow$'' is a straightforward consequence of Lemma~\ref{l:HerAppDecay}. 

\noindent ``$\Longrightarrow$'' Note that~\eqref{e:LinkRegDecay}
gives that 
for any fixed $\delta > 0$ and every $k \geq 0$, we have
\begin{equs}
\|F^{(k)}\|_{L^2(\pi_1)} &\leq \Big(\sum_{m \geq k} \frac{1}{1 + m^{1+2\delta}}\Big)^{1/2} \sup_{m \geq k} (1 + m^{k/2+1/2+\delta}) \, |\hat{c}_m(F)| \\
&\lesssim_\delta \sup_{m \geq k} (1+m^{k/2+1/2+\delta}) \, |\hat{c}_m(F)| \, , \label{e:FromDecayToReg}
\end{equs}
from which the implication, and thus the statement, follow at once. 
Strictly speaking, to apply~\eqref{e:LinkRegDecay} we would need to a priori know that the derivatives of $F$ belong to $L^2(\pi_1)$; to avoid this difficulty, one can apply~\eqref{e:LinkRegDecay} to the Hermite expansion $(\sum_{m=0}^N c_m(F) H_m)_{N \geq 0}$ to show that all its derivatives are Cauchy in $L^2(\pi_1)$: it is then easy to see that their $L^2(\pi_1)$-limit must coincide with the derivatives of $F$ which concludes the proof.
\end{proof}

Let us now see how Assumption~\ref{Assumption:F} fits into the previous statement. 
A first immediate result is that the normalised Hermite coefficients of $F$ decay faster than any inverse power, but we can obtain a more quantitative control on this fast decay. 

\begin{proposition}\label{p:AssF}
Let $F\colon \R\to\R$ be such that Assumption~\ref{Assumption:F} holds. Then, 
there exists a constant $C>0$ such that 
for every $n\in\N$
\begin{equ}[e:Decay]
\| F^{(n)}\|_{L^2(\pi_1)}\leq C\,\exp(\exp(o(n)))\,.
\end{equ}
As a consequence, the normalised Hermite coefficients of $F$, $(\hat c_m(F))_{m\geq 0}$ in~\eqref{e:NormCoeff}, 
decay faster than any inverse power of $m$, and, for every $\gamma>0$ there exists a constant $C=C(\gamma)$ 
such that for every $k\geq 0$ it holds 
\begin{equ} \label{e:expexpMomentGrowth}
		\sum_{m \geq 1}  m^k \, |\hat{c}_m(F)| \leq C \exp(\exp(\gamma k)) \, .
	\end{equ}
\end{proposition}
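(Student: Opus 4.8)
\textbf{Proof proposal for Proposition~\ref{p:AssF}.}

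The plan is to proceed in three steps, extracting the decay of the $L^2(\pi_1)$-norms of the derivatives of $F$ directly from the pointwise growth condition in Assumption~\ref{Assumption:F}, and then feeding this into the machinery of Lemma~\ref{l:HerAppDecay} and Proposition~\ref{p:cm(F)asymptotic}.

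First I would establish~\eqref{e:Decay}. By Assumption~\ref{Assumption:F}, for every $n \geq 0$ and $y \in \R$ we have $|F^{(n)}(y)| \leq \exp(\exp(o(n))) \exp(\kappa y^2)$ with $\kappa \in [0,1/4)$. Squaring and integrating against the Gaussian density,
\begin{equ}
\|F^{(n)}\|_{L^2(\pi_1)}^2 = \int_\R |F^{(n)}(y)|^2 \, \frac{e^{-y^2/2}}{\sqrt{2\pi}} \, \dd y \leq \exp(\exp(o(n)))^2 \int_\R \frac{e^{(2\kappa - 1/2) y^2}}{\sqrt{2\pi}} \, \dd y \, .
\end{equ}
Since $\kappa < 1/4$, we have $2\kappa - 1/2 < 0$, so the remaining Gaussian-type integral is a finite constant $C^2$ depending only on $\kappa$. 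Taking square roots gives $\|F^{(n)}\|_{L^2(\pi_1)} \leq C \exp(\exp(o(n)))$, which is~\eqref{e:Decay}; absorbing the factor $2$ into the $o(n)$ is harmless since $o(n)/n \to 0$ is preserved under multiplication by constants. In particular, every derivative of $F$ lies in $L^2(\pi_1)$, so Proposition~\ref{p:cm(F)asymptotic} applies and yields that $(\hat c_m(F))_{m \geq 0}$ decays faster than any inverse power of $m$.

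Next I would prove the quantitative bound~\eqref{e:expexpMomentGrowth}. Fix $\gamma > 0$ and $k \geq 0$. The strategy is to split the sum $\sum_{m \geq 1} m^k |\hat c_m(F)|$ by choosing, for each $m$, a good value of $\ell$ in the bound~\eqref{e:HerAppDecay} of Lemma~\ref{l:HerAppDecay}, namely $|\hat c_m(F)| \leq \|F^{(\ell)}\|_{L^2(\pi_1)} / \sqrt{(m-\ell+1)\cdots m}$ for $\ell \leq m$, and then using~\eqref{e:Decay} to control $\|F^{(\ell)}\|_{L^2(\pi_1)}$. Concretely, one picks $\ell = \ell(m)$ growing slowly with $m$ --- for instance $\ell(m) = \lfloor \log m \rfloor$, or more generally $\ell(m) = \lfloor m^{1/2}\rfloor$ --- so that, on the one hand, $\sqrt{(m-\ell+1)\cdots m} \geq (m/2)^{\ell/2}$ for $m$ large grows faster than any fixed power $m^{k+2}$ once $m$ is large enough (killing the $m^k$ and giving a summable tail), and on the other hand $\|F^{(\ell(m))}\|_{L^2(\pi_1)} \leq C \exp(\exp(o(\ell(m))))$ where $o(\ell(m))$ is negligible compared to $\log m$, hence $\exp(\exp(o(\ell(m))))$ is at most polynomial in $m$. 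Combining, each summand with $m$ large is bounded by $m^k \cdot (\text{poly in } m) \cdot (m/2)^{-\ell(m)/2}$, which is summable and whose total is dominated by the first few terms where $m$ is comparable to $k$; there, the crude bound $|\hat c_m(F)| \leq \|F\|_{L^2(\pi_1)}$ together with $\sum_{m \leq m_0(k)} m^k \lesssim m_0(k)^{k+1}$ gives a contribution of order $\exp(\exp(\gamma k))$ provided $m_0(k)$ --- the crossover scale, which is polynomial in $\exp(\exp(o(k)))$ if we instead take $\ell(m)$ tuned so that the split happens around $m \asymp \exp(\exp(c k))$ --- is chosen appropriately. The cleanest route is: split at $m_0 \eqdef \lceil \exp(\exp(\gamma k / 2)) \rceil$, bound the tail $m > m_0$ using $\ell = k+2$ in~\eqref{e:HerAppDecay} (so $|\hat c_m(F)| \lesssim \|F^{(k+2)}\|_{L^2(\pi_1)} m^{-(k+2)/2}$ and hence $m^k |\hat c_m(F)| \lesssim \exp(\exp(o(k))) m^{-1-(k+2)/2+k/2} = \exp(\exp(o(k))) m^{-2}$, summing to $\exp(\exp(o(k)))$), and bound the head $m \leq m_0$ by $\|F\|_{L^2(\pi_1)} \sum_{m \leq m_0} m^k \leq \|F\|_{L^2(\pi_1)} m_0^{k+1} \leq C \exp((k+1)\exp(\gamma k/2)) \leq C \exp(\exp(\gamma k))$ for $k$ large. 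This gives~\eqref{e:expexpMomentGrowth}.

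The main obstacle is the bookkeeping in the third step: one must balance the polynomial loss $m^k$, the inverse-factorial-type gain $\sqrt{(m-\ell+1)\cdots m}^{-1}$ from Lemma~\ref{l:HerAppDecay}, and the doubly-exponential-in-$\ell$ cost $\exp(\exp(o(\ell)))$ from~\eqref{e:Decay}, and verify that the crossover scale $m_0 \asymp \exp(\exp(\gamma k/2))$ makes both the head and the tail fit under $\exp(\exp(\gamma k))$. All the analytic inputs are elementary --- a single Gaussian integral for~\eqref{e:Decay} and Lemma~\ref{l:HerAppDecay} for the coefficient decay --- so no genuinely hard estimate is involved; the delicacy is purely in choosing the splitting point and the value of $\ell$ as functions of $k$ and $m$ so that the constants compose correctly. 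One should be slightly careful that in~\eqref{e:HerAppDecay} we need $\ell \leq m$, which is why the head of the sum (small $m$) has to be treated by the trivial bound rather than by taking $\ell = k+2$.
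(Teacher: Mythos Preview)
Your argument for~\eqref{e:Decay} and the qualitative decay via Proposition~\ref{p:cm(F)asymptotic} is correct and matches the paper exactly.

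For~\eqref{e:expexpMomentGrowth}, your head/tail strategy is sound, but the ``cleanest route'' contains an arithmetic slip that makes the tail diverge: with $\ell = k+2$ in Lemma~\ref{l:HerAppDecay} you obtain $|\hat c_m(F)| \lesssim \|F^{(k+2)}\|_{L^2(\pi_1)}\, m^{-(k+2)/2}$, so $m^k |\hat c_m(F)|$ is of order $m^{k-(k+2)/2} = m^{(k-2)/2}$, not $m^{-2}$ as you write, and $\sum_{m > m_0} m^{(k-2)/2}$ diverges for $k \geq 3$. The fix is immediate: take $\ell = 2k+4$ (or any $\ell > 2k+2$) instead. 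Then $m^k |\hat c_m(F)| \lesssim 2^{k+2} \|F^{(2k+4)}\|_{L^2(\pi_1)}\, m^{-2}$, the tail sums to $2^{k+2}\exp(\exp(o(k))) \leq C \exp(\exp(\gamma k))$, and your head bound goes through unchanged. You also need $m_0 \geq 2\ell$ to justify $(m-\ell+1)\cdots m \geq (m/2)^\ell$, which holds for large $k$ since $m_0$ is doubly exponential in $k$.

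The paper's route is different and somewhat slicker: rather than a head/tail split and the pointwise bound of Lemma~\ref{l:HerAppDecay}, it applies Cauchy--Schwarz to reduce to $\bigl(\sum_{m} m^{k+2} \hat c_m(F)^2\bigr)^{1/2}$, uses the elementary inequality $m^{k+2} \leq (2k+4)^{k+2} + 2^{k+2}\, m(m-1)\cdots(m-k-1)$, and then invokes the exact identity~\eqref{e:LinkRegDecay} to recognise $\sum_m m(m-1)\cdots(m-k-1)\,\hat c_m(F)^2 = \|F^{(k+2)}\|_{L^2(\pi_1)}^2$. This trades your term-by-term coefficient estimate for a single $\ell^2$ identity, avoiding the split altogether; once your $\ell$ is corrected, both approaches yield the same bound.
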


\begin{proof}
Note that if $G$ is an arbitrary continuous function on $\R$, then we can bound (using that $\kappa < 1/4$)
\begin{equ}[e:NormComp]
\|G\|_{L^2(\pi_1)} \leq \Big(\int e^{-(1/2 - 2 \kappa) \, x^2} \, \dd x\Big)^{1/2} \sup_{x \in \R} |G(x)| e^{-\kappa x^2} \lesssim_\kappa \sup_{x \in \R} |G(x)| e^{-\kappa x^2} \, .
\end{equ}
Therefore, we immediately deduce \eqref{e:Decay} from Assumption \ref{Assumption:F}. Then, Proposition~\ref{p:cm(F)asymptotic} implies the fast decay
of the normalised Hermite coefficients of $F$. 

At last, we turn to~\eqref{e:expexpMomentGrowth}.
Using Cauchy--Schwartz, the trivial bound $m^k \leq (2k)^k + 2^k \, m \dots (m-k+1)$, 
and the identity~\eqref{e:CoeffDer}, we obtain
	\begin{equs}
		\sum_{m \geq 1} m^k \, |\hat{c}_m(F)| &\lesssim \Big(\sum_{m \geq 1} m^{k+2} \, \hat{c}_m(F)^2\Big)^{1/2} \\
		&\lesssim \Big((2k+4)^{k+2} \sum_{m \geq 1} \hat{c}_m(F)^2 + 2^{k+2} \sum_{m \geq 1} m \dots (m - k - 1) \, \hat{c}_m(F)^2\Big)^{1/2} \, \\
		&=\Big((2k+4)^{k+2} \|F\|_{L^2(\pi_1)}^2 + 2^{k+2} \|F^{(k+2)}\|_{L^2(\pi_1)}^2\big)^{1/2} \, , \\
		&\lesssim_\gamma \exp(\exp(\gamma k))\, ,
\end{equs}
where we applied~\eqref{e:Decay} thanks to which $\gamma>0$ can be chosen arbitrary small. 
\end{proof}

In the next lemma, we show that functions for which Assumption~\ref{Assumption:F} holds 
can be approximated arbitrarily well by suitably chosen polynomials. The statement is a bit  
more general as in Section~\ref{sec:WellPosed} our assumption on $F$ can be relaxed. 

\begin{lemma} \label{l:DensityPolynomial}
Let $\kappa\in(0,1/4)$, $k\in\N\cup\{+\infty\}$ and $F\in\CC^k(\R)$ be such that, 
for every $\ell\leq k$, $\sup_x |F^{(\ell)}(x)| e^{-\kappa x^2}<\infty$ and $F^{(\ell)}(x)=o(C e^{\kappa x^2})$ for $|x|$ large, 
where $C=C(\ell)>0$ is a positive constant. 
Then, there exists a sequence of polynomials $(P_M)_{M \geq 1}$ such that for every $\ell\leq k$, 
\begin{equ}[e:PolyApprox]
\lim_{M\to\infty}\sup_{x \in \R} |F^{(\ell)}(x) - P^{(\ell)}_M(x)| e^{- \kappa x^2} = 0 \, .
\end{equ}
\end{lemma}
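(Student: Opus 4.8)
The plan is to prove the approximation statement by a truncated Hermite expansion followed by a smoothing/regularization step, controlling everything in the weighted supremum norm $\sup_x|\cdot|e^{-\kappa x^2}$.

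\textbf{Step 1: Reduce to a weighted $L^2$ statement and set up the Hermite partial sums.} First I would observe, exactly as in~\eqref{e:NormComp}, that for any $\kappa'\in(\kappa,1/4)$ and any continuous $G$ one has $\|G\|_{L^2(\pi_{1/(1-4\kappa'\sigma^2)})}$-type bounds comparing the weighted sup-norm to a Gaussian $L^2$-norm; more precisely, the hypotheses ensure $F^{(\ell)}\in L^2(\pi_a)$ for a suitable Gaussian weight $\pi_a$ with variance $a<1/(4\kappa)$, so all derivatives lie in $L^2(\pi_1)$ and Proposition~\ref{p:cm(F)asymptotic} applies, giving fast decay of $(\hat c_m(F))_m$. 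Then set $Q_N\eqdef\sum_{m=0}^N c_m(F)H_m$, the $N$-th partial sum of the Hermite expansion of $F$. By~\eqref{e:CoeffDer}, $Q_N^{(\ell)}=\sum_{m=0}^{N-\ell}c_{m+\ell}(F)\sqrt{(m+1)\cdots(m+\ell)}\,H_m$ is the corresponding partial sum for $F^{(\ell)}$, and by~\eqref{e:LinkRegDecay} together with the fast decay of the coefficients, $Q_N^{(\ell)}\to F^{(\ell)}$ in $L^2(\pi_1)$ for every $\ell\le k$, with a rate that is uniform over $\ell$ in any fixed finite range (or, in the $k=\infty$ case, with a rate depending on $\ell$).

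\textbf{Step 2: Upgrade $L^2(\pi_1)$ convergence of the polynomial partial sums to weighted uniform convergence.} This is the crux. The partial sums $Q_N$ are already polynomials, but $L^2(\pi_1)$ control is not enough to get the weighted sup-norm in~\eqref{e:PolyApprox}; the Gaussian weight $e^{-x^2/2}$ in $\pi_1$ decays much faster than $e^{-\kappa x^2}$. The idea is to interpolate: write $F^{(\ell)}-Q_N^{(\ell)}=:R_{N,\ell}$ and bound $\sup_x|R_{N,\ell}(x)|e^{-\kappa x^2}$ by splitting into a region $|x|\le L_N$ and $|x|>L_N$ for a slowly growing cutoff $L_N$. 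On $|x|>L_N$ one uses the pointwise growth hypotheses on $F^{(\ell)}$ and crude polynomial bounds on $Q_N^{(\ell)}$ (a polynomial of degree $\le N$ with coefficients controlled by $\|F^{(\ell+2)}\|_{L^2(\pi_1)}$ via~\eqref{e:CoeffDer}) against the genuinely smaller weight $e^{-\kappa x^2}$ — choosing $L_N\to\infty$ slowly enough that $N\log L_N = o(L_N^2)$, so the Gaussian weight beats the polynomial. On $|x|\le L_N$, one converts the $L^2(\pi_1)$ smallness of $R_{N,\ell}$ and of $R_{N,\ell+1}$ (its derivative, again by~\eqref{e:CoeffDer}) into a uniform-on-compacts bound via a Sobolev/Agmon-type inequality: $\sup_{|x|\le L}|g(x)|^2 \lesssim \int_{-L}^{L}(g^2+g'^2)\,e^{x^2/2}\,dx \lesssim e^{L^2/2}\big(\|g\|_{L^2(\pi_1)}^2+\|g'\|_{L^2(\pi_1)}^2\big)$ (up to constants). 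Since the $L^2(\pi_1)$-errors decay faster than any inverse power of $N$ while $e^{L_N^2/2}$ grows only like $e^{(\log L_N)\cdot o(N)}$-ish, a judicious choice of $L_N$ (e.g. $L_N=(\log N)^{1/3}$ or similar) makes both contributions vanish as $N\to\infty$.

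\textbf{Step 3: Conclude.} Taking $P_M\eqdef Q_{N(M)}$ for $N(M)\to\infty$, Steps 1--2 yield~\eqref{e:PolyApprox} for every $\ell\le k$; in the case $k=\infty$ one needs a diagonal argument over $\ell$, but since the statement only requires convergence for each fixed $\ell$, no uniformity in $\ell$ is needed and the single sequence $(Q_N)_N$ works. I expect \textbf{Step 2} to be the main obstacle: the delicate point is balancing the slow cutoff $L_N$ so that simultaneously (i) the Gaussian factor $e^{-\kappa x^2}$ overwhelms the polynomial growth of $Q_N^{(\ell)}$ outside $[-L_N,L_N]$, and (ii) the Agmon-type constant $e^{L_N^2/2}$ inside $[-L_N,L_N]$ is killed by the super-polynomial decay of the tail $\sum_{m>N-\ell}$ of~\eqref{e:LinkRegDecay}. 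Everything else is routine manipulation of Hermite identities~\eqref{e:CoeffDer},~\eqref{e:LinkRegDecay} and the elementary weighted estimate~\eqref{e:NormComp}.
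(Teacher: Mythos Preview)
Your Step~2 does not close, and the issue is structural rather than just a matter of tuning~$L_N$. To control the degree-$N$ polynomial $Q_N^{(\ell)}$ outside $[-L_N,L_N]$ against $e^{-\kappa x^2}$ you need $L_N$ at least of order~$\sqrt{N}$ (Hermite polynomials of degree~$N$ have their bulk on a scale~$\sqrt{N}$; equivalently, $x^N e^{-\kappa x^2}$ peaks at $x\sim\sqrt{N/(2\kappa)}$). But then the Agmon constant inside is $e^{L_N^2/2}\gtrsim e^{cN}$, and ``faster than any inverse power of~$N$'' for the $L^2(\pi_1)$ error is not enough to beat an exponential. Your suggested $L_N=(\log N)^{1/3}$ goes the other way: it tames the inside but leaves the outside completely uncontrolled (indeed, your own stated condition $N\log L_N=o(L_N^2)$ fails spectacularly for that choice). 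There is a genuine weight mismatch: $\pi_1$ gives the Agmon inequality with weight $e^{-x^2/2}$, whereas you need $e^{-\kappa x^2}$ with $\kappa<1/4$.

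The paper avoids this entirely by choosing the Gaussian variance to match the weight: it works with the rescaled Hermite basis $H_m^\sigma(\cdot)=\sqrt{m!}\,H_m(\cdot/\sigma)$ for $\sigma^2=1/(2\kappa)$, so that the clean Agmon-type bound
\[
\sup_{x}|G(x)|\,e^{-\kappa x^2}=\sup_x|G(x)|\,e^{-x^2/(2\sigma^2)}\lesssim \|G\|_{L^2(\pi_\sigma)}+\|G'\|_{L^2(\pi_\sigma)}
\]
converts $L^2(\pi_\sigma)$ convergence of the partial sums and their derivatives directly into~\eqref{e:PolyApprox}, with no splitting and no balancing. Second, your Step~1 invokes Proposition~\ref{p:cm(F)asymptotic}, which requires \emph{all} derivatives in $L^2(\pi_1)$; for finite~$k$ this is false and you only get polynomial decay of the Hermite tail. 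The paper handles this by first reducing to smooth compactly supported~$F$ via $F_N=\vartheta_N*(\chi_N F)$, after which smoothness is automatic and the Hermite approximation (in the $\sigma$-scaled basis) works for every~$\ell$.
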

\begin{remark}\label{rem:LemmaRed}
Upon choosing $k=1$ in the above statement, its assumptions on $F$ coincide with~\eqref{e:AssF}, while 
for $k=\infty$ we are in the setting of Assumption~\ref{Assumption:F}.
\end{remark}
\begin{proof}
We first claim that the statement holds provided it does for $F$ smooth and compactly supported. 
Let $(\vartheta_N)_{N \geq 1}$ be a sequence of mollifiers, and 
$(\chi_N)_{N \geq 1}$ a sequence of smooth cut-off functions that converges locally to $1$ and 
have uniformly bounded $\cC^k(\R)$-norm. 
Then, for $N \geq 1$, define $F_N = \vartheta_N * [\chi_N F]$. 
Since $F\in\cC^k$ and, for every $\ell\leq k$, $F^{(\ell)}(x)=o(C e^{\kappa x^2})$, it is easy to check that 
\begin{equ}
\sup_{x \in \R} |F^{(\ell)}(x) - F_N^{(\ell)}(x)| e^{ -\kappa x^2} \xrightarrow[N \to \infty]{} 0 \, ,
\end{equ}
from which the claim follows. 

We now consider $F$ smooth and compactly supported. 
Recall that for any $\sigma>0$, the family $(H_m^\sigma)_{m\geq 0}$, 
where $H_m^\sigma(\cdot)\eqdef \sqrt{m!} H_m(\cdot/\sigma)$ and $H_m$ is the 
$m$-th Hermite polynomial on $\R$, forms an orthonormal basis of $L^2(\pi_\sigma)$, for $\pi_\sigma$ the 
law of a real-valued centred Gaussian random variable with variance $\sigma^2$. 
Denoting by $P_M^\sigma(F)$ the polynomial obtained by projecting $F$ onto 
the space generated by $(H_m^\sigma)_{m\leq M}$, i.e. 
\begin{equ}[e:PolApprox]
P^\sigma_M(F) = \sum_{m=0}^M \langle F, H_m^\sigma \rangle_{L^2(\pi_\sigma)} H_m^\sigma
\end{equ}
we have $\|F-P_M^\sigma(F)\|_{L^2(\pi_\sigma)}\to 0$ as 
$M\to\infty$. Applying~\cite[Eq.'s (1.2), (1.3)]{Nualart} and the scaling relation above, it is 
not hard to see that, for any $k\geq 1$, $[P^\sigma_M(F)]^{(\ell)} = P^\sigma_{M-k}(F^{(\ell)})$, 
so that the previous argument shows that for every $k\geq 1$, 
$\|F^{(\ell)}-P_M^\sigma(F)^{(\ell)}\|_{L^2(\pi_\sigma)}\to 0$ 
as $M\to\infty$. As a consequence,~\eqref{e:PolyApprox} follows provided 
we are able to bound the (semi-)norm at the l.h.s.  by the $L^2(\pi_\sigma)$-norm, 
which is indeed possible upon choosing $\sigma$ appropriately. In formulas, 
we claim that for a function $G$ on $\R$ such that $G,G'\in L^2(\pi_{\sigma})$, we have 
\begin{equ}[e:ClaimFinalPoly]
\sup_{x\in\R}|G(x)| e^{-\frac{x^2}{2\sigma^2}}\lesssim \|G\|_{L^2(\pi_{\sigma})}+\|G'\|_{L^2(\pi_{\sigma})}
\end{equ}
from which we deduce~\eqref{e:PolyApprox} by taking $\sigma^2=\frac{1}{2\kappa}>1$ 
and $G=F^{(\ell)}-P_M^{1/\sqrt{2\kappa}}(F)^{(\ell)}$. 

It remains to prove~\eqref{e:ClaimFinalPoly}. By Cauchy-Schwartz inequality, we 
easily see that, for any $x,y\in\R$, we have 
\begin{equs}
|G(y)-G(x)|&\leq \int_{x\wedge y}^{x\vee y}|G'(z)|\dd z\lesssim \|G'\|_{L^2(\pi_\sigma)}\Big(\int_{x\wedge y}^{x\vee y}e^{\frac{z^2}{2\sigma^2}}\dd z\Big)^{\frac12}\\
&\lesssim \|G'\|_{L^2(\pi_\sigma)} \sqrt{|x-y|}\Big(e^{\frac{x^2}{2\sigma^2}}+e^{\frac{y^2}{2\sigma^2}}\Big)^{\frac12}\lesssim \|G'\|_{L^2(\pi_\sigma)} \Big(e^{\frac{x^2}{3\sigma^2}}+e^{\frac{y^2}{3\sigma^2}}\Big)\,.
\end{equs}
and thus
\begin{equs}
|G(x)|&\leq \Big|G(x)-\int_\R G(y) \frac{1}{\sqrt{2\pi}\sigma} e^{-\frac{y^2}{2\sigma^2}}\dd y\Big| + \int_\R |G(y)| \frac{1}{\sqrt{2\pi}\sigma} e^{-\frac{y^2}{2\sigma^2}}\dd y\\
&\leq \int_\R |G(x)-G(y)| \frac{1}{\sqrt{2\pi}\sigma} e^{-\frac{y^2}{2\sigma^2}}\dd y +\|G\|_{L^2(\pi_\sigma)}\\
&\lesssim \|G'\|_{L^2(\pi_\sigma)} e^{\frac{x^2}{2\sigma^2}}+\|G\|_{L^2(\pi_\sigma)}
\end{equs}
from which~\eqref{e:ClaimFinalPoly} follows at once.
\end{proof}

We conclude this section with a few concrete examples of functions that satisfy or not the Assumption~\ref{Assumption:F}.

\begin{lemma} \label{lem:Analytic}
Let $F \colon \R \to \R$ admit a complex analytic extension on the strip $\{z \colon |\Im(z)| < \delta\}$ for some $\delta > 0$, such that the sup-norm of $F$ on the rectangle $Q_{M,\delta} := [-M, M] \times [-\delta, \delta]$ is dominated by $\exp(\kappa M^2)$ for some $\kappa \in (0,1/4)$. Then, for any $\kappa' \in (\kappa, 1/4)$, 
there exists a constant $C \in (0, \infty)$ such that for every $n \geq 0$
\begin{equ} \label{e:BoundDerivativeAnalytic}
|F^{(n)}(x)| \leq C \, \delta^{-n} \, n! \, \exp(\kappa x^2) \, .
\end{equ}
In particular, any such $F$ satisfies Assumption \ref{Assumption:F}. 
\end{lemma}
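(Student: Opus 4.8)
The plan is to prove the derivative bound \eqref{e:BoundDerivativeAnalytic} by Cauchy's integral formula and then verify that it fits the hypothesis of Assumption~\ref{Assumption:F}. First I would fix $x \in \R$, and apply Cauchy's formula on a circle centred at $x$ of radius $r < \delta$, so that
\begin{equ}
F^{(n)}(x) = \frac{n!}{2\pi\iota} \oint_{|z-x| = r} \frac{F(z)}{(z-x)^{n+1}} \, \dif z \, ,
\end{equ}
which immediately gives $|F^{(n)}(x)| \leq n! \, r^{-n} \sup_{|z-x| = r} |F(z)|$. The natural choice is to take $r$ slightly smaller than $\delta$, say $r = \delta/2$ or, better, $r$ depending on $x$ but bounded away from $0$ and from $\delta$; any fixed fraction of $\delta$ gives the $\delta^{-n}$ scaling up to a harmless constant raised to the power $n$, which can then be absorbed by lowering $\delta$ in the statement (or by noting $2^n \leq \exp(\exp(o(n)))$). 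The circle $|z-x| = r$ is contained in the strip and, for $|x| \leq M$ with $M$ large enough, inside the rectangle $Q_{M',\delta}$ with $M' = |x| + \delta$; hence $\sup_{|z-x|=r} |F(z)| \lesssim \exp(\kappa (|x|+\delta)^2) = \exp(\kappa x^2 + 2\kappa\delta|x| + \kappa\delta^2)$.

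The only subtlety is that the exponent $\kappa x^2 + 2\kappa\delta |x| + \kappa\delta^2$ is not quite $\kappa x^2$ because of the linear term $2\kappa\delta|x|$. This is where the freedom $\kappa' \in (\kappa, 1/4)$ enters: for any $\kappa' > \kappa$ one has $2\kappa\delta|x| + \kappa\delta^2 \leq (\kappa' - \kappa) x^2 + C_{\kappa,\kappa',\delta}$ for all $x \in \R$, by a trivial completion-of-the-square / Young's inequality argument, so that $\exp(\kappa(|x|+\delta)^2) \leq C \exp(\kappa' x^2)$. Thus I would actually prove \eqref{e:BoundDerivativeAnalytic} with $\kappa$ replaced by $\kappa'$ on the right-hand side — which is exactly what the statement allows, since it only asserts the bound ``for any $\kappa' \in (\kappa, 1/4)$''. (If one insists on keeping $\kappa$ itself, one instead works on $Q_{|x|+\delta, \delta}$ and absorbs the linear and constant terms into the constant $C$ only on a compact set, handling large $|x|$ by the same $\kappa \to \kappa'$ trick; the cleanest route is simply to state the result with $\kappa'$.) Combining, for a suitable constant $C = C(\kappa, \kappa', \delta) < \infty$ and all $n \geq 0$, $x \in \R$,
\begin{equ}
|F^{(n)}(x)| \leq C \, \delta^{-n} \, n! \, \exp(\kappa' x^2) \, ,
\end{equ}
which is \eqref{e:BoundDerivativeAnalytic} (with $\kappa'$ in place of $\kappa$).

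Finally, to conclude that $F$ satisfies Assumption~\ref{Assumption:F}, I would observe that $\delta^{-n} n! \leq \exp(\exp(o(n)))$: indeed $\log(\delta^{-n} n!) = -n\log\delta + \log(n!) \leq n|\log\delta| + n\log n \leq C n \log n$ for $n$ large, and $C n \log n = \exp(\log C + \log n + \log\log n) = \exp(o(n) \cdot \text{(something)})$ — more carefully, $Cn\log n \leq \exp(o(n))$ fails as stated, so the right bookkeeping is that $\exp(\exp(o(n)))$ must dominate $\delta^{-n} n!$, i.e.\ one needs $\exp(o(n)) \geq \log(\delta^{-n} n!) \asymp n \log n$, which is false. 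The correct reading, as used elsewhere in the paper (e.g.\ the remark after Assumption~\ref{Assumption:F} that ``the growth of their derivatives is at most $n! \ll \exp(\exp(o(n)))$''), is that $n! \leq \exp(n\log n) \leq \exp(\exp(o(n)))$ once we note $n \log n \leq \exp(n^{1/2}) \leq \exp(o(n))$ is \emph{also} false — so the genuinely correct statement is $n! = \exp(n \log n - n + O(\log n)) \leq \exp(\exp(\epsilon n))$ for every $\epsilon > 0$ and $n$ large, since $n\log n \leq e^{\epsilon n}$ eventually; and $\delta^{-n} \leq e^{\epsilon n}$ as well, so $\delta^{-n} n! \leq \exp(e^{\epsilon n} + e^{\epsilon n}) \leq \exp(e^{2\epsilon n}) = \exp(\exp(o'(n)))$ where $o'$ absorbs the constants — in other words $\delta^{-n} n! \leq \exp(\exp(o(n)))$ in the sense of Assumption~\ref{Assumption:F}. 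Hence \eqref{e:assumptionF} holds with $\kappa'$ (any value in $[0,1/4)$ strictly bigger than the original $\kappa$), completing the proof. I expect the main obstacle to be precisely this last bookkeeping — making sure the factorial and the geometric factor $\delta^{-n}$ are correctly dominated by the iterated-exponential bound in Assumption~\ref{Assumption:F}, and keeping the $\kappa$-versus-$\kappa'$ distinction straight when converting the linear-in-$|x|$ loss in the Cauchy estimate back into a quadratic exponential.
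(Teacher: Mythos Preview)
Your approach is essentially identical to the paper's: Cauchy's estimate on a disk of radius~$\delta$ centred at~$x$, followed by the observation $\exp(\kappa(|x|+\delta)^2) \leq C\exp(\kappa' x^2)$ to absorb the linear term. The paper's proof is a one-liner that writes exactly this chain of inequalities; your identification of the $\kappa$-versus-$\kappa'$ issue is apt (the paper's displayed bound indeed ends with $\exp(\kappa' x^2)$, matching what you derive). Your verification that $\delta^{-n} n! \leq \exp(\exp(o(n)))$ is correct in substance --- $\log(\delta^{-n} n!) \sim n\log n$ and $n\log n \leq e^{g(n)}$ for any $g(n)$ growing faster than $\log n$, so e.g.\ $g(n) = 2\log n = o(n)$ works --- though the exposition could be trimmed considerably by stating this directly rather than iterating through false starts.
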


\begin{corollary} \label{coro:examples_F}
The functions $F(x) = \sqrt{a+x^2}$ for $a > 0$ and $G(x) = \Re \exp(\omega x)$ for $\omega \in \C$ satisfy Assumption \ref{Assumption:F}.
\end{corollary}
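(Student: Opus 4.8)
The plan is to prove Lemma~\ref{lem:Analytic} by the Cauchy integral formula for the derivatives, from which the claim that $F$ satisfies Assumption~\ref{Assumption:F} and the two examples in Corollary~\ref{coro:examples_F} follow with little extra work.

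First I would fix $x \in \R$ and a radius $r \in (0, \delta)$. Since $F$ is holomorphic on the strip $\{|\Im z| < \delta\}$, the closed disc $\overline{D}(x,r)$ lies inside it, and Cauchy's formula gives
\[
|F^{(n)}(x)| \;\le\; \frac{n!}{r^{n}} \sup_{|z - x| = r} |F(z)| \, .
\]
Every $z$ on the circle $|z - x| = r$ satisfies $|\Im z| \le r \le \delta$ and $|\Re z| \le |x| + r$, hence $z \in Q_{|x|+r,\delta}$, and so $|F(z)| \le C_{0}\, \exp(\kappa(|x|+r)^{2})$ by hypothesis (with $C_{0}$ absorbing the bounded range of small $M$). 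Expanding $\kappa(|x|+r)^{2} = \kappa x^{2} + 2\kappa r |x| + \kappa r^{2}$ and using Young's inequality $2\kappa r |x| \le (\kappa' - \kappa) x^{2} + \kappa^{2} r^{2}/(\kappa' - \kappa)$, one gets $\exp(\kappa(|x|+r)^{2}) \le C_{1} \exp(\kappa' x^{2})$ where $C_{1} := \exp(\kappa \delta^{2} + \kappa^{2}\delta^{2}/(\kappa' - \kappa))$ is \emph{independent of $r \le \delta$}, of $n$, and of $x$. Thus $|F^{(n)}(x)| \le C_{0} C_{1}\, n!\, r^{-n} \exp(\kappa' x^{2})$ for every $r \in (0,\delta)$; letting $r \uparrow \delta$ with $n$ fixed yields $|F^{(n)}(x)| \le C\, n!\, \delta^{-n} \exp(\kappa' x^{2})$ with $C := C_{0} C_{1}$, which is \eqref{e:BoundDerivativeAnalytic} (up to renaming $\kappa'$).

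To deduce that $F$ satisfies Assumption~\ref{Assumption:F}, note that \eqref{e:BoundDerivativeAnalytic} already has the factor $\exp(\kappa' y^{2})$ with $\kappa' \in (0, 1/4) \subset [0, 1/4)$, so it remains to check $C \delta^{-n} n! \le \exp(\exp(o(n)))$. Taking the logarithm twice and using $\log n! \le n \log n$, one has $\log\log(C \delta^{-n} n!) = \log\!\big(\log C + n \log(1/\delta) + \log n!\big) = \log\!\big(O(n \log n)\big) = o(n)$, as required. For Corollary~\ref{coro:examples_F}: the principal branch of $z \mapsto \sqrt{a + z^{2}}$ is holomorphic on $\{z : a + z^{2} \notin (-\infty, 0]\}$, which contains the strip $\{|\Im z| < \sqrt{a}\}$ since $a + z^{2} \le 0$ forces $z = \pm it$ with $|t| \ge \sqrt{a}$; fixing any $\delta \in (0, \sqrt{a})$, on $Q_{M,\delta}$ we have $|\sqrt{a + z^{2}}| = |a + z^{2}|^{1/2} \le (a + (M+\delta)^{2})^{1/2} \lesssim 1 + M \le C \exp(\kappa M^{2})$ for any $\kappa > 0$, so Lemma~\ref{lem:Analytic} applies. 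Likewise $G(x) = \Re \exp(\omega x)$ extends to the entire function $z \mapsto \tfrac12\big(e^{\omega z} + e^{\bar\omega z}\big)$, which on $Q_{M,\delta}$ is bounded by $e^{|\omega|(M + \delta)} \le C \exp(\kappa M^{2})$ for any $\kappa > 0$, so Lemma~\ref{lem:Analytic} applies with any $\delta > 0$.

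The argument contains no genuine obstacle; the only point requiring care is the choice of radius in the Cauchy estimate. A radius $r$ bounded away from $\delta$ would produce a spurious $r^{-n}$ in place of $\delta^{-n}$, while letting $r \to \delta$ could \emph{a priori} blow up the prefactor — it is precisely the observation that, after the Young step, the prefactor $C_{0} C_{1}$ is uniform in $r \le \delta$ that reconciles the two requirements. The remaining work is bookkeeping, the only mildly delicate piece being the double-logarithm computation that converts the factorial/geometric prefactor into the $\exp(\exp(o(n)))$ bound of Assumption~\ref{Assumption:F}.
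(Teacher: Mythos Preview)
Your proposal is correct and follows essentially the same approach as the paper: the paper's proof of Lemma~\ref{lem:Analytic} is the one-line Cauchy estimate $\frac{|F^{(n)}(x)|}{n!} \le \delta^{-n} \sup_{z \in D(x,\delta)} |F(z)| \lesssim \delta^{-n} \exp(\kappa(x+\delta)^2) \lesssim \delta^{-n} \exp(\kappa' x^2)$, and the corollary is then left implicit. Your version simply fills in the details the paper omits---the uniformity in $r$ before letting $r \uparrow \delta$, the Young inequality absorbing the cross term, the double-logarithm check that $n!\,\delta^{-n} = \exp(\exp(o(n)))$, and the explicit verification that $\sqrt{a+z^2}$ is holomorphic on $\{|\Im z| < \sqrt{a}\}$ and that both examples grow sub-Gaussianly on rectangles---but there is no substantive difference in strategy.
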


\begin{remark}
Note that the growth of the derivates in \eqref{e:BoundDerivativeAnalytic} is much smaller 
than what is required by Assumption \ref{Assumption:F}. Indeed, it can be shown that it implies the existence 
of constants $c, C \in (0, \infty)$ such that for every $m \geq 0$
\begin{equ} \label{e:DecayHermiteAnalytic}
|\hat c_m(F)| \leq C \exp(- c \sqrt{m}) \, .
\end{equ}
\end{remark}

\begin{proof}[of Lemma \ref{lem:Analytic}]
By the Cauchy integral formula we deduce
\begin{equ}
\frac{|F^{(n)}(x)|}{n!} \leq \delta^{-n} \sup_{z \in D(x, \delta)} |F(z)| \lesssim \delta^{-n} \exp(\kappa \, (x+\delta)^2) \lesssim \delta^{-n} \exp(\kappa' x^2) \, ,
\end{equ}
which concludes.
\end{proof}

Now, let us give an example of a function that does not satisfy Assumption \ref{Assumption:F}. 
Of course, any function that is not smooth trivially fails, such as $F(x) = |x|$. 
Let us comment that for the latter, an explicit computations involving integrations by parts shows that
\begin{equ}
c_m(F) \sim (-1)^{\frac{m-2}{2}} \frac{2^{3/4}}{\pi^{3/4}} \frac{1}{m^{5/4}} \, , \qquad \text{for $m$ even, as $m\to\infty$,}
\end{equ}
which is consistent with Proposition \ref{p:cm(F)asymptotic}.

Let us now give an example of a smooth function that does not satisfy Assumption \ref{Assumption:F}.
Let $F$ be a smooth function which is flat at every order at $0$, 
such that for $x \in [-\delta, \delta]$ (for some $\delta \in (0,e^{-1})$) we have
\begin{equ}
	F(x) = |x|^{(\gamma + o(1)) \log \log(1/x)} \, , 
\end{equ}
for some $\gamma > 0$. Then, we claim that
\begin{equ}
	\sup_{x \in [-\delta,\delta]} |F^{(n)}(x)| \geq \exp(\exp((\gamma^{-1} + o(1)) \, n)) \,, 
\end{equ}
so that~\eqref{e:assumptionF} fails. 
First, the Taylor-Lagrange inequality (relative to the trivial Taylor expansion of $F$ at $0$) gives for every $x \in (0, \delta)$
\begin{equ}
	F(x) \leq \|F\|_{\infty, [0, \delta]} \frac{x^n}{n!} \, ,
\end{equ}
so that we can lower bound (using the change of variable $\ell = \log(1/x)$)
\begin{equ}
	\frac{\|F^{(n)}\|_{\infty, [0, \delta]}}{n!} \geq \sup_{x \in (0, \delta)} \frac{F(x)}{x^n} = \sup_{\ell \in (\log(1/\delta), \infty)} \exp(- (\gamma + o(1)) \log(\ell) \ell + n \ell) \, .
\end{equ}
Optimizing over $\ell$ (the maximizer is $\ell_* = \exp(n/(\gamma + o(1)) - 1)$, provided $n$ is sufficiently large so that $\ell_* \geq \log(1/\delta)$), we deduce
\begin{equ}
	\|F^{(n)}\|_{\infty, [0, \delta]} \geq \exp((\gamma + o(1)) \exp(n/(\gamma + o(1)) - 1)) = \exp(\exp((\gamma^{-1} + o(1)) \,  n)) \, .
\end{equ}

\section{Some technical results} \label{a:technical}

In this section, we collect a few technical results which are needed in the paper. 
The first ensures that for every $\tau>0$ fixed, the operators $\gensyx$, $\gensyy$, $\gensy$ and $\genaF{m}{a}$ 
are continuous from $\core$ equipped with its natural Fréchet metric to $\fock{}{}{}$ and derives 
a quantitative (but $\tau$-dependent) estimate on the latter. 

\begin{lemma}\label{l:ContOp}
Let $\tau>0$ be fixed. The operators $\gensyx$, $\gensyy$, $\gensy$ and $\genaF{m}{a}$, for 
$m \geq 1$ and $a \in \{-m+1,-m+3...,m-1\}$, respectively 
given in~\eqref{e:L0w},~\eqref{e:gensy},~\eqref{e:Gena1} and~\eqref{e:Genamj} 
are continuous on $\core$. Furthermore, 
for every $n\geq 1$, there exists a constant $C=C(\tau, n)>0$ such that 
for every $m$ and every $\psi\in\core_n$ we have 
\begin{equ} \label{e:SummabilityA}
\|\sqrt{m!} \,\cA^{\tau, m} \psi\|_\tau \leq C \, m^{n/2} \, \|\psi\|_\tau \, ,
\end{equ}
\end{lemma}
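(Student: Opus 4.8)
The plan is to reduce everything to the single quantitative estimate \eqref{e:SummabilityA}, since the continuity on $\core$ of the individual operators $\gensyx$, $\gensyy$, $\gensy$ and $\genaF{m}{a}$ then follows by straightforward inspection of their Fourier-side definitions \eqref{e:L0w}, \eqref{e:gensy}, \eqref{e:Gena1}, \eqref{e:Genamj}. Indeed, for a fixed $\tau>0$ and $\psi\in\core_n$ with $\mathrm{Supp}(\hat\psi)\subset\{|\sqrt{R_\tau}p_{1:n}|\le r\}$, the multipliers defining $\gensyx,\gensyy,\gensy$ are polynomials in $p_{1:n}$ and hence bounded on the support of $\hat\psi$ by a constant depending on $\tau,n,r$; this gives continuity of these three operators from $\core$ (with its Fréchet topology) to $\fock{}{}{\tau}$ immediately. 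For each fixed $\genaF{m}{a}$ with $m\ge 1$ and $a=m-2j-1$, the formula \eqref{e:Genamj} involves a finite sum over $I\in\Delta_{m-j}^{n+a}$ and, for each term, an integration of $\hat\psi$ against the measure $\Xi_{j+1}^\tau$ restricted to the hyperplane $\{r_{[1:j+1]}=p_{[I]}\}$; since $\Xi_{j+1}^\tau$ has a bounded density $\prod\Theta_\tau(\cdot)$ with respect to Lebesgue measure and $\hat\psi$ is supported in a fixed compact set, the resulting kernel of $\genaF{m}{a}\psi$ is again compactly supported (in a set whose size depends on that of $\mathrm{Supp}(\hat\psi)$) and bounded, so $\genaF{m}{a}\psi\in\core_{n+a}$ with $\fock{}{}{\tau}$-norm controlled by the relevant Fréchet seminorm of $\psi$.

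The substantive part is therefore \eqref{e:SummabilityA}: the dependence on $m$ must be polynomial, namely $m^{n/2}$, \emph{uniformly in $m$} (the constant $C$ being allowed to depend on $\tau$ and $n$ only). First I would decompose $\sqrt{m!}\,\cA^{\tau,m}=\sqrt{m!}\sum_{j=0}^{m-1}\genaF{m}{m-2j-1}$ and bound each summand separately, noting that the target Fock space $\core_{n+a}$ with $a=m-2j-1$ changes with $j$, and that $\genaF{m}{a}\psi\equiv 0$ unless $j+1\le n$, so in fact the sum over $j$ has at most $n$ nonzero terms. For a fixed such $j$, squaring the $\fock{n+a}{}{\tau}$-norm produces, via \eqref{e:normFock}, a factor $(n+a)!$ times the $L^2_\tau(\R^{2(n+a)})$-norm squared of the kernel in \eqref{e:Genamj}; expanding the square of the sum over $I$, using Cauchy--Schwarz on the inner $\Xi_{j+1}^\tau$-integral against $|\hat\psi|^2$ (pulling out a $\tau$-dependent but $m$-independent factor $\|\Theta_\tau\|_\infty^{j+1}$ and the volume of the compact momentum domain), and the combinatorial identity $|\Delta_{m-j}^{n+a}|=\binom{n+a}{m-j}$, one reduces the bound to $\|\psi\|_\tau^2$ times a combinatorial prefactor. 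The bookkeeping of the factorials $n!^2/((n+a)!(j+1)!^2)$ in \eqref{e:Genamj} against the $\sqrt{m!}$ and the $\binom{n+a}{m-j}$ then has to be carried out: since $j+1\le n$ so that $j$ ranges over a bounded set, one has $(j+1)!\le n!$ and $\binom{n+a}{m-j}=\binom{m-2j-1+n}{n-j-1}\le (m+n)^{n-j-1}$, and the crucial cancellation is that $\sqrt{m!}\cdot n!/((n+a)!(j+1)!)$ combined with the half-power coming from $(n+a)!$ in the norm leaves only a $\sqrt{m!}\,/\sqrt{(n+a)!}=\sqrt{m!/(m-2j-1+n)!}$, which is polynomial in $m$ of degree at most $(2j+1-n)/2\le j-(n-1)/2$; together with the $(m+n)^{n-j-1}$ from the binomial this yields a total power at most $m^{n/2}$ up to an $n$-dependent constant. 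Collecting the at-most-$n$ values of $j$ and absorbing all $\tau$-, $n$-dependent constants into $C(\tau,n)$ finishes the argument.

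The main obstacle I anticipate is precisely this last factorial/combinatorial bookkeeping: one must be careful that the half-integer powers of $m!$ and $(n+a)!$ combine to give exactly the stated exponent $n/2$ and not something growing in $m$, and that the $\tau$-dependence (through $\|\Theta_\tau\|_\infty$, the size of the momentum support of $\hat\psi$, and the change of variables normalising $\sqrt{R_\tau}$) is genuinely $m$-independent so it can be hidden in $C(\tau,n)$. A clean way to organise this, which I would follow, is to mimic the estimates already carried out in the proof of Proposition~\ref{p:HighDegreeEstimate} (in particular the manipulations leading to \eqref{e:Gena3+Bound2}), but without keeping track of the $\nu_\tau$-, $\chi$- and anisotropy-refinements there: here we are allowed crude $\tau$-dependent constants, so after reaching the analogue of \eqref{e:Gena3+Bound2} one simply bounds every $\Xi^\tau$-integral over the (fixed, compact) support of $\hat\psi$ by a constant, and reads off the $m$-power from the explicit prefactor $(n+a)!\,n!^2/((n+a)!^2(j+1)!^2)\binom{n+a}{m-j}\cdot m!$. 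The remaining check that this prefactor is $\lesssim_{\tau,n} m^{n}$ (so its square root is $\lesssim m^{n/2}$) is then a finite, elementary computation over the bounded range $0\le j\le n-1$.
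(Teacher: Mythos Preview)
Your plan is essentially the paper's: Cauchy--Schwarz on the inner $\Xi_{j+1}^\tau$-integral, then combinatorial bookkeeping using that only $j\le n-1$ contribute. The paper's version is slightly more direct than going via Proposition~\ref{p:HighDegreeEstimate}: after Cauchy--Schwarz it rewrites the combinatorial prefactor as
\[
\frac{n!}{(n+a)!(j+1)!^2}\binom{n+a}{m-j}^2=\frac{1}{(m+1)!}\binom{n}{j+1}\binom{m+1}{j+1}\binom{n+m-2j-1}{n-j-1}\lesssim_n \frac{m^n}{m!}\,,
\]
which gives \eqref{e:SummabilityA} in one line.

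One point where your informal description is imprecise and worth flagging: after Cauchy--Schwarz you still have the \emph{outer} integral $\int\Xi_{n+a}^\tau(\dd p_{1:n+a})$, and within it the $p_I$-variables (there are $m-j$ of them) are \emph{not} constrained to the support of $\hat\psi$; only their sum $p_{[I]}=r_{[1:j+1]}$ is. So you cannot bound this piece by a ``volume of the compact momentum domain''. What you must use instead is that $\int_{p_{[I]}=q}\Xi_{m-j}^\tau(\dd p_I)\lesssim(\text{mass}\,\Theta_\tau)^{m-j-1}\asymp(\tau\nu_\tau^{-1/2})^{m-j-1}$, which grows exponentially in $m$; this then cancels exactly against the explicit $\tau^{2-m}\nu_\tau^{(1+m)/2}$ prefactor coming from \eqref{e:Genamj}, leaving an $m$-independent $\tau$-power. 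Your fallback to \eqref{e:Gena3+Bound2} does contain this cancellation, but your sentence ``$\tau$-dependent but $m$-independent factor'' hides it.

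A very minor side remark: the claim that $\genaF{m}{a}\psi\in\core_{n+a}$ is not literally true when $m-j\ge 2$ (the individual $p_i$, $i\in I$, are unbounded even if $p_{[I]}$ is), but this is irrelevant for the lemma, which only asserts continuity into $\fock{}{}{\tau}$.
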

\begin{proof}
The continuity of the operators $\gensyx$, $\gensyy$ and $\gensy$ 
from $\core$ to $\fock{}{}{}$ is straightforward. 
For $\genaF{m}{a}$, it follows from the more quantitative estimate~\eqref{e:SummabilityA}. 
The case $m=1$ is trivial, so let us look at $m>1$ (notice that the bound for $m=2$ was given 
in~\cite[Lemma B.2]{CMT}). Let $\psi\in\core_n$ and recall the definition of 
$\cA_j^{\tau, m} \psi$ in~\eqref{e:Genamj}. 
Since $\hat\psi$ is symmetric with respect to permutation of its variables, we see that 
\begin{equs}
\|\cA_j^{\tau, m} &\psi\|_\tau^2 \lesssim \tau^2\nu_\tau \frac{n!^2}{(n+a)! (j+1)!^2} \binom{n+a}{m-j}^2  \\
& \times \tau^{-m} \nu_\tau^{\frac{m}{2}}\int \Xi_{n+a}^\tau(\dd p_{1:n+a}) \, \Big|\int_{r_{[1:j+1]} = p_{[1:m-j]}} \Xi_{j+1}^\tau(\dd r_{1:j+1}) \, \psi(r_{1:j+1}, p_{m-j+1:n+a}) \Big|^2 \\
&\lesssim \tau^2\nu_\tau \frac{n!^2}{(n+a)! (j+1)!^2} \binom{n+a}{m-j}^2\|\psi\|_\tau^2
\end{equs}
where in the last step we applied Cauchy-Schwarz to the integral with respect to the measure $\Xi_{j+1}^\tau$, 
used that, by~\eqref{e:RegMeas}, 
$ \Xi_{n+a}^\tau(\dd p_{1:n+a})=\Xi_{m-j}^\tau(\dd p_{1:m-j})\Xi_{n+a-m+j-1}^\tau(\dd p_{m-j+1:n+a})$ 
and that, by the assumptions on $\rho$ and the definition of $\rho_\tau$ and $\Theta^\tau$ in~\eqref{e:Mollifiers}, 
the total mass of $\Theta^\tau$ is proportional to $\tau \nu_\tau^{-1/2}$. 
Therefore, it remains to estimate the prefactor of the $\fock{}{}{\tau}$-norm of $\tau$, which satisfies 
\begin{equ}
\frac{n!}{(n+a)! (j+1)!^2} \binom{n+a}{m-j}^2=\frac{1}{(m+1)!} \binom{n}{j+1} \binom{m+1}{j+1} \binom{n+m-2j-1}{n-j-1}\lesssim_n  \frac{m^n}{m!}
\end{equ}
the last step being a consequence of $\binom{M}{N} \leq M^N$. Putting the two together,~\eqref{e:SummabilityA} 
follows at once. 
\end{proof}

Next, we want to check that if $F$ is a Hermite polynomial of given degree then 
the operator $A^\tau$ given in~\eqref{e:GenCylFunctional} coincides 
with $\gena$ in Lemma~\ref{lem:ActionGen}. The proof is rather standard and, for $F$ quadratic, can be found 
in, e.g.,~\cite{GPGen}, but since here 
the computations are more involved and the case of Hermite polynomials of degree different 
than $2$ has never been treated, we provide below full details. 

\begin{lemma}\label{l:PolGen}
For any $\tau>0$, $m\in\N$, $n\in\N$ and $\phi=I_n^\tau(h^{\otimes n}) = n! H_n( \eta^\tau(h))$ 
with $h$ such that $\hat h\in \cC_c^\infty(\R^2)$ and $\|h\|_{L^2_\tau(\R^2)} = 1$, 
we have 
\begin{equ}
A^\tau_{H_m}\phi=\genaF{m}{}\phi
\end{equ}
where $A^\tau_{H_m}=A^\tau$ is defined in~\eqref{e:GenCylFunctional} 
with $F$ given by the $m$-th Hermite polynomial $H_m$, and $\genaF{m}{}$ in~\eqref{e:AmA}. 
Further, on the set of $\phi$'s as above, $\genaF{m}{}$ is skew-symmetric, i.e. 
$(\genaF{m}{})^\ast=-\genaF{m}{}$. 
\end{lemma}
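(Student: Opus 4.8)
The plan is to compute both sides of $A^\tau_{H_m}\phi = \genaF{m}{}\phi$ explicitly in Fourier and match them, after which the skew-symmetry follows cheaply. Write $\phi = I_n^\tau(h^{\otimes n}) = n!\,H_n(\eta^\tau(h))$. Since $H_{n+1}' = H_n$, the formula for $A^\tau$ in Lemma~\ref{l:CylDomain} applied to the cylinder function $f(y) = n!\,H_n(y)$ gives
\begin{equ}\label{e:planA}
A^\tau_{H_m}\phi = n!\,H_{n-1}(\eta^\tau(h))\,\langle\cN^\tau_{H_m}[\eta^\tau],h\rangle = n\,I_{n-1}^\tau(h^{\otimes(n-1)})\,\langle\cN^\tau_{H_m}[\eta^\tau],h\rangle\,,
\end{equ}
using $n!\,H_{n-1} = n\,(n-1)!\,H_{n-1}$ and the definition of $I^\tau_{n-1}$. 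So the first task is to identify $\langle\cN^\tau_{H_m}[\eta^\tau],h\rangle$ as a single Wiener--Itô integral. The key point is that $\tau^{-1/2}\nu_\tau^{1/4}\,\eta^\tau(x)$ is a \emph{standard} Gaussian for every $x$: by~\eqref{e:Mollifiers} and Plancherel, $\|\delta_x\|_{L^2_\tau(\R^2)}^2 = \|\rho_\tau\|_{L^2(\R^2)}^2 = \tau\nu_\tau^{-1/2}$, so $\|\tau^{-1/2}\nu_\tau^{1/4}\delta_x\|_{L^2_\tau(\R^2)} = 1$ — which is exactly why the prefactor in~\eqref{e:GoodScaling} is what it is. Extending the isometry identity $I_m^\tau(g^{\otimes m}) = m!\,H_m(\eta^\tau(g))$ by density to $g\in L^2_\tau(\R^2)$, we get, for $m\geq 2$, $H_m(\tau^{-1/2}\nu_\tau^{1/4}\eta^\tau(x)) = \tfrac{\tau^{-m/2}\nu_\tau^{m/4}}{m!}\,I_m^\tau(\delta_x^{\otimes m})$; convolving with $\partial_1\rho_\tau^{*2}$, pairing with $h$ in $L^2(\R^2)$, and using $\widehat{\rho_\tau^{*2}*g} = \Theta_\tau\,\hat g$, a short computation gives $\langle\cN^\tau_{H_m}[\eta^\tau],h\rangle = I_m^\tau(\tilde G)$ with
\begin{equ}\label{e:planG}
\widehat{\tilde G}(q_{1:m}) = -\iota\,\frac{\tau^{1-\frac m2}\,\nu_\tau^{\frac{1+m}4}}{m!\,(2\pi)^{m-1}}\,(\fe_1\cdot q_{[1:m]})\,\Theta_\tau(q_{[1:m]})\,\hat h(q_{[1:m]})\,.
\end{equ}
For $m = 1$ the same computation, now retaining the term $-c_1(H_1)\,\tau^{1/2}\nu_\tau^{1/2}\partial_1\eta^\tau$ with $c_1(H_1) = 1$, yields $\langle\cN^\tau_{H_1}[\eta^\tau],h\rangle = I_1^\tau(G^{(1)})$, $\widehat{G^{(1)}}(q) = -\iota\,\tau^{1/2}\nu_\tau^{1/2}(\fe_1\cdot q)(\Theta_\tau(q) - 1)\hat h(q)$; inserting this into~\eqref{e:planA}, the chaos-$(n-2)$ term of the product vanishes (its kernel is $\propto\int\Xi_1^\tau(\dd r)(\fe_1\cdot r)(\Theta_\tau(r)-1)\hat h(r)^2 = 0$ by oddness), and comparison with~\eqref{e:Gena1} settles the case $m=1$.

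For $m\geq 2$ I would insert~\eqref{e:planG} into~\eqref{e:planA} and expand the product via the Wiener--Itô product rule~\eqref{e:ProdRule}, obtaining terms $I^\tau_{m+n-1-2k}\big(\mathrm{Sym}(\tilde G\otimes_k h^{\otimes(n-1)})\big)$ for $k = 0,\dots,m\wedge(n-1)$. When $n > m$ the extreme term $k=m$ is present but vanishes: by~\eqref{e:ProdMixing} its kernel is proportional to $\int\Xi_m^\tau(\dd r_{1:m})(\fe_1\cdot r_{[1:m]})\Theta_\tau(r_{[1:m]})\hat h(r_{[1:m]})\prod_l\hat h(r_l)$, whose integrand is odd under $r_{1:m}\mapsto -r_{1:m}$ (as $\hat h$, $\Theta_\tau$ are even); when $n\le m$ the constraint $j+1\le n$ in~\eqref{e:Genamj} removes exactly the complementary range, so in all cases the nonzero terms are in bijection with $j=0,\dots,m-1$. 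For each surviving $k$, put $j := k$ and $a := m-2j-1$, so the chaos level $m+n-1-2k$ equals $n+a$, the target of $\genaF{m}{m-2j-1}$. Computing $\mathrm{Sym}(\tilde G\otimes_k h^{\otimes(n-1)})$ uses that $h^{\otimes(n-1)}$ is already symmetric, so the symmetrization reduces to an average over the choice of which $m-j$ of the $n+a$ output slots carry the momenta $p_I$ "merged" through the $\fe_1\cdot q_{[1:m]}$ factor of $\widehat{\tilde G}$; after the sign flip of the $j$ contracted momenta (allowed since $\hat h,\Theta_\tau$ are even) and a resymmetrization of the resulting $\fe_1\cdot(\,\cdot\,)$ factor over them, the kernel is exactly the one in~\eqref{e:Genamj} evaluated at $\hat\phi(r_{1:j+1},p_{1:n+a\setminus I}) = \prod_l\hat h(r_l)\prod_{i\notin I}\hat h(p_i)$. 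The last step is bookkeeping of constants: the prefactor $n\cdot k!\binom mk\binom{n-1}{k}$ from~\eqref{e:ProdRule}, divided by the number of symmetry copies of each term and combined with the $1/m!$ in~\eqref{e:planG}, has to equal $\tfrac{n!}{(n+a)!\,(j+1)!}$ — a routine binomial identity, of the same kind as the one at the end of the proof of Lemma~\ref{l:ContOp}. Summing over $k = j\in\{0,\dots,m-1\}$ then yields $A^\tau_{H_m}\phi = \sum_{j=0}^{m-1}\genaF{m}{m-2j-1}\phi = \genaF{m}{}\phi$.

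For the skew-symmetry, the cleanest route exploits the identification just proved together with Proposition~\ref{p:MollifSol}. For $\phi,\phi'$ of the stated form — hence both in $\core$, which lies in the domain of the generators of the $H_m$- and the $(-H_m)$-dynamics by Lemma~\ref{l:CylDomain} — skew-reversibility (time reversal corresponds to $F\mapsto -F$) gives $\langle(\gensy + A^\tau_{H_m})\phi,\phi'\rangle_\tau = \langle\phi,(\gensy + A^\tau_{-H_m})\phi'\rangle_\tau$; since $A^\tau$ is linear in the nonlinearity ($A^\tau_{-H_m} = -A^\tau_{H_m}$) and $\gensy$, being diagonal with real symbol, is symmetric, subtracting yields $\langle A^\tau_{H_m}\phi,\phi'\rangle_\tau = -\langle\phi,A^\tau_{H_m}\phi'\rangle_\tau$, i.e.\ $\langle\genaF{m}{}\phi,\phi'\rangle_\tau = -\langle\phi,\genaF{m}{}\phi'\rangle_\tau$. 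Alternatively one checks $(\genaF{m}{a})^\ast = -\genaF{m}{-a}$ directly from~\eqref{e:Genamj}: passing to the adjoint interchanges the $j+1$ "merged" and the $m-j$ "split" momenta, replacing $j$ by $m-1-j$ (hence $a$ by $-a$) while turning $-\iota$ into $+\iota$, and summing over $a$ gives $(\genaF{m}{})^\ast = -\genaF{m}{}$. I expect the combinatorial matching in the $m\ge 2$ case — the symmetrization count, the binomial identity for the prefactors, and the sign-flip-and-resymmetrize manoeuvre — to be the only genuinely delicate point; the chaos expansion of $\cN^\tau_{H_m}[\eta^\tau]$, the vanishing of the extreme term, and the skew-symmetry are all short Fourier or parity computations.
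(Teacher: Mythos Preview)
Your overall strategy is the same as the paper's: write $A^\tau_{H_m}\phi$ as the product $n\,I^\tau_{n-1}(h^{\otimes(n-1)})\cdot I^\tau_m(\tilde G)$, expand via the product rule~\eqref{e:ProdRule}, show the extreme contraction $k=m$ vanishes, and identify the remaining terms with $\genaF{m}{m-2j-1}\phi$ level by level. The paper carries out exactly this computation (with the same kernel $\tilde G$ and the same combinatorial matching), so on the main identity you are aligned.

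There is, however, a gap in your argument for the vanishing of the $k=m$ term. You claim the integrand is odd under $r_{1:m}\mapsto-r_{1:m}$ because ``$\hat h$, $\Theta_\tau$ are even''. But $h$ is only assumed real with $\hat h\in\cC_c^\infty$, so one merely has $\hat h(-p)=\overline{\hat h(p)}$, not evenness. Moreover, by~\eqref{e:ProdMixing} the contracted factors are $\hat h(-r_l)$, not $\hat h(r_l)$. With these corrections the substitution $r\mapsto-r$ only yields $J=-\bar J$, i.e.\ $J\in\iota\R$, which combined with the prefactor $-\iota$ shows the term is real --- not zero. The paper instead passes to physical space and observes that the integral equals $\int_{\R^2}\partial_1(\rho_\tau^{*2}*h)\,(\rho_\tau^{*2}*h)^m\,\dd x=\tfrac{1}{m+1}\int_{\R^2}\partial_1\big[(\rho_\tau^{*2}*h)^{m+1}\big]\dd x=0$ as a total derivative of a Schwartz function; this is the clean fix. (Your $m=1$ parity argument is salvageable: there the product is $\hat h(r)\hat h(-r)=|\hat h(r)|^2$, which is genuinely even, so oddness of $(\fe_1\cdot r)(\Theta_\tau(r)-1)$ does the job.)

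For the skew-symmetry, the paper simply asserts $(\genaF{m}{a})^\ast=-\genaF{m}{-a}$ by direct computation from~\eqref{e:Genamj}. Your alternative route via the skew-reversibility of Proposition~\ref{p:MollifSol} (time reversal $\leftrightarrow$ adjoint, $F\mapsto-F$) is correct and arguably more conceptual; it explains \emph{why} the identity holds rather than verifying it, and it avoids any combinatorics. Either approach is fine.
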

\begin{proof}
Let $\phi$ be as in the statement. Recall that by~\cite[Eq. (1.2)]{Nualart}, for any Hermite 
polynomial $H_n$ we have $H_n'\equiv H_{n-1}$. Hence, 
\begin{equs}[e:AmPreFormula]
A^\tau \Phi(\eta^\tau) &= \langle \cN_{H_m}^\tau(\eta^\tau), h \rangle_{L^2(\R^2)} n! H_{n-1}( \eta^\tau(h)) = \langle \cN_{H_m}^\tau(\eta^\tau), h \rangle_{L^2(\R^2)} \, n I_{n-1}^\tau(h^{\otimes n-1})\\
&=- \tau \nu_\tau^{1/4} \,  \langle H_m(\tau^{-1/2} \nu_\tau^{1/4} \eta^\tau), \partial_1 (\rho_\tau^{*2} - \1_{m=1} \delta_0) * h \rangle_{L^2(\R^2)} \, n I_{n-1}(h^{\otimes n-1})\,.
\end{equs}
We want to write the scalar product in terms of the isometry $I_m$. To do so, notice that, 
for every $x\in\R^2$
\begin{equ}
\tau^{-1/2} \nu_\tau^{1/4} \eta^\tau(x) =  \eta^\tau\big(\tau^{-1/2} \nu_\tau^{1/4} \delta_x \big)\qquad\text{and}\qquad \|\tau^{-1/2} \nu_\tau^{1/4} \delta_x\|_{L^2_\tau(\R^2)} = 1\,,
\end{equ}
and thus 
\begin{equs}
H_m\big(\tau^{-1/2} \nu_\tau^{1/4} \eta^\tau(x)\big) &= \frac{1}{m!} \, I_m^\tau\big((\tau^{-1/2} \nu_\tau^{1/4} \delta_x)^{\otimes m}\big) = \frac{\tau^{-m/2} \nu_\tau^{m/4}}{m!} \, I_m^\tau\big(\delta_x^{\otimes m}\big) \, .
\end{equs}
We will focus on the case $m\geq 2$, as for $m=1$ the computations are similar and simpler. 
The definition of $I_m^\tau$ and its linearity ensure that  
\begin{equs}
\langle &H_m\big(\tau^{-1/2} \nu_\tau^{1/4} \eta^\tau\big), \partial_1 \rho_\tau^{*2} * h \rangle_{L^2(\R^2)} = \frac{\tau^{-m/2} \nu_\tau^{m/4}}{m!} \, I_m^\tau\Big(\int_{\R^2} \dd x \, [\partial_1 \rho_\tau^{*2} * h](x) \, \delta_{x}^{\otimes m}\Big) \ ,
\end{equs}
so that the product rule in~\eqref{e:ProdRule} gives 
\begin{equ}[e:AtauInter]
A^\tau \Phi(\eta^\tau)=n\frac{\tau^{1-\frac{m}{2}}\nu_\tau^{\frac{m+1}{4}}}{m!}\sum_{j=0}^{m\wedge(n-1)} j! \binom{m}{j} \binom{n-1}{j} I_{m+n-1-2j}\big( {\rm Sym}(\psi_{m,j})\big)
\end{equ}
where $\psi_{m,j}\in L_\tau^2(\R^{2(n+m-2j-1)})$ is defined via~\eqref{e:ProdMixing}, i.e. 
its Fourier transform is given by 
\begin{equs}
\cF&(\psi_{m,j})(p_{1:m-j}, q_{1:n-1-j})\\
&\eqdef \int \Xi^\tau_j(\dd r_{1:j}) \, \cF\Big(\int_{\R^2} \dd x \, [\partial_1 \rho_\tau^{*2} * h](x) \, \delta_{x}^{\otimes m}\Big)(p_{1:m-j}, r_{1:j})\cF(h^{\otimes n-1})(q_{1:n-1-j}, -r_{1:j})\\
&=- \frac{\iota}{(2\pi)^{m-1}}\int \Xi^\tau_j(\dd r_{1:j}) \, \Theta^\tau(p_{[1:m-j]} - r_{[1:j]}) \, \fe_1 \cdot (p_{[1:m-j]} -r_{[1:j]}) \\
&\qquad \qquad \times \hat{h}(p_{[1:m-j]} - r_{[1:j]}) \, \hat{h}(q_1) ... \hat{h}(q_{n-1-j}) \, \hat{h}(r_1) ... \hat{h}(r_j)
\end{equs}
Now, if $j=m\leq n-1$, we extract those $\hat h$ depending on $q_i$'s and the remaining integral 
reads  
\begin{equs}
\frac{\iota}{(2\pi)^{m-1}}\int  \, \Theta^\tau(- r_{[1:m]}) &\, \fe_1 \cdot (-r_{[1:m]})\hat{h}(- r_{[1:m]}) \hat{h}(r_1) ... \hat{h}(r_m) \,\Xi^\tau_m(\dd r_{1:m}) \\
&=\int  [\partial_{\fe_1}(\rho_\tau^{\ast 2}\ast h)](x)\, (\rho_\tau^{\ast 2}\ast h)^m(x)\,\dd x=0
\end{equs}
the first equality being a consequence of the definition of $\Xi^\tau_m$ in~\eqref{e:RegMeas} 
and Fourier inversion formula, and the second from the fact that the integrand is a total derivative. 
Instead, for $j=0,\dots (m-1)\wedge(n-1)$ we have 
\begin{equs}
\cF&(\psi_{m,j})(p_{1:m-j}, q_{1:n-1-j})\\
&=- \frac{\iota}{(2\pi)^{m-1}}\int_{r_{[1:j+1]} = p_{[1:m-j]}} \Xi_{j+1}^\tau(\dd r_{1:j+1})\,\fe_1 \cdot r_{j+1}\, \hat\phi(r_{1:j+1}, q_{1:n-1-j})\\
&=- \frac{\iota}{(2\pi)^{m-1}}\frac{\fe_1 \cdot p_{[1:m-j]}}{j+1} \int_{r_{[1:j+1]} = p_{[1:m-j]}} \Xi_{j+1}^\tau(\dd r_{1:j+1}) \, \hat{\Phi}(r_{1:j+1}, q_{1:n-1-j})
\end{equs}
where in the second step we introduced the fictitious variable $r_{j+1}=p_{[1:m-j]} - r_{[1:j]}$ and 
identified $\phi$ with its kernel $h^{\otimes n}$ and in the last 
we symmetrised the integral with respect to the variables $r_{1:j+1}$. 
At this point, for $j\in\{0,\dots (m-1) \wedge (n-1)\}$ the above computations show that, 
before symmetrisation, the 
Fourier transform of the kernel of the $j$-th summand 
at the r.h.s. of~\eqref{e:AtauInter} equals 
\begin{equs}
-\iota &\frac{\tau^{1-\frac{m}2} \nu_\tau^{\frac{1+m}{4}}}{(2\pi)^{m-1}} \frac{n}{j+1}\, j! \binom{m}{j} \binom{n-1}{j}\times  \\
&\times \fe_1 \cdot p_{[1:m-j]} \int_{r_{[1:j+1]} = p_{[1:m-j]}} \Xi_{j+1}^\tau(\dd r_{1:j+1}) \, \hat{\Phi}(r_{1:j+1}, q_{1:n-1-j})
\end{equs}
which, by setting $a\eqdef m-1-2j$, relabelling the variables $q_{1:n-j}=p_{1:n+a\setminus \{1:m-j\}}$ and 
symmetrising with respect to $p_{1:n+a}$, coincides with the r.h.s. of~\eqref{e:Genamj}. 
In other words, we proved that 
\begin{equ}
A^\tau\phi=\sum_{j=0}^{m-1}\genaF{m}{m-2j-1}\phi=\genaF{m}{}\phi
\end{equ}
and since, for $F=H_m$,  $\gena=\genaF{m}{}$ the result follows. 

It remains to prove that $\genaF{m}{}$ is skew-symmetric. But this is an immediate consequence of 
the fact that $(\genaF{m}{a})^* = - \genaF{m}{-a}$ which in turn can be verified 
via a direct computation using the expression of these operators in~\eqref{e:Genamj}. 
\end{proof}

\end{appendix}

\bibliography{refs}
\bibliographystyle{Martin}
\addcontentsline{toc}{section}{References}

\end{document}